\newtheorem{theorem}{Theorem}
\newtheorem{proposition}[theorem]{Proposition}
\newtheorem{lemma}[theorem]{Lemma}
\newtheorem{corollary}[theorem]{Corollary}
\theoremstyle{definition}
\newtheorem{example}[theorem]{Example}
\newtheorem{remark}[theorem]{Remark}
\newtheorem{definition}[theorem]{Definition}
\tikzset{%
	symbol/.style={%
		draw=none,
		every to/.append style={%
			edge node={node [sloped, allow upside down, auto=false]{$#1$}}}
	}
}
\DeclareFontFamily{U}{min}{}
\DeclareFontShape{U}{min}{m}{n}{<-> udmj30}{}
\DeclareMathOperator{\Ascr}{\mathscr{A}}
\DeclareMathOperator{\Cscr}{\mathscr{C}}
\DeclareMathOperator{\Dscr}{\mathscr{D}}
\DeclareMathOperator{\Escr}{\mathscr{E}}
\DeclareMathOperator{\Fscr}{\mathscr{F}}
\DeclareMathOperator{\Fbb}{\mathbb{F}}
\DeclareMathOperator{\Nbb}{\mathbb{N}}
\DeclareMathOperator{\Tbb}{\mathbb{T}}
\DeclareMathOperator{\Vbb}{\mathbb{V}}
\DeclareMathOperator{\Dcal}{\mathcal{D}}
\DeclareMathOperator{\Ocal}{\mathcal{O}}
\DeclareMathSymbol{\mathinvertedexclamationmark}{\mathord}{operators}{'074}
\DeclareMathSymbol{\mathexclamationmark}{\mathord}{operators}{'041}
\newcommand{\raisedmathinvertedexclamationmark}{%
	\mathord{\mathpalette\raised@mathinvertedexclamationmark\relax}%
}
\newcommand{\raised@mathinvertedexclamationmark}[2]{%
	\raisebox{\depth}{$\m@th#1\mathinvertedexclamationmark$}%
}
\newcommand{\bang}{\mathexclamationmark}
\newcommand{\gnab}{\raisedmathinvertedexclamationmark}
\DeclareMathOperator{\id}{id}
\DeclareMathOperator{\op}{op}
\DeclareMathOperator{\DBun}{\mathbf{DBun}}
\DeclareMathOperator{\Spec}{Spec}
\DeclareMathOperator{\Sch}{\mathbf{Sch}}
\DeclareMathOperator{\pr}{pr} 
\DeclareMathOperator{\SMan}{\mathbf{SMan}}
\DeclareMathOperator{\R}{\mathbb{R}} 
\DeclareMathOperator{\N}{\mathbb{N}} 
\DeclareMathOperator{\Z}{\mathbb{Z}}
\DeclareMathOperator{\Sym}{Sym}
\DeclareMathOperator{\Set}{\mathbf{Set}} 
\DeclareMathOperator{\QCoh}{\mathbf{QCoh}}
\newcommand{\Mod}[1]{{#1}\textnormal{-}\mathbf{Mod}}
\newcommand{\CAlg}[1]{\mathbf{CAlg}_{#1}}
\newcommand{\Kah}[2]{\Omega^{1}_{{#1}/{#2}}}
\DeclareMathOperator{\SManPC}{\mathbf{SMan}_{pc,H}}
\title[Morphisms and the Cotangent Sequence in Tangent Categories]{Important Classes of Morphisms and the Relative Cotangent Sequence in Tangent Categories}
\author[J.-S. Lemay]{Jean-Simon Pacaud Lemay}
\address{Macquarie University, School of Mathematical and Physical Sciences}
\author[G.\@ Vooys]{Geoff Vooys}
\address{University of Calgary, Department of Mathematics and Statistics}
\email{js.lemay@mq.edu.au, gmvooys@ucalgary.ca}
\thanks{For this research, J.-S. P. Lemay was funded by an ARC DECRA award (\# DE230100303) and this material is based upon work supported by the AFOSR under award number FA9550-24-1-0008.}
\date{\today}
\numberwithin{theorem}{subsection}
\numberwithin{equation}{subsection}
\begin{document}

\begin{abstract}
In this paper we provide a deep and systematic study of what it means to be an immersion, a submersion, a local diffeomorphism, and unramified in a tangent category. We also give a systematic study of the ways in which these classes of morphisms interact, their properties, and give very explicit and concrete characterizations of how each class appears in algebraic geometry, differential geometry, algebra, and in Cartesian differential categories. Additionally, we discuss the notion of being carrable with respect to the tangent bundle projection, then use this to define the notion of horizontal descent in a tangent category, which we then use as a key tool to study the aforementioned classes of morphisms. In particular, we use this to define a de Rham relative cotangent complex in an arbitrary tangent category. 
\end{abstract}

\subjclass{Primary 18F40; Secondary 13N99, 14B10, 53B99, 53C99, 57R99}
\keywords{Tangent Category, Differential Bundle, Horizontal Descent, Immersion, Submersion, Local Diffeomorphism, Unramified Morphism, Relative Cotangent Complex, Carrable Morphisms, Schemes, Smooth Manifolds, Cartesian Differential Categories, Differential Algebra for Semirings}

\maketitle
\tableofcontents
\section{Introduction}

Tangent categories provide a minimal semantic setting in which we can study differential geometric reasoning. The theory of tangent categories was developed first by Rosick{\'y} in the 1980's in \cite{Rosicky} and later rediscovered in 2014 by Cockett and Cruttwell in \cite{GeoffRobinDiffStruct}. Since their rediscovery, the theory of tangent categories has been greatly expanded upon and is now a well-established field with a rich literature with connections and applications in various areas such as (syntehtic) differential geometry, algebraic geometry, commutative algebra, operads, and theoretical computer science for instance. As tangent categories are fundamentally linked to differential geometry, it is natural to study the ways in which various differential geometric concepts can be formalized to a tangent categorical framework. After extending these concepts, however, it is also both necessary and important to study how the newly extended concepts apply to various examples in order to understand what these new concepts teach us about the fundamental nature of geometry. In particular, tangent categories can thus be used to illustrate the deep connection between algebraic geometry and differential geometry in precise ways. 

In this paper, we provide an in-depth and systematic study in a tangent categorical framework of various classes of morphisms that are very important to differential geometry. From \cite{BenVectorBundles} and \cite{GeoffMarcelloTSubmersionPaper}, we already know how to define tangent categorically what it means to be both a submersion and a local diffeomorphism, and that these recover precisely submersions and local diffeomorphisms in the tangent category of smooth manifolds. What wasn't known previously is what these classes of maps were in other important tangent categories such (affine) schemes or commutative algebras. Of course, the study of properties of maps in differential geometry do not start and end at local diffeomorphisms and submersions. For instance, we also have immersions and unramified maps as well, whose formalization in a tangent category we introduce in this paper. Our investigation of these classes of maps contain a formal study of the stability and permanence properties of each class, and the many of the ways in which they relate to each other. 

Along the way we also derive some surprising results. For instance, while it may seem strange to distinguish between immersions and unramified maps; they are the same in differential geometry, we will see that in \emph{full generality} this need not be the case. So in an arbitrary tangent category, with tangent bundle functor $T$, the notion of being an immersion is indeed \emph{distinct} from being $T$-unramified. That said, if a tangent category has negatives (which is what is called a Rosick{\'y} tangent category), then being $T$-unramified is the same as being a $T$-immersion. 

Another geometric aspect of the structure of tangent categories which can be used to explain and study the various classes of maps of interest lies in what is called the horizontal descent of the horizontal bundle in the differential geometry literature. In a tangent category, the horizontal descent of a map $f$ is the unique map $\theta_f$ between the tangent bundle of the domain of $f$ and the pullback of $f$ along the tangent bundle projection (assuming said pullback exists). We spend a great deal of time in this paper studying the horizontal descent $\theta_f$ in its own right and use it to examine the structure of morphisms in a tangent category. Indeed, one can use the horizontal descent to simplify and give clean methods for checking when maps are $T$-immersions, $T$-submersions, $T$-monic, and $T$-{\'e}tale. In particular, the place where we can really see where working with the horizontal descent directly provides a powerful perspective is when working with $T$-submersions. While the general definition of a $T$-submersion is stated in terms of weak pullbacks, we can use the horizontal descent to simply the definition significantly: instead it suffices to ask that the horizontal descent admits a section. This greatly helped classify the $T$-submersions in our main examples. Moreover, arguably the most important application of the horizontal descent is that it also allows us to introduce the formalization of the relative cotangent sequence from algebraic geometry, and hence give the de Rham sequence of maps in an arbitrary tangent category. 

We expect this paper to be of interest to the wider mathematical community as it provides a set of tools with which to begin applying the machinery and technology of tangent category theory to various settings. Moreover, this paper should also be of theoretical interest to geometers, in particular category-theoretic geometers. For instance, in upcoming work \cite{JSGeoffVZariski} we will be studying and proving the existence of a Zariski topology for (certain families) of tangent categories and use this to define and study exactly what it means to be a \emph{scheme} in a tangent-category. Said project relies on a deep understanding on the classes of the maps we study in this paper (as open immersions in algebraic geometry are, in particular, $T$-monic $T$-{\'e}tale maps and so any Zariski open generally must at least be a $T$-monic $T$-{\'e}tale map).

To summarize the story in this paper, we present the table in Figure \ref{Figure: Good table is win} illustrating the tangent-categorical properties a map can have and how they are incarnated in differential geometry, algebraic geometry, commutative algebras, and Cartesian differential categories. 
\begin{figure}[ht]
\begin{center}
\begin{tabular}{|c||c|c|c|c|}
\hline Tangent Category & Diff.\@ Geo.\@ & Alg.\@ Geo.\@ & Com.\@ Alg.\@ & CDC \\
\hline\hline  $T$-monic  & Embedding & Monic & Monic & $f, \theta_f$ \\ 
\hline $T$-immersion & Immersion & Formally Unramified & Monic & Higher Order Derivatives \\
& && & linear monic in linear arg. \\
\hline $T$-unramified & Immersion & Formally Unramified & Monic & $D[f]$ with \\
& & & & trivial kernel in linear arg. \\
\hline $T$-submersion & Submersion & Exact relative & Regular & $\langle \pi_0,D[f]\rangle$ \\
 & & cotangent sequence & epimorphism & regular epi + conditions\\
\hline Split & Submersion & Relatively &  & $D[f]$ \\
$T$-submersion  & with & Formally Smooth & Linear retract &  Retract \\
 & connection & relative to $R$ & &  \\
\hline  &  Local  & Formally unramified & & $D[f]$\\
 $T$-{\'e}tale &diffeomorphism & and relatively & Isomorphism &  Isomorphism \\
 & & formally smooth & & \\
\hline 
\end{tabular}  
\caption{The corresponding classes of morphism in each given (family) of tangent categories.}\label{Figure: Good table is win}
\end{center}
\end{figure}

\textbf{Notation and Conventions}: In this paper we assume that the reader is familiar with the basics of category theory, as well as some familiarity with at least the basic language of algebraic geometry at the level of ringed spaces and differential geometry on smooth manifolds. We denote arbitrary categories using script-fonts such as $\Cscr$ or $\Dscr$. Named categories will generally be written in bold fonts, such as $\Set$ or $\mathbf{CRing}$ for example. When pertinent, for a category $\Cscr$, we will also write $\Cscr_0$ for its class of objects and $\Cscr_1$ for its class morphisms. Objects will be denoted using capital Roman letter, such as $X, Y, Z, \cdots$, while morphisms will generally be written with lower case Roman letters, such as $f, g, h, \cdots$. Identity morphisms are denoted as $\id_X$. Unlike other papers in the tangent category literature (which use diagrammatic order for composition), we write composition in the usual applicative convention using $\circ$, i.e., via:
\[
\begin{tikzcd}
X \ar[r]{}{f} \ar[dr, swap]{}{g \circ f} & Y \ar[d]{}{g} \\
 & Z
\end{tikzcd}
\]
When composing natural transformations, we denote vertical composition using $\circ$ and horizontal composition using $\ast$, i.e., by writing $\beta \circ \alpha$ for the composition:
\[
\begin{tikzcd}
\Cscr \ar[rr, bend left = 50, ""{name = U}]{}{F} \ar[rr, ""{name = M}]{}[description]{G} \ar[rr, bend right = 50, swap, ""{name= D}]{}{H} & & \Dscr
\ar[from = U, to = M, Rightarrow, shorten <= 4pt, shorten >= 4pt]{}{\alpha}
\ar[from = M, to = D, Rightarrow, shorten <= 4pt, shorten >= 4pt]{}{\beta}
\end{tikzcd}
\]
and by writing $\beta \ast \alpha$ for the composition:
\[
\begin{tikzcd}
\Cscr \ar[rr, bend left = 30, ""{name = UL}]{}{F} \ar[rr, swap, bend right = 30, ""{name = DL}]{}{G} & & \Dscr \ar[rr, bend left = 30, ""{name = UR}]{}{H} \ar[rr, swap, bend right = 30, ""{name = DR}]{}{K} & & \Escr
\ar[from = UL, to = DL, Rightarrow, shorten <= 4pt, shorten >= 4pt]{}{\alpha}
\ar[from = UR, to = DR, Rightarrow, shorten <= 4pt, shorten >= 4pt]{}{\beta}
\end{tikzcd}
\] 

\section*{Acknowledgements}
The authors would first like to thank Kristine Bauer for her insights in some of the definitions which are in this project and also for helping frame some of the terminology and perspectives employed in ways that are understandable from more myriad mathematical viewpoints. We also would like to thank Robin Cockett for his insights, his deep knowledge (and especially with regards to pointing us towards the folkloric lemma we use in Section \ref{Section: Strong TImmersions}), and his interest in this project and the material in it. We also would like to thank both Geoff Cruttwell and Marcello Lanfranchi for their helpful discussions regarding the similarities and overlaps in our studies of submersions in their paper \cite{GeoffMarcelloTSubmersionPaper} and the work in this paper. While we ended up discussing material at different levels (they focus on display systems, which is a condition we do not impose in this paper), their insight and questions helped shape much of the material regarding submersions in this paper. We'd also like to thank Deni Salja for helpful conversations. Finally, the second author would like to thank the entire Macquarie University Category Theory group, especially Richard Garner and Ross Street, for their hospitality and intellectually stimulating environment during his visit in November 2024. The questions, insights, conversations, and help that everyone provided was invaluable.

\section{Recollections on Tangent Categories}

In this section we will recall some of the basics of tangent category theory, such as the definitions of tangent categories and differential bundles, as well as provide some examples of the main tangent categories that we will focus on throughout this paper. We also will prove some elementary categorical results which allow us to conclude that for any tangent category, its tangent bundle functor reflect both limits and colimits.

\subsection{Tangent Categories and Differential Bundles}
As we have discussed above, tangent categories provide an abstract categorical framework to study the core structural aspects of differential geometry. While our general exposition regarding tangent categories and their properties will be relatively terse, we refer the reader to \cite{GeoffRobinBundle,GeoffRobinDiffStruct} for a more in-depth introduction to tangent category theory. 

\begin{definition}[{\cite[Definition 3]{BenVectorBundles}}]\label{Defn: T limit and T colimit}
Let $\Cscr$ be a category and let $T:\Cscr \to \Cscr$ be an endofunctor. If $L$ is a (co)limit of a diagram $D:I \to \Cscr$, then we say that $L$ is a \emph{$T$-(co)limit} if for all $m \in \N$, $T^m(L)$ is a (co)limit of the diagram $T^m \circ D:I \to \Cscr$.
\end{definition}

\begin{definition}[{\cite[Definition 2.3]{GeoffRobinDiffStruct}}] A \emph{tangent structure} $\Tbb$ on $\Cscr$ is a tuple $\Tbb = (T, p, 0, \operatorname{add}, \ell, c)$ consisting of: 
\begin{enumerate}[{\em (i)}]
    \item An endofunctor $T:\Cscr \to \Cscr$, called the \emph{tangent bundle functor}, 
    \item A natural transformation $p:T \Rightarrow \id_{\Cscr}$, called the \emph{projection}, such that for all objects $X \in \Cscr_0$ and for all $n \in \N$, the wide pullback
 \[
 \begin{tikzcd}
    & & T_nX \ar[dl]{}[description]{\pi_1} \ar[dll, swap]{}{\pi_0} \ar[d] \ar[dr, swap]{}[description]{\pi_{n-2}} \ar[drr]{}{\pi_{n-1}}& & \\
TX \ar[drr, swap]{}{p_X} & TX \ar[dr, swap]{}[description]{p_X} & \cdots \ar[d] & TX \ar[dl]{}[description]{p_X} & TX \ar[dll]{}{p_X} \\
 & & X
 \end{tikzcd}
 \]
    exists and is a $T$-limit. Note for each $0 \leq i \leq n-1$, we write $\pi_i:T_{n}X \to TX$ for the $i$-th projection of the pullback.
    \item A natural transformation $0:\id_{\Cscr} \Rightarrow T$, called the \emph{zero};
    \item A natural transformation $\operatorname{add}:T_2 \Rightarrow T:\Cscr \to \Cscr$, called the \emph{sum};
    \item A natural transformation $\ell:T \Rightarrow T^2$, called the \emph{vertical lift}; 
    \item A natural transformation $c:T^2 \Rightarrow T^2$, called the \emph{canonical flip};
    \end{enumerate}
    and such that:
  \begin{enumerate}[{\em (i)}]  
\item $(p_X, 0_X, \operatorname{add}_X)$ is an \emph{additive bundle} {\cite[Definition 2.1]{GeoffRobinDiffStruct}}, so an internal commutative monoid in the slice category $\Cscr_{/X}$, that is, the following diagrams commute: 
 \begin{equation*}\label{Eqn: First CMon Bundle Diagrams}
    \begin{tikzcd}
    X \ar[r]{}{0_X} \ar[dr, equals] & TX \ar[d]{}{p_X} \\
     & X
    \end{tikzcd}\quad
    \begin{tikzcd}
    (TX \times_X TX) \times_X TX \ar[d, swap]{}{\cong} \ar[rr]{}{\operatorname{add}_X \times \id} & & T_2X \ar[d]{}{\operatorname{add}_X} \\
    TX \times_X (TX \times_X TX) \ar[dr, swap]{}{\id \times \operatorname{add}_X} & & TX \\
     & T_2X \ar[ur, swap]{}{\operatorname{add}_X}
    \end{tikzcd}
 \end{equation*}
 \begin{equation*}\label{Eqn: Second CMon Bundle Diagrams}
 \begin{tikzcd}
TX \ar[r]{}{\cong} \ar[drr, equals] \ar[d, swap]{}{\cong} &  X \times_X TX \ar[r]{}{0_X \times \id} & T_2X \ar[d]{}{\operatorname{add}_X} \\
TX \times_X X \ar[r, swap]{}{\id \times 0_X} & T_2X \ar[r, swap]{}{\operatorname{add}_X} & TX
 \end{tikzcd}\quad
 \begin{tikzcd}
TX \times_X TX \ar[rr]{}{\operatorname{switch}} \ar[dr, swap]{}{\operatorname{add}_X} & & TX \times_X TX \ar[dl]{}{\operatorname{add}_X} \\
& TX
\end{tikzcd}
 \end{equation*}
    \item  $(\ell_X,0_X)$ is an \emph{additive bundle morphism} {\cite[Definition 2.2]{GeoffRobinDiffStruct}}, that is, the following diagrams commute: 
    \[
    \begin{tikzcd}
    TX \ar[r]{}{\ell_X} \ar[d, swap]{}{p_X} & T^2X \ar[d]{}{(T \ast p)_X} \\
    X \ar[r, swap]{}{0_X} & TX
    \end{tikzcd}\quad
    \begin{tikzcd}
    T_2X \ar[rr]{}{\langle \ell_X \circ \pi_0, \ell_X \circ \pi_1\rangle} \ar[d, swap]{}{\operatorname{add}_X} & & T(T_2X) \ar[d]{}{(T \ast \operatorname{add})_X} \\
    TX \ar[rr, swap]{}{\ell_X} & & T^2X
    \end{tikzcd}\quad
    \begin{tikzcd}
    X \ar[r]{}{0_X} \ar[d, swap]{}{0_X} & TX \ar[d]{}{(T \ast 0)_X} \\
    TX \ar[r, swap]{}{\ell_X} & T^2X
    \end{tikzcd}
    \]
    \item $(c_X, \id_{TX})$ is an \emph{additive bundle morphism}, that is, the following diagrams commute:
    \[
    \begin{tikzcd}
    T^2X \ar[r]{}{c_X} \ar[d, swap]{}{(T \ast p)_X} & T^2X \ar[d]{}{(p \ast T)_X} \\
    TX \ar[r, equals] & TX
    \end{tikzcd}\quad
    \begin{tikzcd}
    (T^2)_2X \ar[rr]{}{\langle c_X \circ \pi_0, c_X \circ \pi_1\rangle} \ar[d, swap]{}{(T \ast \operatorname{add})_X} & & (T^2)_2X \ar[d]{}{(\operatorname{add} \ast T)_X} \\
    T^2X \ar[rr, equals] & & T^2X
    \end{tikzcd}\quad
    \begin{tikzcd}
    TX \ar[r, equals] \ar[d, swap]{}{(T \ast 0)_X} & TX \ar[d]{}{(0 \ast T)_X} \\
    T^2X \ar[r, swap]{}{c_X} & T^2X
    \end{tikzcd}
    \]
    \item The following diagrams commute: 
    \[
    \begin{tikzcd}
    T^2 \ar[r]{}{c} \ar[dr, equals] & T^2 \ar[d]{}{c} \\
     & T^2
     \end{tikzcd}\quad
    \begin{tikzcd}
    T \ar[r]{}{\ell} \ar[dr, swap]{}{\ell} & T^2 \ar[d]{}{c} \\
     & T^2
    \end{tikzcd}
    \]
    \[
    \begin{tikzcd}
    T^3 \ar[d, swap]{}{T \ast c} \ar[r]{}{c \ast T} & T^3 \ar[r]{}{T \ast c} & T^3 \ar[d]{}{c \ast T} \\
    T^3 \ar[r, swap]{}{c \ast T} & T^3 \ar[r, swap]{}{T \ast c} & T^3
    \end{tikzcd}\quad
    \begin{tikzcd}
    T \ar[r]{}{\ell} \ar[d, swap]{}{\ell} & T^2 \ar[d]{}{T \ast \ell} \\
    T^2 \ar[r, swap]{}{\ell \ast T} & T^3
    \end{tikzcd}\quad
    \begin{tikzcd}
    T^2 \ar[d, swap]{}{c} \ar[r]{}{\ell \ast T} & T^3 \ar[r]{}{T \ast c} & T^3 \ar[d]{}{c \ast T} \\
    T^2 \ar[rr, swap]{}{T \ast \ell} & & T^3
    \end{tikzcd}
    \]
    \item For all objects $X \in \Cscr_0$, the diagram
    \[
    \begin{tikzcd}
    T_2X \ar[rrrrrr]{}{(T \ast \operatorname{add})_X \circ \langle \ell_X \circ \pi_0, (0 \ast T)_X \circ \pi_1 \rangle} & & & & & & T^2 \ar[rrr, shift left =1]{}{(T \ast p)_X} \ar[rrr, swap, shift right = 1]{}{0_X \circ p_X \circ (p \ast T)_X} & & & TX
    \end{tikzcd}
    \]
    is an equalizer, called the \emph{universality of the vertical lift}.
\end{enumerate}
A \emph{tangent category} is a pair $(\Cscr,\Tbb)$ where $\Cscr$ is a category and $\Tbb$ is a tangent structure.
\end{definition}

When referring to a tangent category, as a shorthand, we will often abuse notation slightly and refer to $\Cscr$ as a tangent category while leaving the given tangent structure implicit. This will be relevant primarily when it comes to studying specific tangent categories with known ``canonical'' tangent structures, which we will review in the next subsection. 

Let us briefly give some interpretations on tangent structures and the axioms above, which essentially formalize key properties of the tangent bundle on smooth manifolds from classical differential geometry. So an object $X$ in a tangent category can be interpreted as a base space, and $TX$ as its abstract tangent bundle. The projection $p_X$ is the analogue of the natural projection from the tangent bundle to its base space, making $TX$ an abstract fibre bundle over $X$. The wide pullbacks $T_nX$ encode the space of $n$-tangent vectors of $X$ which all anchored at the same point in $X$. In this case $T^m$ preserving $T_nX$ says that if you differentiate $m$ times and had $n$ tangent vectors anchored at the same point, then you now have precisely the space of $n$ degree $m$-vectors tangent to $T^mX$ anchored at the same base point. The sum $\operatorname{add}_X$ allows us to add tangent vectors achored at the same point, while the zero $0_X$ gives the zero tangent vector. This makes $TX$ into a generalized version of a smooth vector bundle where each fibre is a commutative monoid. The vertical lift $\ell_X$ encodes a linear differentiation operation, as it says that if you take a tangent vector and differentiate it, you get a tangent vector which is in a degree $2$ infinitesimal neighbourhood of the base point. The universal property of vertical lift says that the tangent bundle embeds into the double tangent bundle via the vertical bundle. The canonical flip $c_X$ is an analogue of the smooth involution of the same name on the double tangent bundle, which essentially encodes the symmetry of mixed partials $\frac{\partial^2f}{\partial x\,\partial y} = \frac{\partial^2f}{\partial y\,\partial x}$ of second order forms on $X$; in other words, it encodes the symmetry of the Hessian matrix for (differentials) of smooth functions. 

There is a class of tangent categories we will occassionally focus on in particular in this paper which are named after Rosick{\'y} and classify the tangent categories originally considered in \cite{Rosicky}. The difference between these and the definition of tangent structure given above lies precisely in the difference between asking for our tangent bundles to be internal Abelian groups as opposed to commutative monoids. Intuitively, this means that in each fibre of $TX$ is an Abelian group, so we can take the negation of tangent vectors. 

\begin{definition}[{\cite[Section 3.3]{GeoffRobinDiffStruct}}]
A \emph{Rosick{\'y} tangent category} (previously called a tangent category with negatives) is a tangent category $\Cscr$ together with a natural transformation $\operatorname{neg}:T \Rightarrow T$, called the \emph{negation}, such that for all $X$, $(p_X, 0_X, \operatorname{add}_X, \operatorname{neg}_X)$ is an Abelian group bundle \cite[Section 1]{Rosicky}, which is an internal Abelian group in the slice category $\Cscr_{/X}$, that is, the following diagram commutes: 
\[
\begin{tikzcd}
TX \ar[rr]{}{\langle \operatorname{neg}_X, \id_{TX}\rangle} \ar[drr]{}[description]{0_X \circ p_X} \ar[d, swap]{}{\langle \id_{TX}, \operatorname{neg}_X\rangle} & & T_2X \ar[d]{}{\operatorname{add}_X} \\
T_2X \ar[rr, swap]{}{\operatorname{add}_X} & & TX
\end{tikzcd}
\]
\end{definition}

We now recall the definition of differential bundles, which are the generalization of the notion of a smooth vector bundle in differential geometry to the abstract framework of tangent categories \cite[Theorem 1]{BenVectorBundles}. Moreover, differential bundles also recapture the abstract differential geometric analogue of modules for a commutative ri(n)g \cite[Theorem 3.13]{GeoffJSDiffBunComAlg} and quasicoherent sheaves for schemes \cite[Theorem 4.28]{GeoffJSDiffBunComAlg}.

\begin{definition}[{\cite[Definition 2.3]{GeoffRobinBundle}}]
Let $\Cscr$ be a tangent category. Then \emph{differential bundle} over an object $X \in \Cscr_0$ is a tuple $q = (q, \zeta, \sigma, \lambda)$
\begin{enumerate}[{\em (i)}]
    \item An object $E$ and a map $q:E \to X$, called the \emph{projection}, such that for all $n \in \N$, the wide pullbacks
    \[
    \begin{tikzcd}
        & & E_n \ar[dl]{}[description]{\pi_1} \ar[dll, swap]{}{\pi_0} \ar[d] \ar[dr, swap]{}[description]{\pi_{n-2}} \ar[drr]{}{\pi_{n-1}}& & \\
E \ar[drr, swap]{}{q} & E \ar[dr, swap]{}[description]{q} & \cdots \ar[d] & E \ar[dl]{}[description]{q} & TX \ar[dll]{}{q} \\
 & & X
    \end{tikzcd}
    \]
    exist and are preserved by $T^m$ for all $m \in \N$. 
\item A map $\zeta:X \to E$, called the \emph{zero};
\item A map $\sigma:E_2 \to E$, called the \emph{sum}; 
\item A map $\lambda:E \to TE$, called the \emph{lift};
    \end{enumerate}
and such that: 
\begin{enumerate}[{\em (i)}]
    \item $(q,\zeta,\sigma)$ is an additive bundle, so an internal commutative monoid in $\Cscr_{/X}$, that is the following diagrams commute:
    \begin{equation*}\label{Eqn: First Diagrams for Bundle CMon}
    \begin{tikzcd}
    X \ar[r]{}{\zeta} \ar[dr, equals] & E \ar[d]{}{q} \\
     & X
    \end{tikzcd}\quad
    \begin{tikzcd}
    (E \times_X E) \times_X E \ar[rr]{}{\sigma \times \id} \ar[d, swap]{}{\cong} & & E_2 \ar[d]{}{\sigma} \\
    E \times_X (E \times_X E) \ar[dr, swap]{}{\id \times \sigma} & & E \\
     & E \times_X E \ar[ur, swap]{}{\sigma}
    \end{tikzcd}
    \end{equation*}
    \begin{equation*}\label{Eqn: Second Diagrams for Bundle CMon}
   \begin{tikzcd}
    E \ar[r]{}{\cong} \ar[drr, equals] \ar[d, swap]{}{\cong} & X \times_X E \ar[r]{}{\zeta \times \id} & E_2 \ar[d]{}{\sigma} \\
    E \times_X X \ar[r, swap]{}{\id \times \zeta} & E_2 \ar[r, swap]{}{\sigma} & E
    \end{tikzcd}\quad
    \begin{tikzcd}
    E \times_X E \ar[rr]{}{\operatorname{switch}} \ar[dr, swap]{}{\sigma} & & E \times_X E \ar[dl]{}{\sigma} \\
     & E
    \end{tikzcd}
    \end{equation*}
    \item $(\lambda,0_X)$ is an additive bundle morphism, that is, the following diagrams commute:  
    \[
    \begin{tikzcd}
    E \ar[r]{}{\lambda} \ar[d, swap]{}{q} & TE \ar[d]{}{Tq} \\
    X \ar[r, swap]{}{0_X} & TX
    \end{tikzcd}\quad
    \begin{tikzcd}
    X \ar[d, swap]{}{\zeta} \ar[r]{}{0_X} & TX \ar[d]{}{T\zeta} \\
    E \ar[r,swap]{}{\lambda} & TE
    \end{tikzcd}\quad
    \begin{tikzcd}
    E_2 \ar[rr]{}{\langle \lambda \circ \pi_0, \lambda \circ \pi_1\rangle} \ar[d, swap]{}{\sigma} & & T(E_2) \ar[d]{}{T\sigma} \\
    E \ar[rr, swap]{}{\lambda} & & E
    \end{tikzcd}
    \]
    \item $(\lambda,\zeta)$ is an additive bundle morphism, that is, the following diagrams commute: 
     \[
    \begin{tikzcd}
    E \ar[r]{}{\lambda} \ar[d, swap]{}{q} & TE \ar[d]{}{p_E} \\
    X \ar[r, swap]{}{\zeta} & E
    \end{tikzcd}\quad
    \begin{tikzcd}
    X \ar[d, swap]{}{\zeta} \ar[r]{}{\zeta} & E \ar[d]{}{0_E} \\
    E \ar[r,swap]{}{\lambda} & TE
    \end{tikzcd}\quad
    \begin{tikzcd}
    E_2 \ar[rr]{}{\langle \lambda \circ \pi_0, \lambda \circ \pi_1\rangle} \ar[d, swap]{}{\sigma} & & (TE)_2 \ar[d]{}{\operatorname{add}_E} \\
    E \ar[rr, swap]{}{\lambda} & & E
    \end{tikzcd}
    \]
    \item The following diagram commutes: 
    \[
    \begin{tikzcd}
    E \ar[r]{}{\lambda} \ar[d, swap]{}{\lambda} & TE \ar[d]{}{\ell_E} \\
    TE \ar[r, swap]{}{T\lambda} & T^2E
    \end{tikzcd}
    \]
    \item The following diagram: 
    \[
    \begin{tikzcd}
    E_2 \ar[d, swap]{}[description]{q \circ \pi_0 = q \circ \pi_1} \ar[rrrr]{}{T(\sigma) \circ \langle \lambda \circ \pi_0, 0_E \circ \pi_1 \rangle} & & & & TE \ar[d]{}{Tq} \\
    X \ar[rrrr, swap]{}{0_X} & & & & TX
    \end{tikzcd}
    \]
    is a $T$-pullback, called the \emph{universality of the lift}.
    \end{enumerate}
\end{definition}

As a shorthand, we will often denote differential bundles by their projection $q: E \to X$. In particular, for every $X$, its tangent bundle $p_X: TX \to X$ is a differential bundle over $X$ \cite[Example 2.4.(ii)]{GeoffRobinBundle}. See \cite{BenVectorBundles,ching2024characterization,RoryConnectionsTanCAts} for other equivalent ways to define differential bundles, and various key universal properties differential bundles satisfy. An immediate simple observation one can make about differential bundles is: 

\begin{lemma}
Let $q: E \to X$ be a differential bundle in a tangent category $\Cscr$. Then $q$ is monic if and only if $q$ is an isomorphism.
\end{lemma}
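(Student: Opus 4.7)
The plan is to exploit the existence of the zero section of a differential bundle. By the additive bundle axioms for $(q, \zeta, \sigma)$, we have a morphism $\zeta : X \to E$ satisfying $q \circ \zeta = \id_X$, so $\zeta$ is a section of $q$. The backward implication (isomorphisms are monic) is standard, so I only need to establish that a monic differential bundle projection is actually an isomorphism, and for this it suffices to show that the section $\zeta$ is also a retraction.

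First I would observe the chain of equalities
\[
q \circ \zeta \circ q = (q \circ \zeta) \circ q = \id_X \circ q = q = q \circ \id_E.
\]
Then, since $q$ is assumed monic, I can cancel $q$ on the left of the outer equation $q \circ (\zeta \circ q) = q \circ \id_E$ to conclude $\zeta \circ q = \id_E$. Combined with $q \circ \zeta = \id_X$, this shows $\zeta$ is a two-sided inverse to $q$, hence $q$ is an isomorphism.

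There is essentially no obstacle here: the argument is the general categorical fact that a split epimorphism (equivalently, a morphism admitting a section) that is also monic must be an isomorphism. The only input from the theory of differential bundles is the existence of the zero section $\zeta$ satisfying $q \circ \zeta = \id_X$, which is immediate from the first commuting diagram in the additive bundle axiom. No use of the lift $\lambda$, the sum $\sigma$, or the universality of the lift is required.
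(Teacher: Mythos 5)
Your argument is correct and is essentially the paper's own proof, just written out in full: the paper simply notes that $q$ admits a section (the zero $\zeta$) and that a split epimorphism which is monic is an isomorphism. Your explicit cancellation $q \circ (\zeta \circ q) = q \circ \id_E \implies \zeta \circ q = \id_E$ is exactly the standard verification of that fact.
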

\begin{proof}
Because $q:E \to X$ is a retract, $q$ is monic if and only if it is an isomorphism.
\end{proof}

What this lemma above tells us is that while it \emph{is} the case that we can largely think of differential bundles over $X$ as modules over $X$, the idea should not be to think of $E$ as directly a module but instead more as carrying a locally linear structure over $X$ parametrized by a module. We will see below that for schemes, this is exactly what happens.

There are two kinds of morphisms between differential bundles: 

\begin{definition}[{\cite[Definition 2.3]{GeoffRobinBundle}}]
Let $q:E \to X$ and $r:F \to Y$ be differential bundles in a tangent category $\Cscr$. A \emph{morphism of differential bundles} $\binom{g}{f}: q \to \mathsf{r}$ is a pair of maps $g:E \to F$ and $f:X \to Y$ such that the following diagram commutes: 
\[
\begin{tikzcd}
E \ar[r]{}{g} \ar[d, swap]{}{q} & F \ar[d]{}{r} \\
X \ar[r, swap]{}{f} & Y
\end{tikzcd}
\]
commutes. We say that $\binom{g}{f}: q \to r$ is a \emph{linear morphism of differential bundles} if it is a morphism of differential bundles such that the following diagram also commutes: 
\[
\begin{tikzcd}
E \ar[r]{}{g} \ar[d, swap]{}{\lambda_E} & F \ar[d]{}{\lambda_F} \\
TE \ar[r, swap]{}{Tg} & TF
\end{tikzcd}
\]
For differential bundles $q: E \to X$ and $q^\prime: E^\prime \to X$ over the same object $X$, we say that $f: q \to q^\prime$ is a \emph{(linear) morphism of differential bundles over $X$} if $\binom{f}{\id_X}: q \to q^\prime$ is a (linear) morphism of differential bundles. 
\end{definition}

It is worth mentioning that linear morphism of differential bundles automatically preserve the sum and zero as well \cite[Proposition 2.16]{GeoffRobinBundle}, or in other words, a linear morphism of differential bundles is also an additive bundle morphism. 

If $\Cscr$ is a tangent category, then we write $\DBun(\Cscr)$ for the category of differential bundles and linear differential bundle morphisms in $\Cscr$, and for every object we will write $\DBun(X)$ for the category of differential bundles $q:E \to X$ over $X$ with linear differential bundle morphisms $f: q \to q^\prime$ between them. We write $\DBun_\omega(X)$ for the category of differential bundles over $X$ with not-necessarily-linear differential bundle morphisms between them. 

The categories $\DBun(X)$ will be of frequent interest to us in this paper and occupy an important tool for understanding the structure of tangent categories. While this statement is generally true, it is in particular true for both Theorems \ref{Thm: ZeroCarrable makes relative bundle a kernel in DBun} and \ref{Thm: Rosckiy Timmersion iff Tunramified} below. Of particular importance for us are some of the \emph{homological} properties of $\DBun(X)$. Many of the results we present below are immediate consequences of the definition of differential bundles (the $\mathbf{CMon}$-enrichment, for instance, falls into this grouping); those results which are \emph{not} immediate are due to Luschyn-Wright as declared.

\begin{proposition}\label{Prop: STructure of DBun}
Let $X$ be an object in a tangent category $\Cscr$. The category $\DBun(X)$ satisfies the following:
\begin{enumerate}[{\em (i)}]
    \item Finite products in $\DBun(X)$ are biproducts (cf.\@ \cite[Proposition 4.1]{RoryConnectionsTanCAts}).
    \item Every object in $\DBun(X)$ admits finite powers (cf.\@ \cite[Corollary 4.5]{RoryConnectionsTanCAts}). 
    \item The trivial bundle $\id_X: X \to X$ is a zero object in $\DBun(X)$. 
    \item The category $\DBun(X)$ is $\mathbf{CMon}$-enriched.
\end{enumerate}
\end{proposition}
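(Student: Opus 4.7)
The plan is to establish parts (iii) and (iv) first, since together they essentially force (i), and (ii) then follows from (i). For (iii), I would start by observing that $\id_X \colon X \to X$ carries a canonical differential bundle structure with $\zeta = \sigma = \id_X$ (after the trivial identifications $X \times_X X \cong X$) and $\lambda = 0_X$; every axiom required of a differential bundle reduces to an identity already imposed by the ambient tangent structure. For any differential bundle $q \colon E \to X$ with structure maps $\zeta, \sigma, \lambda$, the only candidate morphism over $X$ from $\id_X$ to $q$ is $\zeta$ itself, whose linearity amounts to $\lambda \circ \zeta = T\zeta \circ 0_X$, which is precisely one of the axioms making $(\lambda, \zeta)$ an additive bundle morphism. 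Dually, the only candidate $q \to \id_X$ over $X$ is $q$, and its linearity $0_X \circ q = Tq \circ \lambda$ is the first axiom making $(\lambda, 0_X)$ an additive bundle morphism. Hence $\id_X$ is simultaneously initial and terminal.

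For (iv), given linear morphisms $g, h \colon q \to q'$ over $X$ with $q' \colon F \to X$ having structure $(\zeta', \sigma', \lambda')$, I would pair them over $X$ to obtain $\langle g, h\rangle \colon E \to F_2$ and define $g + h := \sigma' \circ \langle g, h\rangle$, with additive unit $\zeta' \circ q$. The commutative monoid axioms on the hom-set then transcribe the internal $\mathbf{CMon}$ axioms for $(q', \zeta', \sigma')$ shown in the diagrams of differential bundles. The one substantive check is that $g+h$ is again linear; this follows from the axiom $\lambda' \circ \sigma' = T\sigma' \circ \langle \lambda' \circ \pi_0, \lambda' \circ \pi_1\rangle$ together with linearity of $g$ and $h$ and naturality of the pairing, while linearity of $\zeta' \circ q$ reduces analogously to $\lambda' \circ \zeta' = T\zeta' \circ 0_X$. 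Functoriality of addition in each variable follows by the standard argument using that any linear map is also an additive bundle morphism, as noted just after the definition of linear morphisms.

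For (i), when the pullback $E \times_X F$ exists and is preserved by $T^m$ for all $m \in \N$ (which is the standing hypothesis whenever products in $\DBun(X)$ are to exist), it inherits a componentwise differential bundle structure for which the pullback projections are linear; this gives the categorical product in $\DBun(X)$. Granted the zero object from (iii) and the $\mathbf{CMon}$-enrichment from (iv), the canonical injections $\iota_E := \langle \id_E, \zeta' \circ q\rangle$ and $\iota_F := \langle \zeta \circ q', \id_F\rangle$ satisfy the usual biproduct matrix identities $\pi_i \circ \iota_i = \id$, $\pi_i \circ \iota_j = 0$ for $i \neq j$, and $\iota_E \circ \pi_E + \iota_F \circ \pi_F = \id$ by direct computation with the componentwise structure, so the product is a biproduct. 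Part (ii) then follows by induction from (i), and in fact the $n$-fold biproduct of $q$ with itself may be identified with the wide pullback $E_n$ whose existence and $T^m$-preservation are built into the definition of a differential bundle. The main technical hurdle running through all of this is the recurring requirement that each pairing we construct is again linear, which always reduces to $T$ commuting with the relevant pairing map; this is precisely the content of the $T^m$-preservation clauses that are built into the definitions of tangent structures and differential bundles.
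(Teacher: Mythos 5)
Your proof is correct, but it is organized quite differently from the paper's. The paper simply cites Lucyshyn-Wright for (i) and (ii), deduces (iii) from (i) (a terminal object that is the empty biproduct is a zero object), and gets (iv) from the fact that each differential bundle is an internal commutative monoid in $\Cscr_{/X}$ together with \cite[Proposition 2.16]{GeoffRobinBundle}. You instead reverse the dependency: you establish (iii) and (iv) from first principles and then obtain (i) from the general fact that in a $\mathbf{CMon}$-enriched category with a zero object any existing finite product satisfies the biproduct identities, with (ii) supplied by the wide pullbacks $E_n$ that are built into the definition of a differential bundle. What your route buys is a self-contained argument that does not lean on the cited results and that makes visible exactly where the $T^m$-preservation hypotheses are used (namely, to guarantee that pairings into pullbacks are again linear); what the paper's route buys is brevity and the observation that (iii) is a formal consequence of (i).

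One step needs tightening. In your argument for the initiality of $\id_X$ you assert that ``the only candidate morphism over $X$ from $\id_X$ to $q$ is $\zeta$ itself.'' As stated this is false: a not-necessarily-linear bundle morphism over $X$ from the trivial bundle to $q:E \to X$ is an arbitrary section of $q$, of which there may be many. Uniqueness holds only among \emph{linear} morphisms, and it follows because a linear differential bundle morphism is automatically an additive bundle morphism and hence preserves zero sections (\cite[Proposition 2.16]{GeoffRobinBundle}, the fact the paper records immediately after the definition of linear morphisms): if $g$ is linear then $g = g \circ \zeta_{\id_X} = \zeta_q$. You invoke this very fact later for bilinearity of composition, so the fix is only to invoke it here as well; with that insertion the argument for (iii), and hence the whole proof, goes through.
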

\begin{proof}
As mentioned, items $(i)$ and $(ii)$ are due to \cite[Proposition 4.1 \& Corollary 4.5]{RoryConnectionsTanCAts} respectively. Item $(iii)$ follows immediately from $(i)$ because the trivial bundle $\id_X: X \to X$ is a terminal object in $\DBun(X)$. Item $(iv)$ follows immediately from the fact each differential bundle is an internal commutative monoid in $\Cscr_{/X}$ and that linear differential bundle morphisms are commutative monoid morphisms \cite[Proposition 2.16]{GeoffRobinBundle}. 
\end{proof}

\subsection{Examples of Tangent Categories and Their Differential Bundles}

In this section we present our main running examples of tangent categories and their differential bundles that we will consider throughout this paper. Lists of other examples of tangent categories can be found in \cite[Example 2.2]{GeoffRobinBundle} and \cite[Example 2]{garner:embedding-theorem-tangent-cats}. 

Our first example, while trivial, highlights the important fact that tangent structure on a category is not unique. 

\begin{example}
Any category $\Cscr$ is trivially a Rosick{\'y} tangent category with tangent structure given by the identity functor and identity maps. In this case, the only differential bundle over an object $X$ is $\id_X: X \to X$. 
\end{example}

Our next example, which is expected and arguably the canonical example of a tangent category, provides a direct link between tangent category theory and differential geometry.

\begin{example}\label{Example: The tangent category of smooth manifolds} The category of $\SMan$ of (not necessarily Hausdorff nor paracompact) smooth manifolds is a Rosick{\'y} tangent category with tangent structure $\Tbb$ induced by sending a smooth manifold $M$ to its tangent bundle $T(X)$. For any smooth manifold $X$, differential bundles over $X$ correspond precisely to smooth vector bundles over $X$ \cite[Theorem 1]{BenVectorBundles}. As such, we have an equivalences of categories between the categories $\DBun(X) \simeq \mathbf{VecBun}(X)_{\operatorname{linear}}$, where $\mathbf{VecBun}(X)_{\operatorname{linear}}$ is the category of smooth vector bundles over $X$ with linear vector bundle morphisms. If we relax the condition that our bundle maps need to be linear, we also obtain an equivalence $\DBun_\omega(X) \simeq \mathbf{VecBun}(X)$, where $\mathbf{VecBun}(X)$ is the category of smooth vector bundles over $X$ with not-necessarily-linear vector bundle morphisms.
\end{example}

This next example relates tangent category theory to commutative algebra. 

\begin{example}\label{Example: The tangent category of commutative algebras}
Let $R$ be a commutative rig. The category $\CAlg{R}$ of commutative $R$-algebras is a tangent category induced by sending a commutative $R$-algebra $A$ to its algebra of dual numbers:
\[T(A) := A[\epsilon] \cong A[x]/(x^2).\] 
A straightforward extension of the arguments of \cite[Theorem 3.13]{GeoffJSDiffBunComAlg} gives that for any commutative $R$-algebra $A$, differential bundles over $A$ correspond to modules over $A$ \cite[Remark 3.15]{GeoffJSDiffBunComAlg}. So we get an equivalence of categories $\DBun(A) \simeq \Mod{A}$, where $\Mod{A}$ is the category of $A$-modules and $A$-linear maps between them. For full details of this example, see \cite[Section 3]{GeoffJSDiffBunComAlg}. 
\end{example}

Our two next examples connect tangent category theory and algebraic geometry. 

\begin{example}\label{Example: The tangent category of affine R schemes}
Let $R$ be a commutative rig and let $\CAlg{R}^{\op}$ denote the opposite category of commutative $R$-algebras. Then $\CAlg{R}^{\op}$ is a tangent category where the tangent bundle functor is defined by sending a commutative $R$-algebra $A$ to the symmetric $A$-algebra of relative K{\"a}hler differentials\footnote{If $R$ is a rig and if $A$ is a commutative $R$ algebra, an $R$-derivation on $A$ with coefficients in an $A$-module $M$ is an $R$-linear map $\partial:A \to M$ such that for all $a, b \in A$, $\partial(a+b) = \partial(a)+\partial(b)$, $\partial(ab) = \partial(a)b + a\partial(b),$ and $\partial(1) = 0$. Only in the presence of negatives can we omit the equation $\partial(1) = 0$, as the identity $\partial(1) = \partial(1^2) = \partial(1)+\partial(1)$ simply says that $\partial(1)$ is additively idempotent otherwise.}:
\[T_{A/R} := \Sym_{A}\left(\Kah{A}{R}\right).\] 
By \cite[Theorem 4.17]{GeoffJSDiffBunComAlg} for the ring-theoretic case and \cite[Remark 4.20]{GeoffJSDiffBunComAlg} for the rig-theoretic case, we have that $\DBun(A)^{\op} \simeq \Mod{A}$. An important take-away from this equivalence is that for any map $f:E \to F$ of differential bundles over $A$, $f$ is linear if and only if $f$ is isomorphic to a map of the form $\Sym_{A}(g)$ for some module map $g:N \to M$. Now when $R$ is a commutative ring, famously, $\CAlg{R}^{\op}$ is of course equivalent to the category of affine $R$-schemes, and $\CAlg{R}^{\op}$ is a Rosick{\'y} tangent category. Moreover, for any affine $R$-scheme $\Spec A$, we obtain an opposite equivalence $\mathbf{QCoh}(\Spec A) \simeq \Mod{A} \simeq \DBun(A)^{\op}$, where $\mathbf{QCoh}(\Spec A)$ is the category of quasi-coherent modules over $\Spec A$. It is also interesting to mention that in \cite[Definition 16.5.12.I]{EGA44}, Grothendieck calls $T_{A/R}$ the ``fibré tangente'' (French for tangent bundle) of $A$, while in \cite[Section 2.6]{jubin:tangent-monad-and-foliations}, Jubin calls $T_{A/R}$ the tangent algebra of $A$. For full details of this example, see \cite[Section 4]{GeoffJSDiffBunComAlg}. 
\end{example}

We now provide a brief discussion of the tangent category $\Sch_{/S}$ of $S$-schemes for a base scheme $S$. A robust and extensively detailed description may be found in \cite{VooysSchemesDeepDive}. Before we discuss the tangent category $\Sch_{/S}$, let us do a bit of setup. Note that we assume some familiarity with quasi-coherent modules on locally ringed spaces. The original introduction may be found in both \cite{EGA01} and \cite{EGA1} with some material also in \cite{EGA2}, while more modern introductions may be found in various locations such as \cite{stacks-project}. 

Recall that for any scheme $X$, there is a relative symmetric algebra functor
\[
{\Sym}_{\Ocal_X}:\QCoh(X) \to \QCoh(X,\CAlg{\Ocal_X})
\]
for $\QCoh(X,\CAlg{\Ocal_X})$ the category of quasi-coherent sheaves of commutative algebras over $\Ocal_X$. There is also an opposite equivalence of categories described in \cite[Sections 1.2, 1.3, 1.7]{EGA2}
\[
\underline{\Spec}_X:\QCoh(X,\CAlg{\Ocal_X})^{\op} \to \mathbf{AffSch}_{/X}
\]
where $\underline{\Spec}_X$ is called the relative spectrum functor and $\mathbf{AffSch}_{/X}$ is the category of schemes $q:Y \to X$ with affine structure map. The composite functor
\[
\Vbb:= \underline{\Spec}_X \circ \Sym_{\Ocal_X}
\]
is the ``vector bundle with prescribed fibres'' functor of Grothendieck \cite[Section 1.7]{EGA2} which associates to a quasi-coherent sheaf $\Fscr$ a ``vector bundle'' $\Vbb(\Fscr)$ over $X$ whose fibres are precisely those of $\Fscr$ in the same way that vector bundles over smooth manifolds $X$ may be stated, up to opposite equivalence, with locally trivializable smooth manifolds over $X$ whose fibres are vector spaces.

\begin{example}\label{Example: The tangent category of S schemes}
For any base scheme $S$, the category $\Sch_{/S}$ is a Rosick{\'y} tangent category where for a scheme $X \to S$, the tangent scheme $T_{X/S}$ is the relative spectrum of the relative symmetric algebra of the sheaf of relative K{\"a}hler differentials, 
\[
T_{X/S} := \underline{\Spec}_X\left(\underline{\Sym}_{\Ocal_X}\left(\Kah{X}{S}\right)\right) = \mathbb{V}(\Kah{X}{S});
\]
see, for instance, \cite[Line 16.5,12,1]{EGA44} for the first introduction of this scheme. For any $S$-scheme $X$, there is an equivalence of categories $\DBun(X)^{\op} \simeq \QCoh(X)$ \cite[Theorem 4.28]{GeoffJSDiffBunComAlg}. As before, the important take-away is that a morphism of differential bundles $f:E \to F$ is linear if and only if $f$ is isomorphic to $\underline{\Spec}_X(\underline{\Sym}_{\Ocal_X}(g))$ for some map $g:\Fscr \to \Escr$ of quasi-coherent sheaves on $X$. When context is clear, we will simply write $TX$ for the tangent scheme of an $S$-scheme $\nu:X \to S$ in place of the more verbose $T_{X/S}$. For full details of this example, see \cite[Sections 3.4, Definition 5.2]{VooysSchemesDeepDive}. 
\end{example}

Another important family of tangent categories we will consider in this paper are Cartesian differential categories (which we will henceforth abberviate as CDCs), which were defined originally in \cite{RickRobinRobertCDC} to capture the total derivative from multivariable calculus over Euclidean spaces. While we do not present a full definition of a CDC (and instead defer the reader to \cite{RickRobinRobertCDC}), we focus on the following observations.

\begin{example}\label{Example: CDC} Briefly, a CDC is in particular a category $\Cscr$ with finite products $\times$ equipped with a \emph{differential combinator} $D$ which sends maps $f: X \to Y$ to maps $D[f]: X \times X \to Y$, called the \emph{derivative} of $f$, which satisfies seven axioms from \cite[Definition 2.1.1]{RickRobinRobertCDC} which capture the essential properties of the total derivative, such as the chain rule. Every CDC is a tangent category where $TX = X \times X$ and $T f = \langle f \circ \pi_0, D[f]\rangle$ \cite[Proposition 4.7]{GeoffRobinDiffStruct}. In a CDC $\Cscr$, every object is a differential bundle over the terminal object $0$, so $\Cscr \simeq \DBun_\omega(0)$ by \cite[Theorem 4.11]{GeoffRobinDiffStruct}. Furthermore, a map $f: X \to Y$ is a linear differential bundle morphism if and only if $Tf = f \times f$, or equivalently if $D[f] = f \circ \pi_1$, by \cite[Section 3.4]{GeoffRobinBundle}. Some important examples of CDCs include the Lawvere theory of real smooth functions, the Lawvere theory of polynomials with coefficients in a commutative rig $R$, any category with finite biproducts, and the subcategory of \emph{differential objects} (i.e. differential bundles over the terminal object, if one exists) of a tangent category (cf.\@ \cite[Theorem 4.11]{GeoffRobinDiffStruct}).
\end{example}

\subsection{Tangent Bundle Functors and (Co)Limit Reflection}

We conclude this section by proving a relatively elementary, yet powerful, result which holds for arbitrary tangent categories. Our goal is to establish that every tangent bundle functor of a tangent category must necessarily be limit and colimit reflecting. This is a useful step in both knowing how tangent bundle functors interact with isomorphisms (namely that if $TX \cong TY$ then it was already the case that $X \cong Y$) and in determining which endofunctors have a hope of being a tangent bundle functor. We ultimately will use this to conclude that in reasonable situations, all powers of tangent bundle functors reflect the tangent-categorical analogue of local diffeomorphisms (Definition \ref{Defn: TEtale in a tangent category} below).

The technical result we present below is elementary and likely in the category theory literature. However, we could not find it upon a literature search and so present it here with a full proof for the sake of completeness. While this is technically straightforward, it illustrates the technical power that comes from having a category $\Cscr$ with an endofunctor $F:\Cscr \to \Cscr$ and a section/retraction pair in the endofunctor category. 

\begin{theorem}\label{Thm: Endofunctor which has a retract onto th eidentity in endofunctors is limit reflecting}
Let $\Cscr$ be a category with an endofunctor $F:\Cscr \to \Cscr$ and natural transformations
\[
\begin{tikzcd}
\Cscr \ar[rr, bend left = 30, ""{name = U}]{}{F} \ar[rr, bend right = 30, swap, ""{name = D}]{}{\id_{\Cscr}} & & \Cscr
\ar[from = U, to = D, Rightarrow, shorten <= 4pt, shorten >= 4pt]{}{\alpha}
\end{tikzcd}\qquad \begin{tikzcd}
\Cscr \ar[rr, bend left = 30, ""{name = U}]{}{\id_{\Cscr}} \ar[rr, bend right = 30, swap, ""{name = D}]{}{F} & & \Cscr
\ar[from = U, to = D, Rightarrow, shorten <= 4pt, shorten >= 4pt]{}{\beta}
\end{tikzcd}
\]
for which $\alpha \circ \beta = \id_{\Cscr}$. Then $F$ reflects limits and colimits.
\end{theorem}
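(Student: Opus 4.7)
The hypothesis $\alpha \circ \beta = \id_{\Cscr}$ says that $\id_{\Cscr}$ is a retract of $F$ in the endofunctor category $[\Cscr,\Cscr]$, and essentially all the work in the proof flows from exploiting this retraction by componentwise naturality. My plan is therefore to first establish a lemma that $F$ must be faithful, and then use that, together with the retraction, to transfer universal properties from $F(C)$ back to $C$.

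For faithfulness: given parallel maps $f, g : X \to Y$ with $F(f) = F(g)$, the naturality square for $\beta$ at $f$ gives $F(f) \circ \beta_X = \beta_Y \circ f$, and similarly for $g$. Composing on the left with $\alpha_Y$ and using $\alpha \circ \beta = \id$ yields $f = \alpha_Y \circ F(f) \circ \beta_X = \alpha_Y \circ F(g) \circ \beta_X = g$. This takes care of the uniqueness half of the universal property in both cases below.

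For the limit case, suppose $D : \Jscr \to \Cscr$ is any diagram and $\{c_j : C \to D(j)\}_{j \in \Jscr_0}$ is a cone over $D$ whose image $\{F(c_j) : F(C) \to F(D(j))\}$ is a limit cone for $F \circ D$. Given an arbitrary cone $\{x_j : X \to D(j)\}$ over $D$, apply $F$ to obtain a cone over $F \circ D$ and let $u : F(X) \to F(C)$ be the induced unique factorization. Define $v := \alpha_C \circ u \circ \beta_X : X \to C$. Then for each $j$ I would compute
\[ c_j \circ v \;=\; c_j \circ \alpha_C \circ u \circ \beta_X \;=\; \alpha_{D(j)} \circ F(c_j) \circ u \circ \beta_X \;=\; \alpha_{D(j)} \circ F(x_j) \circ \beta_X \;=\; \alpha_{D(j)} \circ \beta_{D(j)} \circ x_j \;=\; x_j, \]
using naturality of $\alpha$, the defining property of $u$, naturality of $\beta$, and finally $\alpha \circ \beta = \id$. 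For uniqueness, if $v' : X \to C$ is any other map with $c_j \circ v' = x_j$ for all $j$, then $F(c_j) \circ F(v') = F(x_j) = F(c_j) \circ F(v)$ for all $j$, whence $F(v') = F(v)$ by the universal property at $F(C)$, and hence $v' = v$ by faithfulness of $F$. This shows $\{c_j\}$ is a limit cone for $D$, establishing that $F$ reflects limits.

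For colimit reflection, the argument is strictly dual: starting from a cocone $\{c_j : D(j) \to C\}$ whose image under $F$ is a colimit cocone for $F \circ D$ and any other cocone $\{x_j : D(j) \to X\}$, I would use the universal property of $F(C)$ to obtain $u : F(C) \to F(X)$, then define $v := \alpha_X \circ u \circ \beta_C$ and verify the cocone factorization $v \circ c_j = x_j$ by the mirror-image naturality chase, with uniqueness again coming from faithfulness of $F$. I do not anticipate any real obstacle in this proof — the only subtle point is recognizing that one needs faithfulness of $F$ to close the uniqueness clause, because the constructed $v$ is not itself built as a preimage of $u$ under $F$.
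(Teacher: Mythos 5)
Your proposal is correct and follows essentially the same route as the paper: the factorization $v = \alpha_C \circ u \circ \beta_X$ is exactly the paper's map $\rho = \alpha_X \circ \theta \circ \beta_Z$, and your uniqueness step via faithfulness of $F$ is equivalent to the paper's appeal to $\beta_X$ being a split monic. Isolating faithfulness as a lemma is a harmless reorganization, not a different argument.
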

\begin{proof}
By duality, it suffices to establish the theorem for limits. We must prove that for any diagram $D:I \to \Cscr$ and any cone $X$ over $D$ for which $FX \cong \lim(F \circ D)$ is a limit in $\Cscr$, then $X \cong \lim(D)$. To this end assume we have a diagram $D:I \to \Cscr$, a cone
\[
\begin{tikzcd}
 & X \ar[dr]{}{d_i} \ar[dl, swap]{}{d_j} \\
Di \ar[rr, swap]{}{Df} & & Dj
\end{tikzcd}
\]
over $D$ in $\Cscr$ (with structure maps $d_i$ given as above rendering all diagrams commutative for $f \in D_1$), and a limit $FX \cong \lim(F \circ D)$ in $\Cscr$. Assume that we have a cone
\[
\begin{tikzcd}
 & Z \ar[dr]{}{\varphi_j}  \ar[dl, swap]{}{\varphi_j} \\
Di \ar[rr, swap]{}{Df} & & Dj
\end{tikzcd}
\]
in $\Cscr$ and note that applying $F$ to $Z$ gives a cone over $F \circ D$. As such, the cone $FZ$ factors uniquely through $FX$ as in the diagram:
\[
\begin{tikzcd}
 & FZ \ar[ddr, bend left = 30]{}{F\varphi_j}  \ar[ddl, swap, bend right = 30]{}{F\varphi_j} \ar[d, dashed]{}{\exists!\theta} \\
 & FX \ar[dr]{}{Fd_j} \ar[dl, swap]{}{Fd_i} \\
F(Di) \ar[rr, swap]{}{F(Df)} & & F(Dj)
\end{tikzcd}
\]
Now define the map $\rho:Z \to X$ via the composition:
\[
\begin{tikzcd}
Z \ar[d, swap]{}{\rho} \ar[r]{}{\beta_Z} & FZ \ar[d]{}{\theta} \\
X & FX \ar[l]{}{\alpha_X}
\end{tikzcd}
\]
An elementary computation shows that the map $\rho$ forces the cone $Z$ to factor through $X$ and so that $X$ is a weak limit of the diagram $D$. 

To finally conclude that $X$ is a limit of $D$, assume that we have two cone maps making the diagram
\[
\begin{tikzcd}
 & Z \ar[ddr, bend left = 30]{}{\varphi_j}  \ar[ddl, swap, bend right = 30]{}{\varphi_j} \ar[d, shift left = 1]{}{\rho} \ar[d, shift right = 1, swap]{}{\sigma} \\
 & X \ar[dr]{}{d_j} \ar[dl, swap]{}{d_i} \\
Di \ar[rr, swap]{}{Df} & & Dj
\end{tikzcd}
\]
commute for all $i \in I$. But then we compute that on one hand for all $i \in I$,
\[
Fd_i \circ \beta_X \circ \rho = Fd_i \circ F\rho \circ \beta_Z = F\varphi_i \circ \beta_Z
\]
and similarly $Fd_i \circ \beta_X \circ \sigma = F\varphi_i \circ \beta_Z.$ But this implies that both $\beta_X \circ \sigma$ and $\beta_X \circ \rho$ are cone maps which factor as
\[
\begin{tikzcd}
 & Z \ar[ddr, bend left = 30]{}{F\varphi_j\circ \beta_Z}  \ar[ddl, swap, bend right = 30]{}{\varphi_j \circ \beta_Z} \ar[d, shift left = 1]{}{\rho} \ar[d, shift right = 1, swap]{}{\sigma} \\
 & FX \ar[dr]{}{Fd_j} \ar[dl, swap]{}{Fd_i} \\
F(Di) \ar[rr, swap]{}{F(Df)} & & F(Dj)
\end{tikzcd}
\]
and so must coincide by virtue of $FX$ being a limit of $F \circ D$. However, this gives that $\beta_X \circ \sigma = \beta_X \circ \rho$, so since $\beta_X$ is a section and hence a split monic, it must also be the case that $\rho = \sigma$. Thus $X$ is a limit to $D$ in $\Cscr$ as desired. 
\end{proof}

\begin{corollary}\label{Cor: tangent bundle functor reflects limits}
For any tangent category $\Cscr$ the tangent bundle functor $T$ and all its powers $T^m$ for $m \in \N$ are limit and colimit reflecting.
\end{corollary}
\begin{proof}
Simply apply Theorem \ref{Thm: Endofunctor which has a retract onto th eidentity in endofunctors is limit reflecting} to the bundle projection, bundle zero section/retraction pair $p:T \Rightarrow \id_{\Cscr}$ and $0:\id_{\Cscr} \Rightarrow T$ and then proceed via induction on $m$.
\end{proof}

We now turn our attention to reflecting isomorphisms. 

\begin{lemma}\label{Lemma: Limits of singleton functors}
Let $\Cscr$ be a category with an object $X \in \Cscr_0$ and let $\underline{X}:\mathbbm{1} \to \Cscr$ denote the functor on the terminal category which picks out the object $X$. Then $Y$ is a limit or a colimit to $\underline{X}$ if and only if $Y \cong X$.
\end{lemma}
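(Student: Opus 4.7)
The plan is to unpack the universal property of a (co)limit of $\underline{X}$ and observe that it collapses to the condition of being isomorphic to $X$. By duality it will suffice to treat the limit case; the colimit case is word-for-word dual.

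First I would observe that a cone over $\underline{X}$ with apex $Z$ is nothing other than a single morphism $Z \to X$, and a morphism of cones from apex $Z$ to apex $Y$ over $\underline{X}$ is a morphism $Z \to Y$ making the evident triangle commute. Thus to say that $Y$ is a limit of $\underline{X}$ is just to say that there is a morphism $\ell: Y \to X$ with the property that every morphism $\varphi : Z \to X$ factors uniquely through $\ell$, i.e.\ there exists a unique $\widetilde{\varphi}: Z \to Y$ with $\ell \circ \widetilde{\varphi} = \varphi$.

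To deduce $Y \cong X$ from this universal property, I would apply it to two natural cones. First, taking $Z = X$ and $\varphi = \id_X$ supplies a unique $\theta : X \to Y$ with $\ell \circ \theta = \id_X$, so $\ell$ is a split epimorphism with section $\theta$. Second, taking $Z = Y$ and $\varphi = \ell$ yields a unique cone map $Y \to Y$ factoring $\ell$ through itself; both $\id_Y$ and $\theta \circ \ell$ have this property, so by uniqueness $\theta \circ \ell = \id_Y$. Hence $\theta$ and $\ell$ are mutually inverse, giving $Y \cong X$.

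For the converse direction, I would verify directly that $X$ equipped with $\id_X$ is a limit of $\underline{X}$: any cone $\varphi : Z \to X$ factors as $\id_X \circ \varphi = \varphi$, and this factorization is manifestly the unique one. Then, since limits are preserved under isomorphism of the apex, any $Y \cong X$ is also a limit. The colimit statement follows by the same argument in $\Cscr^{\op}$. No step here is a genuine obstacle; the only thing to be careful about is keeping the direction of the cone maps straight when dualizing for colimits.
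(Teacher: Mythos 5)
Your proposal is correct and follows essentially the same route as the paper's proof: identify cones over $\underline{X}$ with morphisms into $X$, use the cone $\id_X$ to produce a section of the limit map $\ell$, and use uniqueness of the factorization of $\ell$ through itself to conclude the section is a two-sided inverse, with the converse and the colimit case handled by direct verification and duality respectively. No issues.
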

\begin{proof}
The colimit argument is dual to the limit one and hence omitted. Observe that a cone $Z$ over $\underline{X}$ is precisely given by a morphism $f:Z \to \underline{X}(\ast) = X$ in $\Cscr$. But then on one hand if $L \cong X$ via some map $\theta:L \to X$, the diagram
\[
\begin{tikzcd}
Z \ar[r, equals] \ar[d, swap]{}{\theta^{-1} \circ f} & Z \ar[d]{}{f} \\
L \ar[r, swap]{}{\theta} & X
\end{tikzcd}
\]
is a pullback so $L$ is a limit of $\underline{X}$. On the other hand, if there is a limit $\ell:L \to \underline{X}(\ast) = X$ then the diagram
\[
\begin{tikzcd}
X \ar[dr, equals] \ar[d, swap]{}{\exists! s} \\
L \ar[r, swap]{}{\ell} & X
\end{tikzcd}
\]
admits a unique factorization through $L$. Similarly, the diagram
\[
\begin{tikzcd}
L \ar[dr]{}{\ell} \ar[d, shift right = 1, swap]{}{s \circ \ell} \ar[d, shift left = 1]{}{\id} \\
L \ar[r, swap]{}{\ell} & X
\end{tikzcd}
\]
implies that $s \circ \ell = \id$ as well and so that $\ell$ is an isomorphism.
\end{proof}
The next corollary is immediate from the lemma (as isomorphisms are limits and colimits).

\begin{corollary}\label{Cor: Limt refl is conservative}
If $F:\Cscr \to \Dscr$ is a (co)limit reflecting functor then $F$ is isomorphism reflecting.
\end{corollary}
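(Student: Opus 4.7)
The plan is to exploit Lemma \ref{Lemma: Limits of singleton functors} to reinterpret ``being an isomorphism'' as ``being a limit of a singleton diagram.'' Concretely, suppose $f \colon X \to Y$ is a morphism in $\Cscr$ such that $Ff \colon FX \to FY$ is an isomorphism in $\Dscr$; the goal is to deduce that $f$ itself is an isomorphism.

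First I would observe that for any object $Y \in \Cscr_0$, a cone over the singleton functor $\underline{Y} \colon \mathbbm{1} \to \Cscr$ is precisely a morphism with codomain $Y$, so the map $f \colon X \to Y$ presents $X$ as a cone over $\underline{Y}$. Applying $F$, which commutes with singleton diagrams in the sense that $F \circ \underline{Y} = \underline{FY}$, turns this data into the cone $Ff \colon FX \to FY$ over $\underline{FY}$.

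Next, since $Ff$ is by hypothesis an isomorphism, the proof of Lemma \ref{Lemma: Limits of singleton functors} (the ``if'' direction) yields that $FX$, equipped with the cone map $Ff$, is a limit of $\underline{FY}$ in $\Dscr$. Because $F$ reflects limits, this forces $X$, with cone map $f$, to be a limit of $\underline{Y}$ in $\Cscr$.

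Finally I would invoke the ``only if'' direction of Lemma \ref{Lemma: Limits of singleton functors}: any limit projection of $\underline{Y}$ is itself an isomorphism. Applied to the limit cone $f \colon X \to Y$, this says exactly that $f$ is an isomorphism, as desired. The colimit case is formally dual, swapping the roles of cones and cocones throughout; no step is genuinely difficult, and the only conceptual subtlety is noticing that the lemma produces not merely an abstract isomorphism $X \cong Y$ but identifies the cone map itself as the witnessing isomorphism.
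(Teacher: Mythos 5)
Your proposal is correct and is exactly the argument the paper intends (the paper leaves the proof implicit, remarking only that the corollary is "immediate from the lemma, which shows that isomorphisms are limits and colimits"). You also correctly flag and handle the one genuine subtlety — that the proof of Lemma \ref{Lemma: Limits of singleton functors} identifies the limit cone map itself, not merely the objects, as the isomorphism — which is needed to conclude that $f$, and not just some abstract comparison map, is invertible.
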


\begin{corollary}\label{Cor: tangent bundle functors are isomorphism reflecting}
For a tangent category $\Cscr$ and any $m \in \N$, the functors $T^m$ are isomorphism reflecting.
\end{corollary}
\begin{proof}
Apply Corollary \ref{Cor: Limt refl is conservative} to Corollary \ref{Cor: tangent bundle functor reflects limits}.
\end{proof}

\section{Horizontal Descent}\label{Section: Horizontal Descent}
In differential geometry the horizontal descent of a smooth morphism $f:X \to Y$ of smooth manifolds is defined to be the map 
\[
\theta_f:\coprod_{x \in X} T_xX \to \coprod_{x \in X} T_{f(x)}Y
\]
given by $\theta_f(x,v) := (x,D[f](x)\cdot v)$, where $D[f](x)$ is the derivative of $f$ at the point $x$ applied to the tangent vector $v$. The horizontal descent of $f$ provides a system of coordinates between the tangent bundles $TX$ and $TY$ which allows us to study the properties of $f$ local to the coordinates in $X$ and transfer these local coordinates to a subbundle of $TY$. 

The importance of this map in a general tangent category, when it exists, is difficult to overstate. Not only is $\theta_f$ the pairing map of the projection $p_X:TX \to X$ and the tangent map $Tf:TX \to TY$, it also allows us to record and reflect various structural properties of $f$, as we will see throughout the paper below. In this section we will study the maps $f$ in tangent categories that have a horizontal descent and begin to get to know how the horizontal descent $\theta_f$ can be used to study said maps.

\subsection{Carrable Morphisms}

An important structural and technical question to answer in tangent categories is when given morphisms $f:X \to Y$, for which differential bundles $q:E \to Y$ is it true that the pullback of $f$ along the projection $q$, 
\[
\begin{tikzcd}
f^{\ast}(E) \ar[r]{}{\pi_1} \ar[d, swap]{}{\pi_1} & E \ar[d]{}{q} \\
X \ar[r, swap]{}{f} & Y
\end{tikzcd}
\]
exists in $\Cscr$? If it does exist, is it preserved by all powers of the tangent bundle functor? In complete generality, the question has a negative answer. However, in many examples of interest (such as in our main examples, i.e., in the categories $\SMan$, $\CAlg{R}$, $\CAlg{R}^{\op}$, $\Sch_{/S}$, and in arbitrary CDCs) if $q:E \to Y$ is the tangent bundle $p_Y:TY \to Y$, then it \emph{is} the case that the given pullbacks above exist and are preserved by all powers of the tangent bundle functor. 

In the literature it is frequent practice to study and work with the condition that the differential bundle projection $q:E \to Y$ (or more generally any map $g:Z \to Y$ in $\Cscr$) admits arbitrary pullbacks which are preserved by all powers of the tangent bundle functor. These maps are called \emph{tangent display morphisms}, see \cite[Section 4.2]{GeoffRobinBundle} for the original introduction of display morphisms into tangent category theory, \cite[Section 5.9, Paragraph 3]{JonathanThesis} for a more explicit incarnation of transverse display systems, and both \cite[Definition 6]{BenVectorBundles} and \cite[Definition 2.1]{GeoffMarcelloTSubmersionPaper} for the more modern framework of tangent display morphisms\footnote{A historical note for the interested: the core concept of being a display class of morphisms in a category is taken from \cite[Section 8.3]{TaylorPracticalFoudnations} in which Taylor introduced display classes of morphisms for use with regards to fibrations. These maps were set in the tangent-categorical framework by Cockett and Cruttwell in \cite{GeoffRobinBundle} by way of what are there called transverse display systems. These were later rephrased as transverse displayed tangent categories in \cite{JonathanThesis} and then finally to tangent display systems in \cite{BenVectorBundles}. A recent in-depth study of tangent display systems may be found in \cite{GeoffMarcelloTSubmersionPaper}.}. In this paper we are content to work with a weaker property that is sufficient for our  purposes.

\begin{definition}[{\cite[Lemma 2.7]{GeoffRobinDiffStruct}; cf.\@ also \cite[Definition 15]{GeoffJSReverse}}]\label{Defn: DBun pullbackable}
Let $\Cscr$ be a tangent category and let $f:X \to Y$ be a morphism in $\Cscr$. If $q:E \to Y$ is a differential bundle, we say that $f$ is \emph{$q$-carrable} if for every $n \in \N$ the pullback of $n$ copies of $q$ along $f$ exists and is a $T$-pullback. Moreover, if $f$ is $q$-carrable for all differential bundles $q$, we say that $f$ is \emph{$DB$-carrable} and that $\Cscr$ is \emph{$DB$-carrable} if every morphism is $DB$-carrable.
\end{definition}

The references appearing in the definition above explain our declaration that $f:X \to Y$ is $q$-carrable. First, the term ``carrable'' is French for ``squarable'' and was used by the Grothendieck school in the SGA to denote that specific maps admitted pullback squares. Second, by \cite[Lemma 2.7]{GeoffRobinDiffStruct}, the pullback
\[
\begin{tikzcd}
f^{\ast}E \ar[r]{}{\pr_1} \ar[d, swap]{}{\pr_0} & E \ar[d]{}{q} \\
X \ar[r, swap]{}{f} & Y
\end{tikzcd}
\]
makes the object $f^{\ast}E$ into a differential bundle over $X$ via the first projection $\pr_0$. The differential bundle on $f^{\ast}(E)$ is given as follows: the addition is $f^{\ast}(\sigma) = \id_X \times \sigma$, the zero is $f^{\ast}(\zeta) = \id_X \times \zeta$, and the lift is\footnote{Subject to the usual issue of potentially pre-or-post-composing with isomorphisms to match up with chosen pullbacks.} $f^{\ast}(\lambda) = \id_X \times \lambda$. Thus a map $f$ being $q$-carrable means not only does the differential bundle projection $q$ admits a pullback along $f$, but also that the full differential bundle $q$ pulls back (is squarable, in other words) along $f$. 

\begin{proposition}\label{Prop: dbun projection is always qcarrable}
Let $\Cscr$ be a tangent category and let $q$ be a differential bundle with bundle projection $q:E \to X$. Then $q$ is $q$-carrable.
\end{proposition}
\begin{proof} The $T$-pullback
\[
\begin{tikzcd}
 E_2 \ar[r]{}{\pi_1} \ar[d, swap]{}{\pi_0} & E \ar[d]{}{q} \\
 E \ar[r, swap]{}{q} & X
\end{tikzcd}
\]
defining $E_2$ precisely shows that $q$ is $q$-carrable.
\end{proof}

\begin{example}\label{Example: DB carrable}
For any commutative rig $R$ and any scheme $S$, the tangent categories $\CAlg{R}, \CAlg{R}^{\op}$, and $\Sch_{/S}$ are all $DB$-carrable. 
\end{example}

\begin{example}\label{Example: DB carrable not fincomp} 
It is tempting to think that having every morphism be $DB$-carrable implies that the tangent category $\Cscr$ is finitely complete. However, this is not the case. Indeed let us now explain how $\SMan$ is $DB$-carrable, i.e., for any vector bundle $q:E \to M$, arbitrary pullbacks along $q$ exist and are preserved by $T$. To go about this we begin with a quick observation: submersions in $\SMan$ are tangent display morphisms in the sense that arbitrary pullbacks against submersions exist and are preserved by all powers of the tangent bundle functor. This follows from \cite[Lemma 2.19, Theorem 35.13.2]{KMSDiffGeo} and the fact that submersions $s:X \to Y$ meet arbitrary maps $f:Z \to Y$ transversally. As such, it suffices to argue that for any differential bundle $q:E \to M$, the projection $q$ is a submersion. However, this is well-known because vector bundles $q:E \to M$ admit the zero section $\zeta:M \to E$, so the section/retraction pair $T\zeta:TM \to TE$ and $Tq:TE \to TM$ gives the local surjectivity of $Tq$. Thus every vector bundle projection is a submersion and so arbitrary pullbacks against $q$ exist and are preserved by all powers of $T$. In particular, this shows that $\SMan$ is $DB$-carrable. However, it is well-known that $\SMan$ does not have all pullbacks: for example, the point map $0:\lbrace \ast \rbrace \to \R, \ast \mapsto 0$ does not have a pullback against the map $f:\R^2 \to \R$ given by $f(x,y) := xy$. 
\end{example}

The discussion of when morphisms are $q$-carrable has appeared at least implicitly already in a different guise in the tangent category literature surrounding connections \cite{RoryConnectionsTanCAts}, \cite{GeoffJSElias}. In \cite[Def 3.1]{RoryConnectionsTanCAts}, Lucyshyn-Wright defined the \emph{Basic Condition} for a differential bundle $q:E \to X$ to ask that the $T$-pullbacks
\[
\begin{tikzcd}
E_m \times_X T_n(X) \ar[r] \ar[d] & T_n(X) \ar[d] \\
E_m \ar[r]{}{} & X
\end{tikzcd}
\]
exist for all $m, n \in \N$. Writing $q_m:E_m \to X$ and $p_n:T_nX \to X$ for the respective total projections, then this is equivalent to asking that each $p_n$ is $q_m$-carrable (equivalently that each $q_m$ is $p_n$-carrable) for all $m, n \in \N$. While we will not use this explicitly in this paper, we record the result for those interested in working with connections.

\begin{proposition}\label{Prop: Basic Condition and Carrability}
A differential bundle $q:E \to X$ in a tangent category $\Cscr$ satisfies the Basic Condition if and only if for all $m, n \in \N$ the map $p_n$ is $q_m$-carrable. Equivalently, $q$ satisfies the Basic Condition if and only if the map $q_m$ is $p_n$-carrable for all $m, n \in \N$.
\end{proposition}
\begin{example}
In the case that $m = n = 1$ when regarding the Basic Condition, the pullback which appears is precisely:
\[
\begin{tikzcd}
E \times_X TX \ar[r]{}{\pi_1} \ar[d, swap]{}{\pi_0} & TX \ar[d]{}{p_X} \\
E \ar[r, swap]{}{q} & X
\end{tikzcd}
\]
This is exactly the pullback which witnesses $p$-or-$q$-carrability.
\end{example}

It is worth observing that a tangent category $\Cscr$ is $DB$-carrable if and only if there is a differential bundle pseudofunctor
\[
\DBun(-):\Cscr^{\op} \to \mathfrak{Cat}
\]
where $\mathfrak{Cat}$ is the 2-category of categories, such that for all morphisms $f:X \to Y$ the transition functor $\DBun(f):\DBun(Y) \to \DBun(X)$ is given by the categorical pullback against $f$ in $\Cscr$. 

In general however, while not every morphism in a tangent category is $DB$-carrable, the $DB$-carrable morphisms form a subcategory. 

\begin{lemma}\label{Lemma: The Subcat of DPull morphisms}
For a tangent category $\Cscr$, the $DB$-carrable morphisms form a subcategory of $\Cscr$.
\end{lemma}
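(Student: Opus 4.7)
The plan is to verify the two subcategory axioms for the class of $DB$-carrable morphisms: closure under identity and under composition. Throughout the argument I will use two standard facts about the pullback square described just after Definition \ref{Defn: DBun pullbackable}, namely that (a) by \cite[Lemma 2.7]{GeoffRobinDiffStruct}, whenever $f$ is $q$-carrable, the pulled-back projection $\operatorname{pr}_0 \colon f^*E \to X$ is itself a differential bundle, and (b) the pasting lemma for pullbacks, together with the fact that a composite of pullback squares preserved by $T^m$ is preserved by $T^m$.

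For the identity $\id_X \colon X \to X$, note that for any differential bundle $q \colon E \to X$ the wide pullback of $n$ copies of $q$ along $\id_X$ is, up to canonical isomorphism, just the wide pullback $E_n$ appearing in the definition of a differential bundle. By that definition $E_n$ exists and is preserved by $T^m$ for all $m, n \in \N$, so $\id_X$ is $q$-carrable. Since $q$ was arbitrary, $\id_X$ is $DB$-carrable.

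For composition, let $f \colon X \to Y$ and $g \colon Y \to Z$ be $DB$-carrable, and fix a differential bundle $q \colon E \to Z$ together with $m, n \in \N$. First, $DB$-carrability of $g$ applied to $q$ produces the wide pullback of $n$ copies of $q$ along $g$, which by fact (a) assembles into a differential bundle $g^*q \colon g^*E \to Y$; in particular, this wide pullback is canonically isomorphic to the $n$-fold wide pullback $(g^*E)_n$ over $Y$, and $T^m$ preserves it. Next, applying $DB$-carrability of $f$ to the differential bundle $g^*q$ yields the wide pullback of $n$ copies of $g^*q$ along $f$, namely $f^*\bigl((g^*E)_n\bigr)$, which exists and is preserved by $T^m$. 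By the pullback-pasting lemma, the composite
\[
\begin{tikzcd}
f^*\bigl((g^*E)_n\bigr) \ar[r] \ar[d] & (g^*E)_n \ar[r] \ar[d] & E_n \ar[d] \\
X \ar[r, swap]{}{f} & Y \ar[r, swap]{}{g} & Z
\end{tikzcd}
\]
identifies $f^*\bigl((g^*E)_n\bigr)$ with the wide pullback of $n$ copies of $q$ along $g \circ f$. Since the outer rectangle is a composite of two pullback squares each preserved by $T^m$, the outer rectangle is itself preserved by $T^m$. Hence $g \circ f$ is $q$-carrable, and as $q$ was arbitrary, $g \circ f$ is $DB$-carrable.

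The argument above is essentially bookkeeping; the only mildly delicate step is keeping track of the wide pullbacks $E_n$ rather than binary pullbacks, but this is handled uniformly once one observes that pulled-back differential bundles over $Y$ are again differential bundles (fact (a)), so one may iterate $DB$-carrability of $f$ and $g$ on the nose. No further ingredients are required.
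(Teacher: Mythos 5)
Your proof is correct and follows essentially the same route as the paper: identities are handled by observing the relevant pullbacks are trivially available, and composition is handled by pulling back the differential bundle along $g$ first (using that $g^*q$ is again a differential bundle, via \cite[Lemma 2.7]{GeoffRobinDiffStruct}) and then applying the Pullback Lemma. If anything, you are slightly more careful than the paper in tracking the wide pullbacks of $n$ copies and their preservation by $T^m$, which the paper's proof leaves implicit.
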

\begin{proof} We need explain why identity morphisms are $DB$-carrable, and why $DB$-carrable morphisms are closed under composition. That isomorphisms are $DB$-carrable is trivial to see, as if $f:X \to Y$ is an isomorphism and $q:E \to Y$ is a differential bundle then
\[
\begin{tikzcd}
E \ar[r, equals] \ar[d, swap]{}{f^{-1} \circ q} & E \ar[d]{}{q} \\
X \ar[r, swap]{}{f} & Y
\end{tikzcd}
\]
is a pullback square. So in particular identity morphisms are $DB$-carrable. To see that $DB$-carrable morphisms compose, assume that $f:X \to Y$ and $g:Y \to Z$ are $DB$-carrable and let $q:E \to Z$ be a differential bundle. Now consider the diagram
\[
\begin{tikzcd}
f^{\ast}(g^{\ast}E) \ar[r]{}{\pr_1} \ar[d, swap]{}{\pr_0} & g^{\ast}E \ar[r]{}{\pr_1} \ar[d]{}{\pr_0} & E \ar[d]{}{q} \\
X \ar[r, swap]{}{f} & Y \ar[r, swap]{}{g} & Z
\end{tikzcd}
\]
in which each square is a pullback and each vertical morphism is a differential bundle projection by virtue of $f$ and $g$ being $DB$-carrable. But then by the well-known Pullback Lemma, the diagram
\[
\begin{tikzcd}
f^{\ast}(g^{\ast}E) \ar[r]{}{\pr_1 \circ \pr_1} \ar[d, swap]{}{\pr_0} & E \ar[d]{}{q} \\
X \ar[r, swap]{}{g \circ f} & Z
\end{tikzcd}
\]
is a pullback square, so $g \circ f$ is $DB$-carrable.
\end{proof}

Based on the observations above, it becomes evident that studying the category of $DB$-carrable morphisms is essentially all about studying to what degree we can get a differential bundle pseudofunctor $\DBun(-):\Cscr^{\op} \to \mathfrak{Cat}$. While this is an interesting and important technique to provide, it is fundamentally a ``global-to-the-category $\Cscr$'' kind of approach to studying the tangent structure $\Cscr$. As is well-known in geometry, while sometimes the full category $\Cscr$ is of interest, for some problems that exist one becomes primarily interested in a single morphism $f:X \to Y$ which happens to exist in $\Cscr$. In this case, the category of $DB$-carrable morphisms, while still helpful, becomes slightly less useful because you may only care about the differential bundles $q$ over $Y$ for which $f$ is $q$-carrable; this collection of differential bundles is likely of interest to you when studying a specific morphism \emph{precisely} because it allows you to better study the properties the morphism $f$ has directly.

\begin{definition}\label{Defn: Fcar Dbuns}
    Let $\Cscr$ be a tangent category and let $f:X \to Y$ be some fixed morphism in $\Cscr$. We define the category $\DBun(Y)_{f\operatorname{-}\text{car}}$ of \emph{$f$-carrable differential bundles over $Y$} as the full subcategory of $\DBun(Y)$ whose objects are differential bundles $q:E \to Y$ over $Y$ for which $f:X \to Y$ is $q$-carrable.
\end{definition}

\begin{lemma} For a morphism $f:X \to Y$ in a tangent category $\Cscr$, $\DBun(Y)_{f\operatorname{-}\text{car}}$ is the largest full subcategory of $\DBun(Y)$ for which there is a functor $f^{\ast}:\DBun(Y)_{f\operatorname{-}\text{car}} \to \DBun(X)$ given by taking pullbacks.
\end{lemma}
\begin{proof}
Note that as the category $\DBun(Y)_{f\operatorname{-}\text{car}}$ is defined by requiring that any object $q:E \to Y$ makes $f:X \to Y$ a $q$-carrable morphism, the pullback differential bundle $\pr_0:f^{\ast}E \to X$ exists. Consequently, the functor $f^{\ast}:\DBun(Y)_{f\operatorname{-}\text{car}} \to \DBun(X)$ is defined. The fact that this is the largest such subcategory of $\DBun(Y)$ for which $f^{\ast}$ may be defined is immediate from the facts that $\DBun(Y)_{f\operatorname{-}\text{car}}$ is a full subcategory of $\DBun(Y)$ and that any differential bundle $q$ over $Y$ for which $f$ is $q$-carrable is necessarily an object of $\DBun(Y)_{f\operatorname{-}\text{car}}$.
\end{proof}

As such, we have an inclusion functor $\operatorname{incl}:\DBun(Y)_{f\operatorname{-}\text{car}} \to \DBun(Y)$ as well as a pullback functor $f^{\ast}:\DBun(Y)_{f\operatorname{-}\text{car}} \to \DBun(X)$.

\subsection{Horizontal Descent}\label{Subsection: Horizontal Descent}

Recall that for every object $X$ in a tangent category $\Cscr$, its tangent bundle $p_X: TX \to X$ is a differential bundle over $X$. As a shorthand, if a morphism $f: X \to Y$ is $p_X$-carrable, we will write that $f$ is a $p$-carrable morphism. In particular, the projection of the tangent bundle is $p$-carrable.

\begin{proposition}\label{Prop: Every bundle projection is p-carrable}
Let $X$ be an object in a tangent category $\Cscr$. Then $p_X:TX \to X$ is $p$-carrable.
\end{proposition}
\begin{proof}
Specializing Proposition \ref{Prop: dbun projection is always qcarrable} to the differential bundle $p_X:TX \to X$ shows that $p_X$ is always $p$-carrable with pullback $p_X^{\ast}(TX) \cong T_2X$.
\end{proof}

Now if $f:X \to Y$ is a $p$-carrable morphism, the naturality square
\[
\begin{tikzcd}
TX \ar[r]{}{Tf} \ar[d, swap]{}{p_X} & TY \ar[d]{}{p_Y} \\
X \ar[r, swap]{}{f} & Y
\end{tikzcd}
\]
admits a factorization:
\[
\begin{tikzcd}
TX \ar[drr, bend left = 20]{}{Tf} \ar[ddr, swap, bend right = 20]{}{p_X} \ar[dr, dashed]{}[description]{\exists!\theta_f} & & \\
 & f^{\ast}(TY) \ar[r]{}{\pr_1} \ar[d, swap]{}{\pr_0} & TY \ar[d]{}{p_Y} \\
 & X \ar[r, swap]{}{f} & Y
\end{tikzcd}
\]
In differential geometry literature, $f^{\ast}(TY)$ is often called the \emph{horizontal bundle of $f$} and $\theta_f$ is often called the \emph{horizontal descent of $f$}. A large portion of this paper can be rephrased as studying the tangent-categorical structure of various morphisms $f$ in terms of said horizontal descent of $f$, so we formally define said terms now.

\begin{definition}\label{Defn: Thetaf}
Let $\Cscr$ be a tangent category and let $f:X \to Y$ be a $p$-carrable morphism. We call the pullback $f^{\ast}(TY) \cong X \times_Y TY$ the \emph{horizontal bundle of $f$}. If $\theta_f:TX \to f^{\ast}(TY)$ is the unique morphism rendering the diagram
\[
\begin{tikzcd}
TX \ar[drr, bend left = 20]{}{Tf} \ar[ddr, swap, bend right = 20]{}{p_X} \ar[dr, dashed]{}[description]{\exists!\theta_f} & & \\
 & f^{\ast}(TY) \ar[r]{}{\pr_1} \ar[d, swap]{}{\pr_0} & TY \ar[d]{}{p_Y} \\
 & X \ar[r, swap]{}{f} & Y
\end{tikzcd}
\]
commutative, then we call $\theta_f = \langle p_X, Tf\rangle$ the \emph{horizontal descent of $f$}.
\end{definition}

\begin{example}\label{Example: What  is thetaf in SMan} In $\SMan$, for a smooth morphism $f:X \to Y$, its horizontal bundle can be understood in local coordinates as pairs $(x,\overrightarrow{w})$ of a point $x \in X$ and a tangent vector $\overrightarrow{w}$ of $Y$ at the point $f(x)$. Then its horizontal descent $\theta_f: TX \to f^{\ast}(TY)$, acts on a tangent vector $(x,\overrightarrow{v}) \in TX$ by $\theta_f\left(x,\overrightarrow{v}\right) = \left(x,D[f](x)\cdot \overrightarrow{v}\right)$. 
\end{example}

\begin{example}\label{Example: What is thetaf in CAlg}
Let $R$ be a commutative rig. In the tangent category $\CAlg{R}$, the horizontal bundle of a map $f:A \to B$ is the rig which is the ``infinitesimal extension of $A$ by $B$.'' That is, $A \times_B B[\epsilon]$ is isomorphic to the rig: 
\[A \times_B B[\epsilon] \cong \lbrace (a,b) \; | \; a \in A, b \in B \rbrace =: A \ltimes B,\] 
where $A \ltimes B$ denotes the semidirect product. In particular, the multiplication of $A \ltimes B$ is given $(a,b)(x,y) = (ax, f(a)y + bf(x))$, so $A$ acts on $B$ with the dual numbers multiplication on the $B$-component. Then its horizontal descent $\theta_f: A[\epsilon] \to A \ltimes B$ is defined as $\theta_f\left( a + a^{\prime}\epsilon \right)= (a, f(a^{\prime}))$. 
\end{example}

\begin{example}\label{Example: What is thetaf in CAlgop}
Let $R$ be a commutative rig. In the tangent category $\CAlg{R}^{\op}$ given a map $f^{\op}:B \to A$ in $\CAlg{R}^{\op}$, which we use as the representation of the map $f:A \to B$ in $\CAlg{R}$, the horizontal bundle of $f^{\op}$ is represented by the algebra $\Sym_A(\Kah{A}{R}) \otimes_A B \cong T_{A/R} \times_A B$; this is seen via the calculation:
\begin{prooftree}
    \AxiomC{$f^{\op}:B \to A$ in $\CAlg{R}^{\op}$}\doubleLine
    \UnaryInfC{$f:A \to B$ in $\CAlg{R}$}
    \UnaryInfC{$\Kah{A}{R} \otimes_A B \xrightarrow{\mathrm{d}a \otimes b \mapsto b\mathrm{d}(f(a))} \Kah{B}{R}$ in $\Mod{B}$}
    \UnaryInfC{$\Sym_{B}(\Kah{A}{R} \otimes_A B) \to \Sym_B(\Kah{B}{R})$ in $\CAlg{B}$}
    \AxiomC{$\Sym_B(\Kah{A}{R} \otimes_A B) \cong \Sym_A(\Kah{A}{R}) \otimes_A B$ in $\CAlg{B}$}
    \BinaryInfC{$\Sym_A(\Kah{A}{R}) \otimes_A B \xrightarrow{\mathrm{d}a \otimes b \mapsto b\mathrm{d}(f(a))} \Sym_B(\Kah{B}{R})$ in $\CAlg{R}$}
    \UnaryInfC{$\theta_f:\Sym_B(\Kah{B}{R}) \to \Sym_A(\Kah{A}{R}) \otimes_A B$ in $\CAlg{R}^{\op}$}\doubleLine
    \UnaryInfC{$\theta_f:T_{B/R} \to T_{A/R} \times_A B$}
\end{prooftree}
where the elments $\mathrm{d}a$ are the generating elements of the K{\"a}hler differentials. In this case we also see that the horizontal descent $\theta_f$ is the (opposite) of the map induced by the assignment $\mathrm{d}a \otimes b \mapsto b\mathrm{d}\big(f(a)\big).$
\end{example}

\begin{example}\label{Example: What is thetaf in Schemes}
Let $S$ be a scheme. The construction of the horizontal descent $\theta_f$ for $\Sch_{/S}$ is much the same as the construction of $\theta_f$ for $\CAlg{R}^{\op}$ above, save we depend on some slight differences in structure. Recall that the relative cotangent sequence for schemes \cite[Proposition II.8.11]{Hartshorne} says that for any morphisms $f:X \to Y$, $g:Y \to S$ the sequence of quasi-coherent sheaves on $X$
\[
\begin{tikzcd}
f^{\ast}(\Kah{Y}{S}) \ar[r]{v_{X/Y/S}} & \Kah{X}{S} \ar[r] &  \Kah{X}{Y} \ar[r] & 0
\end{tikzcd}
\]
is exact. In this case the tangent scheme of $X$ is $T_{X/S} := \underline{\Spec}_X(\underline{\Sym}_{\Ocal_X}(\Kah{X}{S}))$ and the horizontal bundle for $f:X \to Y$ is the scheme
\begin{align*}
T_{Y/S} \times_Y X &\cong \underline{\Spec}_Y\left(\underline{\Sym}_{\Ocal_Y}(\Kah{Y}{S})\right) \times_Y X \cong \underline{\Spec}_X\left(f^{-1}\underline{\Sym}_{\Ocal_Y}(\Kah{Y}{S}) \otimes_{f^{-1}\Ocal_Y} \Ocal_X\right) \\
&\cong\underline{\Spec}_X\left(\underline{\Sym}_{\Ocal_X}\left(f^{-1}\Kah{Y}{S} \otimes_{f^{-1}\Ocal_Y} \Ocal_X\right)\right) = \underline{\Spec}_X\left(\underline{\Sym}_{\Ocal_X}\left(f^{\ast}\Kah{Y}{S}\right)\right).
\end{align*}
The horizontal descent $\theta_f:T_{X/S} \to T_{Y/S} \times_Y X$ is then the map
\[
\underline{\Spec}_X\left(\underline{\Sym}_{\Ocal_X}\left(\Kah{X}{S}\right)\right) \xrightarrow{\underline{\Spec}_X(\underline{\Sym}_{\Ocal_X}(v_{X/Y/S}))} \underline{\Spec}_{X}\left(\underline{\Sym}_{\Ocal_X}\left(f^{\ast}\Kah{Y}{S}\right)\right).
\]
\end{example}
\begin{example}\label{Example: What is thetaf in a CDC}
Let $\Cscr$ be a CDC. Then every map $f:X \to Y$ is $p$-carrable because the diagram
\[
\begin{tikzcd}
X \times Y \ar[r]{}{f \times \id} \ar[d, swap]{}{\pi_0} & Y \times Y \ar[d]{}{\pi_0} \\
X \ar[r, swap]{}{f} & Y
\end{tikzcd}
\]
is always a pullback (where recall that $TY = Y \times Y$). In this case the horizontal descent $\theta_f:X \times X \to X \times Y$ is $\theta_f \colon= \langle \pi_0, D[f]\rangle$. In particular, if $f:X \to Y$ is linear in the CDC sense, that is, if $D[f] = f \circ \pi_1$, then $\theta_f = \id_X \times f$.
\end{example}

Let us now prove some structural properties of the horizontal descent maps with regards to how they interact with compositions, isomorphisms, and the tangent bundle functor itself. For the remainder of this section we work in a tangent category $\Cscr$. 

\begin{lemma}\label{Lemma: Thetaf and isos}
If $f:X \to Y$ is an isomorphism then it is $p$-carrable and $\theta_f:TX \to f^{\ast}(TY)$ is an isomorphism. Moreover, in such a situation we can take $\theta_f$ to be the identity.
\end{lemma}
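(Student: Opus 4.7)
The plan is to observe that when $f$ is an isomorphism, the naturality square defining $\theta_f$ is already a pullback square, so we can take it as the defining pullback and obtain $\theta_f = \id_{TX}$ by uniqueness.

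First I would verify that an isomorphism $f$ is automatically $p$-carrable, so that $\theta_f$ is defined. If $f$ is an isomorphism then so is $Tf$, and so is $T^m f$ for every $m \in \N$; thus each square
\[
\begin{tikzcd}
T^{m+1}X \ar[r]{}{T^{m+1}f} \ar[d, swap]{}{T^m p_X} & T^{m+1}Y \ar[d]{}{T^m p_Y} \\
T^m X \ar[r, swap]{}{T^m f} & T^m Y
\end{tikzcd}
\]
has both horizontal maps invertible, so it is a pullback (and remains so under further applications of $T$). This confirms carrability and simultaneously exhibits a concrete choice of the pullback $f^{\ast}(TY)$, namely the object $TX$ with projections $\pr_0 := p_X$ and $\pr_1 := Tf$.

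Next I would verify the pullback claim cleanly: given any cone $(Z, g : Z \to X, h : Z \to TY)$ with $f \circ g = p_Y \circ h$, the map $u := T(f^{-1}) \circ h$ satisfies $p_X \circ u = f^{-1} \circ p_Y \circ h = f^{-1} \circ f \circ g = g$ and $Tf \circ u = h$ by functoriality, while uniqueness follows because $Tf$ is an isomorphism. With this choice of pullback, the defining equations $\pr_0 \circ \theta_f = p_X$ and $\pr_1 \circ \theta_f = Tf$ reduce to $p_X \circ \theta_f = p_X$ and $Tf \circ \theta_f = Tf$, both of which are satisfied by $\id_{TX}$; by the uniqueness clause in Definition \ref{Defn: Thetaf} we conclude $\theta_f = \id_{TX}$.

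Finally, the first assertion follows because the pullback $f^{\ast}(TY)$ is defined only up to canonical isomorphism, so for any other choice of pullback the induced $\theta_f$ differs from the identity by the canonical comparison isomorphism, and is therefore itself an isomorphism. There is no real obstacle here; the main subtlety is the mild bookkeeping point that the ``moreover'' clause is a statement about the freedom to select a specific representative of the pullback, rather than a separate fact requiring proof.
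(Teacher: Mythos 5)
Your proposal is correct and follows essentially the same route as the paper: recognize that the $p$-naturality square is itself a pullback because $Tf$ (and each $T^mf$) is invertible, take $TX$ with projections $p_X$ and $Tf$ as the model for $f^{\ast}(TY)$, and conclude $\theta_f = \id_{TX}$ by uniqueness. You merely spell out the universal-property verification and the carrability check that the paper's terser proof leaves implicit.
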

\begin{proof}
If $f:X \to Y$ is an isomorphism then the diagram
\[
\begin{tikzcd}
TX \ar[r]{}{Tf} \ar[d, swap]{}{p_X} & TY \ar[d]{}{p_Y} \\
X \ar[r, swap]{}{f} & Y
\end{tikzcd}
\]
is a pullback because $Tf$ is an isomorphism as well. Thus in this case we have that $TX \cong f^{\ast}(TY)$. The final claim of the lemma follows upon simply making $f^{\ast}(TY) := TX$, which may be done freely.
\end{proof}
\begin{lemma}\label{Lemma: Thetaf and composition}
Let $f:X \to Y$ and $g:Y \to Z$ be $p$-carrable morphisms such that $g \circ f$ is also $p$-carrable. Then the following equality holds: 
\[
\theta_{g \circ f} = \gamma \circ (\id_X \times \theta_g) \circ \theta_f
\]
where $\gamma:X \times_Y (Y \times_Z TZ) \xrightarrow{\cong} X \times_Z TZ$ is the canonical (unique) natural isomorphism. 
\end{lemma}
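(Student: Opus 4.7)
The plan is to invoke the universal property of the pullback $(g\circ f)^{\ast}(TZ) \cong X\times_Z TZ$ to pin down $\theta_{g\circ f}$ uniquely, and then verify that the composite on the right-hand side satisfies the two equations that characterize $\theta_{g\circ f}$. By definition, $\theta_{g\circ f}$ is the unique morphism such that $\pr_0\circ \theta_{g\circ f} = p_X$ and $\pr_1\circ\theta_{g\circ f} = T(g\circ f) = Tg\circ Tf$, so it suffices to show
\[
\pr_0\circ\gamma\circ(\id_X\times\theta_g)\circ\theta_f = p_X,\qquad \pr_1\circ\gamma\circ(\id_X\times\theta_g)\circ\theta_f = Tg\circ Tf.
\]

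For the first equation, I would first note that the comparison isomorphism $\gamma\colon X\times_Y(Y\times_Z TZ)\to X\times_Z TZ$ is induced by the pairing of the projection onto $X$ and the projection onto $TZ$, and in particular it is compatible with the projection to $X$, i.e.\ $\pr_0\circ\gamma = \pr_0$, where the right-hand $\pr_0$ denotes the first projection of $X\times_Y(Y\times_Z TZ)$. Next, the pullback map $\id_X\times\theta_g\colon X\times_Y TY \to X\times_Y(Y\times_Z TZ)$ satisfies $\pr_0\circ(\id_X\times\theta_g) = \pr_0$ by construction. Finally, $\pr_0\circ\theta_f = p_X$ by the definition of the horizontal descent $\theta_f$. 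Chaining these gives the first equation.

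For the second equation, a similar chase works: $\pr_1\circ\gamma$ equals the composite $\pr_1\circ\pr_1$ of the two second projections out of $X\times_Y(Y\times_Z TZ)$; then $\pr_1\circ(\id_X\times\theta_g) = \theta_g\circ\pr_1$; and $\pr_1\circ\theta_g = Tg$ while $\pr_1\circ\theta_f = Tf$. Composing, we get $Tg\circ Tf$, which by functoriality of $T$ equals $T(g\circ f)$. Uniqueness of the map into the pullback then forces the claimed equality.

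The proof is essentially a diagram chase, and the only subtlety is keeping careful track of what $\gamma$ and the induced pullback map $\id_X\times\theta_g$ do on each component; the existence of $\gamma$ itself follows from the standard iterated pullback identification $X\times_Y(Y\times_Z TZ)\cong X\times_Z TZ$ (which uses that $g\circ f$ is $p$-carrable so both sides are genuine pullbacks). I expect no essential obstacle beyond this bookkeeping.
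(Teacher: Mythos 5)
Your proposal is correct and follows essentially the same route as the paper: characterize $\theta_{g\circ f}$ by the universal property of the pullback $X\times_Z TZ$ and check that the composite $\gamma\circ(\id_X\times\theta_g)\circ\theta_f$ has the same two components $p_X$ and $Tg\circ Tf$. You have simply written out in full the "routine check" that the paper leaves to the reader, and your component computations are accurate.
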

\begin{proof}
Note that $\theta_{g \circ f}$ is the unique morphism making the diagram
\[
\begin{tikzcd}
TX \ar[dr, dashed]{}{\exists!\theta_{g \circ f}} \ar[drr, bend left = 20]{}{Tg \circ Tf} \ar[ddr, swap, bend right = 20]{}{p_X} & & \\
 & X \times_Z TZ \ar[r]{}{\pr_1} \ar[d, swap]{}{\pr_0} & TZ \ar[d]{}{p_Z}  \\
 & X \ar[r, swap]{}{g \circ f} & Z
\end{tikzcd}
\]
commute. A routine check shows that the composite
\[
TX \xrightarrow{\theta_f} X \times_Y TY \xrightarrow{\id_X \times \theta_g} X \times_Y (Y \times_Z TZ) \xrightarrow[\cong]{\gamma} X \times_Z TZ
\]
does precisely this.
\end{proof}

When examining how the tangent bundle functor interacts with the horizontal descent of a $p$-carrable morphism, we are obliged to compare $T\theta_f$ with $\theta_{Tf}$. As such, we must confront the fact that there is some subtlety in the way that $T\theta_f$ and $\theta_{Tf}$ interact. On one hand, we have that $T\theta_f:T^2X \to T(f^{\ast}(TY)) \cong T(X \times_Y TZ)$, while $\theta_{Tf}: T^2X \to T(f)^{\ast}(T^2Y).$ When $T$ preserves the pullback
\[
\begin{tikzcd}
f^{\ast}(TY) \ar[r]{}{\pr_1} \ar[d, swap]{}{\pr_0} & TY \ar[d]{}{p_Y} \\
X \ar[r, swap]{}{f} & Y
\end{tikzcd}
\]
this amounts to the difference between the unique morphisms making the following diagrams commute: 
\[\begin{tikzcd}
T^2X \ar[drr, bend left = 20]{}{T^2f}  \ar[ddr, swap, bend right = 20]{}{(T \ast p)_X} \ar[dr, dashed]{}{\exists!T \theta_f} \\
 & T(f^{\ast}TY) \ar[r]{}{T\pr_1} \ar[d, swap]{}{T\pr_0} & T^2Y \ar[d]{}{(T \ast p)_Y} \\
 & TX \ar[r, swap]{}{Tf} & TY
\end{tikzcd} \qquad \qquad \begin{tikzcd}
T^2X \ar[drr, bend left = 20]{}{T^2f}  \ar[ddr, swap, bend right = 20]{}{(p \ast T)_X} \ar[dr, dashed]{}{\exists!\theta_{Tf}} \\
 & T(f)^{\ast}(T^2Y) \ar[r]{}{\pr_1} \ar[d, swap]{}{\pr_0} & T^2Y \ar[d]{}{(p \ast T)_Y} \\
 & TX \ar[r, swap]{}{Tf} & TY
\end{tikzcd}\]
In particular, note that the morphisms which make $T^2Y$ into a differential $TY$-bundle are \emph{different} in each case. As such, while the underlying objects $T(f)^{\ast}(T^2Y)$ and $T(f^{\ast}TY)$ are certainly isomorphic, the isomorphism between them which translates between $T \theta_f$ and $\theta_{Tf}$ must also mediate between the different differential bundle structures on $T^2Y$ based on bundle projections $(T \ast p)_Y$ or $(p \ast T)_Y$, respectively.
\begin{proposition}\label{Proposition: Tangent of thetaf}
Let $f:X \to Y$ be a $p$-carrable morphism. Then there is an isomorphism $\tilde{c}:T(f^{\ast}TY) \to T(f)^{\ast}(T^2Y)$ for which the following equality holds:
\[
\theta_{Tf} \circ c_X = \tilde{c} \circ T\theta_f
\]
\end{proposition}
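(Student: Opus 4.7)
The plan is to construct $\tilde{c}$ explicitly via the universal property of the pullback $(Tf)^*(T^2Y)$, using the canonical flip $c_Y$ on $T^2Y$ to mediate between the two differential bundle structures on $T^2Y$, and then verify the desired equation componentwise.

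First, I would unpack what the two sides of the equation do. Since $f$ is $p$-carrable, the pullback square defining $f^{\ast}(TY)$ is preserved by $T$, so $T(f^{\ast}TY)$ is a pullback of $Tf$ along $(T \ast p)_Y : T^2Y \to TY$, and $T(\theta_f) = \langle T(p_X), T^2f\rangle = \langle (T \ast p)_X, T^2f\rangle$ into this pullback. On the other side, $\theta_{Tf} = \langle p_{TX}, T^2f\rangle = \langle (p \ast T)_X, T^2 f\rangle$ into the pullback $(Tf)^{\ast}(T^2Y)$ of $Tf$ along $(p \ast T)_Y$. Thus the two targets differ only in \emph{which} of the two tangent bundle structures on $T^2Y$ is being pulled back.

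Next, I would define $\tilde{c} : T(f^{\ast}TY) \to (Tf)^{\ast}(T^2Y)$ by the universal property, via the cone $\bigl(T\pr_0,\; c_Y \circ T\pr_1\bigr)$. For this to factor through $(Tf)^{\ast}(T^2Y)$, one must check
\[
(p \ast T)_Y \circ c_Y \circ T\pr_1 \;=\; Tf \circ T\pr_0,
\]
which reduces, using the flip axiom $(p \ast T)_Y \circ c_Y = (T \ast p)_Y$, to the naturality square $(T \ast p)_Y \circ T\pr_1 = Tf \circ T\pr_0$, i.e.\@ $T$ applied to the defining commuting square of $f^{\ast}(TY)$. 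An inverse for $\tilde{c}$ is constructed symmetrically using the cone $\bigl(\pr_0,\; c_Y \circ \pr_1\bigr)$; the two composites are identities by the involutivity $c_Y \circ c_Y = \id_{T^2Y}$ together with the universal property, so $\tilde{c}$ is an isomorphism.

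Finally, I would verify $\theta_{Tf} \circ c_X = \tilde{c} \circ T\theta_f$ by comparing both sides componentwise into the pullback $(Tf)^{\ast}(T^2Y)$. The first components match because $(p \ast T)_X \circ c_X = (T \ast p)_X$ by the corresponding flip axiom for $X$, which equals $\pr_0 \circ T\theta_f$ after applying $\tilde{c}$. The second components match because $T^2f \circ c_X = c_Y \circ T^2f$ by the naturality of $c$, and by construction $\pr_1 \circ \tilde{c} = c_Y \circ T\pr_1$, so $\pr_1 \circ \tilde{c} \circ T\theta_f = c_Y \circ T^2 f$, which coincides with the second component of $\theta_{Tf} \circ c_X$. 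Uniqueness of factorizations through the pullback then closes the argument.

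The main obstacle here is not any single computation but keeping careful track of which of the two tangent bundle structures on $T^2Y$ (via $(T \ast p)_Y$ versus $(p \ast T)_Y$) each pullback uses; once the flip axioms and naturality of $c$ are invoked in the right order, the construction of $\tilde{c}$ and the verification of the identity become routine.
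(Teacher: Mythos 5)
Your proposal is correct and follows essentially the same route as the paper: the paper packages the argument as a commuting cube (naturality of $c$ together with the flip axiom exchanging $(p \ast T)_Y$ and $(T \ast p)_Y$) inducing $\tilde{c}$ between the two pullbacks, which is exactly your cone $\bigl(T\pr_0,\; c_Y \circ T\pr_1\bigr)$ with inverse given by involutivity of $c$. Your componentwise verification of $\theta_{Tf} \circ c_X = \tilde{c} \circ T\theta_f$ is just the explicit unpacking of the paper's ``common face of the two triangular prisms.''
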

\begin{proof}
Recall that the canonical flip $c:T^2 \Rightarrow T^2$ is a natural involution (and hence isomorphism) which makes the following diagram commute: 
\[
\begin{tikzcd}
 & T^2X \ar[rr]{}{T^2f} \ar[dd, near start]{}{(p \ast T)_X} & & T^2Y \ar[dd]{}{(p \ast T)_Y} \\
T^2X \ar[dd, swap]{}{(T \ast p)_X} \ar[ur]{}{c_X} \ar[rr, near start, crossing over]{}{T^2f} & & T^2Y \ar[ur, swap]{}{c_Y}  \\
 & TX \ar[rr, near start]{}{Tf} & & TY \\
TX \ar[ur, equals] \ar[rr, swap]{}{Tf} & & TY \ar[ur, equals]
\ar[from = 2-3, to = 4-3, crossing over, near start]{}{(T \ast p)_Y}
\end{tikzcd}
\]
This then implies that there is an isomorphism $\tilde{c}:T(f^{\ast}TY) \to T(f)^{\ast}(T^2Y)$ which make the following diagrams commute: 
\[
\begin{tikzcd}
& & T(f)^{\ast}(T^2Y) \ar[dr]{}{\pr_1} \\
& T^2X \ar[ur]{}{\theta_{Tf}} \ar[rr]{}{T^2f} & & T^2Y \\
& T(f^{\ast}TY) \ar[dr]{}{T\pr_1} \ar[uur, near start, crossing over, swap, dashed]{}{\exists!\tilde{c}} \\
T^2X \ar[uur]{}{c_X} \ar[ur, swap]{}{T\theta_f} \ar[rr, swap]{}{T^2f} & & T^2Y \ar[uur, swap]{}{c_Y}
\end{tikzcd}\] 
\[ \begin{tikzcd}
& & T(f)^{\ast}(T^2Y) \ar[dr]{}{\pr_0} \\
& T^2X \ar[ur]{}{\theta_{Tf}} \ar[rr]{}{(p \ast T)_X} & & TX \\
& T(f^{\ast}TY) \ar[dr]{}{T\pr_0} \ar[uur, near start, crossing over, swap, dashed]{}{\exists!\tilde{c}} \\
T^2X \ar[uur]{}{c_X} \ar[ur, swap]{}{T\theta_f} \ar[rr, swap]{}{(T \ast p)_X} & & TX \ar[uur, equals]
\end{tikzcd}
\]
Picking out the common face of both triangular prisms gives the equation claimed in the proposition.
\end{proof}

Some immediate but useful consequences of this proposition extended to arbitrary powers of the tangent bundle functor is given below.
\begin{corollary}\label{Cor: pcarrable is power of Tm stable}
For any $m \in \N$ if $f:X \to Y$ is $p$-carrable then so too is $T^mf:T^mX \to T^mY$.
\end{corollary}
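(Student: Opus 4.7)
The plan is to prove the corollary by induction on $m$. The base case $m=0$ is immediate from the hypothesis that $f$ itself is $p$-carrable, and for the inductive step it suffices to establish the single claim: if $g:X \to Y$ is $p$-carrable then $Tg:TX \to TY$ is also $p$-carrable; then iterating this claim $m$ times starting with $g = f$ gives the corollary. So the entire content of the corollary reduces to showing $p$-carrability is preserved under one application of $T$.

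For this key claim, the main tool is Proposition \ref{Proposition: Tangent of thetaf}. First I would handle the case of a single pullback ($n=1$): the canonical flip provides an isomorphism $\tilde{c}:T(g^{\ast}TY) \xrightarrow{\cong} (Tg)^{\ast}(T^2Y)$, where the left-hand object exists by functoriality since $g^{\ast}TY$ exists by hypothesis. Transporting along $\tilde{c}$ produces the pullback square defining $(Tg)^{\ast}(T^2Y)$, establishing existence. For the preservation statement, applying $T^k$ to the isomorphism yields $T^k((Tg)^{\ast}(T^2Y)) \cong T^{k+1}(g^{\ast}TY)$, and the right-hand side is a pullback of $T^{k+1}g$ against $T^{k+1}p_Y$ by hypothesis on $g$; one further application of the canonical flip (naturally in $Y$) transports this square to a pullback of $T^{k+1}g$ against $p_{T^{k+1}Y}$, which is exactly what is required.

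To extend to the wide pullback $(Tg)^{\ast}(T_n(TY))$ for $n > 1$, I would rerun the same argument using the fact that the wide pullback $T_n(-)$ is constructed as an iterated fibre product over $p$, and that the canonical flip is compatible with the additive bundle structure (by the axioms of a tangent structure, namely that $(c_Y, \id_{TY})$ is an additive bundle morphism). Concretely, each factor of the wide pullback can be flipped individually by the same $\tilde{c}$ construction, and the flipped squares glue to give $T(g^{\ast}T_nY) \cong (Tg)^{\ast}T_n(TY)$; then the $T^k$-preservation step repeats verbatim.

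The main obstacle is bookkeeping rather than conceptual: one must carefully track the interplay between the two distinct differential bundle structures on $T^{k+1}Y$ (those induced by $T^k(p_Y)$ versus $p_{T^k Y}$) and verify that iterated canonical flips mediate between them coherently with wide pullbacks. No essentially new idea is required beyond Proposition \ref{Proposition: Tangent of thetaf}, the tangent structure axiom that $T^m$ preserves the pullbacks $T_n(-)$, and the additivity of $c$.
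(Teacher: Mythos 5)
Your proposal is correct and follows essentially the same route as the paper: both arguments apply powers of $T$ to the original pullback square (using the hypothesis that all $T^k$ preserve it) and then use whiskered canonical flips to pass between the cospan over $(T^m \ast p)_Y$ and the one over $(p \ast T^m)_Y$; your packaging as an induction via the single-step claim ``$g$ $p$-carrable $\Rightarrow$ $Tg$ $p$-carrable'' is just a reorganization of the paper's direct argument, with the composite of your one-step flips being exactly the paper's whiskered isomorphism. The only slip is cosmetic: in the preservation step the projection you actually need is $T^k(p_{TY}) = (T^k \ast p \ast T)_Y$ rather than $p_{T^{k+1}Y}$, but this is repaired by the same whiskered-flip bookkeeping you already acknowledge, and the paper is no more explicit on this point (nor on the $n$-fold wide pullbacks, which you at least address).
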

\begin{proof} Begin by observing that the $m = 0$ case is a tautology while the $m = 1$ case follows immediately from Proposition \ref{Proposition: Tangent of thetaf}. Assume $m \geq 2$ and note that we induce natural isomorphisms $\tilde{c}:T^{m+1} \Rightarrow T^{m+1}$ by pre-and-post-whiskering the canonical flip $c$ with the various powers of $T$ in order to witness that the cospan
\[
\begin{tikzcd}
 & T^{m+1}Y \ar[d]{}{(T^m \ast p)_X} \\
T^mX \ar[r, swap]{}{T^mf} & T^mY
\end{tikzcd}
\]
is isomorphic to the cospan:
\[
\begin{tikzcd}
 & T^{m+1}Y \ar[d]{}{(p \ast T^m)_X} \\
T^mX \ar[r, swap]{}{T^mf} & T^mY
\end{tikzcd}
\]
Since $f:X\to Y$ is $p$-carrable, the pullback
\[
\begin{tikzcd}
T^{m+1}(X) \ar[dr, dashed]{}{\exists!\theta_{T^mf}} \ar[drr, bend left = 20]{}{T^mf} \ar[ddr, swap, bend right = 20]{}{(p \ast T^m)_X} \\
 & T^m(f)^{\ast}(T^{m+1}Y) \ar[r]{}{\pr_1} \ar[d, swap]{}{\pr_0} & T^{m+1}Y \ar[d]{}{(p \ast T^m)_Y} \\
 & T^mX \ar[r, swap]{}{T^mf} & T^mY
\end{tikzcd}
\]
exists for all $m \in \N$ because $T^mf$ admits a pullback against $(T^m \ast p)_Y$ for all $m \in \N$ and so, by passing through the cospan isomorphisms, $T^mf$ also admits a pullback against $(p \ast T^m)_Y$. Furthermore, all powers $T^k$ preserve said pullback because $T^{m+k}f = T^k(T^mf)$ and all powers of $T$ preserve the pullback of $f$ along $p_Y$. Thus $T^mf$ is $p$-carrable.
\end{proof}

\begin{corollary}\label{Cor: The combinatorial swicheroo fo rhigher power s of T on thetaf}
Let $f:X \to Y$ be a $p$-carrable morphism. Then for all $m \in \N$ there exists an isomorphism $\tilde{c}_m:T\big(T^m(f)^{\ast}(T^{m+1}Y)\big) \to T^{m+1}(f)^{\ast}(T^{m+2}Y)$ for which the following equality holds: 
\[
 \tilde{c}_{m}\circ T\theta_{T^mf} = \theta_{T^{m+1}f} \circ (c \ast T^m)_X .
\]
\end{corollary}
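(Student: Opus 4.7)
The corollary is essentially the same statement as Proposition \ref{Proposition: Tangent of thetaf} but with the morphism $f$ replaced by $T^m f$, so the plan is to apply that proposition to this ``shifted'' morphism and then translate the bookkeeping.

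First I would invoke Corollary \ref{Cor: pcarrable is power of Tm stable} to conclude that $T^m f : T^m X \to T^m Y$ is itself a $p$-carrable morphism for every $m \in \N$. This allows us to legally form the horizontal descents $\theta_{T^m f}$ and $\theta_{T(T^m f)}$ and to apply all the $p$-carrable machinery, in particular Proposition \ref{Proposition: Tangent of thetaf}, to $T^m f$ in place of $f$. Doing so produces an isomorphism, which we christen $\tilde{c}_m$, between $T\bigl((T^m f)^*(T(T^m Y))\bigr)$ and $T(T^m f)^*\bigl(T^2(T^m Y)\bigr)$ satisfying
\[
\theta_{T(T^m f)} \circ c_{T^m X} \;=\; \tilde{c}_m \circ T \theta_{T^m f}.
\]

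The remaining step is a routine translation between the whiskering conventions fixed earlier in the paper. Writing $T(T^m f) = T^{m+1} f$ is immediate from functoriality, and the component identification $c_{T^m X} = (c \ast T^m)_X$ follows directly from the convention (used throughout the recollections section, e.g.\@ $(p \ast T)_X = p_{TX}$) that pre-whiskering by $T^m$ evaluates a natural transformation at $T^m X$. Substituting these identifications into the displayed equality gives exactly the claimed equation.

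The only conceivable obstacle is checking that the object $\tilde{c}_m$ lands in the correct codomain so that the composite $\tilde{c}_m \circ T\theta_{T^m f}$ makes sense; but this is built into the statement of Proposition \ref{Proposition: Tangent of thetaf}, where $\tilde{c}$ is precisely the mediating isomorphism between the two canonically-arising pullbacks that reconcile the $(T \ast p)$-bundle and $(p \ast T)$-bundle structures on $T^2(T^m Y)$. No further induction on $m$ is required, since we are merely quoting Proposition \ref{Proposition: Tangent of thetaf} at the $p$-carrable morphism $T^m f$.
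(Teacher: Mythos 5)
Your proof is correct and takes essentially the same route as the paper: the paper also reduces the claim to the triangular-prism construction of Proposition \ref{Proposition: Tangent of thetaf}, re-run at the $p$-carrable morphism $T^m f$ (whose $p$-carrability it likewise draws from Corollary \ref{Cor: pcarrable is power of Tm stable}). Your version simply quotes the proposition's statement at $T^m f$ rather than re-deriving its proof, and the bookkeeping identifications $T(T^m f) = T^{m+1}f$ and $c_{T^m X} = (c \ast T^m)_X$ are exactly right under the paper's whiskering conventions.
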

\begin{proof}
Begin by observing that since $f:X\to Y$ is $p$-carrable, the pullback
\[
\begin{tikzcd}
T^{m+1}(X) \ar[dr, dashed]{}{\exists!\theta_{T^{m+1}f}} \ar[drr, bend left = 20]{}{T^mf} \ar[ddr, swap, bend right = 20]{}{(p \ast T^m)_X} \\
 & T^m(f)^{\ast}(T^{m+1}Y) \ar[r]{}{\pr_1} \ar[d, swap]{}{\pr_0} & T^{m+1}Y \ar[d]{}{(p \ast T^m)_Y} \\
 & T^mX \ar[r, swap]{}{T^mf} & T^mY
\end{tikzcd}
\]
exists for all $m \in \N$ by Corollary \ref{Cor: pcarrable is power of Tm stable}. Moreover, $p$-carrability also implies that all powers of the tangent bundle functor $T^k$ preserve said pullback squares for all $k \in \N$. Thus on one hand we have that $T\theta_{T^mf}$ arises as the unique map rendering the diagram
\[
\begin{tikzcd}
T^{m+2}X \ar[drr, bend left = 20]{}{T^{m+2}f} \ar[ddr, swap, bend right = 20]{}{(T \ast p \ast T^m)_X} \ar[dr, dashed]{}{\exists!T\theta_{T^mf}} & & \\
 & T\big(T^m(f)^{\ast}(T^{m+1}Y)\big) \ar[r]{}{T\pr_1} \ar[d, swap]{}{T\pr_0} & T^{m+2}Y \ar[d]{}{(T \ast p \ast T^m)_Y} \\
 & T^{m+1}X \ar[r, swap]{}{T^mf} & T^{m+1}Y
\end{tikzcd}
\]
commutative. However, a routine check shows that we can induce commuting triangular prisms as in the proof of Proposition \ref{Proposition: Tangent of thetaf} and hence produce a unique isomorphism $\tilde{c}_m:T\big(T^m(f)^{\ast}(T^{m+1}Y)\big) \to T^{m+1}(f)^{\ast}(T^{m+2}Y)$ which renders the diagram
\[
\begin{tikzcd}
T^{m+2}X \ar[rr]{}{(c \ast T^m)_X} \ar[d, swap]{}{T\theta_{T^mf}} & & T^{m+2}X \ar[d]{}{\theta_{T^{m+1}f}} \\
T\big(T^m(f)^{\ast}(T^{m+1}Y\big) \ar[rr, dashed, swap]{}{\exists!\tilde{c}_m} & & T^{m+1}(f)^{\ast}(T^{m+2}Y)
\end{tikzcd}
\]
commutative. The identity encoded by said commuting diagram is precisely the claimed identity $\theta_{T^{m+1}f} \circ (c \ast T^m)_X = \tilde{c}_m \circ T\theta_{T^mf}$. 
\end{proof}
\begin{corollary}\label{Cor: The other higher powers of T to thetaf in one line}
Let $f:X \to Y$ be a $p$-carrable morphism. Then for all $m \in \N$ there are isomorphisms $C$ and $\tilde{C}$ for which
\[
\tilde{C}_m \circ T^m\theta_f \circ C_m = \theta_{T^{m}f}.
\]
\end{corollary}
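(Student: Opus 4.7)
The plan is to proceed by induction on $m \in \N$, using Corollary \ref{Cor: The combinatorial swicheroo fo rhigher power s of T on thetaf} as the main engine to pass from the $m$-th case to the $(m+1)$-th case.

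For the base case $m = 0$, we simply take $C_0 = \tilde{C}_0 = \id_{TX}$ (noting that $T^0\theta_f = \theta_f = \theta_{T^0f}$), so the identity holds trivially. For the inductive step, assume that we have already constructed isomorphisms $C_m$ and $\tilde{C}_m$ such that $\tilde{C}_m \circ T^m\theta_f \circ C_m = \theta_{T^mf}$. Applying the functor $T$ to both sides of this equation yields
\[
T\tilde{C}_m \circ T^{m+1}\theta_f \circ TC_m = T\theta_{T^mf},
\]
and $TC_m$ and $T\tilde{C}_m$ remain isomorphisms since $T$ preserves isomorphisms.

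Now invoke Corollary \ref{Cor: The combinatorial swicheroo fo rhigher power s of T on thetaf}, which furnishes an isomorphism $\tilde{c}_m$ satisfying $\tilde{c}_m \circ T\theta_{T^mf} = \theta_{T^{m+1}f} \circ (c \ast T^m)_X$. Since the canonical flip $c$ is a natural involution, the whiskered transformation $(c \ast T^m)_X$ is an isomorphism (with inverse itself). Substituting our inductive expression for $T\theta_{T^mf}$ and composing on the right by $(c \ast T^m)_X^{-1}$, we obtain
\[
\bigl(\tilde{c}_m \circ T\tilde{C}_m\bigr) \circ T^{m+1}\theta_f \circ \bigl(TC_m \circ (c \ast T^m)_X^{-1}\bigr) = \theta_{T^{m+1}f}.
\]
Defining $\tilde{C}_{m+1} := \tilde{c}_m \circ T\tilde{C}_m$ and $C_{m+1} := TC_m \circ (c \ast T^m)_X^{-1}$ completes the induction, as each is a composition of isomorphisms.

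I expect the only subtle point to be ensuring that all the required pullbacks exist and are preserved so that the domains and codomains of the isomorphisms $\tilde{c}_m$ and $T\tilde{C}_m$ match up correctly when we compose them; however, this is precisely guaranteed by Corollary \ref{Cor: pcarrable is power of Tm stable}, which tells us that $T^mf$ is $p$-carrable for every $m$, and by the fact that all powers of $T$ preserve the relevant pullback squares. With this bookkeeping in hand, the argument reduces essentially to a mechanical induction, with Corollary \ref{Cor: The combinatorial swicheroo fo rhigher power s of T on thetaf} doing all of the geometric work.
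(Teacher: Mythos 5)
Your proof is correct and is essentially the paper's argument: the paper also iterates Corollary \ref{Cor: The combinatorial swicheroo fo rhigher power s of T on thetaf}, unrolling the recursion down to $m=0$ and collecting the composites $\tilde{c}_m \circ T\tilde{c}_{m-1} \circ \cdots \circ T^m\tilde{c}_0$ and $(T^m \ast c)_X \circ \cdots \circ (c \ast T^m)_X$ into $\tilde{C}$ and $C$, which is exactly what your inductive definitions $\tilde{C}_{m+1} = \tilde{c}_m \circ T\tilde{C}_m$ and $C_{m+1} = TC_m \circ (c \ast T^m)_X^{-1}$ produce when unfolded. Phrasing it as a clean induction is, if anything, a slight improvement in rigor over the paper's informal "proceeding recursively."
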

\begin{proof}
Observe that for all $k, \ell \in \N$, applying Corollary \ref{Cor: The combinatorial swicheroo fo rhigher power s of T on thetaf} and using the fact that $c_{T^kX} = (c \ast T^k)_X$ is an involution gives
\begin{align*}
T^{\ell}(\theta_{T^{k+1}f}) &= T^{\ell}\left(\tilde{c}_k \circ T\theta_{T^{k}f} \circ (c \ast T^k)_X\right) = T^{\ell}(\tilde{c}_k) \circ T^{\ell+1}(\theta_{T^{k}f}) \circ T^{\ell}(c \ast T^k)_X \\
&= T^{\ell}(\tilde{c}_k) \circ T^{\ell+1}(\theta_{T^kf}) \circ (T^{\ell} \ast c \ast T^k)_X.
\end{align*}
Consequently, for any $0 \leq k \leq m$,
\[
T^k(\theta_{T^{m+1-k}f}) = T^k\tilde{c}_{m-k} \circ T^{k+1}(\theta_{T^{m-k}f}) \circ (T^k \ast c \ast T^{m-k})_X.
\]
Proceeding recursively in $m$ and terminating when $m = 0$ allows us to deduce that
\begin{align*}
\theta_{T^{m}f} &= \tilde{c}_m \circ T\theta_{T^mf} \circ (c \ast T^m)_X = \tilde{c}_m \circ T\tilde{c}_{m-1} \circ T^2\theta_{T^{m-1}f} \circ (T \ast c \ast T^{m-1})_X \circ (c \ast T^m)_X = \\
&= \tilde{c}_m \circ T\tilde{c}_{m-1} \circ \cdots T^m\tilde{c}_0 \circ T^{m}\theta_f \circ (T^m \ast c)_X \circ (T^{m-1} \ast c \ast T)_X \circ \cdots \circ (c \ast T^m)_X.
\end{align*}
Setting $\tilde{C} := \tilde{c}_m \circ \cdots \circ T^m\tilde{c}_0$ and $C := (T^m \ast c)_X \circ \cdots \circ (c \ast T^m)_X$ thus gives that
\[
\theta_{T^{m}f} = \tilde{C} \circ T^{m}\theta_f \circ C
\]
as desired. 
\end{proof}

We now study how the horizontal descents interact with pullbacks.

\begin{proposition}\label{Prop: How thetaf pulls back}
Let $f:X \to Z$ be a $p$-carrable morphism and assume we have a $T$-pullback: 
\[
\begin{tikzcd}
X \times_Z Y \ar[r]{}{\pr_1} \ar[d, swap]{}{\pr_0} & Y \ar[d]{}{g} \\
X \ar[r, swap]{}{f} & Z
\end{tikzcd}
\]
If $\pr_1$ is $p$-carrable, then there is a morphism $\hat{\pi}:\pr_1^{\ast}(TY) \to f^{\ast}(TZ)$ which makes
\[
\begin{tikzcd}
\pr_1^{\ast}(TY) \ar[r]{}{\pi_1} \ar[d, swap]{}{\hat{\pi}} & TY \ar[d]{}{Tg} \\
f^{\ast}(TZ) \ar[r,swap]{}{\pr_1} & TZ
\end{tikzcd}
\]
into a pullback square and the following equality holds:
\[
\theta_{\pr_1} = \rho \circ (\theta_f \times \id_{TY}) \circ \varphi.
\]
where $\rho:f^{\ast}(TZ) \times_{TZ} TY \to \pr_1^{\ast}(TZ)$ and $\varphi:T(X \times_Y Z) \to TX \times_{TY} TZ$ are the unique induced isomorphisms. 
\end{proposition}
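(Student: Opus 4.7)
The plan is to construct $\hat{\pi}$ directly from the universal property of $f^{\ast}(TZ)$, recognize the claimed square as a pullback via one application of the pullback pasting/cancellation lemma, and then verify the final identity by checking it after post-composition with the two defining projections out of $\pr_1^{\ast}(TY)$.

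To build $\hat{\pi}$, I would write $\pr_1^{\ast}(TY) = (X \times_Z Y) \times_Y TY$ with projections $\pi_0$ and $\pi_1$, and assemble the cone $(\pr_0 \circ \pi_0,\, Tg \circ \pi_1)$ over the cospan $f, p_Z$; the cone condition $f \circ \pr_0 \circ \pi_0 = g \circ \pr_1 \circ \pi_0 = g \circ p_Y \circ \pi_1 = p_Z \circ Tg \circ \pi_1$ combines the pullback property of $X \times_Z Y$ with the naturality of $p$, and the universal property of $f^{\ast}(TZ)$ then produces the map $\hat{\pi}$ making the target square commute. To see that the resulting square is a pullback, I would paste it on top of the defining pullback square of $f^{\ast}(TZ)$; by construction of $\hat{\pi}$, the outer rectangle coincides with the pasting of the two pullback squares $\pr_1^{\ast}(TY) \to X \times_Z Y \to X$ and $X \times_Z Y \to Y \to Z$, hence is itself a pullback, and the pullback cancellation lemma forces the top square to be a pullback as well.

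For the identity $\theta_{\pr_1} = \rho \circ (\theta_f \times \id_{TY}) \circ \varphi$, both sides are morphisms $T(X \times_Z Y) \to \pr_1^{\ast}(TY)$, so by the universal property of the pullback just established it suffices to test equality after post-composing with $\pi_1$ and with $\hat{\pi}$. Post-composition with $\pi_1$ makes both sides collapse to $T\pr_1$: on the left by the defining equation of $\theta_{\pr_1}$, and on the right because $\rho$ preserves the $TY$-projection, the map $\theta_f \times \id_{TY}$ is the identity on the $TY$-factor, and $\varphi$ sends $T(X \times_Z Y)$ onto $TX \times_{TZ} TY$ with second projection $T\pr_1$. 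Post-composition with $\hat{\pi}$, followed by the two projections of $f^{\ast}(TZ) = X \times_Z TZ$ to $X$ and $TZ$, yields the pair $(p_X \circ T\pr_0,\, Tf \circ T\pr_0)$ from both sides: on the left by unwinding the construction of $\hat{\pi}$ together with the defining equations of $\theta_{\pr_1}$ and the identity $g \circ \pr_1 = f \circ \pr_0$, and on the right by using that $\hat{\pi} \circ \rho$ coincides with the first projection of $f^{\ast}(TZ) \times_{TZ} TY$ together with the defining equations of $\theta_f$.

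The main obstacle will be purely organisational: tracking the three canonical isomorphisms $\varphi$, $\rho$, and the evident rearrangement used to form $\theta_f \times \id_{TY}$, and matching each pullback projection with its counterpart so that the universal properties apply cleanly. No structural input beyond the pullback cancellation lemma and the defining universal property of the horizontal descent should be needed.
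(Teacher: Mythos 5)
Your proposal is correct, and the construction of $\hat{\pi}$ as the induced map $\langle \pr_0 \circ \pi_0, Tg \circ \pi_1\rangle$ into $f^{\ast}(TZ)$ is exactly what the paper does. Where you diverge is in how you certify that the resulting square is a pullback: the paper runs the universal property by hand, taking an arbitrary cone $(W,\alpha,\beta)$ over $f^{\ast}(TZ) \xrightarrow{\pr_1} TZ \xleftarrow{Tg} TY$, factoring it first through $X \times_Z Y$ and then through $\pr_1^{\ast}(TY)$, and checking commutativity and uniqueness of the resulting $\delta$ directly. You instead observe that pasting the claimed square onto the defining pullback of $f^{\ast}(TZ)$ yields the same outer rectangle as pasting the defining pullback of $\pr_1^{\ast}(TY)$ onto the given pullback $X \times_Z Y$, and invoke the pullback cancellation lemma. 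This is shorter and arguably more transparent, at the cost of having to match the two decompositions of the rectangle carefully (in particular using $p_Z \circ Tg = g \circ p_Y$ and $\pr_0 \circ \hat{\pi} = \pr_0 \circ \pi_0$). Your treatment of the final identity $\theta_{\pr_1} = \rho \circ (\theta_f \times \id_{TY}) \circ \varphi$ — testing against $\pi_1$ and $\hat{\pi}$ and unwinding the canonical isomorphisms — is in fact more detailed than the paper, which dismisses that part as completely formal; your computation of the common pair $(p_X \circ T\pr_0,\, Tf \circ T\pr_0)$ checks out.
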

\begin{proof}
The final statement of the proposition is completely formal, so it suffices to first construct the morphism $\hat{\pi}:\pr_1^{\ast}(TY) \to f^{\ast}(TZ)$ and then prove that the square 
\[
\begin{tikzcd}
\pr_1^{\ast}(TY) \ar[r]{}{\pi_1} \ar[d, swap]{}{\hat{\pi}} & TY \ar[d]{}{Tg} \\
f^{\ast}(TZ) \ar[r,swap]{}{\pr_1} & TZ
\end{tikzcd}
\]
is a pullback. To construct $\hat{\pi}$, we note first that the diagram
\[
\begin{tikzcd}
\pr_1^{\ast}(TY) \ar[d, swap]{}{\pr_0} \ar[r]{}{\pr_1} & TY \ar[d]{}{Tg} \\
X \times_Z Y \ar[d, swap]{}{\pr_0} & TZ \ar[d]{}{p_Z} \\
X \ar[r, swap]{}{f} & Z
\end{tikzcd}
\]
commutes, making $(\pr_1^{\ast}(TY), \pr_0\circ \pr_0, Tg \circ \pr_1)$ into a cone over $X \xrightarrow{f} Z \xleftarrow{p} TZ$. As such, we define $\hat{\pi}:\pr_1^{\ast}(TY) \to f^{\ast}(TZ)$ as the induced unique cone map, that is $\hat{\pi} = \langle \pr_0 \circ \pr_0, Tg \circ \pr_1\rangle$. Now assume that we have a commuting diagram:
\[
\begin{tikzcd}
W \ar[r]{}{\alpha} \ar[d, swap]{}{\beta} & TY \ar[d]{}{Tg} \\
f^{\ast}(TZ) \ar[r, swap]{}{\pr_1} & TZ
\end{tikzcd}
\]
Since 
\[
f \circ \pr_0 \circ \beta = p_Z \circ \pr_1 \circ \beta = p_Z \circ Tg \circ \alpha = g \circ p_Y \circ \alpha
\]
we get a unique morphism $\gamma:W \to X \times_Z Y$ making the diagram
\[
\begin{tikzcd}
W \ar[drr, bend left = 20]{}{p_Y \circ \alpha} \ar[ddr, swap, bend right = 20]{}{\pr_0 \circ \beta} \ar[dr, dashed]{}{\exists!\,\gamma} \\
 & X \times_Z Y \ar[r]{}{\pr_1} \ar[d, swap]{}{\pr_0} & Y \ar[d]{}{g} \\
 & X \ar[r, swap]{}{f} & Z
\end{tikzcd}
\]
commute. But then since
\[
\begin{tikzcd}
W \ar[r]{}{\alpha} \ar[d, swap]{}{\gamma} & TY \ar[d]{}{p_Y} \\
X \times_Z Y \ar[r, swap]{}{\pr_1} & Y
\end{tikzcd}
\]
commutes, we get a unique morphism $\delta$ making the diagram
\[
\begin{tikzcd}
W \ar[drr, bend left = 20]{}{\alpha} \ar[ddr, swap, bend right = 20]{}{\gamma} \ar[dr]{}{\delta} \\
 &\pr_1^{\ast}(TY) \ar[r]{}{\pr_1} \ar[d, swap]{}{\pr_0} & TY \ar[d]{}{p_Y} \\
 & X \times_Z Y \ar[r, swap]{}{\pr_1} & Z
\end{tikzcd}
\]
commute. We now show that $\delta$ makes the following diagram commute:
\[
\begin{tikzcd}
W \ar[drr, bend left = 20]{}{\alpha} \ar[ddr, swap, bend right = 20]{}{\beta} \ar[dr, dashed]{}{\exists!\,\delta} \\
 &\pr_1^{\ast}(TY) \ar[r]{}{\pr_1} \ar[d, swap]{}{\pr_0} & TY \ar[d]{}{Tg} \\
 & f^{\ast}(TZ) \ar[r, swap]{}{\pr_1} & TZ
\end{tikzcd}
\]
Since we already know that $\alpha = \pr_1 \circ \delta$, it suffices to prove the identity regarding $\beta$. Then we compute that
\begin{align*}
\hat{\pi} \circ \delta =\left\langle \pr_0\circ \pr_0, Tg \circ \pr_1 \right\rangle \circ \delta = \left\langle \pr_0 \circ \pr_0 \circ \delta, Tg \circ \pr_1 \circ \delta\right\rangle = \left\langle \pr_0 \circ \gamma, Tg \circ \alpha \right\rangle = \langle \pr_0 \circ \beta, \pr_1 \circ \beta\rangle = \beta.
\end{align*}
Thus the diagram commutes. It is also routine to check that $\delta$ is the unique such cone map, as it was constructed stage-by-stage from other unique cone morphisms. Thus the diagram
\[
\begin{tikzcd}
\pr_1^{\ast}(TY) \ar[r]{}{\pi_1} \ar[d, swap]{}{\hat{\pi}} & TY \ar[d]{}{Tg} \\
f^{\ast}(TZ) \ar[r,swap]{}{\pr_1} & TZ
\end{tikzcd}
\]
is a pullback and hence completes the proof of the proposition.
\end{proof}

\section{The Relative Cotangent Sequence}

One of the most important conceptual and technical tools in both differential geometry and algebraic geometry is the relative cotangent sequence. In algebraic geometry, for a morphism $f:X \to Y$ of schemes there is an exact sequence
\[
\begin{tikzcd}
f^{\ast}\Kah{Y}{S} \ar[r] & \Kah{X}{S} \ar[r] & \Kah{X}{Y} \ar[r] & 0
\end{tikzcd}
\]
of quasi-coherent sheaves on $X$. One important application of this sequence is that it can be used to probe for when $f$ is formally unramified (this occurs if and only if $\Kah{X}{Y} \cong 0$). On the other hand, by passing through the opposite equivalence $\DBun(X) \simeq \QCoh(X)^{\op}$, we find that the relative cotangent sequence reflects to give a sequence of differential bundles
\[
\begin{tikzcd}
X \ar[r] & \underline{\Spec}_X\left(\underline{\Sym}_{\Ocal_X}\left(\Kah{X}{Y}\right)\right) \ar[r] & T_{X/S} \ar[r]{}{\theta_f} & X \times_Y T_{Y/S} \ar[equals, r] & f^{\ast}T_{Y/S}
\end{tikzcd}
\]
where the map to the last bundle $T_{Y/S} \times_X Y$ follows from the following chain of isomorphisms
\[
\underline{\Spec}_X\left(\underline{\Sym}_{\Ocal_X}\left(f^{\ast}\Kah{Y}{S}\right)\right) \cong \underline{\Spec}_{X}\left(f^{\ast}\underline{\Sym}_{\Ocal_Y}\left(\Kah{Y}{S}\right)\right) \cong X \times_Y \underline{\Spec}_{Y}\left(\underline{\Sym}_{\Ocal_Y}\left(\Kah{Y}{S}\right)\right) = f^{\ast}(T_{Y/S}).
\]
In differential geometry, for any map $f:X \to Y$ of smooth manifolds, we can always study the sub-bundle $T_{X/Y}$ of $TX$ defined as $T_{X/Y} := \left\lbrace \left(x, \overrightarrow{v}\right) \in TX \; : \; D[f](x) \cdot \overrightarrow{v} = \overrightarrow{0}\right\rbrace$. This ``vertical bundle'' not only gives a relative tangent bundle of $X$ over $Y$ as an object of $\SMan_{/Y}$, but it also provides an exact sequence
\[
\begin{tikzcd}
X \ar[r] & T_{X/Y} \ar[r] & TX \ar[r]{}{\theta_f} & f^{\ast}(TY)
\end{tikzcd}
\]
in the Abelian category $\mathbf{Vec}(X) \simeq \DBun(X).$

In both the algebraic geometry and differential geometry settings, we see that once we are able to produce a ``relative tangent bundle'' $T_{X/Y}$ (this is the bundle $\underline{\Spec}_X(\underline{\Sym}_{\Ocal_X}(\Kah{X}{Y}))$ in the algebraic-geometric setting) we are able to produce a sequence which shows that this bundle $T_{X/Y}$ acts as a kernel to the horizontal descent $\theta_f:TX \to f^{\ast}(TY)$. In this section we prove that this is no accident and show that this hold in a general tangent category. Indeed, we will show that provided a map $f:X \to Y$ is both $p$-carrable \emph{and} $0$-carrable (Definition \ref{Defn: Zero Carrable}), then not only does the bundle $T_{X/Y}$ exist, but it \emph{also} is a kernel to $\theta_f$ in the sense that $T_{X/Y}$ is the limit of the following cospan in $\DBun(X)$:
\[
TX \xrightarrow{\theta_f} f^{\ast}(TY) \xleftarrow{f^\ast(0_Y)} X
\]
We also show that this structure provides the framework for defining morphisms we call $T$-unramified, which are the tangent-categorical analogue of a map $f:X \to Y$ whose tangent $Tf$ has trivial kernel, i.e., no tangent vectors ``ramify'' through $Tf$ by mapping both to zero. While we are primarily focused on the general theory of such maps in this section, they become especially important in Section \ref{Section: Strong TImmersions} when we introduce the tangent-categorical extension of immersions: $T$-immersions. There we will prove that in a Rosick{\'y} tangent category, a map $f$ is $T$-unramified if and only if it is a $T$-immersion (Theorems \ref{Thm: Rosckiy Timmersion iff Tunramified}, \ref{Thm: If C Rosicky then immersion iff unrmaified}). 

\subsection{Zero Carrable Morphisms}
In this section we examine the morphisms $f$ for which there is a $T$-pullback of the zero along the tangent of $f$. These morphisms are intimately related to formally unramified morphisms in algebraic geometry and to the vertical bundle of the tangent bundle in differential geometry. They also allow us to produce a new tangent bundle functor on the slice category $\Cscr_{/X}$ which is \emph{not} necessarily isomorphic to tangent bundle functor on $\Cscr$ ``restricted'' to the slice category $\Cscr_{/X}$. We will see later in this section that these mutual overlaps allow us to define a differential bundle analogue of the relative cotangent sequence of algebraic geometry in an arbitrary tangent category whenever $f:X \to Y$ is also $p$-carrable.

\begin{definition}\label{Defn: Zero Carrable}
Let $f:X \to Y$ be a morphism in a tangent category $\Cscr$. We say that $f$ is \emph{$0$-carrable} if the $T$-pullback
\[
\begin{tikzcd}
T_{X/Y} \ar[r]{}{\iota^f} \ar[d, swap]{}{\iota_f} & Y \ar[d]{}{0_Y} \\
TX \ar[r, swap]{}{Tf} & TY
\end{tikzcd}
\]
exists. We call $T_{X/Y}$ the \emph{relative tangent bundle of $X$ over $Y$}.
\end{definition}

\begin{remark}
    In \cite{Rosicky} and \cite{GeoffRobinDiffStruct} the relative tangent bundle $T_{X/Y}$ for a map $f:X \to Y$ is defined via a \emph{different} universal property than that presented in Definition \ref{Defn: Zero Carrable}. There $T_{X/Y}$ was defined to be the equalizer
    \[
    \begin{tikzcd}
        T_{X/Y} \ar[r]{}{\iota_f} & TX \ar[rrr, shift left = 1]{}{Tf} \ar[rrr, shift right = 1, swap]{}{0_Y \circ p_Y \circ T(f)} & & & TY
    \end{tikzcd}
    \]
    provided that it exists. We take the pullback perspective of Definition \ref{Defn: Zero Carrable} because it corresponds to the perspective taken in \cite{GeoffJSDiffBunComAlg}.
\end{remark}

\begin{remark}
In some settings the tangent relative tangent bundle $T_{X/Y}$ is called the vertical tangent (the idea being that as the pullback of $Tf$ against $0_Y$, all the tangent vectors in $T_{X/Y}$ have to sit ``vertically'' over $X$). In \cite{PronkVooys} it is even denoted as $V_f(X)$; however, in this paper we use the notation $T_{X/Y}$ to line up with the notation Grothendieck used in \cite{EGA44} when describing $T_{X/S}$ as ``fibr{\'e} tangent de $X$ relativement {\`a} $S$'' for an $S$-scheme $\nu:X \to S$ \cite[Below Line 16.5.12.1]{EGA44}.
\end{remark}

By \cite[Proposition 2.4]{GeoffJSDiffBunComAlg} and \cite[Pages 4 -- 5]{Rosicky}, if every map in the slice category $\Cscr_{/Y}$ is $0$-carrable, then the relative tangent bundle $T_{X/Y}$ gives a tangent bundle functor $T_{(-)/Y}:\Cscr_{/Y} \to \Cscr_{/Y}$ which makes $\Cscr_{/Y}$ into a tangent category. These tangent structures are generally different from those which appear on the slice categories in \cite[Proposition 2.5]{GeoffRobinDiffStruct}, as those are given by just restricting the global tangent structure to the slice category. Instead, these give a relative version which aligns with the tangent structures used on $\mathbf{CAlg}_{R}^{\op}$ and $\Sch_{/S}$ in algebraic geometry (as opposed to asserting that the only tangents you work with there are induced by the global K{\"a}hler differentials $\Kah{(-)}{\mathbb{Z}})$.

\begin{proposition}\label{Prop: Bundle projection is zero carrable}
    In any tangent category $\Cscr$, for any object $X$ the projection $p_X:TX \to X$ is $0$-carrable.
\end{proposition}
\begin{proof}
    To show this observe that from the expression of the universality of the vertical lift a l{\`a} \cite[Definition 2.1]{GeoffRobinBundle} shows that the diagram
    \[
    \begin{tikzcd}
        T_2X \ar[rrr]{}{p_X\circ \pi_0 = p_X \circ \pi_1} \ar[d, swap]{}{v} & & & X \ar[d]{}{0_X} \\
        T^2X \ar[rrr, swap]{}{(T \ast p)_X} & & & TX
    \end{tikzcd}
    \]
    is a pullback square with $v = (T \ast \operatorname{add})_X \circ \langle (0 \ast T) \circ \pi_0, \ell_X \circ \pi_1\rangle$. However, this pullback square is exactly a limit of the cospan
    \[
    \begin{tikzcd}
        T^2X \ar[rr]{}{(T \ast p)_X} & & TX & & \ar[ll, swap]{}{0_X} X
    \end{tikzcd}
    \]
    and so illustrates that $p_X:TX \to X$ is $0$-carrable.
\end{proof}

Let us now see how the vertical tangent bundles $T_{X/Y}$ are incarnated in our four main tangent categories of interest. It is worth observing that we can, in complete generality, only say that the vertical bundles exist whenever our tangent category of interest $\Cscr$ admits all pullbacks and that the tangent bundle functor preserves pullback\footnote{The minimal assumptsion, of course, is that every map is $0$-carrable.}. As such, when we consider $\SMan$ we also give a short argument as to \emph{why} the vertical tangent bundles exist.

\begin{example}\label{Example: Relative Tangent bundle in SMan} In $\SMan$ we claim that every map $f:X \to Y$ is $0$-carrable despite the fact that $\SMan$ is not finitely complete. To see this let
\[
T_{X/Y} := \left\lbrace \left(x,\overrightarrow{v}\,\right) \; : \; x \in X, \overrightarrow{v}\,\in T_xX.\,D[f](x)\cdot\overrightarrow{v}=\overrightarrow{0}  \right\rbrace
\]
be the smooth submanifold of $TX$ generated stalkwise by the kernel of the linear maps\footnote{Because the linear maps $D[f](x):T_xX \to T_{f(x)}Y$ are all necessarily smooth maps of vector spaces, their kernels are closed linear subspaces of $T_xX$ and so the product space $\lbrace x \rbrace \times \operatorname{Ker}D[f](x)$ is a closed linear subspace of $\lbrace x \rbrace \times T_xX$. These closed linear subspaces may be patched locally as in $X$ because the transition maps in the atlas defining $X$ are smooth and hence assemble to give a (not-necessarily of full rank) smooth submanifold of $TX$.} $D[f](x)$. The maps $\iota_f:T_{X/Y} \to TX$ and  $\iota^f:T_{X/Y} \to Y$ are then defined by taking $\iota_f$ to be the (smooth) submanifold inclusion and $\iota^f$ to be given by $\iota^f(x,\overrightarrow{v}\,) := f(x)$. The diagram
\[
\begin{tikzcd}
T_{X/Y} \ar[r]{}{\iota^f} \ar[d, swap]{}{\iota_f} & Y \ar[d]{}{0_Y} \\
TX \ar[r, swap]{}{Tf} & TY
\end{tikzcd}
\]
then commutes precisely because each tangent vector $(x,\overrightarrow{v}\,)$ in $T_{X/Y}$ has been chosen so that the vector $\overrightarrow{v}$ lies in the kernel of $D[f](x)$. To see that it is a pullback assume that $M$ is a smooth manifold with maps $g:M \to Y$ and $h:M \to TX$ for which
\[
\begin{tikzcd}
M \ar[r]{}{g} \ar[d, swap]{}{h} & Y \ar[d]{}{0_Y} \\
TX \ar[r, swap]{}{Tf} & TY
\end{tikzcd}
\]
commutes. Now observe that for all $m \in M$ we can write 
\[
h(m) = \left(h_0(m), \overrightarrow{h_1^{h_0(m)}(m)}\right)
\]
where $h_0 = p_X \circ h$ and where $h_1^{h_0(-)}(-)$ is the map which assigns to each $m$ the tangent vector component associated to $h(m)$ anchored at $h_0(m)$. We claim that this factors through $T_{X/Y}$; because $\iota_f$ is monic (it is injective on the level of underlying sets), it suffices to provide such a factorization because it is consequently always unique if it exists. From the fact that $0_Y \circ g = Tf \circ h$ we see that for all $m \in M$ we have
\[
\left(g(m), \overrightarrow{0}\,\right) = (0_Y \circ g)(m) = (Tf \circ h)(m) = \left( f\big(h_0(m)\big), D[f]\bigg(f\big(h_0(m)\big)\bigg)\overrightarrow{h_1^{h_0(m)}\big(m\big)}\,\right).
\]
This informs us simultaneously that $g(m) = (f \circ h_0)(m)$ for all $m \in M$ and also that for all $m \in M$,
\[
D[f]\bigg(f\big(h_0(m)\big)\bigg)\overrightarrow{h_1^{h_0(m)}\big(m\big)} = \overrightarrow{0}
\]
in $T_{f(h_0(m))}Y$. Thus every tangent vector $h_1^{h_0(m)}(m)$ lies in the kernel of $D[f]\big(f(h_0(m))\big)$ and so $h$ factors through the inclusion of $T_{X/Y}$ into $TX$. This implies that the diagram
\[
\begin{tikzcd}
M \ar[ddr, swap, bend right = 20]{}{h} \ar[dr, dashed]{}{\exists!\,h} \ar[drr, bend left = 20]{}{g} \\
& T_{X/Y} \ar[r]{}{\iota^f} \ar[d, swap]{}{\iota_f} & Y \ar[d]{}{0_Y} \\
& TX \ar[r, swap]{}{Tf} & TY
\end{tikzcd}
\]
commutes. Lastly, it is not difficult to see that the tangent bundle functor preserves this pullback, and so this gives us that $f$ is $0$-carrable.
\end{example}

\begin{example}\label{Example: Relative tangent bundle in CAlg}
Let $R$ be a commuutative rig. Because $\CAlg{R}$ is complete and that the tangent bundle functor preserves limits, we find that for every map $f:A \to B$ of commutative $R$-algebras, the pullback
\[
\begin{tikzcd}
A[\epsilon] \times_{B[\epsilon]} B \ar[r]{}{\iota^f} \ar[d, swap]{}{\iota_f} & B \ar[d]{}{0_B} \\
A[\epsilon] \ar[r, swap]{}{f[\epsilon]} & B[\epsilon]
\end{tikzcd}
\]
always exists and so every morphism is $0$-carrable. A quick computation allows us to describe the relative tangent bundle explicitly as: 
\[
T_{A/B} \cong \lbrace (a+x\epsilon, b) \in A[\epsilon] \times B \; | \; f(a)+f(x)\epsilon = b\rbrace \cong \lbrace (a + x\epsilon, f(a)) \; | \; x \in \operatorname{Ker}(f) \rbrace \cong A \ltimes \operatorname{Ker}(f).
\]
That is, $T_{A/B}$ is isomorphic to the infinitesimal extension of $A$ by the module $\operatorname{Ker}(f)$.
\end{example}

\begin{example}\label{Example: Relative Tangent Bundle in CAlgop}
Let $R$ be a commutative rig. In $\CAlg{R}^{\op}$, for any map $f^{\op}:B \to A$ in $\CAlg{R}^{\op}$ we compute that the relative tangent bundle is given by:
\begin{prooftree}
    \AxiomC{$f^{\op}:B \to A$ in $\CAlg{R}^{\op}$}
    \UnaryInfC{$f:A \to B$ in $\CAlg{R}$}
    \UnaryInfC{The $B$-module of relative K{\"a}hler differentials $\Kah{B}{A}$ exists}
    \UnaryInfC{Relative tangent bundle $T_{B/A} \cong \Sym_B(\Kah{B}{A})$}
\end{prooftree}
In particular, every map in $\CAlg{R}^{\op}$ is $0$-carrable because $\CAlg{R}^{\op}$ is complete and the tangent bundle functor preserves limits.
\end{example}
\begin{example}\label{Example: Relative tangent bundle in Sch}

Let $S$ be a base scheme. Because $\Sch_{/S}$ is finitely complete and the tangent bundle functor preserves limits, every map $f:X \to Y$ is $0$-carrable. In this case we also compute that from the relative cotangent sequence
\[
\begin{tikzcd}
f^{\ast}(\Kah{Y}{S}) \ar[r] & \Kah{X}{S} \ar[r] & \Kah{X}{Y} \ar[r] & 0
\end{tikzcd}
\]
in $\QCoh(X)$, the relative tangent bundle in $\Sch_{/S}$ is $T_{X/Y}\cong\underline{\Spec}_X(\underline{\Sym}_{\Ocal_X}(\Kah{X}{Y}))$.
\end{example}

As a prelude towards defining unramified morphisms in tangent categories, we now argue that whenever $f:X \to Y$ is $0$-carrable, the unique map $X \to T_{X/Y}$ is monic. So if $f: X \to Y$ is $0$-carrable, let $0_{X/Y}: X \to T_{X/Y}$ be the unique map which makes the following diagram commute:
\[
\begin{tikzcd}
X \ar[drr, bend left =20]{}{f} \ar[ddr, bend right = 20, swap]{}{0_X} \ar[dr, dashed]{}{\exists!0_{X/Y}} \\
 & T_{X/Y} \ar[r]{}{\iota^f} \ar[d, swap]{}{\iota_f} & Y \ar[d]{}{0_Y} \\
 & TX \ar[r, swap]{}{Tf} & TY
\end{tikzcd}
\]
While it is realitvely immediate that this map $0_{X/Y}$ is a section, we record it explicitly for the sake of completeness.

\begin{lemma}\label{Lemma: Unique map from X to relative Tangent Bundle}
For any $0$-carrable map $f:X \to Y$ in a tangent category $\Cscr$, the unique morphism $0_{X/Y}:X \to T_{X/Y}$ is a  section of $p_{X/Y}:T_{X/Y} \to X$.
\end{lemma}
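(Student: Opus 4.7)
The plan is to exhibit $0_{X/Y}$ as a \emph{split} monomorphism, which is of course stronger than merely being monic. The key observation is that in any tangent structure the zero section $0_X : X \to TX$ is already split monic, with retraction $p_X : TX \to X$, since the first additive-bundle axiom from the definition of a tangent structure asserts $p_X \circ 0_X = \id_X$. This splitting can be lifted through the pullback defining $T_{X/Y}$.

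More concretely, I would first record the defining equations of $0_{X/Y}$ coming from the universal property of the pullback, namely $\pi_0 \circ 0_{X/Y} = 0_X$ and $\pi_1 \circ 0_{X/Y} = f$. Then the composite $p_X \circ \pi_0 : T_{X/Y} \to X$ satisfies
\[
(p_X \circ \pi_0) \circ 0_{X/Y} \;=\; p_X \circ 0_X \;=\; \id_X,
\]
so $p_X \circ \pi_0$ is a retraction of $0_{X/Y}$. Consequently $0_{X/Y}$ is a split monomorphism, and in particular monic, as desired.

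There is essentially no obstacle: the statement is a direct unfolding of the pullback factorization together with the fact that $0$ is naturally split by $p$ in any tangent category. The only minor bookkeeping is making sure the two projections $\pi_0, \pi_1$ of the pullback square defining $T_{X/Y}$ are used in the correct roles, which is immediate from the definition of $0$-carrability.
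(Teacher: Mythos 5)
Your proof is correct and follows essentially the same route as the paper: both arguments hinge on the identity $\pi_0 \circ 0_{X/Y} = 0_X$ together with the fact that $0_X$ is split by $p_X$. Your version is marginally sharper in that it exhibits the explicit retraction $p_X \circ \pi_0$, showing $0_{X/Y}$ is in fact a split monomorphism, whereas the paper only invokes the cancellation property of the monic $0_X$.
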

\begin{proof}
The map $p_{X/Y}:T_{X/Y} \to X$ is defined via the composite:
\[
\begin{tikzcd}
T_{X/Y} \ar[r]{}{\iota_f} & TX \ar[r]{}{p_X} & X
\end{tikzcd}
\]
It then follows from the definition of $0_{X/Y}$ that the equation $p_{X/Y} \circ 0_{X/Y} = p_X \circ \iota_f \circ 0_{X/Y} = p_X \circ 0_X = \id_X$ holds.
\end{proof}

Given two $0$-carrable morphisms $f:X \to Y$ and $g:Y \to Z$, it is not at all immediate that the composite $g \circ f:X \to Z$ should be $0$-carrable. Instead let us now give a necessary and sufficient condition for the $0$-carrability of $g \circ f$.

\begin{proposition}\label{Prop: Zero carrable nec and suf}
Let $f:X \to Y$ and $g:Y \to Z$ be morphisms in a tangent category $\Cscr$ and assume that $g$ is $0$-carrable. Then $g \circ f$ is $0$-carrable if and only if the $T$-pullback
\[
\begin{tikzcd}
TX \times_{TY} T_{Y/Z} \ar[r]{}{\pi_1} \ar[d, swap]{}{\pi_0} & T_{Y/Z} \ar[d]{}{\iota_g} \\
TX \ar[r, swap]{}{Tf} & TY
\end{tikzcd}
\]
exists.
\end{proposition}
\begin{proof}
We will show that if $TX \times_{TY} T_{Y/Z}$ exists then it serves as a limit of the cone 
\[
TX \xrightarrow{Tg \circ Tf} TZ \xleftarrow{0_Z} Z
\]
and then that if $T_{X/Z}$ exists, it serves as a  pullback of the cospan 
\[
T_{Y/Z} \xrightarrow{\iota_{g}} TY \xleftarrow{Tf} TX.
\]

$\implies:$ If the $T$-pullback given in the statement of the proposition exists, then stacking the pullback squares
\[
\begin{tikzcd}
TX \times_{TY} T_{Y/Z} \ar[r]{}{\pi_1} \ar[d, swap]{}{\pi_0} & T_{Y/Z} \ar[d]{}{\iota_g} \ar[r]{}{\iota^g} & Z \ar[d]{}{0_Z} \\
TX \ar[r, swap]{}{Tf} & TY \ar[r, swap]{}{Tg} & TZ
\end{tikzcd}
\]
and applying the Pullback Lemma yields that $TX \times_{TY} T_{Y/Z}$ is a $T$-pullback of the cospan 
\[
TX \xrightarrow{Tg \circ Tf} TZ \xleftarrow{0_Z} Z.
\]

$\impliedby:$ Assume that $T_{X/Z}$ exists. To see that $T_{X/Y}$ is the apex of a cone over $TX \xrightarrow{Tf} TY \xleftarrow{\iota_{g}}T_{Y/Z}$, note that the fact that $T(g \circ f) = T(g) \circ T(f)$ and the resulting commutativity of the square
\[
\begin{tikzcd}
T_{X/Y} \ar[r]{}{\iota^{g \circ f}} \ar[d, swap]{}{Tf \circ \iota_{g \circ f}} & Z \ar[d]{}{0_Z} \\
TY \ar[r, swap]{}{Tg} & TZ
\end{tikzcd}
\]
implies that there is a unique morphism $\gamma_{g \circ f}$ which renders
\[
\begin{tikzcd}
T_{X/Z} \ar[dr, dashed]{}{\exists!\gamma_{g \circ f}} \ar[drr, bend left = 20]{}{\iota^{g \circ f}} \ar[ddr, swap, bend right = 20]{}{Tf \circ \iota_{g \circ f}} \\
 & T_{Y/Z} \ar[r]{}{\iota^g} \ar[d, swap]{}{\iota_g} & Z \ar[d]{}{0_Z} \\
 & TY \ar[r, swap]{}{Tg} & TZ
\end{tikzcd}
\]
commutative. From here we compute
\[
0_Z \circ \iota^g \circ \gamma_{g \circ f} = 0_Z \circ \iota^{g \circ f} = Tg \circ Tf \circ \iota_{g \circ f} = Tg \circ \iota_g \circ \gamma_{g \circ f} = 0_Z \circ \iota^g \circ \gamma_{g \circ f}.
\]
Since $0_Z$ is monic, we have $\iota_g \circ \gamma_{g \circ f} = Tf \circ \iota_{g \circ f}$ and hence that the square
\[
\begin{tikzcd}
T_{X/Z} \ar[r]{}{\gamma_{g \circ f}} \ar[d, swap]{}{\iota_{g \circ f}} & T_{Y/Z} \ar[d]{}{\iota_g} \\
TX \ar[r, swap]{}{Tf} & TY
\end{tikzcd}
\]
commutes. To prove that this square is a limiting square, assume that we have a commuting diagram
\[
\begin{tikzcd}
W \ar[r]{}{\varphi} \ar[d, swap]{}{\psi} & T_{Y/Z} \ar[d]{}{\iota_g} \\
TX \ar[r, swap]{}{Tf} & TY
\end{tikzcd}
\]
and paste on the commuting square below:
\begin{equation}\label{Eqn: The cool pasting thing for zero carrable composition}
\begin{tikzcd}
W \ar[r]{}{\varphi} \ar[d, swap]{}{\psi} & T_{Y/Z} \ar[d]{}{\iota_g} \ar[r]{}{\iota^g} & Z \ar[d]{}{0_Z} \\
TX \ar[r, swap]{}{Tf} & TY \ar[r, swap]{}{Tg} & TZ
\end{tikzcd}
\end{equation}
Thus, from Diagram (\ref{Eqn: The cool pasting thing for zero carrable composition}) gives that there is a unique morphism $\rho:W \to T_{X/Z}$ which renders the diagram
\[
\begin{tikzcd}
W \ar[dr, dashed]{}{\exists!\rho} \ar[drr, bend left = 20]{}{\iota^g \circ \varphi} \ar[ddr, swap, bend right = 20]{}{\psi} & & \\
 & T_{X/Z} \ar[r]{}{\iota^{g \circ f}} \ar[d, swap]{}{\iota_{g \circ f}} & Z \ar[d]{}{0_Z} \\
 & TX \ar[r, swap]{}{T(g \circ f)} & TZ
\end{tikzcd}
\]
commutative. We then compute
\[
\iota_g \circ \varphi = Tf \circ \psi = Tf \circ \iota_{g \circ f} \circ \rho = \iota_g \circ \gamma_{g \circ f} \circ \rho,
\]
so from the fact that $\iota_g$ is monic we derive that $\gamma_{g \circ f} \circ \rho = \varphi$. Thus the diagram
\[
\begin{tikzcd}
W \ar[dr, dashed]{}{\exists!\rho} \ar[drr, bend left = 20]{}{\varphi} \ar[ddr, swap, bend right = 20]{}{\psi} \\
 & T_{X/Z} \ar[r]{}{\gamma_{g \circ f}} \ar[d, swap]{}{\iota_{g \circ f}} & T_{Y/Z} \ar[d]{}{\iota_g} \\
 & TX \ar[r, swap]{}{Tf} & TY
\end{tikzcd}
\]
commutes; additionally, the fact that $\rho$ is unique is immediate from the fact that $\iota_{g \circ f}$ is monic. This indicates that $T_{X/Z}$ is a pullback; that it is a $T$-pullback follows from a straightforward check using the fact that $T_{X/Z}$, $T_{Y/Z}$ are .
\end{proof}
\begin{corollary}
Let $\Cscr$ be a tangent category and let $f:X \to Y$ and $g:Y \to Z$ be morphisms in $\Cscr$. Then $g \circ f$ is $0$-carrable if and only if $g$ is $0$-carrable and $TX \times_{TY} T_{Y/Z}$ exists.
\end{corollary}

We now discuss two pullback-stability results for $0$-carrable morphisms. The first is an immediate result implied by pullback stacking, while the second is a more subtle. Namely, for the more subtle result, we will prove that if we have a pullback diagram
\[
\begin{tikzcd}
X \times_Y Z \ar[r]{}{\pi_1} \ar[d, swap]{}{\pi_0} & Z \ar[d]{}{g} \\
X \ar[r, swap]{}{f} & Y
\end{tikzcd}
\]
which is preserved by $T$ and if both maps $f$ and $g$ are $0$-and-$p$-carrable, then there is an isomorphism $T_{(X \times_Y Z)/Z} \cong T_{X/Y} \times_Y Z$. This gives a conceptual proof of Grothendieck's observation in \cite[Line 16.5.12.2]{EGA44} that for any cospan $X \rightarrow Y \leftarrow Z$ of schemes there is an isomorphism $T_{(X \times_Y Z)/Z} \cong T_{X/Y} \times_Y Z$ (and its corresponding tangent-categorical generalization). Let us first, however, prove the straightforward pullback-stacking result before establishing the more technical result.

\begin{lemma}
Let $\Cscr$ be a tangent category and assume that we have a $T$-pullback square:
\[
\begin{tikzcd}
X \times_Y Z \ar[r]{}{\pi_1} \ar[d, swap]{}{\pi_0} & Z \ar[d]{}{g} \\
X \ar[r, swap]{}{f} & Y
\end{tikzcd}
\]
If $\pi_1$ is both $p$-carrable and $0$-carrable then the diagram below is a $T$-pullback:
\[
\begin{tikzcd}
T_{(X \times_Y Z)/Z} \ar[r]{}{\iota^{\pr_1}} \ar[d, swap]{}{T\pi_0 \circ \iota_{\pi_1}} & Z \ar[d]{}{Tg \circ 0_Z} \\
TX \ar[r, swap]{}{Tf} & TY
\end{tikzcd}
\]
\end{lemma}
\begin{proof}
We simply stack the pullback squares
\[
\begin{tikzcd}
T_{(X \times_Y Z)/Z} \ar[r]{}{\iota_{\pi_1}} \ar[d, swap]{}{\iota^{\pi_1}} & T(X \times_Y Z) \ar[r]{}{T\pi_0} \ar[d]{}{T\pi_1} & TX \ar[d]{}{Tf} \\
Z \ar[r, swap]{}{0_Z} & TZ \ar[r, swap]{}{Tg} & TY
\end{tikzcd}
\]
and apply the Pullback Lemma.
\end{proof}
This lemma immediately allows us to deduce the following stronger result: that $T_{(X \times_Y Z)/Z}$ is a pullback of $Z \rightarrow TZ \leftarrow T(X \times_Y Z)$ if and only if it is a pullback of $Z \rightarrow TY \leftarrow TX$.
\begin{corollary}
Let $\Cscr$ be a tangent category and assume that we have a $T$-pullback square
\[
\begin{tikzcd}
X \times_Y Z \ar[r]{}{\pi_1} \ar[d, swap]{}{\pi_0} & Z \ar[d]{}{g} \\
X \ar[r, swap]{}{f} & Y
\end{tikzcd}
\]
which is preserved by the tangent bundle functor $T$. If $\pi_1$ is $p$-carrable then it is $0$-carrable if and only if there is a $T$-pullback square:
\[
\begin{tikzcd}
P \ar[r]{}{} \ar[d]{}{} & Z \ar[d]{}{Tg \circ 0_Z} \\
TX \ar[r, swap]{}{Tf} & TY
\end{tikzcd}
\]
\end{corollary}

\begin{proposition}\label{Prop: Pullback stability for zero and p carrable maps}
Let $\Cscr$ be a tangent category and assume that we have a $T$-pullback square:
\[
\begin{tikzcd}
X \times_Y Z \ar[r]{}{\pi_1} \ar[d, swap]{}{\pi_0} & Z \ar[d]{}{g} \\
X \ar[r, swap]{}{f} & Y
\end{tikzcd}
\]
If $f$ and $\pi_1$ are both $p$-carrable and $0$-carrable then the unique morphism $\gamma:T_{(X \times_Y Z)/Z} \to T_{X/Y}$ which renders the diagram
\[
\begin{tikzcd}
T_{(X \times_Y Z)/Z} \ar[drr, bend left = 20]{}{g \circ \pr_1} \ar[dr, dashed]{}{\exists!\, \gamma} \ar[ddr, swap, bend right = 20]{}{T\pi_0 \circ \pr_0} \\
 & T_{X/Y} \ar[r]{}{\pr_1} \ar[d, swap]{}{\pr_0} & Z \ar[d]{}{g} \\
 & TX \ar[r, swap]{}{Tf} & TY
\end{tikzcd}
\]
commutative makes
\[
\begin{tikzcd}
T_{(X \times_Y Z)/Z} \ar[r]{}{\pr_1} \ar[d, swap]{}{\gamma} & Z \ar[d]{}{g} \\
T_{X/Y} \ar[r, swap]{}{\iota^f} & Y
\end{tikzcd}
\]
into a $T$-pullback square. In particular, $T_{(X \times_Y Z)/Z} \cong T_{X/Y} \times_Y Z$.
\end{proposition}
\begin{proof}
We prove that the desired square is a pullback by showing that it uniquely factors arbitrary cones over $T_{X/Y} \rightarrow Y \leftarrow Z$ by way of making use of the universal properties of each of $T(X \times_Y Z),$ $T_{X/Y}$, and $T_{(X \times_Y Z)/Z}$.. To this end, assume that we have a commuting diagram
\[
\begin{tikzcd}
W \ar[r]{}{\varphi} \ar[d, swap]{}{\psi} & Z \ar[d]{}{g} \\
T_{X/Y} \ar[r, swap]{}{\iota^f} & Y
\end{tikzcd}
\]
and note that by making use of the universal property of $T_{X/Y}$ and the naturality of the transformation $0$, we can extend it to a commuting diagram of the form:
\[
\begin{tikzcd}
W \ar[r]{}{\varphi} \ar[d, swap]{}{\psi} & Z \ar[d]{}{g} \ar[dr]{}{0_Z} \\
T_{X/Y} \ar[r, swap]{}{\iota^f} \ar[d, swap]{}{\iota_f} & Y \ar[d]{}{0_Y} & TZ \ar[dl]{}{Tg} \\
TX \ar[r, swap]{}{Tf} & TY
\end{tikzcd}
\]
Because the tangent bundle functor $T$ preserves the pullback $X \times_Y Z$ and this diagram is ultimately a cone over $TX \xrightarrow{Tf} TY \xleftarrow{Tg} TZ$, there is a unique morphism $\alpha$ rendering the diagram
\[
\begin{tikzcd}
V \ar[dr, dashed]{}{\exists! \alpha} \ar[drr, bend left = 20]{}{0_Z \circ \varphi} \ar[ddr, swap, bend right = 20]{}{\iota_f \circ \psi} & & \\
 & T(X \times_Y Z) \ar[r]{}{T\pi_1} \ar[d, swap]{}{T\pi_0} & TZ \ar[d]{}{Tg} \\
& TX \ar[r, swap]{}{Tf} & TY
\end{tikzcd}
\]
commutative.

We now claim that the span $T(X \times_Y Z) \xleftarrow{\alpha} V \xrightarrow{\varphi} Z$ forms a cone over $T(X \times_Y Z) \xrightarrow{T\pi_1} TZ \xleftarrow{0_Z} Z$ and hence factors through $T_{(X \times_Y Z)/Z}$. However, this is an immediate consequence of the definition of $\alpha$ (and in fact is encoded in the top-most commuting triangle of the diagram directly above). Thus there is a unique morphism $\rho:V \to T_{(X \times_Y Z)/Z}$ which renders
\[
\begin{tikzcd}
V \ar[dr, dashed]{}{\exists! \rho} \ar[drr, bend left = 20]{}{\varphi} \ar[ddr, swap, bend right = 20]{}{\alpha} & & \\
 & T_{(X \times_Y Z)/Z} \ar[r]{}{\iota^{\pi_1}} \ar[d, swap]{}{\iota_{\pi_1}} & Z \ar[d]{}{0_Z} \\
 & T(X \times_Y Z) \ar[r, swap]{}{T\pi_1} & TZ
\end{tikzcd}
\]
commutative. 

We now claim that $\rho:V \to T_{(X \times_Y Z)/Z}$ produces a factorization
\[
\begin{tikzcd}
V \ar[drr, bend left = 20]{}{\varphi} \ar[ddr, swap, bend right = 20]{}{\psi} \ar[dr]{}{\rho} & & \\
 & T_{(X \times_Y Z)/Z} \ar[d, swap]{}{\gamma} \ar[r]{}{\iota^{\pi_1}} & Z \ar[d]{}{g} \\
 & T_{X/Y} \ar[r, swap]{}{\iota^f} & Y
\end{tikzcd}
\]
To see this note that by the definiton and construction of $\rho,$ the identity $\iota^{\pi_1} \circ \rho = \varphi$ holds. As such, it suffices to show that $\gamma \circ \rho = \psi$. Now observe that since $\iota_{\pi_1} \circ \gamma = T\pi_0 \circ \pr_0$ holds, we can derive from the equalities
\[
\iota_{\pi_1} \circ \gamma \circ \rho = T\pi_0 \circ \pr_0 \circ \rho = T\pi_0 \circ \alpha = \iota_{\pi_1} \circ \psi
\]
and the fact that $\iota_{\pi_1}:T_{X/Y} \to TX$ is monic, $\gamma \circ \rho = \psi$ holds. Thus the diagram
\[
\begin{tikzcd}
V \ar[drr, bend left = 20]{}{\varphi} \ar[ddr, swap, bend right = 20]{}{\psi} \ar[dr]{}{\rho} & & \\
 & T_{(X \times_Y Z)/Z} \ar[d, swap]{}{\gamma} \ar[r]{}{\pr_1} & Z \ar[d]{}{g} \\
 & T_{X/Y} \ar[r, swap]{}{\iota^f} & Y
\end{tikzcd}
\]
commutes. Furthermore, we see that $\rho$ is also necessarily unique, as if there are any two maps making
\[
\begin{tikzcd}
V \ar[drr, bend left = 20]{}{\varphi} \ar[ddr, swap, bend right = 20]{}{\psi} \ar[dr, shift left = 1]{}{\rho} \ar[dr, shift right = 1, swap]{}{\sigma} & & \\
 & T_{(X \times_Y Z)/Z} \ar[d, swap]{}{\gamma} \ar[r]{}{\iota^{\pi_1}} & Z \ar[d]{}{g} \\
 & T_{X/Y} \ar[r, swap]{}{\iota^f} & Y
\end{tikzcd}
\]
commute, then they also make the diagram
\[
\begin{tikzcd}
V \ar[drr, bend left = 20]{}{\varphi} \ar[dddr, swap, bend right = 20]{}{\iota_{\pi_1} \circ \psi} \ar[dr, shift left = 1]{}{\rho} \ar[dr, shift right = 1, swap]{}{\sigma} & & \\
 & T_{(X \times_Y Z)/Z} \ar[d, swap]{}{\gamma} \ar[r]{}{\iota^{\pi_1}} & Z \ar[d]{}{0_Z} \\
 & T(X \times_Y Z) \ar[r]{}{T\pi_1} \ar[d, swap]{}{T\pi_0} & TZ \ar[d]{}{Tg} \\
 & TX \ar[r, swap]{}{Tf} & TY
\end{tikzcd}
\]
commute. Thus, since $T_{(X \times_Y Z)/Z}$ is a pullback, $\rho = \sigma$ and so
\[
\begin{tikzcd}
T_{(X \times_Y Z)/Z} \ar[r]{}{\iota^{\pi_1}} \ar[d, swap]{}{\gamma} & Z \ar[d]{}{g} \\
T_{X/Y} \ar[r, swap]{}{\iota_{f}} & Y
\end{tikzcd}
\]
is a pullback square.
\end{proof}

Before moving on, it is worth discussing some of the structure maps that the relative tangent bundle $T_{X/Y}$ carries, in particular its differential bundle structure. So for any map $f:X \to Y$ (seen as an object in the slice category), the projection $p_{X/Y}:T_{X/Y} \to X$ is defined via
\[
\begin{tikzcd}
T_{X/Y} \ar[r]{}{\iota_f} \ar[dr, swap]{}{p_{X/Y}} & TX \ar[d]{}{p_X} \\
 & X
\end{tikzcd}
\]
which induces a differential bundle $p_{X/Y}: T_{X/Y} \to X$ in $\Cscr$. The zero $0_{X/Y}: X \to T_{X/Y}$ is defined as above. The addition $\operatorname{add}_{X/Y}: (T_{X/Y})_2 \to T_{X/Y}$ is relatively straightforward to define by pulling back against two copies of $\iota_f:T_{X/Y} \to TX$ and using the addition on $T_2X$. To define the lift $\lambda_{X/Y}: T_{X/Y} \to T(T_{X/Y})$, note that by the assumption of $f$ being $0$-carrable, the diagram
\[
\begin{tikzcd}
T(T_{X/Y}) \ar[r]{}{T(\iota^f)} \ar[d, swap]{}{T\iota_f} & TY \ar[d]{}{(T \ast 0)_Y} \\
T^2X \ar[r, swap]{}{T^2f} & T^2Y
\end{tikzcd}
\]
is a pullback square. But then we induce the commuting cube
\begin{equation}\label{Eqn: Diagram defining relative lift}
\begin{tikzcd}
 & T_{X/Y} \ar[dl, swap, dashed]{}{\exists!\lambda_{X/Y}} \ar[dd, near start]{}{\iota_f} \ar[rr]{}{\iota^f} & & Y \ar[dl]{}{0_Y} \ar[dd]{}{0_Y} \\
T(T_{X/Y}) \ar[rr, near start, crossing over]{}{T(\iota^f)} & & TY \\
& TX \ar[dl, swap]{}{\ell_X} \ar[rr, near start]{}{Tf} & & TY \ar[dl]{}{\ell_Y} \\
T^2X \ar[rr, swap]{}{T^2f} & & T^2Y
\ar[from = 2-3, to = 4-3, crossing over, near start]{}{(T \ast 0)_Y}
\ar[from = 2-1, to = 4-1, crossing over, swap]{}{T\iota_f}
\end{tikzcd}
\end{equation}
with $\lambda_{X/Y}$ given by the universal property carried by $T(T_{X/Y})$. We also have a flip $c_{X/Y}$ on the relative tangent structure, which we obtain by following roughly the same process as defining the lift $\ell_{X/Y}$. Once again we observe that upon applying $T$ to the diagram exhibiting the universal property of $T_{X/Y}$ and appending the naturality square for $c_Y$ we obtain the commuting diagram:
\[
\begin{tikzcd}
T(T_{X/Y}) \ar[r]{}{T(\iota^f)} \ar[d, swap]{}{T\iota_f} & TY \ar[d]{}{(T \ast 0)_Y} \ar[r, equals] & T^2Y \ar[d]{}{(0 \ast T)_Y} \\
T^2X \ar[r, swap]{}{T^2f} & T^2Y \ar[r, swap]{}{c_Y} & T^2Y
\end{tikzcd}
\]
Note that both squares are pullback squares, so the overall diagram is a pullback square as well. Thus
\[
\begin{tikzcd}
T(T_{X/Y}) \ar[r]{}{T\pi_1} \ar[d, swap]{}{T\pi_0} & TY \ar[d]{}{(0 \ast T)_Y} \\
T^2X \ar[r, swap]{}{c_Y \circ T^2f} & T^2Y
\end{tikzcd}
\]
is a pullback. This implies that, as $c_Y \circ T^2f$ and $T^2f$ differ only by an application of $c$, there is a pullback square
\[
\begin{tikzcd}
T^2_{X/Y} \ar[r]{}{\iota^{Tf}} \ar[d, swap]{}{\iota_{Tf}} & TY \ar[d]{}{(0 \ast T)_Y} \\
T^2X \ar[r, swap]{}{T^2f} & T^2Y
\end{tikzcd}
\]
in $\Cscr$. This incidentally shows that $Tf$ is $0$-carrable. Moreover, this also shows that there is a unique map $c_{X/Y}$ which renders the diagram
\[
\begin{tikzcd}
 & T^2_{X/Y} \ar[rr]{}{\iota^{Tf}} \ar[dd, near end]{}{\iota_{Tf}} & & TY \ar[dd]{}{(0 \ast T)_Y} \\
T(T_{X/Y})  \ar[dd, swap]{}{T\iota_{f}} \ar[ur, dashed]{}{\exists!\,c_{X/Y}} \ar[rr, crossing over, near start]{}{T(\iota^f)} & & TY \ar[ur, equals]   \\
 & T^2X \ar[rr, swap, near start]{}{T^2f} & & T^2Y \\
T^2X \ar[ur]{}{c_X} \ar[rr, swap]{}{T^2f} & & T^2Y \ar[ur, swap]{}{c_Y}
\ar[from = 2-3, to = 4-3, crossing over, near start]{}{(T \ast 0)_Y}
\end{tikzcd}
\]
commutative. Putting these observations together allows us to deduce the proposition below.

\begin{proposition} In a tangent category $\Cscr$, if $f:X \to Y$ is $0$-carrable then $Tf$ is $0$-carrable. Furthermore, $T(T_{X/Y})$ may be used as a model for the pullback of $T^2f$ against $(0 \ast T)_Y$.
\end{proposition}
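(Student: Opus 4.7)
The strategy is to transport the $T$-image of the defining pullback of $T_{X/Y}$ across the canonical flip so that its right column becomes the intended zero section $0_{TY} = (0 \ast T)_Y$ and its bottom row becomes $T^2f$. This is exactly the gap the author has already flagged, and the only genuine tool needed is the flip axiom $c_Y \circ (T \ast 0)_Y = (0 \ast T)_Y$ together with the naturality identity $c_Y \circ T^2 f = T^2 f \circ c_X$ and the fact that $c$ is an involution.

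First, since $f$ is $0$-carrable, $T$ preserves the pullback defining $T_{X/Y}$, and so the square with top-right arrow $T(0_Y) = (T \ast 0)_Y$, bottom arrow $T^2f$, and corners $T(T_{X/Y}), TY, T^2X, T^2Y$ is a pullback. I would then write $(T\ast 0)_Y = c_Y \circ (0 \ast T)_Y$ (which follows from the flip axiom together with $c_Y^2 = \id$) and note that, by naturality of $c$, the commuting square can equivalently be presented with $(0 \ast T)_Y$ as the right vertical arrow, $T^2f$ as the bottom arrow, $T\pi_1$ as the top arrow, and $c_X \circ T\pi_0$ as the left vertical arrow. To see that this rewritten square is still a pullback, I would run a direct cone chase: given any cone $(g : P \to TY,\, h : P \to T^2X)$ satisfying $(0 \ast T)_Y \circ g = T^2 f \circ h$, applying the flip identities converts it to a cone $(g, c_X \circ h)$ over the original pullback with right column $(T \ast 0)_Y$, yielding a unique $k : P \to T(T_{X/Y})$; the involution property $c_X^2 = \id$ then shows $k$ factors $h$ through $c_X \circ T\pi_0$ and is uniquely determined. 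Thus $T(T_{X/Y})$, equipped with $T\pi_1$ and $c_X \circ T\pi_0$, is a model of the pullback of $T^2f$ against $0_{TY} = (0 \ast T)_Y$.

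For the $T^m$-preservation clause, the same argument applies after whiskering. Since $f$ is $0$-carrable, $T^{m+1}$ preserves the defining pullback of $T_{X/Y}$, giving a pullback whose right column is $T^{m+1}(0_Y) = T^m\big((T \ast 0)_Y\big)$. Since each $T^m(c_X)$ and $T^m(c_Y)$ is still an isomorphism (being $T^m$ applied to an involution) and the relevant flip/naturality identities simply whisker by $T^m$, the same transport trick converts this into a pullback of $T^{m+2}f$ against $T^m\big((0 \ast T)_Y\big)$ with corner $T^{m+1}(T_{X/Y}) = T^m\big(T(T_{X/Y})\big)$. This is precisely the statement that $T^m$ preserves the pullback constructed in the previous paragraph, completing the verification that $Tf$ is $0$-carrable.

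The only real obstacle is the bookkeeping: keeping straight which flip is post-composed on which side, remembering that $0_{TY} = (0 \ast T)_Y$ and not $(T \ast 0)_Y$, and confirming that the replacement of $(T \ast 0)_Y$ by $(0 \ast T)_Y$ must be compensated on the opposite side of the square by $c_X$ (via naturality) rather than absorbed on the same side. Once that is laid out carefully, nothing beyond the tangent-structure axioms and $0$-carrability of $f$ is used.
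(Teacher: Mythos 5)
Your proposal is correct and follows essentially the same route as the paper: both arguments transport the $T$-image of the defining pullback of $T_{X/Y}$ across the canonical flip, using the axiom $c_Y \circ (T \ast 0)_Y = (0 \ast T)_Y$ together with the naturality identity $c_Y \circ T^2f = T^2f \circ c_X$ and the involutivity of $c$. Your direct cone chase (and your explicit identification of the new left projection as $c_X \circ T\pi_0$, plus the whiskered $T^m$-preservation check) just makes explicit what the paper handles by pasting pullback squares and adjusting the bottom leg by $c_X$.
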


Moreover, putting this together with the fact that tangent bundle functors reflect limits (Corollary \ref{Cor: tangent bundle functor reflects limits}), we obtain the following characterization.

\begin{corollary}
In a tangent category $\Cscr$, a map $f$ is $0$-carrable if and only if $Tf$ is $0$-carrable.
\end{corollary}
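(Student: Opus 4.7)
The forward direction is the preceding Proposition. For the converse, assume $Tf$ is $0$-carrable; the plan is to invoke the limit-reflection Corollary \ref{Cor: Tangent functor reflects limits} on the cospan
\[
D \;:=\; \bigl(TX \xrightarrow{Tf} TY \xleftarrow{0_Y} Y\bigr),
\]
whose limit, if it exists, is $T_{X/Y}$.

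First, I would upgrade the hypothesis to produce a limit of $TD$ in $\Cscr$ preserved by every $T^m$. By assumption the pullback of $T^2 f$ against $(0 \ast T)_Y = 0_{TY}$ exists and is $T^m$-preserved. The tangent-structure identity $c_Y \circ (T \ast 0)_Y = (0 \ast T)_Y$ combined with the naturality square $c_Y \circ T^2 f = T^2 f \circ c_X$ furnishes an isomorphism of cospans between the hypothesised one and the $T$-image of $D$ (in which $(0 \ast T)_Y$ is replaced by $T(0_Y) = (T \ast 0)_Y$), and the analogous $c$-translation applies after any further application of $T^m$. Thus $TD$ admits a pullback $P$ in $\Cscr$ preserved by all powers of $T$.

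Second, I would construct a cone $L$ over $D$ with $TL$ recovering $P$ as a cone on $TD$; Theorem \ref{Thm: Endofunctor which has a retract onto th eidentity in endofunctors is limit reflecting} then certifies $L$ as the limit of $D$, giving $T_{X/Y}$ and showing $f$ is $0$-carrable. The natural transformations $p \colon T \Rightarrow \id_{\Cscr}$ and $0 \colon \id_{\Cscr} \Rightarrow T$ with $p \circ 0 = \id$ yield, pointwise along $D$, natural transformations $p_{\ast} \colon TD \Rightarrow D$ and $0_{\ast} \colon D \Rightarrow TD$ satisfying $p_{\ast} \circ 0_{\ast} = \id_D$. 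Postcomposing the universal cone $P \to TD$ with $p_{\ast}$ produces a cone $P \to D$; splitting the induced idempotent $0_{\ast} \circ p_{\ast}$ on $P$ extracts $L$ as a subobject with $TL \cong P$ cone-compatibly, and $T^m$-preservation of $L$ then follows from that of $P$.

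The principal obstacle is the idempotent splitting, since a general tangent category need not be Cauchy complete. The idempotent in question is not arbitrary, however: it literally zeroes the tangent component of the universal cone, and can be exhibited concretely as an equalizer built from the projections of $P$ and the naturality squares of $p$ and $0$. The well-definedness of this equalizer is underwritten by the full strength of $0$-carrability, namely $T^m$-preservation for every $m$, which is exactly what allows the candidate $L$ to enter the hypothesis of the limit-reflection theorem.
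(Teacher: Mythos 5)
Your forward direction is the paper's (the preceding Proposition), and your $c$-flip translation of the hypothesised pullback of $T^2f$ against $(0\ast T)_Y$ into a pullback $P$ of the $T$-image of the cospan $D = \bigl(TX \xrightarrow{Tf} TY \xleftarrow{0_Y} Y\bigr)$ is correct and necessary bookkeeping. You have also correctly isolated what the paper's one-clause appeal to Corollary \ref{Cor: Tangent functor reflects limits} leaves unsaid: Theorem \ref{Thm: Endofunctor which has a retract onto th eidentity in endofunctors is limit reflecting} only certifies an \emph{already given} cone $L$ over $D$ whose $T$-image is a limit of $TD$; it does not manufacture one. The gap is that your manufacture of $L$ does not go through. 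The splitting of the idempotent $\hat e$ induced on $P$ by $0_\ast \circ p_\ast$ is not a removable technicality: since $p_\ast \circ 0_\ast = \id_D$ exhibits $\mathrm{Cone}(-,D)$ as a natural retract of $\mathrm{Cone}(-,TD)\cong\Cscr(-,P)$ with associated idempotent $\Cscr(-,\hat e)$, a splitting of $\hat e$ is \emph{equivalent} to the representability of $\mathrm{Cone}(-,D)$, i.e.\ to the existence of the pullback $T_{X/Y}$ you are trying to build. In particular, the ``equalizer built from the projections of $P$'' you invoke is the equalizer of $\hat e$ and $\id_P$, whose existence is equivalent to the conclusion; $T^m$-preservation of $P$ gives no purchase on it, so this step is circular as written.

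Even granting a splitting $P \xrightarrow{r} L \xrightarrow{s} P$ with $s\circ r = \hat e$ and $r \circ s = \id_L$, your assertion that $TL\cong P$ ``cone-compatibly'' is the other half of the work and does not follow formally: applying $T$ only exhibits $TL$ as the splitting of $T\hat e$ on $TP\cong\lim(T^2D)$, and identifying that splitting with $\lim(TD)=P$ is essentially the claim that $T$ preserves $\lim(D)$ --- part of what $0$-carrability demands --- further entangled with flip bookkeeping, since $T\hat e$ involves $T\ast 0\ast T$ rather than $0\ast T^2$. To complete the converse you need an honest construction of a cone over $D$ from the given data (for instance, exploiting that $0_Y$ is a split monic and hence an absolute equalizer, so that $T_{X/Y}$ may be re-expressed as $\operatorname{Eq}(Tf,\, 0_Y\circ f\circ p_X)$, and then locating this equalizer inside $P$), together with a direct verification that its $T$-image is the given limit cone. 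As it stands, the hardest step of the converse is asserted rather than proved --- a difficulty which, to be fair, the paper's own one-line proof also elides.
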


\subsection{The Relative Cotangent Sequence}\label{Subsection: Relative Cotangent sequence}
We now prove the most important result in this section. The theorem below is somewhat innocuous, as on one hand it simply shows that it is not a coincidence that in each of our main examples, $T_{X/Y}$ had the flavour of a kernel of the horizontal descent $\theta_f$ (in fact, in Example \ref{Example: Relative Tangent bundle in SMan} we literally defined $T_{X/Y}$ as the kernel of $\theta_f$ in $\DBun(X)$). However, on the other hand what is remarkable about the following result is that it gives us a way of phrasing the relative cotangent sequence in tangent-categorical terms, allowing us to study said cotangent sequence in any tangent category. 

\begin{theorem}\label{Thm: ZeroCarrable makes relative bundle a kernel in DBun}
Let $f:X \to Y$ be a $0$-carrable and $p$-carrable morphism in a tangent category $\Cscr$. Then the map $\iota_f:T_{X/Y} \to TX$ makes the diagram
\[
\begin{tikzcd}
T_{X/Y} \ar[r]{}{\iota_f} & TX \ar[rr, shift left = 1]{}{\theta_f} \ar[rr, shift right = 1, swap]{}{f^{\ast}(0_Y) \circ p_X} & & f^{\ast}(TY)
\end{tikzcd}
\]
into an equalizer diagram in $\DBun(X)$.
\end{theorem}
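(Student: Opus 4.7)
The plan is to first verify the diagram is an equalizer in $\Cscr$ and then promote this to an equalizer in $\DBun(X)$. The core of the argument is that $T_{X/Y}$ is by construction the pullback of $0_Y$ along $Tf$, and the equalizer condition $\theta_f \circ g = f^{\ast}(0_Y) \circ p_X \circ g$, when post-composed with $\pr_1 : f^{\ast}(TY) \to TY$, reads exactly $Tf \circ g = 0_Y \circ f \circ p_X \circ g$, i.e., $(g, f \circ p_X \circ g)$ forms a cone on the cospan defining $T_{X/Y}$.

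First I would handle the equalizer in $\Cscr$. For commutativity, post-composing $\theta_f \circ \pi_0$ and $f^{\ast}(0_Y) \circ p_X \circ \pi_0$ with $\pr_0$ both give $p_X \circ \pi_0$, while after $\pr_1$ the left side gives $Tf \circ \pi_0 = 0_Y \circ \pi_1$, and the right side gives $0_Y \circ f \circ p_X \circ \pi_0$; these agree because applying $p_Y$ to the defining pullback square yields $\pi_1 = f \circ p_X \circ \pi_0$. For the universal property, given $g : W \to TX$ equalizing the pair, the cone on $TX \xrightarrow{Tf} TY \xleftarrow{0_Y} Y$ factors uniquely through $T_{X/Y}$ to produce $h : W \to T_{X/Y}$ with $\pi_0 \circ h = g$.

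Next, to promote the result to $\DBun(X)$ I need linearity of $\pi_0$, $\theta_f$, and $f^{\ast}(0_Y) \circ p_X$. Linearity of $\pi_0$ is the back-left face of the cube diagram \eqref{Eqn: Diagram defining relative lift} defining $\lambda_{X/Y}$, namely $T\pi_0 \circ \lambda_{X/Y} = \ell_X \circ \pi_0$. Linearity of $f^{\ast}(0_Y) \circ p_X$ is automatic since this is the zero morphism of the $\mathbf{CMon}$-enriched category $\DBun(X)$ (Proposition \ref{Prop: STructure of DBun}). For $\theta_f$, the pullback bundle $f^{\ast}(TY)$ has lift characterized by $T\pr_0 \circ \lambda_{f^{\ast}(TY)} = 0_X \circ \pr_0$ and $T\pr_1 \circ \lambda_{f^{\ast}(TY)} = \ell_Y \circ \pr_1$, and then $T\theta_f \circ \ell_X = \lambda_{f^{\ast}(TY)} \circ \theta_f$ follows via the universal property of $T(f^{\ast}(TY))$ (a pullback by $p$-carrability) using the additive bundle axiom $Tp_X \circ \ell_X = 0_X \circ p_X$ and the naturality $T^2 f \circ \ell_X = \ell_Y \circ Tf$.

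The main step, and the main obstacle, is the linearity of the induced factorization $h$. Given a linear $g : (E, q_E) \to (TX, p_X)$ in $\DBun(X)$ equalizing the pair, the factorization $h$ is a morphism over $X$ since $p_{X/Y} \circ h = p_X \circ \pi_0 \circ h = p_X \circ g = q_E$. Since $f$ is $0$-carrable, $T$ preserves the defining pullback of $T_{X/Y}$, so $T(T_{X/Y})$ is itself a pullback and $(T\pi_0, T\pi_1)$ are jointly monic. Hence $Th \circ \lambda_E = \lambda_{X/Y} \circ h$ can be checked after post-composition with $T\pi_0$ and $T\pi_1$: post-composing with $T\pi_0$ yields $Tg \circ \lambda_E = \ell_X \circ g$ on both sides (by linearity of $g$ and of $\pi_0$), while post-composing with $T\pi_1$ yields $0_Y \circ f \circ p_X \circ g$ on both sides (using $\pi_1 \circ h = f \circ p_X \circ g$, the bundle axiom $Tp_X \circ \ell_X = 0_X \circ p_X$, the naturality of $0$, and the identity $T\pi_1 \circ \lambda_{X/Y} = 0_Y \circ \pi_1$ extracted from the definition of $\lambda_{X/Y}$). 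Both carrability hypotheses are genuinely used: $0$-carrability to give the pullback structure on $T(T_{X/Y})$ that powers this uniqueness argument, and $p$-carrability to make $f^{\ast}(TY)$ a differential bundle with the expected lift.
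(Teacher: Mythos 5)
Your proposal is correct and follows essentially the same route as the paper: check the equalizing identity componentwise via $\pr_0,\pr_1$, obtain the factorization $h$ from the pullback defining $T_{X/Y}$, and verify its linearity by post-composing with the jointly monic pair $(T\pi_0,T\pi_1)$ coming from $0$-carrability, using linearity of $g$ and the additive bundle axioms. The only cosmetic difference is that you first record the $\Cscr$-level equalizer and make explicit the linearity of $\theta_f$ and of the zero map $f^{\ast}(0_Y)\circ p_X$, points the paper leaves implicit.
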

\begin{proof}
We first check that $\iota_f$ does indeed equalize $\theta_f$ and $f^{\ast}(0_Y) \circ p_X$. To see this note that since $f^{\ast}(TY)$ is a pullback, it suffices to show that, for pullback projections
\[
\begin{tikzcd}
f^{\ast}(TY) \ar[r]{}{\pr_1} \ar[d, swap]{}{\pr_0} & TY \ar[d]{}{p_Y} \\
X \ar[r, swap]{}{f} & Y
\end{tikzcd}
\]
the identities $\pr_0 \circ \theta_f \circ \iota_f = \pr_0 \circ f^{\ast}(0_Y) \circ p_X \circ \iota_f$ and $\pr_1 \circ \theta_f \circ \iota_f = \pr_1 \circ f^{\ast}(0_Y) \circ p_X \circ \iota_f$ hold. To do so, first observe that $\iota_f:T_{X/Y} \to TX$ is a linear morphism of differential bundles, which implies that the diagrams
\[
\begin{tikzcd}
T_{X/Y} \ar[r]{}{\iota_f} \ar[dr, swap]{}{\iota_f \circ p_X} & TX \ar[r]{}{\theta_f} \ar[d]{}[description]{p_X} & f^{\ast}(TY) \ar[dl]{}{\pr_0} \\
 & X
\end{tikzcd}\quad
\begin{tikzcd}
T_{X/Y} \ar[r]{}{\iota_f} & TX \ar[r]{}{\theta_f} & f^{\ast}(TY) \\
 & X \ar[ul]{}{0_{X/Y}} \ar[u]{}[description]{0_X} \ar[ur, swap]{}{f^{\ast}(0_Y)}
\end{tikzcd}
\]
commute. The diagram involving only projections implies that $\pr_0 \circ \theta_f \circ \iota_f = p_X \circ \iota_f$ while the fact that the differential bundle $f^{\ast}(TY)$ has projection $\pr_0$ and zero $f^{\ast}(0_Y)$ implies that $\pr_0 \circ f^{\ast}(0_Y) \circ p_X \circ \iota_f = p_X \circ \iota_f$. So $\pr_0 \circ \theta_f \circ \iota_f = \pr_0 \circ f^{\ast}(0_Y) \circ p_X \circ \iota_f$. In a similar vein, we also get that $\pr_1 \circ \theta_f \circ \iota_f = Tf \circ \iota_f = 0_Y \circ \iota^f$ and
\[
\pr_1 \circ f^{\ast}(0_Y) \circ p_X \circ \iota_f = 0_Y \circ f \circ p_X \circ \iota_f = 0_Y \circ p_Y \circ Tf \circ \iota_f = 0_Y \circ p_Y \circ 0_Y \circ \iota^f = 0_Y \circ \iota^f.
\]
Thus $\pr_1 \circ \theta_f \circ \iota_f = \pr_1 \circ f^{\ast}(0_Y) \circ p_X \circ \iota_f$. Therefore, by the universal property of $f^{\ast}(TY)$, it follows that $\theta_f \circ \iota_f = f^{\ast}(0_Y) \circ p_X \circ \iota_f$ and so $\iota_f$ equalizes $\theta_f$ and $f^{\ast}(0_Y) \circ p_X$.

To see that $\iota_f:T_{X/Y} \to TX$ is the equalizer claimed, let $q: Z \to X$ be a differential bundle, and let $k: q \to p_X$ be a a linear differential bundle morphism which equalizes $\theta_f$ and $f^{\ast}(0_Y) \circ p_X$, so $k: Z \to TX$. Using that $k$ commutes with bundle projections, we have that 
\begin{align*}
Tf \circ k &= \iota^f \circ \theta_f \circ k = \iota^f \circ f^{\ast}(0_Y) \circ p_X \circ k = \iota^f \circ f^{\ast}(0_Y) \circ q = 0_Y \circ f \circ q
\end{align*}
This gives a unique map $\rho: Z \to T_{X/Y}$ which makes the following diagram commute: 
\[
\begin{tikzcd}
Z \ar[drr, bend left = 20]{}{k} \ar[dr, dashed]{}{\exists!\rho} \ar[ddr, swap, bend right = 20]{}{f \circ q} \\
 & T_{X/Y} \ar[d, swap]{}{\iota^f} \ar[r]{}{\iota_f} & TX \ar[d]{}{Tf} \\
 & Y \ar[r, swap]{}{0_Y} & TY
\end{tikzcd}
\]

We now need to verify that $\rho$ is a linear differential bundle morphism. To see this we first note that the cell factoring $k$ shows that the diagram
\[
\begin{tikzcd}
Z \ar[r]{}{\rho} \ar[rr, bend left = 30]{}{k} \ar[d, swap]{}{q}  & T_{X/Y} \ar[r]{}{\iota_f} & TX \ar[d]{}{p_X} \\
X \ar[rr, equals] & & X
\end{tikzcd}
\]
commutes. However, the upper edge of the rectangle satisfies $q = p_X \circ k = p_X \circ \iota_f \circ \rho = p_{X/Y} \circ \rho$ so $(\rho, \id_X)$ is a bundle map. To see that it is linear it suffices to prove that the diagram
\[
\begin{tikzcd}
Z \ar[d, swap]{}{\lambda} \ar[r]{}{\rho} & T_{X/Y} \ar[d]{}{\lambda_{X/Y}} \\
TZ \ar[r, swap]{}{T\rho} & T(T_{X/Y})
\end{tikzcd}
\]
commutes; for the construction of $\lambda_{X/Y}$, see Diagram \ref{Eqn: Diagram defining relative lift}. However, as $T^2X \xleftarrow{T\iota_f} T(T_{X/Y}) \xrightarrow{T\iota^f} TY$ is the pullback of $T^2X \xrightarrow{T^2f} T^2Y \xleftarrow{(T \ast 0)_Y} TY$, it carries the universal property that $T\rho$ is the unique map which renders the diagram
\[
\begin{tikzcd}
TZ \ar[drr, bend left = 20]{}{Tk} \ar[dr, dashed]{}{\exists!T\rho} \ar[ddr, swap, bend right = 20]{}{Tf \circ Tq} \\
 & T(T_{X/Y}) \ar[d, swap]{}{T\iota^f} \ar[r]{}{T\iota_f} & T^2X \ar[d]{}{T^2f} \\
 & TY \ar[r, swap]{}{(T \ast 0)_Y} & T^2Y
\end{tikzcd}
\]
commutative. Note also that the diagram
\[
\begin{tikzcd}
Z \ar[rr]{}{Tf \circ Tq \circ \lambda} \ar[d, swap]{}{Tk \circ \lambda} & & TY \ar[d]{}{(T \ast 0)_Y} \\
T^2X \ar[rr, swap]{}{T^2f} & & T^2Y 
\end{tikzcd}
\]
commutes with the outer edges composing into a way to witness $Z$ as a cone over $T^2X \xrightarrow{T^2f} T^2Y \xleftarrow{(T \ast 0)_Y} TY$. As such, if we can prove now that both $T\rho \circ \lambda$ and $\lambda_{X/Y} \circ \rho$ factor the diagram above through $T(T_{X/Y})$, we will be finished with verifying the linearity of $(\rho,\id_X)$ by way of the universal property of the pullback.  

We first show that $T\rho \circ \lambda$ factors the above square through $T(T_{X/Y})$. To this end we compute now that
\[
T\iota_f \circ T\rho \circ \lambda  = T(\iota_f \circ \rho) \circ \lambda = Tk \circ \lambda
\]
while
\[
T\iota^f \circ T\rho \circ \lambda = T(\iota^f \circ \rho) \circ \lambda = T(f \circ q) \circ \lambda = Tf \circ Tq \circ \lambda.
\]
Thus the diagram
\[
\begin{tikzcd}
Z \ar[dr]{}{T\rho \circ \lambda} \ar[drr, bend left = 20]{}{Tq \circ Tf \circ \lambda} \ar[ddr, swap, bend right = 20]{}{Tk \circ \lambda} \\
 & T(T_{X/Y}) \ar[r]{}{T\iota^f} \ar[d, swap]{}{T\iota_f} & TY \ar[d]{}{(T \ast 0)_{Y}} \\
 & T^2X \ar[r, swap]{}{T^2f} & T^2Y
\end{tikzcd}
\]
commutes. Next we show that $T^2f \circ Tk \circ \lambda = (T \ast 0)_Y \circ Tf \circ Tq \circ \lambda$ factors through $\lambda_{X/Y} \circ \rho$. To this end we first compute that
\[
T\iota_f \circ \lambda_{X/Y} \circ \rho = \ell_X \circ \iota_f \circ \rho = \ell_X \circ k = Tk \circ \lambda
\]
by the linearity of $k$. Similarly,
\begin{align*}
T\iota^f \circ \lambda_{X/Y} \circ \rho &= 0_Y \circ \iota^f \circ \rho = 0_Y \circ f \circ q = q \circ 0_X \circ Tf = Tf \circ Tq \circ \lambda;
\end{align*}
note that the last identity, namely $q \circ 0_X = Tq \circ \lambda$, used that $(\lambda, 0_X)$ is a morphism of additive bundles in $\Cscr$. However, this also proves that the diagram
\[
\begin{tikzcd}
Z \ar[dr]{}{\lambda_{X/Y} \circ \rho} \ar[drr, bend left = 20]{}{Tq \circ Tf \circ \lambda} \ar[ddr, swap, bend right = 20]{}{Tk \circ \lambda} \\
 & T(T_{X/Y}) \ar[r]{}{T\iota^f} \ar[d, swap]{}{T\iota_f} & TY \ar[d]{}{(T \ast 0)_{Y}} \\
 & T^2X \ar[r, swap]{}{T^2f} & T^2Y
\end{tikzcd}
\]
commutes. However, as $T(T_{X/Y})$ is a limiting cone, this in turn implies that $T\rho \circ \lambda = \lambda_{X/Y} \circ \rho$. So we get that $(\rho,\id_X)$ is a linear morphism of differential bundles.

Lastly, since $\iota_f: T_{X/Y} \to TX$ is monic (since recall it is the pullback against the monic $0_Y$), we know that  $\rho$ is the unique linear differential bundle morphism which renders the diagram
\[
\begin{tikzcd}
T_{X/Y} \ar[r]{}{\iota_f} & TX \ar[rr, shift left = 1]{}{\theta_f} \ar[rr, shift right = 1, swap]{}{f^{\ast}(0_Y) \circ p_X} & & f^{\ast}(TY) \\
Z \ar[ur, swap]{}{k} \ar[u, dashed]{}{\exists\,\rho}
\end{tikzcd}
\]
commutative. As such, we finally conclude that $T_{X/Y} \xrightarrow{\iota_f} TX$ is the equalizer of $\theta_f$ and $f^{\ast}(0_Y) \circ p_X$ in $\DBun(X)$ as desired. 
\end{proof}

We may now properly define the notion of a relative cotangent sequence in a tangent category. 

\begin{definition}\label{Defn: Relative Cotangent Sequence}
Let $f:X \to Y$ be a $0$-carrable and $p$-carrable morphism in a tangent category $\Cscr$. The \emph{relative cotangent sequence for $f$} is the diagram
\[
\begin{tikzcd}
X \ar[r]{}{0_{X/Y}} & T_{X/Y} \ar[r]{}{\iota_f} & TX \ar[r]{}{\theta_f} & f^{\ast}(TY)
\end{tikzcd}
\]
\end{definition}

As products in $\DBun(X)$ are biproducts by Proposition \ref{Prop: STructure of DBun} and $\DBun(X)$ is $\mathbf{CMon}$-enriched, the sequence which appears in Definition \ref{Defn: Relative Cotangent Sequence} is the differential bundle analogue of an exact sequence. Let us explore this idea further. Because the trivial differential bundle over $X$ (i.e. the identity $\id_X$) is the zero object in $\DBun(X)$, the diagram
\[
\begin{tikzcd}
X \ar[r] & T_{X/Y} \ar[r]{}{\iota_f} & TX \ar[r]{}{\theta_f} & f^{\ast}(TY)
\end{tikzcd}
\]
is exact in the sense that it exhibits $T_{X/Y}$ as the kernel of $\theta_f$ as, by Theorem \ref{Thm: ZeroCarrable makes relative bundle a kernel in DBun}, the diagram
\[
\begin{tikzcd}
T_{X/Y} \ar[r]{}{\iota_f} \ar[d, swap]{}{p_{X/Y}} & TX \ar[d]{}{\theta_f} \\
X \ar[r, swap]{}{f^{\ast}(0_Y)} & f^{\ast}(TY)
\end{tikzcd}
\]
is a $T$-pullback and hence a kernel preserved by $T^m$ for all $m \in \Nbb$. It also recovers the relative cotangent sequences of smooth manifolds and of schemes in the following sense.

\begin{example}\label{Example: Relative Cotangent Sequence for SMan}
In $\SMan$, for any smooth morphism $f:X \to Y$, the relative cotangent sequence of $f$ is the sequence
\[
\begin{tikzcd}
X \ar[r] & T_{X/Y} \ar[r] & TX \ar[rr]{}{\langle p_X, Tf \rangle} & & f^{\ast}(TY)
\end{tikzcd}
\]
in $\DBun(X)$. Along stalks $x \in X$ the sequence above induces the exact sequence
\[
\begin{tikzcd}
0 \ar[r] & \operatorname{Ker}\big(D[f](x)\big) \ar[r]{}{\operatorname{incl}} & T_xX \ar[rr]{}{D[f](x)} & & T_{f(x)}Y
\end{tikzcd}
\]
of real vector spaces.
\end{example}
\begin{example}\label{EXample: Relative Cotangent Sequence for Schemes}
For a morphism of schemes $f:X \to Y$, the relative cotangent sequence for $f$ in $\DBun(X)$ (in the sense of Definition \ref{Defn: Relative Cotangent Sequence}) is determined by applying the relative spectrum $\underline{\Spec}_X$ of the relative symmetric algebra functor $\underline{\Sym}_{\Ocal_X}$ to the cotangent sequence:
\[
\begin{tikzcd}
f^{\ast}\Kah{Y}{S} \ar[r] & \Kah{X}{S} \ar[r] & \Kah{X}{Y} \ar[r] & 0
\end{tikzcd}
\]
Because the functor $\underline{\Spec}_{X} \circ \underline{\Sym}_{\Ocal_X}:\QCoh(X)^{\op} \to \DBun(X)$ is an equivalence of categories by \cite[Theorem 4.28]{GeoffJSDiffBunComAlg} we see that the relative cotangent sequence of \cite{EGA44} coincides with the relative cotangent sequence of Definition \ref{Defn: Relative Cotangent Sequence}.
\end{example}

\subsection{Unramified Morphisms}
There is a special situation which is worth our focus. We have seen already that the map $0_{X/Y}:X \to T_{X/Y}$ is the zero section for the relative tangent bundle of $X$ over $Y$. A natural question is then to ask when the map $0_{X/Y}$ is an isomorphism. In the case of $\Sch_{/S}$ this is equivalent to asking:
\[\underline{\Spec}_X\left(\underline{\Sym}_{\Ocal_X}(0)\right) \cong X \cong T_{X/Y} = \underline{\Spec}_{X}\left(\underline{\Sym}_{\Ocal_X}\left(\Kah{X}{Y}\right)\right),\] 
which happens if and only if $\Kah{X}{Y} \cong 0$ (and similarly for $\CAlg{R}^{\op}$ for a commutative rig $R$). In the case of $\SMan,$ this is equivalent to asking that the diagram
\[
\begin{tikzcd}
X \ar[r]{}{f} \ar[d, swap]{}{0_X} & Y \ar[d]{}{0_Y} \\
TX \ar[r, swap]{}{Tf} & TY
\end{tikzcd}
\]
is a pullback. By making use of Example \ref{Example: Relative Tangent bundle in SMan} this amounts to asking that for all $x \in X$, $\operatorname{Ker}\big(D[f](x)\big) \cong 0.$ In turn, this indicates that there is no ramificiation within the map $f$ in the sense that nonzero tangent vectors get sent to zero through $Tf$. We make this a formal definition and briefly study these maps below.

\begin{definition}\label{Defn: TUnramified Morphisms}
In a tangent category $\Cscr$, a morphism $f:X \to Y$is \emph{$T$-unramified} if the diagram
\[
\begin{tikzcd}
    X \ar[r]{}{f} \ar[d, swap]{}{0_X} & Y \ar[d]{}{0_Y} \\
    TX \ar[r, swap]{}{Tf} & TY
\end{tikzcd}
\]
is a $T$-pullback.
\end{definition}

Immediate from the definition is the following characterization of $T$-unramified morphisms as the $0$-carrable maps which have trivial relative tangent bundle.
\begin{lemma}
If $f$ is $T$-unramified then $T^mf$ is $T$-unramified for all $m \in \N$.
\end{lemma}
\begin{proof}
As the $0$-naturality square for $f$
\[
\begin{tikzcd}
X \ar[r]{}{f} \ar[d, swap]{}{0_X} & Y \ar[d, swap]{}{0_Y} \\
TX \ar[r, swap]{}{Tf} & TY
\end{tikzcd}
\]
is a $T$-pullback, for any $m \in \Nbb$ the diagram
\[
\begin{tikzcd}
T^mX \ar[rr]{}{T^mf} \ar[d, swap]{}{((T^m \ast 0)_X} & & T^mY \ar[d]{}{(T^m \ast 0)_Y} \\
T^{m+1}X \ar[rr, swap]{}{T^{m+1}f} & & T^{m+1}Y
\end{tikzcd}
\]
is a $T$-pullback as well. After whiskering by some number of natural transformations of the form $T^{k} \ast c \ast T^{\ell}$ for indices satisfying $k + \ell = m-1$, we arrive at the fact that the square above is naturally isomorphic to the square:
\[
\begin{tikzcd}
T^mX \ar[rr]{}{T^mf} \ar[d, swap]{}{(0 \ast T^m)_X} & & T^mY \ar[d]{}{(0 \ast T^m)_Y} \\
T^{m+1}X \ar[rr, swap]{}{T^{m+1}f} & & T^{m+1}Y
\end{tikzcd}
\]
Because being a $T$-pullback is stable under isomorphism, it follows that $T^mf$ is $T$-unramified.
\end{proof}
\begin{lemma}\label{Lemma: Tunramified is zerocarrable}
In a tangent category $\Cscr$, any $T$-unramified morphism is $0$-carrable and $T_{X/Y} \cong X$. 
\end{lemma}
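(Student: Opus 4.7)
The plan is simply to unpack the definitions side by side. The condition that $f:X \to Y$ is $T$-unramified asserts exactly that the commuting square
\[
\begin{tikzcd}
X \ar[r]{}{f} \ar[d, swap]{}{0_X} & Y \ar[d]{}{0_Y} \\
TX \ar[r, swap]{}{Tf} & TY
\end{tikzcd}
\]
is a pullback preserved by every power $T^m$. But the defining data of $0$-carrability requires precisely the existence (and $T^m$-preservation) of a pullback of the cospan $TX \xrightarrow{Tf} TY \xleftarrow{0_Y} Y$. So the $T$-unramified square already witnesses this pullback, with $X$ playing the role of $T_{X/Y}$ and with structure maps $\pi_0 = 0_X$ and $\pi_1 = f$.

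Concretely, I would first record that $f$ being $T$-unramified gives a pullback square as above, preserved by $T^m$ for all $m$, and that this is the very diagram Definition \ref{Defn: Zero Carrable} asks to exist for $f$ to be $0$-carrable. Hence $f$ is $0$-carrable, and one may take $T_{X/Y} := X$ with $\pi_0 := 0_X$ and $\pi_1 := f$. By the uniqueness of limits up to (unique) isomorphism, any other choice of the pullback $T_{X/Y}$ is then canonically isomorphic to $X$, giving $T_{X/Y} \cong X$.

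There is no real obstacle here, only a small bookkeeping remark to make: under the identification $T_{X/Y} = X$, the canonical zero section $0_{X/Y}:X \to T_{X/Y}$ of Lemma \ref{Lemma: Unique map from X to relative Tangent Bundle}, which is determined by $\pi_0 \circ 0_{X/Y} = 0_X$ and $\pi_1 \circ 0_{X/Y} = f$, becomes the identity $\id_X$. This is consistent with $0_{X/Y}$ being monic (as it now visibly is), and it is the natural sanity check that the isomorphism $T_{X/Y} \cong X$ is compatible with all of the ambient structure.
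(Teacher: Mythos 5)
Your argument is correct and is exactly the paper's proof: the $T$-unramified square is, verbatim, the pullback (preserved by all $T^m$) that Definition \ref{Defn: Zero Carrable} demands, so $f$ is $0$-carrable with $T_{X/Y}\cong X$ by uniqueness of pullbacks. The paper states this in one line; your version merely spells out the identifications $\pi_0 = 0_X$, $\pi_1 = f$ and the (correct) observation that $0_{X/Y}$ becomes $\id_X$.
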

\begin{proof}
Because the naturality square of $0$ is a pullback, this says that $f$ is $0$-carrable and $T_{X/Y} \cong X$.
\end{proof}

As mentioned at the start of this subsection, we will see in Theorem \ref{Thm: If C Rosicky then immersion iff unrmaified}, in a Roscisk{\'y} tangent category the converse of Lemma \ref{Lemma: Tunramified is zerocarrable} is true. That is, in a Roscisk{\'y} tangent category, a $p$-carrable morphism $f$ is $T$-unramified if and only if it is $0$-carrable. For now, we simply use Theorem \ref{Thm: If C Rosicky then immersion iff unrmaified} as the tool which allow us to compute the $T$-unramified morphisms in our main classes of examples. 

\begin{example}\label{Example: Tunramified in SMan}
In $\SMan$ a $T$-unramified morphism is an immersion. This can be seen from Theorem \ref{Thm: Rosckiy Timmersion iff Tunramified} below and the fact that $\SMan$ is a Rosick{\'y} tangent category.
\end{example}

\begin{example}\label{Example: Tunramified in CAlgR}
Let $R$ be a commutative rig. In $\CAlg{R}$ a map $f:A \to B$ is $T$-unramified if and only if $\operatorname{Ker}(f) = 0$. To see this recall from Example \ref{Example: Relative tangent bundle in CAlg} that given a map $f:A \to B$ the relative tangent bundle $T_{A/B} \cong A \ltimes \operatorname{Ker}(f)$ with natural map $0_{B/A}:A \to A \ltimes \operatorname{Ker}(f)$ given by $a \mapsto (a,0)$. This map is an isomorphism if and only if $\operatorname{Ker}(f) = \lbrace a \; | \; f(a) = 0\rbrace = 0$.
\end{example}

\begin{example}\label{Example: Tunramified in CAlgRop}
Let $R$ be a commutative rig. Then in $\CAlg{R}^{\op}$, the $T$-unramified morphisms correspond precisely to the $R$-algebra morphisms $f: A \to B$ which are formally unramified, that is, those for which $\Kah{B}{A} \cong 0$ \cite[D{\'e}finition 19.10.2 \& Proposition 20.7.4]{EGA04}. This follows from the opposite equivalence $\DBun(B) \simeq \Mod{B}^{\op}$ for all commutative $R$-algebras $A$ and the fact that the relative tangent bundle $T_{B/A}$ of $f^{\op}:B \to A$ is represented by the rig $\Sym_{B}(\Kah{B}{A})$.
\end{example}

\begin{example}\label{Example: Tunramified in Sch}
Let $S$ be a scheme. In $\Sch_{/S}$, the $T$-unramified morphisms are precisely the scheme morphisms $f: X \to Y$ which are formally unramified, i.e., those for which $\Kah{X}{Y} \cong 0$. This follows from Theorem \ref{Thm: Rosckiy Timmersion iff Tunramified} below and the fact that $\Sch_{/S}$ is a Roscisk{\'y} tangent category. 
\end{example}

\begin{example}\label{Example: Tunramified in CDC} In a CDC, a map $f: X \to Y$ is $T$-unramified if and only if the diagram
\[
\begin{tikzcd}
X \ar[d, swap]{}{\iota_0} \ar[r] & 0 \ar[d]{}{0} \\
X \times X \ar[r, swap]{}{D[f]} & Y
\end{tikzcd}
\]
is a pullback square, where $0$ is the terminal object and $0: 0 \to Y$ is the zero map (since homsets in CDCs are commutative monoids). This follows from translating between the tangent bundle functor $T$ to the differential combinator $D$. Thus saying $f$ is T-unramified essentially says that $D[f]$ has trivial kernel in its second argument, or in other words, has trivial kernel in the slice category. In particular, if $f$ is linear (that is, if $D[f] = f \circ \pi_1$) then $f$ is $T$-unramified if and only if the diagram
\[
\begin{tikzcd}
0 \ar[r, equals] \ar[d, swap]{}{0} & 0 \ar[d]{}{0} \\
X \ar[r, swap]{}{f} & Y
\end{tikzcd}
\]
is a pullback square, or in other words, if $\operatorname{Ker}(f) \cong 0$. 
\end{example}

We conclude this section by showing that $T$-unramified maps compose, are stable under application of (powers of) $T$, and are also stable under pullback.

\begin{proposition}\label{Prop: Compositions of zero carrable}
Let $f:X \to Y$ and $g:Y \to Z$ be morphisms in a tangent category $\Cscr$. Then:
 \begin{enumerate}[{\em (i)}] 
    \item If $g:Y \to Z$ is $T$-unramified then $f$ is $0$-carrable if and only if $g \circ f$ is $0$-carrable. In this case, $T_{X/Y} \cong T_{X/Z}$.
    \item If $g:Y \to Z$ is $T$-unramified then $g \circ f$ is $T$-unramified if and only if $f$ is $T$-unramified.
\end{enumerate}
\end{proposition}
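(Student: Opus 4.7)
The plan is to deduce both parts from the Pullback Lemma applied to a common horizontally-pasted pair of squares, together with the naturality square of the tangent zero $0: \id_{\Cscr} \Rightarrow T$. For each part I will consider the rectangle whose right square is precisely the defining pullback for $g$ being $T$-unramified (top row $Y \xrightarrow{g} Z$, bottom row $TY \xrightarrow{Tg} TZ$, verticals $0_Y$ and $0_Z$), and whose left square has top row $W \xrightarrow{\alpha} Y$, bottom row $TX \xrightarrow{Tf} TY$, and right vertical $0_Y$; the specific choice of $(W,\alpha,\beta)$ will depend on which part I am proving. Since the right square is a pullback preserved by every $T^m$ by hypothesis on $g$, the Pullback Lemma tells us that the left square is a pullback iff the outer rectangle is, and applying the same reasoning to the $T^m$-image of the pasting shows that the corresponding $T^m$-preservation clauses are equivalent.

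For part (i), I take $W := T_{X/Y}$, $\alpha := \pi_1$, and $\beta := \pi_0$, so that the left square is exactly the defining pullback witnessing that $f$ is $0$-carrable. By naturality of $0$ we have $0_Z \circ g = Tg \circ 0_Y$, and by functoriality $Tg \circ Tf = T(g \circ f)$; so the outer rectangle has top edge $g \circ \pi_1 : T_{X/Y} \to Z$, right vertical $0_Z$, and bottom edge $T(g \circ f)$, which is precisely the defining square for $g \circ f$ being $0$-carrable but with $T_{X/Y}$ in place of $T_{X/Z}$ as its apex. The Pullback Lemma therefore gives that $f$ is $0$-carrable iff $g \circ f$ is; and whenever this holds, we may take $T_{X/Y}$ itself as a model of $T_{X/Z}$, yielding the canonical isomorphism $T_{X/Y} \cong T_{X/Z}$.

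For part (ii), I take $W := X$, $\alpha := f$, and $\beta := 0_X$. The left square is then exactly the defining square for $f$ being $T$-unramified, and by the same naturality of $0$ and functoriality of $T$, the outer rectangle is the defining square for $g \circ f$ being $T$-unramified. The Pullback Lemma (and its $T^m$-preservation counterpart) then yields the equivalence.

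The only real obstacle lies in the bookkeeping for the $T^m$-preservation clauses, but this is automatic: the Pullback Lemma is a purely diagrammatic statement and therefore transports across any functor, while the $T^m$-preservation of the right square is built into the hypothesis that $g$ is $T$-unramified. Thus no clever construction is needed beyond choosing the span $(W,\alpha,\beta)$ appropriately in each part.
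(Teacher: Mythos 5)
Your part (ii) and the forward half of part (i) are correct and essentially coincide with the paper's argument: paste the relevant square for $f$ against the $T$-unramified square for $g$ and invoke the Pullback Lemma, with the $T^m$-clauses following by applying $T^m$ to the whole pasting.

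The gap is in the converse half of part (i). There you take the apex of the left square to be $W := T_{X/Y}$, but in that direction $T_{X/Y}$ is precisely the object whose existence you are trying to establish: $0$-carrability of $f$ is an \emph{existence} statement about the pullback of $Tf$ along $0_Y$, whereas the Pullback Lemma in the form ``left is a pullback iff outer is'' only applies once you already have a commuting left square with a specified apex. What you actually have in hand when $g \circ f$ is $0$-carrable is $T_{X/Z}$, the pullback of $T(g\circ f)$ along $0_Z$, which comes equipped with projections $\pi_1$ to $Z$ and $\pi_0$ to $TX$ but with no map to $Y$. The missing step is to manufacture that map: since $g$ is $T$-unramified, $Y$ (with the maps $g$ and $0_Y$) is the pullback of $Tg$ along $0_Z$, so the cone $(T_{X/Z},\ \pi_1,\ Tf\circ\pi_0)$ over $Z \xrightarrow{0_Z} TZ \xleftarrow{Tg} TY$ factors uniquely through a map $h: T_{X/Z} \to Y$ with $g \circ h = \pi_1$ and $0_Y \circ h = Tf \circ \pi_0$. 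Only then do you have a commuting left square (with apex $T_{X/Z}$) to which the Pullback Lemma applies, yielding that $T_{X/Z}$ is the pullback of $Tf$ along $0_Y$, i.e.\ that $f$ is $0$-carrable with $T_{X/Y} \cong T_{X/Z}$; repeating the factorization after applying each $T^m$ handles preservation. This is exactly the extra construction the paper carries out in its $\impliedby$ direction (phrased there via the equalizer description of $Y \cong T_{Y/Z}$), and without it the ``iff'' you draw from the Pullback Lemma does not follow.
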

\begin{proof}
We first prove Statement (i). $\implies$: Assume that $f$ is $0$-carrable and consider the diagram:
\[
\begin{tikzcd}
T_{X/Y} \ar[r]{}{\iota^f} \ar[d, swap]{}{\iota_f} & Y \ar[r]{}{g} \ar[d]{}{0_Y} & Z \ar[d]{}{0_Z} \\
TX \ar[r, swap]{}{Tf} & TY \ar[r, swap]{}{Tg} & TZ
\end{tikzcd}
\]
Since $f$ is $0$-carrable and $g$ is $T$-unramified, both squares are $T$-pullbacks. Applying the Pullback Lemma implies that the outer square 
\[
\begin{tikzcd}
T_{X/Y} \ar[rr]{}{g \circ \iota^f} \ar[d, swap]{}{\iota_f} & & Z \ar[d]{0_Z} \\
TX \ar[rr, swap]{}{Tg \circ Tf} & & TZ
\end{tikzcd}
\]
is a $T$-pullback as well. Thus $T_{X/Y} \cong T_{X/Z}$ and $g \circ f$ is $0$-carrable.

$\impliedby:$ Assume that $g \circ f$ is $0$-carrable and consider the diagram:
\[
\begin{tikzcd}
T_{X/Z} \ar[r]{}{\iota^{g \circ f}} \ar[d, swap]{}{\iota_{g \circ f}} & Z \ar[d]{}{0_Z} \\
TX \ar[r, swap]{}{Tg \circ Tf} & TZ
\end{tikzcd}
\]
Because, by \cite[Pages 4, 5]{Rosicky}, $T_{X/Z}$ is the equalizer of $Tg \circ Tf$ and the map
\[
0_Z \circ p_Z \circ Tg \circ Tf = Tg \circ 0_Y \circ p_Y \circ Tf = Tg \circ Tf \circ 0_X \circ p_X,
\]
and because $Y \cong T_{Y/Z}$ is the equalizer of $Tg$ and $0_Z \circ p_Z \circ Tg = Tg \circ 0_Y \circ p_Y$ (also by \cite[Pages 4, 5]{Rosicky}), the diagram above admits a factorization:
\[
\begin{tikzcd}
T_{X/Z} \ar[rrr]{}{\iota^{g \circ f}} \ar[dd, swap]{}{\iota_{g \circ f}} \ar[drr, swap]{}{f \circ p_{X/Y}} & & & Z \ar[dd]{}{0_Z} \\
 & & Y \ar[ur, swap]{}{g} \\
TX \ar[drr, swap]{}{Tf} \ar[rrr,  near start]{}{Tg \circ Tf} & & & TZ \\
 & & TY \ar[ur, swap]{}{Tg} 
 \ar[from = 2-3, to = 4-3, crossing over, near start]{}{0_Y}
\end{tikzcd}
\]
However, as the back face of the triangular prism is a $T$-pullback and the rightmost face is a $T$-pullback, we may apply the Pullback Lemma to deduce that the leftmost square is a $T$-pullback as well. This shows exactly that $f$ is $0$-carrable with corresponding limit cone:
\[
\begin{tikzcd}
T_{X/Z} \ar[r]{}{f \circ p_{X/Y}} \ar[d, swap]{}{\iota_{g \circ f}} & Y \ar[d]{}{0_Y} \\
TX \ar[r, swap]{}{Tf} & TY
\end{tikzcd}
\]

To prove Statement $(ii)$ we can either defer to Statement $(i)$ as a special case or give a more specialized proof by noting that in the diagram
\[
\begin{tikzcd}
X \ar[r]{}{f} \ar[d, swap]{}{0_X} & Y \ar[d]{}{0_Y} \ar[r]{}{g} & Z \ar[d]{}{0_Z} \\
TX \ar[r, swap]{}{Tf} & TY \ar[r, swap]{}{Tg} & TZ
\end{tikzcd}
\]
with the rightmost square a $T$-pullback, the total square is a $T$-pullback if and only if the leftmost square is a $T$-pullback.
\end{proof}

\begin{lemma}\label{Lemma: Tm preserves unramified}
In a tangent category $\Cscr$, if $f:X \to Y$ is $T$-unramified, then $T^mf$ is $T$-unramified for all $m \in \mathbb{N}$.
\end{lemma}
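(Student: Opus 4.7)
The plan is to proceed by induction on $m$, with the base case $m = 0$ immediate from the hypothesis that $f$ is $T$-unramified. For the inductive step, I will assume that $T^m f$ is $T$-unramified and establish the same for $T^{m+1} f$.

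The key technical point to notice is a subtle discrepancy in the vertical maps that appear. Applying $T$ to the defining pullback for $T^m f$ being $T$-unramified produces a pullback whose vertical sides are $T(0_{T^m X}) = (T \ast 0)_{T^m X}$ and $T(0_{T^m Y}) = (T \ast 0)_{T^m Y}$; however, the sides required for $T^{m+1} f$ being $T$-unramified are $0_{T^{m+1} X} = (0 \ast T)_{T^m X}$ and $0_{T^{m+1} Y} = (0 \ast T)_{T^m Y}$, which are in general different morphisms. The tangent-structure axiom expressing the compatibility of the canonical flip $c$ with the zero gives the identity $c_Z \circ T(0_Z) = 0_{TZ}$ for every object $Z$; instantiating at $Z := T^m X$ and $Z := T^m Y$ supplies exactly the bridging identities $0_{T^{m+1} X} = c_{T^m X} \circ T(0_{T^m X})$ and $0_{T^{m+1} Y} = c_{T^m Y} \circ T(0_{T^m Y})$. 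Naturality of $c$ at $T^{m+1} f$ then ensures that the square with vertical sides $c_{T^m X}, c_{T^m Y}$ and horizontal sides $T^{m+2} f$ commutes, and it is a pullback since both vertical sides are isomorphisms.

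With this in hand the inductive step reduces to a direct application of the pullback lemma. I will paste two pullback squares vertically: the upper one is the $T$-image of the $T$-unramified pullback for $T^m f$ (a pullback by the preservation condition built into the inductive hypothesis), and the lower one is the naturality square for $c$ just described. The pullback lemma yields that the outer rectangle is a pullback, and by the bridging identities above its vertical sides are precisely $0_{T^{m+1} X}$ and $0_{T^{m+1} Y}$. The requirement that each $T^n$ preserve this pullback is handled by the same pasting after pre-applying $T^n$ to the diagram: the upper half remains a pullback because $T^{n+1}$ preserves the pullback witnessing that $T^m f$ is $T$-unramified, and the lower half remains a pullback because $T^n$ of an isomorphism is still an isomorphism.

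I expect the only nontrivial step in the whole argument to be spotting the $(T \ast 0)$ versus $(0 \ast T)$ mismatch and invoking the canonical flip to resolve it; everything else is bookkeeping with the pullback lemma and naturality of $c$.
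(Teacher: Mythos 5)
Your proposal is correct and rests on exactly the same two observations as the paper's (very terse) proof: the preservation clause built into the definition of $T$-unramified already supplies the pullback after applying powers of $T$, and the mismatch between $(T \ast 0)$ and $(0 \ast T)$ is resolved by the canonical flip axiom $c_Z \circ T(0_Z) = 0_{TZ}$. You simply organize the bookkeeping as an induction with an explicit pasting of pullback squares, where the paper compresses it to the remark that $(T^m \ast 0) \cong (0 \ast T^m)$.
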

\begin{proof}
This is immediate from the definition, as being $T$-unramified asks that $T^m$ preserve the $0$-naturality square pullback for all $m \in \N$. Because $(T^m \ast 0) \cong (0 \ast T^m)$ for all $m \in \N$, the lemma follows.
\end{proof}

\begin{proposition}\label{Prop: Tunramified pullback stable}
In a tangent category $\Cscr$, assume that $f:X \to Y$ is $T$-unramified and that
\[
\begin{tikzcd}
W \ar[r]{}{\pi_1} \ar[d, swap]{}{\pi_0} & Z \ar[d]{}{g} \\
X \ar[r, swap]{}{f} & Y
\end{tikzcd}
\]
is a $T$-pullback. Then $\pi_1$ is $T$-unramified as well.
\end{proposition}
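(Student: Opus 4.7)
The plan is to verify directly, via a universal-property chase, that the naturality square for $0$ on $\pi_1$ is a pullback, and then to deduce $T^m$-preservation by running the same argument one level up. Conceptually, I will assemble a commutative cube whose back face is the $T$-unramified square for $f$ (a pullback by hypothesis), whose top face is the $T$-image of the given pullback for $W$ (a pullback since $T$ preserves it), and whose bottom face is the given pullback; then the front face, which is the naturality-of-$0$ square for $\pi_1$, will be shown to be a pullback.

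For the chase, suppose we are given a test cone consisting of morphisms $u \colon A \to Z$ and $v \colon A \to TW$ satisfying $0_Z \circ u = T\pi_1 \circ v$. Applying $Tg$ to both sides and using the naturality identity $Tg \circ 0_Z = 0_Y \circ g$ together with $g \circ \pi_1 = f \circ \pi_0$ yields
\[
0_Y \circ (g \circ u) = Tf \circ (T\pi_0 \circ v).
\]
Since $f$ is $T$-unramified, this equation factors uniquely through a morphism $x \colon A \to X$ with $f \circ x = g \circ u$ and $0_X \circ x = T\pi_0 \circ v$. Then the pair $(x,u)$ satisfies $f \circ x = g \circ u$, so by the pullback property of $W$ there is a unique $w \colon A \to W$ with $\pi_0 \circ w = x$ and $\pi_1 \circ w = u$. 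To verify that $0_W \circ w = v$, I would use that $TW$ is itself a pullback (since $T$ preserves the original square): it suffices to check that both maps agree after postcomposing with $T\pi_0$ and with $T\pi_1$, and both checks follow immediately from the defining equations for $w$, $x$, and the naturality of $0$. Uniqueness of $w$ is parallel: any other factoring $w'$ forces $\pi_0 \circ w'$ to satisfy the universal property of the $T$-unramified square for $f$, hence $\pi_0 \circ w' = x$, and then the $W$-pullback property forces $w' = w$.

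Finally, for $T^m$-preservation: apply $T^m$ throughout the setup. By Lemma~\ref{Lemma: Tm preserves unramified}, $T^m f$ is again $T$-unramified, and by iterating the preservation-by-$T$ hypothesis on the given pullback (so that $T^m$ of it is a pullback preserved by $T$) the exact same chase carried out one level up shows that $T^m$ of the naturality square for $\pi_1$ is a pullback. Thus $\pi_1$ is $T$-unramified.

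The argument has no genuine obstacle; the only mild subtlety is the bookkeeping of which of the cube's faces are pullbacks at what stage, and in particular the observation that verifying $0_W \circ w = v$ really does need the $TW$-pullback property (not just monicity of $0_W$), since in a general tangent category we have not assumed that $0$ is a componentwise monomorphism.
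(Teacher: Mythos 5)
Your proof is correct and follows essentially the same route as the paper's: you assemble the same commutative cube and rely on the same three pullbacks (the $T$-unramified square for $f$, the given square with apex $W$, and its image under $T$), merely unrolling the paper's two applications of the Pullback Lemma into an explicit cone-factorization chase, including the correct observation that $0_W \circ w = v$ must be checked against the pullback $TW$ rather than by monicity. The one caveat is your final paragraph: the stated hypothesis only gives preservation of the $W$-pullback by $T$, not by all $T^m$, so ``iterating'' it is not literally licensed --- but the paper's own proof makes the identical leap, and the intended hypothesis (consistent with the conventions elsewhere in the paper) is clearly preservation by all powers of $T$.
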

\begin{proof}
Because $T$-unramified maps are preserved by $T^m$ for all $m \in \N$, setting $m = 1$ gives the pullback square
\[
\begin{tikzcd}
TW \ar[r]{}{T\pi_1} \ar[d, swap]{}{T\pi_0} & TZ \ar[d]{}{Tg} \\
TX \ar[r, swap]{}{Tf} & TY
\end{tikzcd}
\]
which fits into a commuting cube:
\[
\begin{tikzcd}
 & TW \ar[rr]{}{T\pi_1} \ar[dd, near start]{}{T\pi_0} & & TZ \ar[dd]{}{Tg} \\
W \ar[ur]{}{0_W} \ar[dd, swap]{}{\pi_0} & & Z \ar[ur, swap]{}{0_Z} \\
 & TX \ar[rr, swap, near start]{}{Tf} & & TY \\
X \ar[ur]{}{0_X} \ar[rr, swap]{}{f} & & Y \ar[ur, swap]{}{0_Y}
\ar[from = 2-1, to = 2-3, crossing over, near end]{}{\pi_1}
\ar[from = 2-3, to = 4-3, crossing over, near start]{}{g}
\end{tikzcd}
\]
The commutativity of the cube implies that the diagram
\[
\begin{tikzcd}
W \ar[r]{}{0_W} \ar[d, swap]{}{\pi_1} & TW \ar[r]{}{T\pi_0} \ar[d]{}{T\pi_1} & TX \ar[d]{}{Tf} \\
Z \ar[r, swap]{}{f} & TZ \ar[r, swap]{}{Tg} & TY
\end{tikzcd}
\]
commutes with right-most square a pullback. The Pullback Lemma thus tells us that the left-most square is a pullback square if and only if the total square is a pullback. However, using the commutativity of the cube we see that the rectangle above is equivalent to the diagram:
\[
\begin{tikzcd}
W \ar[r]{}{\pi_0} \ar[d, swap]{}{\pi_1} & X \ar[r]{}{0_X} \ar[d]{}{f} & TX \ar[d]{}{Tf} \\
Z \ar[r, swap]{}{g} & Y \ar[r, swap]{}{0_Y} & TY
\end{tikzcd}
\]
The leftmost square above is a $T$-pullback by assumption and the rightmost square is a $T$-pullback by virtue of $f$ being $T$-unramified. Thus by the Pullback Lemma the total square is a $T$-pullback; walking this argument back shows that the diagram
\[
\begin{tikzcd}
W \ar[r]{}{0_W} \ar[d, swap]{}{\pi_1} & TW \ar[d]{}{T\pi_1} \\
Z \ar[r, swap]{}{0_Z} & TZ
\end{tikzcd}
\]
is a $T$-pullback as well. Hence, $\pi_1$ is $T$-unramified.
\end{proof}

\section{Tangent Monics and Tangent Epimorphisms}

In this section we study what we call $T$-monic and $T$-epic morphisms which, as the name suggest, are monic and epic morphisms such all tangent powers are also monic and epic respectively. 

\subsection{Tangent Monics}

Tangent monics are exactly the $T$-monics which appear in $\Cscr$. They have been briefly studied in the recent paper \cite[Definition 3.39]{GeoffMarcelloTSubmersionPaper} where they were used there to study display monic $T$-{\'e}tale maps (which were in turn used as a tangent-categorical analogue of injective local diffeomorphisms in a tangent category). We use them here as technical tools for both studying the various morphisms defined in this paper and also as a way to characterize $T$-{\'e}tale maps and monic $T$-{\'e}tale maps later in terms of the horizontal descent $\theta_f$ (Propositions \ref{Prop: Classify of Tetales} and \ref{Prop: Classify of monic Tetales} below).

\begin{definition}[{\cite[Definition 3.39]{GeoffMarcelloTSubmersionPaper}}]\label{Defn: TMonic maps}
A morphism $f:X \to Y$ is $T$-monic if and only if $T^mf$ is monic for all $m \in \N$.
\end{definition}
Because a morphism $f:X \to Y$ is monic if and only if the diagram
\[
\begin{tikzcd}
X \ar[d, equals] \ar[r, equals] & X \ar[d]{}{f} \\
X \ar[r, swap]{}{f} & Y
\end{tikzcd}
\]
is a pullback, we can make the following immediate observation regarding $T$-monics.
\begin{lemma}
In a tangent category $\Cscr$, a map $f: X \to Y$ is $T$-monic if and only if the square
\[
\begin{tikzcd}
X \ar[r, equals] \ar[d, equals] & X \ar[d]{}{f} \\
X \ar[r, swap]{}{f} & Y
\end{tikzcd}
\]
is a $T$-pullback.
\end{lemma}

\begin{example}\label{Example: T monics for SMan}
In $\SMan$, the $T$-monic morphisms are precisely the monic immersions, i.e., the embeddings. A clean proof of this may be given via Proposition \ref{Prop: f monic and thetaf monic iff Tmonic} below.
\end{example}

\begin{example}\label{Example: T monics for calg}
Let $R$ be a commutative rig. Then in $\CAlg{R}$ the $T$-monics are precisely the monomorphisms. This can be seen by observing that the tangent bundle functor $T(A) := A[\epsilon]$ preserves limits, so it in particular preserves all monics. By appealing to Corollary \ref{Cor: tangent bundle functor reflects limits} we see that $T^mf$ for $m \geq 1$ is monic if and only if $f$ is monic.
\end{example}

\begin{example}\label{Example: T monics for affine schemes}
Let $R$ be a commutative rig. Then in $\CAlg{R}^{\op}$ the $T$-monics are precisely the monomorphisms as well. This can be seen from much the same way as the example prior. 
\end{example}

\begin{example}\label{Example: T monics for Sch}
In $\Sch_{/S}$ the $T$-monic morphisms are precisely the monomorphisms. While we can once again use the same argument as in the previous example, an alternative proof of this may be given as an immediate consequence of Proposition \ref{Prop: f monic and thetaf monic iff Tmonic} below.
\end{example}

\begin{example}\label{Example: T monics for CDC}
Let $\Cscr$ be a CDC. Then $f:X \to Y$ is $T$-monic if and only if $D^n[f]$ is monic for all $m \in \mathbb{N}$, that is, $f$ is $T$-monic if and only if all of its higher-derivatives are monic. In particular, when $f$ is linear, then $f$ is $T$-monic if and only if $f$ is monic. 
\end{example}

\begin{example} Here is an example which shows, concretely, that not all monics in a tangent category are $T$-monics. Consider the CDC $\mathbf{Euc}$, the category of Euclidean spaces $\R^n$ and smooth functions between them. Then the monic $x^3:\R \to \R$ is \emph{not} a $T$-monic since its derivative is $D[x^3]:\R \times \R \to \R$ is $D[x^3](p,v) = \left(3x^2|_{x=p}\right)\overrightarrow{v} = 3p^2\overrightarrow{v}$ for all $(p,v) \in \R \times \R$. In particular, if $p = 0$ then $D[x^3](0,v) = D[x^3](0,w)$ for all $v, w \in \R$ and so $D[x^3]$ is not monic. So $x^3$ is monic but not $T$-monic. 
\end{example}

We now record that in any tangent category, the vertical lift and the canonical flip are $T$-monic.

\begin{proposition}\label{Prop: Vertical Lift is TMonic}
In a tangent category $\Cscr$, 
 \begin{enumerate}[{\em (i)}] 
    \item The vertical lift $\ell_X:TX \to T^2X$ is $T$-monic for every object $X$.
    \item The canonical flip $c_X:T^2X \to T^2X$ is a $T$-monic for every object $X$.
\end{enumerate}
\end{proposition}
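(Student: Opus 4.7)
The plan is to handle part (ii) first, since it follows immediately from the involution axiom, and then to tackle part (i) by factoring $\ell_X$ through the universal vertical lift as a composite of two morphisms each of which is transparently $T$-monic.

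For part (ii), I invoke the tangent structure axiom $c \circ c = \id$, which gives $c_X \circ c_X = \id_{T^2X}$, so $c_X$ is a self-inverse isomorphism. Applying $T^m$ and using functoriality yields $T^m(c_X) \circ T^m(c_X) = \id_{T^{m+2}X}$ for every $m \in \N$, so each $T^m(c_X)$ is an isomorphism and hence monic. Thus $c_X$ is $T$-monic.

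For part (i), the key tool is the universality of the vertical lift applied to the differential bundle $p_X : TX \to X$, which supplies a pullback square
\[
\begin{tikzcd}
T_2X \ar[r]{}{\mu} \ar[d, swap]{}{p_X \circ \pi_0} & T^2X \ar[d]{}{(T \ast p)_X} \\
X \ar[r, swap]{}{0_X} & TX
\end{tikzcd}
\]
with $\mu := (T \ast \operatorname{add})_X \circ \langle \ell_X \circ \pi_0, (0 \ast T)_X \circ \pi_1\rangle$, preserved by $T^m$ for every $m \in \N$ (as part of the differential bundle axioms). Since $0_X$ is split monic with retraction $p_X$, each $T^m(0_X)$ is split monic, and so each $T^m(\mu)$ is monic as the pullback of a monic; thus $\mu$ is $T$-monic. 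Next, I consider the section $s := \langle \id_{TX}, 0_X \circ p_X\rangle : TX \to T_2X$, which is well-defined in $T_2X = TX \times_X TX$ because $p_X = p_X \circ 0_X \circ p_X$, and which admits $\pi_0$ as a retraction, so is split monic and therefore $T$-monic. A direct computation will verify $\ell_X = \mu \circ s$, so that $\ell_X$ is a composite of $T$-monics and thus $T$-monic.

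The main obstacle, and essentially the only calculation in the argument, is verifying the factorization $\ell_X = \mu \circ s$. The subtlety is that $T^2X$ carries two distinct additive bundle structures over $TX$, one via $(p \ast T)_X$ with zero section $(0 \ast T)_X$ and another via $(T \ast p)_X$ with zero section $(T \ast 0)_X$, and the addition in the formula for $\mu$ takes place in the $(T \ast p)_X$-bundle. Unwinding $\mu \circ s$ yields $(T \ast \operatorname{add})_X \circ \langle \ell_X, (0 \ast T)_X \circ 0_X \circ p_X\rangle$, and naturality of $0$ applied to $0_X$ gives $(0 \ast T)_X \circ 0_X = (T \ast 0)_X \circ 0_X$, which identifies the second entry as the $(T \ast p)_X$-zero over $0_X \circ p_X$; adding this to $\ell_X$ via $(T \ast \operatorname{add})_X$ then returns $\ell_X$, as required.
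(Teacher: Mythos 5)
Your proof is correct, but part (i) takes a genuinely different route from the paper. The paper's argument is essentially one line: by \cite[Lemma 2.13]{GeoffRobinDiffStruct} the universality of the vertical lift exhibits $\ell_X$ as an equalizer of $(T\ast p)_X$ and $0_X \circ p_X \circ (p\ast T)_X$, this equalizer is preserved by all $T^m$, and equalizers are monic, so $\ell_X$ is $T$-monic. You instead use the \emph{pullback} form of the universality of the lift for the differential bundle $p_X:TX\to X$, and factor $\ell_X = \mu \circ s$ with $s = \langle \id_{TX}, 0_X\circ p_X\rangle$ split monic and $\mu$ the pullback of the split monic $0_X$; since $T^m$ preserves both the splitting and the pullback square (the latter being part of the differential bundle axioms), each factor is $T$-monic and hence so is the composite. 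Your factorization check is the only delicate point and it is right: the second component of $\mu\circ s$ is $(0\ast T)_X\circ 0_X\circ p_X = (T\ast 0)_X\circ 0_X\circ p_X = (T\ast 0)_X\circ (T\ast p)_X\circ \ell_X$ (using naturality of $0$ at $0_X$ and $Tp_X\circ\ell_X = 0_X\circ p_X$), so the $T$-image of the unit law for $(p_X,0_X,\operatorname{add}_X)$ collapses the sum back to $\ell_X$. What the paper's route buys is brevity (monicity is immediate from the equalizer universal property); what yours buys is that the $T^m$-preservation is explicitly guaranteed by the differential bundle axioms rather than imported from the cited lemma, at the cost of the factorization computation. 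Part (ii) is identical in both: $c_X$ is an involution, hence an isomorphism, hence $T$-monic.
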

\begin{proof}
For the vertical lift, by \cite[Lemma 2.13]{GeoffRobinDiffStruct}, the universality of the vertical lift implies that $\ell$ is the equalizer
\[
\begin{tikzcd}
TX \ar[r]{}{\ell_X} & T^2X \ar[rrr, bend right = 30, swap]{}{(T \ast p)_X} \ar[rrr,bend left = 30]{}{(p \ast T)_X} \ar[r]{}{(p \ast T)_X} & TX \ar[r]{}{p_X} & X \ar[r]{}{0_X} & TX
\end{tikzcd}
\]
Additionally, all powers $T^m$ of the tangent bundle functor $T$ preserve said equalizer. Putting this all together implies that $\ell_X$ is $T$ monic. On the other hand, for the canonical flip, this is immediate from $c_X$ being an isomorphism.
\end{proof}

We immediately get that $T$-monic morphisms compose, are stable under application of the tangent bundle functor, and are stable under pullbacks which are preserved by the tangent bundle functor. 

\begin{proposition}\label{Prop: T monics compose and are stable under T preserved pullbacks}
Let $f:X \to Y$ and $g:Y \to Z$ be $T$-monic maps in a tangent category $\Cscr$ and assume also that we have a pullback square
\[
\begin{tikzcd}
X \times_Y W \ar[r]{}{\pr_0} \ar[d, swap]{}{\pr_1} & X \ar[d]{}{f} \\
W \ar[r, swap]{}{h} & Y
\end{tikzcd}
\]
which is preserved by $T^m$ for all $m \in \N$. Then:
\begin{enumerate}[{\em (i)}] 
    \item The composite $g \circ f:X \to Z$ is $T$-monic.
    \item For all $m \in \N$, $T^mf$ is $T$-monic.
    \item The map $\pr_1:X \times_Y W \to W$ is $T$-monic.
\end{enumerate}
\end{proposition}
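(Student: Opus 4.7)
The proposition splits into three independent claims, all of which follow from basic categorical facts about monics plus the defining property of $T$-monic.

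For $(i)$: The plan is to simply observe that for each $m \in \N$, $T^m(g \circ f) = T^m g \circ T^m f$ by functoriality of $T^m$, and that the composite of two monics is always monic. Since both $T^m f$ and $T^m g$ are monic by hypothesis, so is $T^m(g \circ f)$.

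For $(ii)$: The plan is to observe that for any $k \in \N$, $T^k(T^m f) = T^{k+m} f$, which is monic because $f$ is $T$-monic. Hence $T^m f$ is itself $T$-monic for every $m$.

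For $(iii)$: The key observation is that pullbacks of monics along arbitrary morphisms are monic. Since $T^m$ preserves the given pullback square for every $m \in \N$, applying $T^m$ yields a pullback diagram
\[
\begin{tikzcd}
T^m(X \times_Y W) \ar[r]{}{T^m \pr_0} \ar[d, swap]{}{T^m \pr_1} & T^m X \ar[d]{}{T^m f} \\
T^m W \ar[r, swap]{}{T^m h} & T^m Y
\end{tikzcd}
\]
in $\Cscr$. Because $f$ is $T$-monic, $T^m f$ is monic, and hence its pullback $T^m \pr_1$ is monic as well. This holds for every $m \in \N$, so $\pr_1$ is $T$-monic as required.

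No step here presents a genuine obstacle; the entire proposition is a formal consequence of the definition of $T$-monic together with the elementary facts that monics compose, that functors (and in particular $T^m$) commute with composition, and that pullbacks of monics are monic. The only ``content'' is bookkeeping with the index $m$ and invoking preservation of the relevant pullback by every $T^m$ in case $(iii)$.
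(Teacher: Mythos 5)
Your proposal is correct and follows essentially the same argument as the paper: functoriality plus composition of monics for (i), the identity $T^k(T^mf)=T^{k+m}f$ for (ii), and applying $T^m$ to the preserved pullback together with stability of monics under pullback for (iii). No gaps.
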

\begin{proof}
Claim $(i)$ is immediate from the observations that $T^m(g \circ f) = T^m(g) \circ T^m(f)$ for all $m \in \N$ and that monics compose. Claim $(ii)$ is immediate from the fact that for all $k, m \in \N$ if $T^{m}f$ is monic then $T^{m+k}f = T^k(T^mf)$ is monic. For claim $(iii)$, note that upon fixing an $m \in \N$, applying the functor $T^m$ to the given pullback diagram, and using that $T^m$ preserves this pullback gives rise to the pullback square:
\[
\begin{tikzcd}
T^m(X \times_Y W) \ar[rr]{}{T^m\pr_0} \ar[d, swap]{}{T^m\pr_1} & & T^mX \ar[d]{}{T^mf} \\
T^mW \ar[rr, swap]{}{T^mh} & & T^mY
\end{tikzcd}
\]
Since $T^mf$ is monic and monics are stable under pullback, we see that $T^m\pr_1$ is monic.
\end{proof}

We now show that for any $0$-carrable morphism the unique map $0_{X/Y}:X \to T_{X/Y}$ is $T$-monic.

\begin{lemma}\label{Lemma: 0carrable is Tmonic}
In a tangent category $\Cscr$, for any $0$-carrable morphism $f:X \to Y$, $0_{X/Y}$ is $T$-monic.
\end{lemma}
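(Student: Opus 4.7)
The plan is to exploit the factorization $0_X = \pi_0 \circ 0_{X/Y}$ coming from the definition of $0_{X/Y}$, together with the observation that $0_X$ is split monic (and hence so are all its iterated tangent images). More precisely, from the defining triangle of $0_{X/Y}$ we have $\pi_0 \circ 0_{X/Y} = 0_X$, so applying $T^m$ gives
\[
T^m(\pi_0) \circ T^m(0_{X/Y}) = T^m(0_X)
\]
for every $m \in \N$.

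Next, I would observe that $T^m(0_X)$ is split monic. Indeed, one of the additive-bundle axioms for $(p_X, 0_X, \operatorname{add}_X)$ gives $p_X \circ 0_X = \id_X$; applying the functor $T^m$ yields $T^m(p_X) \circ T^m(0_X) = \id_{T^mX}$, so $T^m(0_X)$ admits a retract and is in particular monic.

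Finally I would invoke the elementary fact that if a composite $g \circ h$ is monic then $h$ itself is monic (if $h \circ u = h \circ v$ then $(g \circ h) \circ u = (g \circ h) \circ v$, hence $u = v$). Applying this to the factorization above shows that $T^m(0_{X/Y})$ is monic for every $m \in \N$, which is exactly the statement that $0_{X/Y}$ is $T$-monic. There is no real obstacle here: the only content is the naturality/functoriality manipulation combined with the additive-bundle identity $p_X \circ 0_X = \id_X$, and the pullback-preservation clause in the definition of $0$-carrability is not even needed (though it would give an alternative route via the fact that $T^m(\pi_0)$ is itself monic as the pullback of the split monic $T^m(0_Y)$).
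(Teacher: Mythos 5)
Your proof is correct and uses essentially the same idea as the paper: from $\pi_0 \circ 0_{X/Y} = 0_X$ one gets $T^m\pi_0 \circ T^m 0_{X/Y} = T^m 0_X$, which is split monic since $p_X \circ 0_X = \id_X$, and left-cancellation of monics finishes the argument. Your version is in fact slightly cleaner than the paper's, which phrases the same cancellation via the pullback-preservation of the defining square and an induction on $m$ — neither of which, as you correctly observe, is needed.
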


\begin{proof}
Because $0_{X/Y}:X \to T_{X/Y}$ is a section of $p_{X/Y}:T_{X/Y} \to X$, $T^{m}0_{X/Y}$ is monic for all $m \in \N$ because sections are absolute monomorphisms.
\end{proof}

We now show that a $p$-carrable map is a $T$-monic if it is monic and that the horizontal descent is $T$-monic.

\begin{proposition}\label{Prop: f monic and thetaf monic iff Tmonic}
Let $f:X \to Y$ be a $p$-carrable map in a tangent category $\Cscr$. Then $f$ is $T$-monic if and only if $f$ is monic and $\theta_f$ is $T$-monic.
\end{proposition}
\begin{proof} For the $\implies$ direction, since $f$ is $p$-carrable, then for all $m \in \N$ we have pullback diagrams
\[
\begin{tikzcd}
T^{m+1}X \ar[drr, bend left = 20]{}{T^{m+1}f} \ar[ddr, swap, bend right = 20]{}{(T^m \ast p)_X} \ar[dr, dashed]{}{\exists!T^m\theta_f} \\
 & T^m(f^{\ast}TY) \ar[r]{}{T^m\pi_1} \ar[d, swap]{}{T^m\pi_0} & T^{m+1}Y \ar[d]{}{(T^m \ast p)_Y} \\
 & T^mX \ar[r, swap]{}{T^mf} & T^mY
\end{tikzcd}
\]
induced by applying $T^m$ to the pullback diagram defining $f^{\ast}(TY)$. In each case, since $T^mf$ is monic for all $m \in \N$, $T^m\pi_0$ is monic. Using also that $T^{m+1}f$ is monic for all $m \in \N$ allows us to deduce that $T^m\theta_f$ is monic for all $m \in \N$ as well because $T^{m+1}f = T^mf \circ T^m\theta_f$. 

For the $\impliedby$ direction, in order to prove that $f$ is $T$-monic, we must show that $T^m(f)$ is monic for all $m \in \N$. We show this by induction on $m$. For the base case $m = 0$ we note that $T^0(f) = \id(f) = f$ is monic by assumption. When $m = 1$ we note that since $Tf = \pr_1 \circ \theta_f$, $Tf$ is monic precisely because $\theta_f$ and $\pr_1$ are both monic since $\theta_f$ is by assumption and $\pr_1$ because it is the pullback of $p_Y:TY \to Y$ against the monic $f:X \to Y$. We now proceed with the induction on $m$. Let $m \in \N$ be given with $m \geq 1$ and assume that $T^m(f)$ is monic. We will show that $T^{m+1}(f)$ is monic. Now since we have a pullback
\[
\begin{tikzcd}
T^m(f)^{\ast}(T^{m+1}(Y)) \ar[r]{}{\pr_1} \ar[d, swap]{}{\pr_0} & T(T^m(Y)) \ar[d]{}{(p \ast T^m)_Y} \\
T^m(X) \ar[r, swap]{}{T^m(f)} & T^m(Y)
\end{tikzcd}
\]
Then $\pr_1$ is monic by the Inductive Hypothesis on $T^m(f)$. Additionally, applying Proposition \ref{Proposition: Tangent of thetaf} to $T^{m-1}f$ and expanding out the induced algebraic equations shows that there are isomorphisms $\tilde{c}_m:T(T^{m-1}(f)^{\ast}(T^mY)) \to T^m(f)^{\ast}(T^{m+1}Y)$ and $\hat{c}_m:T^{m+1}X \to T^{m+1}X$ which make the equation $\tilde{c}_m \circ T^m\theta_{f} = \theta_{T^mf} \circ \hat{c}_m$ hold. But as $\tilde{c}_m$ and $\hat{c}_m$ are isomorphisms and $T^m\theta_f$ is monic, $\theta_{T^mf}$ is monic. This then gives rise to the commuting diagram
\[
\begin{tikzcd}
T^{m+1}X \ar[drr, bend left = 20]{}{T^{m+1}f} \ar[dr, dashed]{}{\exists!\theta_{T^mf}} \ar[ddr, swap, bend right = 20]{}{(p \ast T^m)} \\
 & T^{m}(f)^{\ast}(T^{m+1}(Y)) \ar[d, swap]{}{\pr_0} \ar[r]{}{\pr_1} & T^{m+1}Y \ar[d]{}{(p \ast T^m)_Y} \\
 & T^mX \ar[r, swap]{}{T^mf} & TY
\end{tikzcd}
\]
and hence allows us to deduce that $T^{m+1}(f)$ is monic. 
\end{proof}

\begin{corollary}
In a tangent category $\Cscr$, if $f:X \to Y$ is $p$-carrable and $T$-monic, then $T^m\theta_f$ and $\theta_{T^mf}$ are $T$-monic for all $m \in \N$.
\end{corollary}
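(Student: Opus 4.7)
The plan is to unpack the definition of $T$-monic and reduce everything to Proposition \ref{Cor: f monic and thetaf monic iff Tmonic} applied to appropriate tangent powers of $f$. The key observation is that a map $h$ is $T$-monic if and only if $T^k h$ is monic for every $k \in \N$, so in order to show that $T^m\theta_f$ or $\theta_{T^m f}$ is $T$-monic it suffices to exhibit each $T^k$ of them as monic.

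For $T^m \theta_f$: since $f$ is $p$-carrable and $T$-monic, Proposition \ref{Cor: f monic and thetaf monic iff Tmonic} gives that $\theta_f$ itself is $T$-monic. But then for any $k \in \N$ we have $T^k(T^m \theta_f) = T^{k+m}\theta_f$, which is monic because $\theta_f$ is $T$-monic. Hence $T^m\theta_f$ is $T$-monic.

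For $\theta_{T^m f}$: I would first check that $T^m f$ is itself $p$-carrable and $T$-monic, and then invoke Proposition \ref{Cor: f monic and thetaf monic iff Tmonic} directly applied to $T^m f$ in place of $f$. Carrability of $T^m f$ is exactly Corollary \ref{Cor: pcarrable is power of Tm stable}; $T$-monicity of $T^m f$ follows because $T^k(T^m f) = T^{k+m} f$ is monic for every $k \in \N$ (since $f$ is $T$-monic). Proposition \ref{Cor: f monic and thetaf monic iff Tmonic} then immediately gives that $\theta_{T^m f}$ is $T$-monic.

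There is no real obstacle here; the argument is almost purely bookkeeping. The only subtlety is keeping straight the distinction between $T^m \theta_f$ and $\theta_{T^m f}$ — these are genuinely different morphisms (cf.\@ Corollary \ref{Cor: The other higher powers of T to thetaf in one line}) — but since each statement follows independently by the two-line argument above, this distinction does not cause trouble. Alternatively, one could deduce the second claim from the first by invoking Corollary \ref{Cor: The other higher powers of T to thetaf in one line} to write $\theta_{T^m f} = \tilde{C}_m \circ T^m\theta_f \circ C_m$ and noting that pre- and post-composition with the isomorphisms $C_m, \tilde{C}_m$ preserves $T$-monicity (their $T^k$-images are again isomorphisms, hence monic), but the direct reduction to Proposition \ref{Cor: f monic and thetaf monic iff Tmonic} via $T^m f$ is cleaner.
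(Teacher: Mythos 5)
Your proposal is correct and follows essentially the same route as the paper: both deduce that $\theta_f$ is $T$-monic from Proposition \ref{Cor: f monic and thetaf monic iff Tmonic}, note that $T$-monicity is stable under applying $T^m$ (you inline the index-shift argument that the paper delegates to Proposition \ref{Prop: T monics compose and are stable under T preserved pullbacks}.(ii)), and then handle $\theta_{T^mf}$ by applying the same proposition to $T^mf$, using Corollary \ref{Cor: pcarrable is power of Tm stable} for its $p$-carrability. The only cosmetic difference is that you cite the $p$-carrability of $T^mf$ explicitly, which the paper leaves implicit.
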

\begin{proof} By Proposition \ref{Prop: f monic and thetaf monic iff Tmonic}, $\theta_f$ is $T$-monic. By Proposition \ref{Prop: T monics compose and are stable under T preserved pullbacks}.(ii), we get that $T^m\theta_f$ is also $T$-monic. Again by Proposition \ref{Prop: T monics compose and are stable under T preserved pullbacks}.(ii), we get that $T^m f$ is $T$-monic, hence by Proposition \ref{Prop: f monic and thetaf monic iff Tmonic}, $\theta_{T^mf}$ is $T$-monic. 
\end{proof}
\begin{lemma}\label{Lemma: Split monics are tmonic}
Let $\Cscr$ be a tangent category and let $s$ be a split monic. Then $s$ is $T$-monic.
\end{lemma}
\begin{proof}
Because split monic are absolute monomorphisms, $T^ms$ is a monic for all $m \in \N$.
\end{proof}
\begin{proposition}
    In any tangent category $\Cscr$, for any differential bundle $q:E \to X$, the zero $\zeta:X \to E$ is $T$-monic. In particular, $0_X:X \to TX$ is $T$-monic.
\end{proposition}
\begin{proof}
By Lemma \ref{Lemma: Split monics are tmonic} in any tangent category $\Cscr$, for any differential bundle $q:E \to X$, $\zeta:X\to E$ is $T$-monic because $\zeta$ is split monic. The final statement follows from specializing to the differential bundle $p_X:TX \to X$.
\end{proof}

\subsection{Tangent Epics}

In this short subsection we introduce and give some basic results regarding the dual to tangent monics: tangent epics. While we will not study these morphisms seriously in this paper, we introduce them to lay some groundwork for the study of $T$-submersions in Section \ref{Section: TSubmersions} and \ref{Section: Split Submersions}. We are indebted to Geoff Cruttwell for suggesting studying $T$-epics and introducing them here.

\begin{definition}\label{Defn: Tangent epimorphisms}
A morphism $f:X \to Y$ is a \emph{$T$-epimorphism} (\emph{$T$-epic} for short) if and only if $T^mf$ is epic for all $m \in \N$.
\end{definition}
Because a morphism $f:X \to Y$ is epic if and only if the diagram 
\[
\begin{tikzcd}
X\ar[r]{}{f} \ar[d, swap]{}{f}  & Y \ar[d, equals] \\
Y \ar[r, equals] & Y
\end{tikzcd}
\]
is a pushout, we can make the following immediate observation regarding $T$-epimorphisms.
\begin{lemma}
Let $f:X \to Y$ be a morphism in a tangent category $\Cscr$. We say that $f$ is \emph{$T$-epic} if the diagram
\[
\begin{tikzcd}
X \ar[r]{}{f} \ar[d, swap]{}{f} & Y \ar[d, equals] \\
Y \ar[r, equals] & Y
\end{tikzcd}
\]
is a $T$-pushout.
\end{lemma}

While we do not provide examples of $T$-epimorphisms explicitly in this paper, we will provide a stability result below. Afterwards, we will give some quick calculations that allow one to deduce some classes of maps as $T$-epimorphisms.
\begin{proposition}
Let $f:X \to Y$ and $g:Y \to Z$ be morphisms in a tangent category $\Cscr$. Additionally, assume also that we have a $T$-pushout square:
\[
\begin{tikzcd}
X \ar[r]{}{f} \ar[d, swap]{}{h} & Y \ar[d]{}{\rotatebox[origin = c]{180}{$\pi$}_0} \\
W \ar[r, swap]{}{\rotatebox[origin = c]{180}{$\pi$}_1} & P
\end{tikzcd}
\]
Then:
\begin{enumerate}[{\em (i)}] 
    \item If $f$ and $g$ are $T$-epic, the composite $g \circ f:X \to Z$ is $T$-epic. In particular, if $f$ is $T$-epic then $g \circ f$ is $T$-epic if and only if $f$ is $T$-epic.
    \item For all $m \in \N$, $T^mf$ is $T$-epic.
    \item The map $\rotatebox[origin = c]{180}{$\pi$}_1:W \to P$ is $T$-epic.
\end{enumerate}
\end{proposition}
\begin{proof}
We prove the proposition item-by-item. For $(i)$ we note that this is immediate from the fact that if $T^mf$ and $T^mg$ are epic for all $m \in \N$ then $T^m(g \circ f) = T^mg \circ T^mf$ is epic as well. The last statement is immediate from the elementary category theory fact that if there is a composable pair of morphisms $\gamma:A \to  B$ and $\rho:B \to C$ for which $\gamma$ is epic, then their composite $\rho \circ \gamma$ is epic if and only if $\rho$ is epic.

For $(ii)$ assume that the diagram
\[
\begin{tikzcd}
X \ar[r]{}{f} \ar[d, swap]{}{f} & Y \ar[d, equals] \\
Y \ar[r, equals] & Y
\end{tikzcd}
\]
is a $T$-pushout. But then for all $m \in \N$ the diagram
\[
\begin{tikzcd}
T^mX \ar[r]{}{T^mf} \ar[d, swap]{}{T^mf} & T^mY \ar[d, equals] \\
T^mY \ar[r, equals] & T^mY
\end{tikzcd}
\]
is a $T$-pushout as well.

For $(iii)$ note that since the diagram given is a $T$-pushout, for all $m \in \N$ the diagram
\[
\begin{tikzcd}
T^mX \ar[r]{}{T^mf} \ar[d, swap]{}{T^mh} & T^mY \ar[d]{}{T^m\rotatebox[origin = c]{180}{$\pi$}_0} \\
T^mW \ar[r, swap]{}{T^m\rotatebox[origin = c]{180}{$\pi$}_1} & T^mP
\end{tikzcd}
\]
is a pushout. But then since epimorphisms are stable under pushout, $T^m\rotatebox[origin = c]{180}{$\pi$}_1$ is epic for all $m \in \N$.
\end{proof}

\begin{lemma}\label{Lemma: Split epimorphisms are tepimorphisms}
Let $\Cscr$ be a tangent category and let $r$ be a split epimorphism. Then $r$ is a $T$-epimorphism.
\end{lemma}
\begin{proof}
Because split epimorphisms are absolute epimorphisms, $T^mr$ is an epimorphism for all $m \in \N$.
\end{proof}
\begin{proposition}
    In any tangent category $\Cscr$, for any differential bundle $q:E \to X$, $q$ is a $T$-epimorphism. In particular, $p_X:TX \to X$ is a $T$-epimorphism.
\end{proposition}
\begin{proof}
By Lemma \ref{Lemma: Split epimorphisms are tepimorphisms} in any tangent category $\Cscr$, for any differential bundle $q:E \to X$, $q$ is a $T$-epimorphism because $q$ is a split epimorphism. The final statement follows from specializing to the differential bundle $p_X:TX \to X$.
\end{proof}

\section{Strong Tangent-Immersions}\label{Section: Strong TImmersions}
In differential geometry, immersions are the morphisms $f:X \to Y$ of smooth manifolds which preserve differential data in the sense that at every point $x \in X$, $T_xf:T_xX \to T_{f(x)}Y$ is injection. In this section we introduce a notion of immersions which is well-suited for tangent-categorical settings because defining and studying such objects involves \emph{only} data encoded by the tangent structure $\Tbb$ with which the tangent category $\Cscr$ is equipped. However, because this notion of immersion is quite general, it has the downside of being quite a strong definition formally speaking. It in particular does not directly reference the differential bundle categories $\DBun(X)$. As such, we see that a potential trade-off for having immersions being definable in \emph{any} tangent category with a condition that can be checked for \emph{any} morphism is that the inherent linearity in characterizing immersions takes a backseat role. For the reader who is bothered by this, do not fret; we take a more ``linear'' approach to immersions in Section \ref{Section: TImmersions} below.

In order to make it so that strong immersions depend \emph{solely} on data encoded by the tangent structure $\Tbb$ and in particular the naturality square
\[
\begin{tikzcd}
TX\ar[r]{}{Tf} \ar[d, swap]{}{p_X} & TY \ar[d]{}{p_Y} \\
X \ar[r, swap]{}{f} & Y
\end{tikzcd}
\]
we will need to recall some classical category theory of prepullbacks. While we generically focus on morphisms which are $p$-carrable, it is interesting for the maximally general case to give such a connection to prepullbacks. Consequently we will give the definition for general tangent categories before specializing and focusing more on the $p$-carrable case. We will also examine how strong $T$-immersions relate to $T$-unramified and $0$-carrable morphisms later on in this section (cf.\@ Proposition \ref{Prop: Immersion Unramified} in particular).

\subsection{Prepullbacks and Immersions}

In order to define strong $T$-immersions, let us recall first what it means to be a prepullback of a cospan $Z \xrightarrow{k} W \xleftarrow{h} Y$ in a category. 
\begin{definition}\label{Defn: Prepullback}
In a category $\Cscr$, a commuting square
\[
\begin{tikzcd}
X \ar[r]{}{f} \ar[d, swap]{}{g} & Y \ar[d]{}{h} \\
Z \ar[r, swap]{}{k} & W
\end{tikzcd}
\]
is a \emph{prepullback in $\Cscr$} if for any commutative diagram 
\[
\begin{tikzcd}
V \ar[r]{}{s} \ar[d, swap]{}{t} & Y \ar[d]{}{h} \\
Z \ar[r, swap]{}{k} & W
\end{tikzcd}
\]
there is at most one map $\ell:V \to X$ which makes the following diagram commute: 
\[
\begin{tikzcd}
V \ar[drr, bend left = 20]{}{s} \ar[ddr, swap, bend right = 20]{}{t} \ar[dr, dashed]{}{\exists\leq 1} \\
& X \ar[r]{}{f} \ar[d, swap]{}{g} & Y \ar[d]{}{h} \\
& Z \ar[r, swap]{}{k} & W
\end{tikzcd}
\]
Additionally, if $T:\Cscr \to \Cscr$ is an endofunctor then we say that a commuting square is a \emph{$T$-prepullback} if it is a prepullback such that applyinng $T^m$ to the diagram, for all $m \in \N$, remains a prepullback. 
\end{definition}
We will see below that a strong $T$-immersion will precisely be a map $f$ in a tangent category such that the projection naturality square for $f$ is a $T$-prepullback. 
 
\begin{definition}\label{Defn: Strong Timmersion}
In a tangent category $\Cscr$, a map $f:X \to Y$ is a \emph{strong $T$-immersion} if the naturality square
\[
\begin{tikzcd}
TX \ar[d, swap]{}{p_X} \ar[r]{}{Tf} & TY \ar[d]{}{p_Y} \\
X \ar[r, swap]{}{f} & Y
\end{tikzcd}
\]
is a $T$-prepullback.
\end{definition}

We now show that strong $T$-immersions are stable under composition and $T$-pullbacks. To go about this, however, we need to provide a folkloric structural lemma for the sake of completeness. While we cannot and do not claim originality of the result\footnote{We have been unable to find it directly stated in the literature, but the second author learned it from Robin Cockett, who informed the second author that this is a well-known structural result.}, it is a useful technical tool which we will require.

\begin{lemma}[Folklore]\label{Lemma: The prepullback stacking lemma}
Consider commutative diagrams with squares numbered as below:
\[
\begin{tikzcd}
X \ar[r, ""{name = UL}]{}{k} \ar[d, swap]{}{f} & Y \ar[d]{}{g} \ar[r, ""{name = UR}]{}{\ell} & Z \ar[d]{}{h} \\
W \ar[r, swap, ""{name = DL}]{}{k^{\prime}} & V \ar[r, swap, ""{name = DR}]{}{\ell^{\prime}} & U
\ar[from = UL, to = DL, symbol = {\rotatebox[origin=c]{90}{$(1)$}}]
\ar[from = UR, to = DR, symbol = {\rotatebox[origin=c]{90}{$(2)$}}]
\end{tikzcd}
\qquad
\begin{tikzcd}
X \ar[r, ""{name = U}]{}{\ell \circ k} \ar[d, swap]{}{f} & Z \ar[d]{}{h} \\
W \ar[r, swap, ""{name = D}]{}{\ell^{\prime} \circ k^{\prime}} & U
\ar[from = U, to = D, symbol = {\rotatebox[origin=c]{90}{$(3)$}}]
\end{tikzcd}
\]
Then:
\begin{enumerate}[{\em (i)}] 
    \item If Square $(3)$ is a prepullback then Square $(1)$ is a prepullback as well.
    \item If Square $(1)$ is a prepullback and Square $(2)$ is a prepullback then Square $(3)$ is a prepullback.
\end{enumerate}
In particular, if Square $(2)$ is a pullback then Square $(1)$ is a prepullback if and only if Square $(3)$ is a prepullback.
\end{lemma}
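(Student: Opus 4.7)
The plan is to argue both claims by direct diagram chasing from the definition of prepullback. Since prepullback is a weaker condition than pullback (uniqueness without existence), it is expected to paste well in essentially the same way as pullbacks paste, and both parts follow by extracting the uniqueness portions of the classical pasting lemma.

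For part $(i)$, I would assume Square $(3)$ is a prepullback and unpack what being a prepullback asks of Square $(1)$. Given a commutative cone $(s:V \to Y,\,t:V \to W)$ over the cospan $W \xrightarrow{k^{\prime}} V \xleftarrow{g} Y$ and two candidate mediators $u_1, u_2:V \to X$ with $k \circ u_i = s$ and $f \circ u_i = t$, I would build an induced cone $(\ell \circ s,\,t)$ over the outer cospan $W \xrightarrow{\ell^{\prime}\circ k^{\prime}} U \xleftarrow{h} Z$, with commutativity $h \circ \ell \circ s = \ell^{\prime} \circ g \circ s = \ell^{\prime} \circ k^{\prime} \circ t$ checked using Square $(2)$ and the cone relation. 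Both $u_1$ and $u_2$ mediate this new cone (since $\ell \circ k \circ u_i = \ell \circ s$ and $f \circ u_i = t$), and the prepullback property of $(3)$ forces $u_1 = u_2$.

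For part $(ii)$, the key idea is to split uniqueness into two stages. Given a cone $(s:V \to Z,\,t:V \to W)$ over Square $(3)$ and two mediators $u_1, u_2:V \to X$, I would first show $k \circ u_1 = k \circ u_2$ by viewing these as two mediators for the cone $(s,\,k^{\prime}\circ t)$ over Square $(2)$; the required commutativities are $\ell \circ k \circ u_i = \ell \circ k \circ u_i = s$ and $g \circ k \circ u_i = k^{\prime} \circ f \circ u_i = k^{\prime} \circ t$, both of which follow from the commutativity of Square $(1)$ and the mediating property. Applying the prepullback hypothesis on $(2)$ yields the equality. Then, setting $s^{\prime} := k \circ u_1 = k \circ u_2$, both $u_1$ and $u_2$ mediate the cone $(s^{\prime},\,t)$ over Square $(1)$, so the prepullback hypothesis on $(1)$ gives $u_1 = u_2$.

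The final "in particular" claim is then immediate, since any pullback is a prepullback, so once $(2)$ is a pullback both implications $(i)$ and $(ii)$ apply and combine to give the stated biconditional. I expect the main obstacle to be purely bookkeeping: at each step one must verify that the claimed induced cones genuinely commute and that the candidate mediators genuinely mediate the induced cones. No new structural or conceptual input beyond the definition of prepullback is needed.
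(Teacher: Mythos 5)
Your proposal is correct and follows essentially the same argument as the paper: for (i) you transport two competing mediators of a cone over Square (1) to mediators of the composite cone over Square (3), and for (ii) you first use the prepullback property of Square (2) to force $k\circ u_1 = k\circ u_2$ and then the prepullback property of Square (1) to conclude $u_1 = u_2$. The only cosmetic issue is that you reuse the letter $V$ both for an object of the diagram and for the apex of the test cone; otherwise the bookkeeping you flag goes through exactly as you describe.
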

\begin{proof}
We first prove Statement (i). Assume that Square $(3)$ is a prepullback and that we have a commuting diagram of the form:
\[
\begin{tikzcd}
P \ar[dr, shift left = 1]{}{\alpha} \ar[dr, shift right = 1, swap]{}{\beta} \ar[drr, bend left =20]{}{s} \ar[ddr, swap, bend right = 20]{}{t} & \\
& X \ar[r]{}{k} \ar[d, swap]{}{f} & Y \ar[d]{}{g} \\
& W \ar[r, swap]{}{k^{\prime}} & V
\end{tikzcd}
\]
Because then the diagram
\[
\begin{tikzcd}
P \ar[dr, shift left = 1]{}{\alpha} \ar[dr, shift right = 1, swap]{}{\beta} \ar[drr, bend left =20]{}{\ell \circ s} \ar[ddr, swap, bend right = 20]{}{t} \\
& X \ar[r, ""{name = U}]{}{\ell \circ k} \ar[d, swap]{}{f} & Z \ar[d]{}{h} \\
& W \ar[r, swap, ""{name = D}]{}{\ell^{\prime} \circ k^{\prime}} & U
\end{tikzcd}
\]
commutes, using that Square $(3)$ is a prepullback implies that $\alpha = \beta$ and so proves that Square $(1)$ is a prepullback.

We now prove Statement (ii). Assume that Squares $(1)$ and $(2)$ are prepullbacks and that we have a commuting diagram of the form:
\[
\begin{tikzcd}
P \ar[dr, shift left = 1]{}{\alpha} \ar[dr, shift right = 1, swap]{}{\beta} \ar[drr, bend left =20]{}{s} \ar[ddr, swap, bend right = 20]{}{t} \\
& X \ar[r, ""{name = U}]{}{\ell \circ k} \ar[d, swap]{}{f} & Z \ar[d]{}{h} \\
& W \ar[r, swap, ""{name = D}]{}{\ell^{\prime} \circ k^{\prime}} & U
\end{tikzcd}
\]
Now $g \circ k = k^{\prime} \circ f$ and $f \circ \alpha = t = f \circ \beta$, the following diagram commutes: 
\[
\begin{tikzcd}
P \ar[dr, shift left = 1]{}{k \circ \alpha} \ar[dr, shift right = 1, swap]{}{k \circ \beta}  \ar[drr, bend left =20]{}{s} \ar[ddr, swap, bend right = 40]{}{k^{\prime} \circ t} \\
& Y \ar[r, ""{name = U}]{}{\ell} \ar[d, swap]{}{g} & Z \ar[d]{}{h} \\
& V \ar[r, swap, ""{name = D}]{}{\ell^{\prime}} & U
\end{tikzcd}
\]
Then using that Square $(2)$ is a prepullback, we get that $k \circ \alpha = k \circ \beta$. This then implies that the diagram
\[
\begin{tikzcd}
P \ar[dr, shift left = 1]{}{\alpha} \ar[dr, shift right = 1, swap]{}{\beta} \ar[drr, bend left =20]{}{k \circ \alpha} \ar[ddr, swap, bend right = 20]{}{t} \\
& X \ar[r, ""{name = U}]{}{k} \ar[d, swap]{}{f} & Y \ar[d]{}{g} \\
& W \ar[r, swap, ""{name = D}]{}{k^{\prime}} & V
\end{tikzcd}
\]
commutes as well. Using that Square $(1)$ is a prepullback gives that $\alpha = \beta$ and hence that Square $(3)$ is a prepullback as well.
\end{proof}

With this at hand we now have the technical ingredients to show that strong $T$-immersions are stable under composition, application of all powers of the tangent bundle functor, and $T$-pullbacks.

\begin{proposition}\label{Prop: Strong Immersions comp and Tm stable}
In a tangent category $\Cscr$, if $f:X \to Y$ and $g:Y \to Z$ are strong $T$-immersions then:
\begin{enumerate}[{\em (i)}] 
    \item The composite $g \circ f:X \to Z$ is a strong $T$-immersion.
    \item The map $T^mf$ is a strong $T$-immersion for all $m \in \N$.
    \item If the $T$-pullback 
\[
\begin{tikzcd}
W \ar[r]{}{\pi_1} \ar[d, swap]{}{\pi_0} & Z \ar[d]{}{g} \\
X \ar[r, swap]{}{f} & Y
\end{tikzcd}
\]
exists in $\Cscr$, then $\pi_1$ is a strong $T$-immersion as well.
\end{enumerate}
\end{proposition}
\begin{proof}
Statement (i) follows from an immediate application of Lemma \ref{Lemma: The prepullback stacking lemma}.i. Statement (ii) holds because by assumption all powers of the tangent bundle functor preserve the $T$-prepullback
\[
\begin{tikzcd}
TX \ar[r]{}{Tf} \ar[d, swap]{}{p_X} & TY \ar[d]{}{p_Y} \\
X \ar[r]{}{f} & Y
\end{tikzcd}
\]
and use of the various whiskerings of the canonical flip allow one to drag $T^m$ of the square above to the isomorphic square with vertical maps $(p \ast T^m)_X$ and $(p \ast T^m)_Y$. 

For statement (iii), consider the commuting diagram:
\[
\begin{tikzcd}
TW \ar[r]{}{p_W} \ar[d, swap]{}{T\pi_1} & W \ar[d]{}{\pi_1} \ar[r]{}{\pi_0} & X \ar[d]{}{f} \\
TZ \ar[r, swap]{}{p_Z} & Z \ar[r, swap]{}{g} & Y
\end{tikzcd}
\]
Because the rightmost square is a pullback by assumption, by Lemma \ref{Lemma: The prepullback stacking lemma} it suffices to prove the whole rectangle is a prepullback in order to conclude that the leftmost square is a prepullback. To this end observe also that the commuting rectangle above is equal to the commuting rectangle
\[
\begin{tikzcd}
TW \ar[r]{}{T\pi_0} \ar[d, swap]{}{T\pi_1} & TX \ar[r]{}{p_X} \ar[d]{}{Tf} & X \ar[d]{}{f} \\
TZ \ar[r, swap]{}{Tg} & TY \ar[r, swap]{}{p_Y} & Y
\end{tikzcd}
\]
we see that is suffices to prove that this last rectangle is a prepullback. To this end assume that we have a commuting diagram:
\[
\begin{tikzcd}
V \ar[drr, bend left = 20]{}{t} \ar[ddr, swap, bend right = 20]{}{s} \ar[dr, shift left = 1]{}{\alpha} \ar[dr, shift right = 1, swap]{}{\beta} \\
 & TW \ar[r]{}{T\pi_1} \ar[d]{}{p_X \circ T\pi_0} & TZ \ar[d]{}{p_Y \circ Tg} \\
 & X \ar[r, swap]{}{f} & Y
\end{tikzcd}
\]
Because $TW$ is a pullback with projections $T\pi_0$ and $T\pi_1$, to prove that $\alpha = \beta$ it suffices to prove that $T\pi_1 \circ \alpha = T\pi_1 \circ \beta$ and $T\pi_0 \circ \alpha = T\pi_0 \circ \beta$. The first of these identities can be read off the diagram directly as
\[
T\pi_1 \circ \alpha = t = T\pi_1 \circ \beta.
\]
For the other identity, consider that the computations
\begin{align*}
& Tf \circ T\pi_0 \circ \alpha = Tg \circ T\pi_1 \circ \alpha = Tg \circ t = Tg \circ T\pi_1 \circ \beta = Tf \circ T\pi_0 \circ \beta
\end{align*}
and
\begin{align*}
& p_X \circ T\pi_0 \circ \alpha = s = p_X \circ T\pi_0 \circ \beta
\end{align*}
imply that the diagram
\[
\begin{tikzcd}
V \ar[drr, bend left = 20]{}{Tg \circ t} \ar[ddr, swap, bend right = 60]{}{s} \ar[dr, shift left = 1]{}{T\pi_0 \circ \alpha} \ar[dr, shift right = 1, swap]{}{T\pi_0 \circ \beta} \\
 & TX \ar[r]{}{Tf} \ar[d, swap]{}{p_X} & TY \ar[d]{}{p_Y} \\
 & X \ar[r, swap]{}{f} & Y
\end{tikzcd}
\]
commutes. Using that $f$ is a $T$-immersion gives that $T\pi_0 \circ \alpha = T\pi_0 \circ \beta$ and so allows us to deduce that $\alpha = \beta$. So that the naturality square for $\pi_1$ is a prepullback. Running this same argument for arbitrary $m \in \N$ gives that $\pi_1$ is a $T$-immersion.
\end{proof}

When given a $p$-carrable morphism $f:X \to Y$, it is significantly simpler to check when $f$ is a strong $T$-immersion by making use of the horizontal descent of $f$. In particular, we will show that $f$ is a strong $T$-immersion precisely when its horizontal descent of $f$ is $T$-monic. 

\begin{lemma}\label{Lemma: Strong Timmersion if and only if thetaf monic}
In a tangent category $\Cscr$, if $f:X \to Y$ is $p$-carrable then $f$ is a strong $T$-immersion if and only if $\theta_f:TX \to f^{\ast}(TY)$ is $T$-monic.
\end{lemma}
\begin{proof}
$\implies:$ Assume that $f$ is a strong $T$-immersion and that there are maps $g$ and $h$ rendering the diagram
\[
\begin{tikzcd}
Z \ar[r, shift left = 1]{}{g} \ar[r, shift right = 1, swap]{}{h} & TX \ar[r]{}{\theta_f} & f^{\ast}(TY)
\end{tikzcd}
\]
commutative. Then $p_X \circ g = \pi_0 \circ \theta_f \circ g = \pi_1 \circ \theta_f \circ g = Tf \circ g$, $p_X \circ h = Tf \circ h$, and both $g$ and $h$ make the diagram
\[
\begin{tikzcd}
Z \ar[drr, bend left = 20]{}{} \ar[ddr, swap, bend right = 20]{}{} \ar[dr, shift left = 0.5]{}{g} \ar[dr, shift right = 0.5, swap]{}{h} \\
 & TX \ar[r]{}{Tf} \ar[d, swap]{}{p_X} & TY \ar[d]{}{p_Y} \\
 & X \ar[r, swap]{}{f} & Y
\end{tikzcd}
\]
commute. But since the naturality square for $p$ is a prepullback we have that $g = h$ and so that $\theta_f$ is monic. Finally, using that all powers $T^m$ preserve the given prepullback and running the same argument for $T^m\theta_f$ shows that $\theta_f$ is $T$-monic.

$\impliedby:$ If $\theta_f$ is $T$-monic then it is immediate that for all $m \in \N$ the naturality square for $(T^m \ast p)$ at $T^mf$ has at most one factorization through $T^{m+1}X$ by virtue of $T^m\theta_f$ being monic.
\end{proof}

We now characterize the strong $T$-immersions in our main example categories of interest. 

\begin{example}\label{Example: TImmersions in SMan}
In $\SMan$, a morphism $f:X \to Y$ is a strong $T$-immersion if and only if it is an immersion of smooth manifolds. Recall from Example \ref{Example: What  is thetaf in SMan} that $\theta_f:TX \to f^{\ast}(TY)$ is given by $\theta_f\left(x,\overrightarrow{v}\right) = \left(x,D[f](x)\cdot \overrightarrow{v}\right).$ Moreover, since $p_Y:TY \to Y$ is a submersion of manifolds, we have that:
\[f^{\ast}(TY) \cong X \times_{Y} TY \cong \coprod_{x \in X} T_{f(x)}Y.\] 
Thus we see that $\theta_f$ is monic if and only if for all $x \in X$, the map $T_xf:T_xX \to T_{f(x)}Y$ is injective. However, this is the case if and only if $f$ is an immersion.
\end{example}

\begin{example}\label{Example: TImmersions in CAlgR}
Let $R$ be a commutative rig. In $\mathbf{CAlg}_R$ the strong $T$-immersions are precisely the injective morphisms of $R$-algebras. To see this note the diagram
\[
\begin{tikzcd}
A[\epsilon] \ar[dr, dashed]{}{\theta_f} \ar[drr, bend left = 20]{}{f[\epsilon]} \ar[ddr, swap, bend right = 20]{}{p_A} \\
 & f^{\ast}(B[\epsilon]) \ar[r]{}{\pi_1} \ar[d, swap]{}{\pi_0} & B[\epsilon] \ar[d]{}{p_B}  \\
 & A \ar[r, swap]{}{f} & B
\end{tikzcd}
\]
has $f^{\ast}(B[\epsilon]) \cong A \ltimes B$. Under the isomorphism $A \ltimes B \cong \left\lbrace (a, b\epsilon) \; | \; a \in A, b \in B \right\rbrace$, we see that $\theta_f:A[\epsilon] \to f^{\ast}(B[\epsilon])$ takes the form $\theta_f(a+x\epsilon) = (a,f(x)).$ Thus we see immediately that $\theta_f$ is monic if and only if $f$ is monic.
\end{example}

\begin{example}\label{Example: Timmersions in CAlgop}
Let $R$ be a commutative ring. In $\mathbf{CAlg}_R^{\op}$ the strong $T$-immersions are precisely the formally unramified morphisms of $R$-algebras\footnote{Which are $R$-algebra morphisms $f: A \to B$ such that $\Kah{B}{A} \cong 0$ (cf.\@ \cite[D{\'e}finition 19.10.2 \& Proposition 20.7.4]{EGA04}).}. To see this, consider that the map $\Spec f:\Spec B \to \Spec A$ induces corresponding structure map $\theta_{f}:T_{B/R} \to f^{\ast}(T_{A/R})$ given by $\theta_f = \Spec(\Sym_B(v))$ for $v$ the map in the relative cotangent exact sequence:
\[
\begin{tikzcd}
\Kah{A}{R} \otimes_A B \ar[r]{}{v} & \Kah{B}{R} \ar[r] & \Kah{B}{A} \ar[r] & 0
\end{tikzcd}
\]
Now, since $\DBun(B)^{\op} \simeq \Mod{B}$ by \cite{GeoffJSDiffBunComAlg}, we see that $\Spec(\Sym_B(v))$ is monic if and only if $v$ is an epimorphism of $B$-modules. However, as the relative cotangent sequence is exact, $v$ is an epimorphism if and only if $0 \cong \operatorname{Coker}(v) \cong \Kah{B}{A}$.
\end{example}

\begin{example}\label{Example: Timmersions in Schemes}
The above example extends to the full category of schemes $\Sch_{S}$ for arbitrary base schemes $S$. Indeed, in $\Sch_{/S}$, the morphisms for which $\theta_f$ is monic are precisely the formally unramified morphisms of $S$-schemes. Recall from \cite[Proposition 17.2.1]{EGA44} that a morphism of schemes $f:X \to Y$ is formally unramified if and only if the sheaf of relative K{\"a}hler differentials satisfies $\Kah{X}{Y} \cong 0$. Recall also from the relative cotangent sequence for K{\"a}hler differentials of schemes that there is an exact sequence of quasi-coherent sheaves on $X$
\[
\Kah{Y}{S} \otimes_{\Ocal_Y} \Ocal_X \to \Kah{X}{S}  \to \Kah{X}{Y} \to 0
\]
so $\Kah{X}{Y}$ is the cokernel of the map $\Kah{Y}{S} \otimes_{\Ocal_Y} \Ocal_X \to \Kah{X}{S}$ in $\QCoh(X)$. We now use the equivalence $\QCoh(X) \simeq \DBun(X)^{\op}$ of \cite{GeoffJSDiffBunComAlg} to observe that the equivalences
\begin{prooftree}
    \AxiomC{$\Kah{X}{Y} \cong 0$ in $\QCoh(X)$}
    \UnaryInfC{$T_{X/Y} \cong 0$ in $\DBun(X)$}
\end{prooftree}
and
\begin{prooftree}
    \AxiomC{Exact Sequence $\Kah{Y}{S} \otimes_{\Ocal_Y} \Ocal_X \to \Kah{X}{S} \to \Kah{X}{Y} \to 0$ in $\QCoh(X)$}
    \UnaryInfC{Exact Sequence $X \to T_{X/Y} \to T_{X/S} \to T_{Y/S} \times_Y X$ in $\DBun(X)$}
\end{prooftree}
combine to allow us to deduce
\begin{prooftree}
    \AxiomC{Exact Sequence $X \to T_{X/Y} \to T_{X/S} \to T_{Y/S} \times_Y X$ in $\DBun(X)$}
    \UnaryInfC{Exact Sequence $X = X \to T_{X/S} \to T_{Y/S} \times_Y X$ in $\DBun(X)$}
    \UnaryInfC{$\theta_f:T_{X/S} \to T_{Y/S} \times_Y X$ a kernel in $\DBun(X)$}
    \UnaryInfC{$\theta_f$ monic in $\DBun(X)$, as $\DBun(X) \simeq \QCoh(X)^{\op}$ is Abelian}
\end{prooftree}
\end{example}

\begin{example}\label{Example: Immersions in CDC} In a CDC, as $\theta_f = \langle \pi_0, D[f]\rangle$ it follows that $\theta_f$ is $T$-monic if and only if $D[f]$ is $T$-monic. Therefore, by Lemma \ref{Lemma: Strong Timmersion if and only if thetaf monic}, $f$ is a strong $T$-immersion if and only if $D^n[f]$ is monic for every $n \in \mathbb{N}$. In particular, if $f$ is linear then $f$ is a strong $T$-immersion if and only if $f$ is monic.
\end{example}

\subsection{Strong Immersions and Unramification}\label{Subsection: STrong Immersions and Unramification}
We now show that strong $T$-immersions are $T$-unramified morphisms in any tangent category.
 
\begin{proposition}\label{Prop: Immersion Unramified}
In a tangent category $\Cscr$, if $f:X \to Y$ is a strong $T$-immersion then $f$ is also $T$-unramified.
\end{proposition}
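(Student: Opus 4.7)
The plan is to exhibit the $0$-naturality square for $f$ directly as a pullback, and then to note that the same argument runs $T^m$-fold without modification, yielding preservation by all $T^m$. The two key facts in play are that $p_X \circ 0_X = \mathrm{id}_X$ (so $0_X$ is split monic with retract $p_X$) and that the $p$-naturality square for $f$ is by hypothesis a prepullback preserved by all powers of $T$. For the base case: given a cone $(g : Z \to Y,\, h : Z \to TX)$ with $Tf \circ h = 0_Y \circ g$, I would propose $\rho := p_X \circ h$ as the factorization through $X$. The identity $f \circ \rho = g$ is immediate from $p_Y \circ Tf = f \circ p_X$ together with $p_Y \circ 0_Y = \mathrm{id}_Y$. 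The identity $0_X \circ \rho = h$ would follow by checking that both $h$ and $0_X \circ \rho$ factor the common cone $(0_Y \circ g,\, \rho)$ over the $p$-naturality square at $f$, and then invoking the prepullback property to conclude the two coincide. Uniqueness of $\rho$ is automatic because $0_X$ is split monic.

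For preservation by $T^m$, the identical argument runs with $T^m 0_X$, $T^m p_X$, and $T^{m+1} f$ in place of $0_X$, $p_X$, and $Tf$. The prepullback invoked is now the $T^m$-image of the $p$-naturality square for $f$, which remains a prepullback by the $T$-immersion hypothesis, and $T^m 0_X$ is split monic with retract $T^m p_X$. Every compatibility check reduces to one of the naturality identities $p_Y \circ Tf = f \circ p_X$, $Tf \circ 0_X = 0_Y \circ f$, or $p \circ 0 = \mathrm{id}$, each applied under $T^m$. Crucially, this approach sidesteps any comparison between $T^m 0_X$ and $0_{T^m X}$ via canonical flips: the $T^m$-image of the $0$-naturality square is handled on its own terms rather than reduced to the $0$-naturality square for $T^m f$.

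The step that warrants the most care is the verification that $h$ and $T^m 0_X \circ \rho$ give rise to the same factoring cone over the relevant prepullback. This is pure bookkeeping, but it requires honestly tracking how $p$ and $0$ interact with powers of $T$ via naturality. Once this is in place, the remainder of the argument is formal.
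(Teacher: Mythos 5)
Your proposal is correct and follows essentially the same route as the paper's proof: factor the cone via $\rho = p_X \circ h$, verify $f \circ \rho = g$ by naturality of $p$, and obtain $0_X \circ \rho = h$ by exhibiting both as cone maps into the $p$-naturality prepullback, with uniqueness from $0_X$ being split monic and the $T^m$ case handled mutatis mutandis. No gaps.
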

\begin{proof}
Assume that we have a cone
\[
\begin{tikzcd}
C \ar[r]{}{\beta} \ar[d, swap]{}{\alpha} & Y \ar[d]{}{0_Y} \\
TX \ar[r, swap]{}{Tf} & TY
\end{tikzcd}
\]
and note that if we can show that $(C,\alpha,\beta)$ factors through $(X,0_X,f)$ then because $0_X$ is a section (and hence monic), it will follow immediately that $(X,0_X,f)$ is the terminal cone over $TX \xrightarrow{Tf} TY \xleftarrow{0_Y} Y$. To this end we compute that
\[
f \circ p_X \circ \alpha = p_Y \circ Tf \circ \alpha = p_Y  \circ 0_Y \circ \beta = \beta
\]
so the diagram
\[
\begin{tikzcd}
C \ar[r]{}{0_Y \circ \beta} \ar[d, swap]{}{p_X \circ \alpha} & TY \ar[d]{}{p_Y} \\
X \ar[r, swap]{}{f} & Y
\end{tikzcd}
\]
commutes. But now consider the diagram:
\[
\begin{tikzcd}
 & C \ar[ddr, bend left = 60]{}{0_Y \circ \beta} \ar[ddl, swap, bend right = 60]{}{p_X \circ \alpha} \ar[d, shift left = 0.5]{}{\alpha} \ar[d, shift right = 0.5, swap]{}{0_X \circ p_X \circ \alpha}  \\
 & TX \ar[dr]{}{Tf} \ar[dl, swap]{}{p_X} \\
X \ar[dr, swap]{}{f} & & TY \ar[dl]{}{p_Y} \\
 & Y
\end{tikzcd}
\]
To see it commutes, we compute that:
\begin{align*}
 T(f) \circ \alpha &= 0_Y \circ \beta, \\
 p_X \circ \alpha &= p_X \circ 0_X \circ p_X \circ \alpha, \\
 T(f) \circ 0_X \circ p_X \circ \alpha &= 0_Y \circ f \circ p_X \circ \alpha = 0_Y \circ p_Y \circ 0_Y \circ \beta = 0_Y \circ \beta.
\end{align*}
As such $(C, p_X \circ \alpha, 0_Y \circ \beta)$ is a cone over $X \xrightarrow{f} Y \xleftarrow{p_Y} TY$ with cone maps $\alpha, 0_X \circ p_X \circ \alpha:C \to TX$. Because the naturality square for $p$ at $f$ is a prepullback, $\alpha = 0_X \circ p_X \circ \alpha$ and so the diagram
\[
\begin{tikzcd}
C \ar[drr, bend left = 20]{}{\beta} \ar[ddr, swap, bend right = 20]{}{p_X \circ \alpha} \ar[dr, dashed]{}{p_X \circ \alpha} \\
 & X \ar[r]{}{f} \ar[d, swap]{}{0_X} & Y \ar[d]{}{0_Y} \\
 & TX \ar[r, swap]{}{Tf} & TY
\end{tikzcd}
\]
commutes. Finally that all powers $T^m$ preserve these pullbacks follows mutatis mutandis by recognizing that since $T^m$ preserves the prepullback giving the $T$-immersive structure on $f$, we can argue as above after applying $T^m$ to the zero naturality square.
\end{proof}

We can use Theorem \ref{Thm: ZeroCarrable makes relative bundle a kernel in DBun} above to show that when each $\DBun(X)$ is enriched in the category $\mathbf{Ab}$ of Abelian groups, then $\theta_f$ is monic if and only if the relative tangent bundle is trivial in the sense that $T_{X/Y} \cong X$. This gives an alternative way to characterize the strong $T$-immersions in $p$-carrable Rosick{\'y} tangent categories as exactly the $T$-unramified maps. To prove this we recall the following well-known characterization of maps with trivial kernels coinciding with monics in $\mathbf{Ab}$-enriched categories with zero objects. We present a sketch of the lemma for the sake of completeness.

\begin{lemma}\label{Lemma: The homological algebra characcterization of trivial kernels in additive categories}
Let $\Ascr$ be a $\mathbf{Ab}$-enriched category with a zero object. The following are equivalent for a morphism $f:X \to Y$:
\begin{enumerate}[{\em (i)}] 
    \item The map $f$ is monic.
    \item For any map $g:Z \to X$, $f \circ g = 0$ if and only if $g = 0$.
    \item The equalizer $\operatorname{Eq}(f,0_{XY}:X \to Y)$ satisfies $\operatorname{Eq}(f,0_{XY}) \cong 0$.
\end{enumerate}
\end{lemma}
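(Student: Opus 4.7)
The plan is to establish the three equivalences by proving a cyclic chain $(i) \Rightarrow (ii) \Rightarrow (iii) \Rightarrow (i)$, using only the $\mathbf{Ab}$-enrichment (so that precomposition by any map sends the zero morphism to the zero morphism, and differences of parallel morphisms make sense) and the universal property of the zero object. Throughout, I will write $0_{ZX}$ for the zero map $Z \to X$, which factors through the zero object $0$ of $\Ascr$.

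For $(i) \Rightarrow (ii)$, I will note that $g = 0$ obviously gives $f \circ g = 0$ by $\mathbf{Ab}$-enrichment, and conversely if $f \circ g = 0 = f \circ 0_{ZX}$ then monicity of $f$ forces $g = 0_{ZX}$. For $(ii) \Rightarrow (iii)$, I will consider the equalizer $e: E \to X$ of $f$ and $0_{XY}$. Since $f \circ e = 0$, condition $(ii)$ gives $e = 0_{EX}$. But $e$ is an equalizer and therefore monic. The identity and the zero endomorphism on $E$ both compose with the (now zero) equalizer $e$ to give the same map, so monicity of $e$ forces $\mathrm{id}_E = 0_{EE}$, which is precisely the statement that $E$ is isomorphic to the zero object.

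For $(iii) \Rightarrow (i)$, assume $\mathrm{Eq}(f, 0_{XY}) \cong 0$, and let $g, h: Z \to X$ satisfy $f \circ g = f \circ h$. Subtracting using $\mathbf{Ab}$-enrichment, $f \circ (g - h) = 0 = 0_{XY} \circ (g-h)$, so $g - h$ factors uniquely through the equalizer $E \cong 0$. Any map factoring through the zero object is the zero map, so $g - h = 0$, i.e., $g = h$, proving that $f$ is monic.

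I do not expect any real obstacle here: the result is classical linear-algebra-in-an-additive-category, and the only subtlety is being careful to use the monicity of the equalizer in the step $(ii) \Rightarrow (iii)$ (a zero map being monic is exactly what forces the source to be a zero object). Given this, I will present the three implications in succession and leave the bookkeeping of $\mathbf{Ab}$-enriched identities (bilinearity of composition, behaviour of zero morphisms) as routine.
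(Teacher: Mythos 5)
Your proposal is correct and follows essentially the same cyclic chain $(i)\Rightarrow(ii)\Rightarrow(iii)\Rightarrow(i)$ as the paper, with identical arguments for the first and third implications. The only (cosmetic) difference is in $(ii)\Rightarrow(iii)$: the paper verifies directly that the zero object satisfies the universal property of the equalizer, whereas you take the equalizer $e:E\to X$ as given, deduce $e=0$ from $(ii)$, and use monicity of $e$ to force $\id_E = 0_{EE}$ — both routes are valid and equally short.
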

\begin{proof} For $(i) \implies (ii)$: If $f$ is monic then the equation $f \circ 0 = 0 = f \circ g$ implies that $g = 0$. For $(ii) \implies (iii)$: if there is a map making
\[
\begin{tikzcd}
Z \ar[r]{}{g} & X \ar[r, shift left = 1]{}{f} \ar[r, shift right = 1, swap]{}{0_{XY}} & Y
\end{tikzcd}
\]
commute if and only if $g$ is the zero map, then such a $g$ uniquely factors through the diagram:
\[
\begin{tikzcd}
Z \ar[rr]{}{g} \ar[dr, swap]{}{\bang_Z} & & X \\
 & 0 \ar[ur, swap]{}{\gnab_X}
\end{tikzcd}
\]
Thus $\operatorname{Eq}(f,0_{XY}) \cong 0$. Lastly for $(iii) \implies (i)$: assume for some $g, h:Z \to X$ that the diagram
\[
\begin{tikzcd}
Z \ar[r, shift left = 1]{}{g} \ar[r, swap, shift right = 1]{}{h} & X \ar[r]{}{f} & Y
\end{tikzcd}
\]
and note that we have that $f \circ g = f \circ h$. Then $f \circ (g-h) = f \circ g - f \circ h = 0$ and so $g - h$ must factor through $\operatorname{Eq}(f,0_{XY})$. However, as $\operatorname{Eq}(f,0) \cong 0$ this implies that $g-h = 0_{ZX}$ and so that $g = h$.
\end{proof}

\begin{theorem}\label{Thm: Rosckiy Timmersion iff Tunramified}
In a Rosick{\'y} tangent category $\Cscr$, if $f: X \to Y$ is $p$-carrable, then $f:X \to Y$ is a strong $T$-immersion if and only if $f$ is $T$-unramified.
\end{theorem}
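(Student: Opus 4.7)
The forward direction, that every $T$-immersion is $T$-unramified, is already known from Proposition \ref{Prop: Immersion Unramified} and uses neither the $p$-carrable nor the Rosick\'y hypothesis, so the plan is to establish the converse: in a Rosick\'y tangent category, if $f:X\to Y$ is $p$-carrable and $T$-unramified, then $f$ is a $T$-immersion. By Lemma \ref{Lemma: Timmersion if and only if thetaf monic}, this amounts to showing that $\theta_f$ is $T$-monic, i.e.\ that $T^m\theta_f$ is monic in $\Cscr$ for every $m\in\N$.

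My first step would be to reduce this to the single statement ``$\theta_g$ is monic'' for every $p$-carrable $T$-unramified $g$. Indeed, by Lemma \ref{Lemma: Tm preserves unramified} every $T^mf$ is still $T$-unramified, and by Corollary \ref{Cor: pcarrable is power of Tm stable} it is still $p$-carrable; meanwhile Corollary \ref{Cor: The other higher powers of T to thetaf in one line} gives $\theta_{T^mf}=\tilde C_m\circ T^m\theta_f\circ C_m$ with $\tilde C_m,C_m$ isomorphisms, so monicness of $T^m\theta_f$ is equivalent to monicness of $\theta_{T^mf}$. Thus it suffices to prove the base-level claim that $\theta_g$ is monic for any such $g$.

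For the core step I would take $\alpha,\beta:Z\to TX$ with $\theta_f\circ\alpha=\theta_f\circ\beta$. Unpacking $\theta_f=\langle p_X,Tf\rangle$ gives $p_X\circ\alpha=p_X\circ\beta=:q$ and $Tf\circ\alpha=Tf\circ\beta$. The Rosick\'y hypothesis makes $p_X:TX\to X$ an internal abelian group, so the morphisms $Z\to TX$ lying over $q$ form an abelian group with zero $0_X\circ q$, and I can form $\alpha-\beta:=\operatorname{add}_X\circ\langle\alpha,\operatorname{neg}_X\circ\beta\rangle$. The naturality of $\operatorname{add}$, $0$ and $\operatorname{neg}$ makes $(Tf,f)$ a morphism of abelian group bundles, so applying $Tf$ yields $Tf\circ(\alpha-\beta)=Tf\circ\alpha-Tf\circ\beta=0_Y\circ f\circ q$. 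The $T$-unramified pullback square for $f$ then produces a unique $\rho:Z\to X$ with $0_X\circ\rho=\alpha-\beta$ and $f\circ\rho=f\circ q$; postcomposing the first equation with $p_X$ forces $\rho=q$, whence $\alpha-\beta=0_X\circ q$ is the zero element, i.e.\ $\alpha=\beta$.

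The main obstacle is conceptual rather than computational: the entire argument hinges on being able to form and then cancel the difference $\alpha-\beta$, which requires the negation $\operatorname{neg}$. Without negatives one can only produce a sum of $\alpha$ with a negative copy of $\beta$ via $\operatorname{neg}$, and no analogous trick using only commutative-monoid structure will turn the equation $Tf\circ\alpha=Tf\circ\beta$ into a single cone over the span defining the $T$-unramified pullback. This is precisely why the equivalence fails in general tangent categories and why the Rosick\'y hypothesis is indispensable here.
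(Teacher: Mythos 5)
Your proposal is correct, but it takes a genuinely different route from the paper. The paper's proof is a two-line appeal to the relative cotangent sequence: by Theorem \ref{Thm: ZeroCarrable makes relative bundle a kernel in DBun} the map $\pi_0\colon T_{X/Y}\to TX$ is the kernel of $\theta_f$ in $\DBun(X)$, which in a Rosick\'y tangent category is $\mathbf{Ab}$-enriched with zero object, so Lemma \ref{Lemma: The homological algebra characcterization of trivial kernels in additive categories} identifies ``$\theta_f$ monic'' with ``$T_{X/Y}\cong X$'', i.e.\ with $f$ being $T$-unramified. You instead argue entirely inside $\Cscr$ with generalized elements: given $\alpha,\beta\colon Z\to TX$ equalized by $\theta_f$, you form $\alpha-\beta$ in the fibrewise abelian group of maps over $q=p_X\circ\alpha$, push it forward along $Tf$ to land on the zero section, and use the universal property of the $T$-unramified pullback square to force $\alpha-\beta=0_X\circ q$; the passage to all powers $T^m$ is then handled explicitly via Lemma \ref{Lemma: Tm preserves unramified}, Corollary \ref{Cor: pcarrable is power of Tm stable} and Corollary \ref{Cor: The other higher powers of T to thetaf in one line}. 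What the paper's route buys is brevity and a conceptual packaging (``monic iff trivial kernel'' in an additive category) that ties the theorem directly to the cotangent sequence; what your route buys is that it is self-contained at the level of $\Cscr$ and sidesteps the point the paper's proof leaves implicit, namely how monicity of $\theta_f$ in $\DBun(X)$ relates to $T$-monicity in $\Cscr$. Your closing remark on why negation is indispensable is consistent with the paper's $\operatorname{add}_{\N}$ counterexample in $\mathbf{CMon}$. Both directions check out; the argument is sound.
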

\begin{proof}
Because $\Cscr$ is a Rosick{\'y} tangent category, for every object $X \in \Cscr_0$ the category $\DBun(X)$ has a zero object and is $\mathbf{Ab}$-enriched. From here the theorem follows by combining Theorem \ref{Thm: ZeroCarrable makes relative bundle a kernel in DBun} and Lemma \ref{Lemma: The homological algebra characcterization of trivial kernels in additive categories}.
\end{proof}

While every strong $T$-immersion is $T$-unramified, the converse is not necessarily true in an arbitrary tangent category. By Theorem \ref{Thm: Rosckiy Timmersion iff Tunramified} this distinction may be detected only by non-Rosick{\'y} tangent categories. We conclude this section with an example of a tangent category that has a $T$-unramified map that is not a $T$-immersion. 

\begin{example}\label{Example: CDC where T unram is not strong immersion} 
The category $\mathbf{CMon}$ of commutative monoids is a CDC where every map is linear, so for any map $f:X \to Y$, the differential $D[f]:X \times X \to Y$ satisfies $D[f](x,y) = f(y)$ and so the horizontal descent $\theta_f:X \times X \to X \times Y$ takes the form $(x,y) \mapsto (x,f(y))$. As such, by the explanation in Example \ref{Example: T monics for CDC} regarding linear morphisms, $f:M \to N$ is a strong $T$-immersion if and only if $f$ is monic, which in $\mathbf{CMon}$ means that $f$ is injective. On the other hand, by Example \ref{Example: Tunramified in CDC}, $f$ is $T$-unramified if and only if $\operatorname{Ker}(f) \cong 0$. So to give an example of a $T$-unramified map that is not a strong $T$-immersion, it suffices to give an example of a monoid morphism $f$ whose kernel is trivial but for which $f$ is not injective. So consider the addition on the natural numbers $\operatorname{add}_{\N}:\N \times \N \to \N$. Note $\operatorname{Ker}(\operatorname{add}_{\N})=0$; however $\operatorname{add}_{\N}$ is not injective since, for example, $\operatorname{add}_{\N}(2,3) = 2+3 = 5 = 3+2 = \operatorname{add}_{\N}(3,2).$ Thus $\operatorname{add}_{\N}$ is $T$-unramified but not a strong $T$-immersion. 
\end{example}

\section{Tangent Immersions}\label{Section: TImmersions}

We now move on from the more globally defined notion of $T$-immersions we just defined and studied and move to a notion of $T$-immersion which more directly mirrors the sprit of the definition as presented in $\SMan$. Namely, we define a flavour of $T$-immersion which is fundamentally based on the idea that it not just that immersions have monic horizontal descent but instead based on the idea that immersions are \emph{linear} and monic in the fibres of their differentials. Because this notion of immersion is more involved, i.e., it requires us to use the category of differential bundles, in order for it to make sense it requires more technology at hand. This in particular means that while asking for the horizontal descent to be only linear and monic, we of course need to know that we can talk about the horizontal descent in the first place. This necessitates that this weaker notion of immersion be definable in fewer tangent categories than the strong $T$-immersions of Definition \ref{Defn: Strong Timmersion}. However, we will see that the trade-off is that it is often much simpler to verify that any given $p$-carrable map satisfies this weaker notion of $T$-immersion.

\subsection{Definitions, Basic Properties, and Examples of Immersions}\label{Subsection: Immersions defns and examples} Because the fundamental notion of a $T$-immersion is that it is a monic \emph{linear} map of differential bundles over $X$, we need to find a way to reformulate what $T$-limits and $T$-colimits are in this more linearized sense. We cannot directly ask that a diagram in $\DBun(X)$ is a $T$-limit or $T$-colimit because the tangent functor $T$ does \emph{not} restrict to an endofunctor on $\DBun(X)$. Instead, by \cite[Lemma 2.5]{GeoffRobinBundle}, the tangent functor restricts to a functor $T_{\ast}:\DBun(X) \to \DBun(TX)$  which sends a differential bundle $q:E \to X$ to the differential bundle $Tq: TE \to TX$, which simply applies $T$ to the differential bundle structural maps, but where the lift also involves the canonical flip $c_E \circ T\lambda: TE \to T^2E$. 

In order to get a linear notion of $T$-limit or $T$-colimit, we thus are not simply led to ask that $T^m$ preserve the (co)limit of a certain diagram for all $m \in \N$. Instead we need to ask that each of the induced functors $T_{\ast}^m:\DBun(X) \to \DBun(T^mX)$ sends the (co)limit of a given diagram $D:I \to \DBun(X)$ to a limit of the correspondign diagram $T^m_{\ast} \circ D:I \to \DBun(T^mX)$ for all $m \in \N$. In order to make this precise, we present the definition below.

\begin{definition}\label{Defn: Bundle Linear Limit/Colimit}
Let $\Cscr$ be a tangent category and let $X$ be an object of $\Cscr$. We say that an object $L$ of $\DBun(X)$ is a \emph{linear $T$-(co)limit of $D$} of a diagram $D:I \to \DBun(X)$ if $L$ is a (co)limit of $D$ in $\DBun(X)$ and for all $m \in \N$ the induced functor $T^m_{\ast}:\DBun(X) \to \DBun(T^mX)$ preserves the (co)limit $L$.
\end{definition}

\begin{remark}\label{Remark: Linear Tmonic}
If we want to say that if there is a map $f:L \to E$ in $\DBun(X)$ for which the diagram
\[
\begin{tikzcd}
L \ar[r, equals] \ar[d, equals] & L \ar[d]{}{f} \\
L \ar[r, swap]{}{f} & E
\end{tikzcd}
\]
is a linear $T$-pullback then we will instead say that $f$ is a \emph{linear $T$-monomorphism} (in $\DBun(X)$, should specification be necessary). Similarly, if instead the diagram
\[
\begin{tikzcd}
L \ar[r]{}{f} \ar[d, swap]{}{f} & E \ar[d, equals] \\
E \ar[r, equals] & E
\end{tikzcd}
\]
is a linear $T$-pushout then we will instead say that $f$ is a \emph{linear $T$-epimorphism} (in $\DBun(X)$, should specification be necessary).
\end{remark}

As a sanity check we now verify that linear morphisms of differential bundles which happen to be $T$-(co)limits in $\Cscr$ are linear $T$-(co)limits in the corresponding bundle categories. Namely, we show that if $T$-limits factor through the category of differential bundles over $X$ then their restriction from the full tangent category $\Cscr$ to the differential bundle category $\DBun(X)$ gives rise to linear $T$-limits. This becomes relevant when comparing strong immersions with immersions as defined in Definition \ref{Defn: TImmersion} below.
\begin{proposition}\label{Prop: When Tlimits restrict to Linear Tlimits}
Let $D:I \to \Cscr$ be a diagram in a tangent category $\Cscr$ which factors as
\[
\begin{tikzcd}
I \ar[rr]{}{D} \ar[dr, swap]{}{D} & & \Cscr \\
 & \DBun(X) \ar[ur, swap]{}{\operatorname{Forget}}
\end{tikzcd}
\]
with corresponding $T$-limit $L$ and limit cone
\[
\varphi_i:L \to Di.
\]
If $L$ admits the structure of a differential bundle over $X$ and each map $\varphi_i:L \to D_i$ is a linear morphism of differential bundles then $L$ is a linear $T$-limit in $\DBun(X)$.
\end{proposition}
\begin{proof}
Let us first observe that since $L$ is a $T$-limit of $D$, for all $m \in \N$, the diagram $T^m \circ D$ has limit $T^mL$ with limit cone $T^m\varphi_i$. Additionally, by construction and \cite{GeoffRobinBundle} we have that $T^mL$ is a differential bundle over $T^mX$ and that each map $T^m\varphi_i$ is a linear map of differential bundles. In particular, this shows that $T^m \circ D$ factors as:
\[
\begin{tikzcd}
I \ar[rr]{}{T^m \circ D} \ar[dr, swap]{}{T^m \circ D} & & \Cscr \\
 & \DBun(T^mX) \ar[ur, swap]{}{\operatorname{Forget}}
\end{tikzcd}
\]
Additionally, note that each object $T^mL$ remains as a limit to $T^m \circ D$ and that the forgetful functor
\[
\operatorname{Forget}:\DBun(T^mX) \to \Cscr
\]
reflects limits. Consequently $T^mL$ is a limit of $T^m \circ D:I \to \DBun(T^mX)$ and hence $T^m_{\ast}:\DBun(X) \to \DBun(T^mX)$ sends $L$ to the limit of $T^m \circ D$. Thus $L$ is a linear $T$-limit in $\DBun(X)$.
\end{proof}
We now can dualize Proposition \ref{Prop: When Tlimits restrict to Linear Tlimits} to derive the corresponding result for linear $T$-colimits.
\begin{proposition}\label{Prop: When Tcolimits restrict to Linear Tcolimits}
Let $D:I \to \Cscr$ be a diagram in a tangent category $\Cscr$ which factors as
\[
\begin{tikzcd}
I \ar[rr]{}{D} \ar[dr, swap]{}{D} & & \Cscr \\
 & \DBun(X) \ar[ur, swap]{}{\operatorname{Forget}}
\end{tikzcd}
\]
with corresponding $T$-colimit $C$ and colimit cocone
\[
\varphi_i:Di \to C
\]
If $C$ admits the structure of a differential bundle over $X$ and each map $\varphi_i:D_i \to C$ is a linear morphism of differential bundles then $C$ is a linear $T$-colimit in $\DBun(X)$.
\end{proposition}
Some immediate corollaries to Propositions \ref{Prop: When Tlimits restrict to Linear Tlimits} and \ref{Prop: When Tcolimits restrict to Linear Tcolimits} we use to build (classes of) examples of linear $T$-limits are given below.
\begin{corollary}\label{Cor: Tlinear epics and monics in DBun}
Let $\varphi: q \to r$ be a linear morphism of differential bundles over $X$. If $\varphi$ is $T$-monic (resp. $T$-epic) in $\Cscr$ then $\varphi$ is a linear $T$-monomorphism (resp. a linear $T$-epimorphism) in $\DBun(X)$.
\end{corollary}

\begin{corollary}\label{Cor: When thetaf is tmonic is tlinear monic}
If $f:X \to Y$ is a $p$-carrable map and if $\theta_f$ is $T$-monic (resp. $T$-epic) then $\theta_f$ is a linear $T$-monomorphism (resp. linear $T$-epimorphism). 
\end{corollary}

\begin{corollary}\label{Cor: When f tmonic then thetaf linear monic}
If $f:X \to Y$ is $T$-monic and $p$-carrable then $\theta_f:TX \to f^{\ast}(TY)$ is a linear $T$-monomorphism.
\end{corollary}
\begin{proof}
Note that $\theta_f$ is always a linear morphism of differential bundles. By Proposition \ref{Prop: f monic and thetaf monic iff Tmonic} $f$ is $T$-monic if and only if $f$ is monic and $\theta_f$ is $T$-monic, so from here an application of Corollary \ref{Cor: When thetaf is tmonic is tlinear monic} proves the corollary.
\end{proof}


We now are led to define immersions in general tangent categories. As we will see below, these are the morphisms which have the property that $\theta_f$ is a linear $T$-monomorphism. Afterwards, we will prsent some basic facts regarding $T$-immersions and then present some examples before moving on to study the relationship between $T$-immersions and unramified morphisms in Section \ref{Subsection: Immersions and Unramification}.

\begin{definition}\label{Defn: TImmersion}
Let $\Cscr$ be a tangent category and let $f:X \to Y$ be a $p$-carrable morphism in $\Cscr$. We say that $f$ is a \emph{$T$-immersion} if the morphism $\theta_f:TX \to f^{\ast}(TY)$ is a linear $T$-monomorphism in $\DBun(X)$.
\end{definition}
\begin{remark}
Our definition for $T$-immersions above has forced us to require that immersions $f:X \to Y$ are $p$-carrable. While this is strictly less general than the definition of $T$-immersion which appeared in an earlier version of this paper, we think that this is a \emph{good} definition because it encodes exactly the local linear injectivity that one uses to \emph{define} immersions in $\SMan$ in a minimal setting, as the injectivity of the maps $T_xf:T_xX \to T_{f(x)}Y$ only every see the local injectivity of the differential bundle map $\theta_f:TX \to f^{\ast}(TY)$ and not the rest of the bundle category $\DBun(\Cscr)$ or even the full tangent category $\Cscr$.
\end{remark}
A straightforward, but important for sanity reasons, observation given the results above shows that every strong immersion is an immersion. We can also conclude that every $T$-monic is \emph{also} an immersion, as is expected.
\begin{proposition}\label{Prop: Strong immersion is immersion}
Let $f:X \to Y$ be a $p$-carrable strong immersion in a tangent category. Then $f$ is a $T$-immersion.
\end{proposition}
\begin{proof}
As $\theta_f$ is a linear morphism of differential bundles, this follows from Corollary \ref{Cor: When thetaf is tmonic is tlinear monic}.
\end{proof}
\begin{proposition}\label{Prop: Tmonic is immersion}
Let $f:X \to Y$ be a $T$-monic morphism which is $p$-carrable. Then $f$ is a $T$-immersion.
\end{proposition}
\begin{proof}
Apply Corollary \ref{Cor: When f tmonic then thetaf linear monic}.
\end{proof}

We now show that $T$-immersions are stable with respect to composition.

\begin{proposition}\label{Proposition: Timmersions composition}
$T$-immersions are composition stable. That is, if $f:X \to Y$ and $g:Y\to Z$ are $T$-immersions and if $g \circ f$ is $p$-carrable then $g \circ f$ is a $T$-immersion.
\end{proposition}
\begin{proof}
Begin by noting that each of $f$, $g$, and $g \circ f$ are $p$-carrable. Thus, by Lemma \ref{Lemma: Thetaf and composition}, the diagram
\[
\begin{tikzcd}
TX \ar[d, swap]{}{\theta_{g \circ f}} \ar[r]{}{\theta_f} & f^{\ast}TY \ar[d]{}{f^{\ast}\theta_g} \\
(g \circ f)^{\ast}TZ & f^{\ast}(g^{\ast}TZ) \ar[l, swap]{}{\cong} \ar[l]{}{\gamma}
\end{tikzcd}
\]
commutes in $\DBun(X)$ and hence allows us to write $\theta_{g \circ f} = \gamma \circ f^{\ast}\theta_g \circ \theta_f$. Because $\gamma$ and $\theta_f$ are monic in $\DBun(X)$, it suffices to prove that $f^{\ast}\theta_g$ is monic in $\DBun(X)$ in order to complete the proof it suffices to verify that $f^{\ast}\theta_g$ is monic. However, this is immediate because $\theta_g$ is monic in $\DBun(Y)$ and because pullbacks send monics to monics.
\end{proof}

While we do not show it here, Proposition \ref{Prop: Immersion has analogous prepullvack property} says that $T$-immersions have an analogous property to the prepullbacks which define strong $T$-immersions (cf.~ Definition \ref{Defn: Strong Timmersion}). Instead, for the moment, we proceed to examine $T$-immersions in our working example categories of interest before moving on in the next sub-subsection to study a middle strength of immersion which sits between $T$-immersions and strong $T$-immersions.

\begin{example}\label{Example: Immersions in SMan}
In $\SMan$ a morphism $f:X \to Y$ is a $T$-immersion if and only if the map
\[
\theta_f:TX \to f^{\ast}TY, \quad \left(x,\overrightarrow{v}\right) \mapsto \left(x,D[f](x)\overrightarrow{v}\right)
\]
is monic in $\DBun(X)$. However, this is equivalent to asking that for all $x \in X$ the map
\[
\id_x \times D[f](x):\lbrace x \rbrace \times T_{x}X \to \lbrace x \rbrace \times T_{f(x)}Y
\]
is monic. Once again, as $D[f](x)= T_xf$, this is further equivalent to asking that each linear map $T_xf:T_xX \to T_{f(x)}Y$ is injective and hence that $f$ is an immersion in the usual sense. 
\end{example}
\begin{example}\label{Example: Immersions in CAlg}
If $R$ is a commutative rig, then a map $f$ is a $T$-immersion if and only if $f$ is monic. To see this let $f:A \to B$ be a morphism of commutative $R$-algebras. In order that $f:A \to B$ be a $T$-immersion, we need the map
\[
\theta_f:TA \to f^{\ast}TB
\]
be monic in $\DBun(A)$. As we have that $TA = A[\epsilon],$ $f^{\ast}TB = A \ltimes B$, and $\theta_f = \id \ltimes f$, we see that the map
\[
A[\epsilon] \to A\ltimes B, \quad a+x\epsilon \mapsto (a,f(x))
\]
is monic in $\DBun(A)$ if and only if the underling map of $A$-modules $f:A \to B$ is monic in $\Mod{A}$ by virtue of the equivalence $\DBun(A) \simeq \Mod{A}$. As morphisms of $A$-modules are monic if and only if $f$ is injective, we finally get that $f$ is a $T$-immersion if and only if $f$ is injective as a map of $R$-algebras..
\end{example}
\begin{example}\label{Example: Immersions in CAlgOp}
If $R$ is a commutative rig then a morphism $f:A \to B$ of commutative $R$-algebras is a $T$-immersion if and only if $f$ is formally unramified in the sense that $\Kah{B}{A} \cong 0$. To see this recall from the opposite equivalence $\DBun(B) \simeq \Mod{B}^{\op}$ that the morphism
\[
\theta_f:T_{B/R} \to f^{\ast}T_{A/R}
\]
is monic if and only if the corresponding morphism
\[
\Sym{A}(u_f):\Sym{B}(\Kah{A}{R} \otimes_A B) \to \Kah{B}{R}
\]
is epic in $\DBun(B)^{\op}$. But as $u_f$ arises from the cotangent sequence
\[
\begin{tikzcd}
\Kah{A}{R} \otimes_A B \ar[r]{}{u_f} & \Kah{B}{R} \ar[r]{}{} & \Kah{B}{A} \ar[r] & 0
\end{tikzcd}
\]
we see that $u_f$ is epic if and only $u_f$ is surjective as a morphism of $B$-modules. But as we have an isomorphism of $B$-modules
\[
\Kah{B}{A} \cong \frac{\Kah{B}{R}}{\operatorname{Im}(u_f)}
\]
for $\operatorname{Im}(u_f)$ the congruence generated by the image of $u_f$. But as $u_f$ is surjective, we get $\operatorname{Im}(u_f) = \Kah{B}{R}$ and hence that $\Kah{B}{A} \cong 0$.
\end{example}
\begin{example}\label{Example: Immersions in Sch}
If $S$ is a scheme, then $f:X\to Y$ is a $T$-immersion if and only if $f$ is formally unramified. To see this we recall from the equivalence of categories $\DBun(X) \simeq \QCoh(X)^{\op}$ for any $S$-scheme $X$, when considering the tangent-categorical relative cotangent sequence
\[
\begin{tikzcd}
X \ar[r] & T_{X/Y} \ar[r]{}{\iota_f} & TX \ar[r]{}{\theta_f} & f^{\ast}TY
\end{tikzcd}
\]
we have that $\theta_f$ is monic if and only if the corresponding cotangent sequence
\[
\begin{tikzcd}
f^{\ast}\Kah{Y}{S} \ar[r]{}{u_f} & \Kah{X}{S} \ar[r] & \Kah{X}{Y} \ar[r] & 0
\end{tikzcd}
\]
of quasi-coherent sheaves on $X$ has that $u_f$ is an epimorphism. But as the cotangent sequence is an exact sequence, $u_f$ is an epimorphism if and only if $\Kah{X}{Y} \cong 0$. Because $f$ is a formally unramified morphism of schemes if and only if $\Kah{X}{Y} \cong 0$ by \cite[Proposition 17.2.1]{EGA44}.
\end{example}

\begin{example}\label{Example: Immersions in CDCs} In a CDC, for a map of type $f: C \times X \to Y$, we can define its partial tangent $T^C f: C \times TX \to TY$ by inserting zeroes into $Tf$. As such, we can talk about a map $f: C \times X \to Y$ being (linear) $T$-monic in its second argument by simply saying $(T^C)^n f: C \times (T^C)^n X \to (T^C)^n Y$ is monic in the (linear) simple slice category. Therefore, a map $f: X \to Y$ in a CDC is a $T$-immersion  if and only if $D[f]: X \times X \to X$ is linear $T^X$-monic, which we interpret as saying that all higher order partial derivatives of $f$ are monic on linear maps in their linear arguments. 
\end{example}

\subsection{Tangent Immersions and Unramified Morphisms}\label{Subsection: Immersions and Unramification} Above we showed that every strong $T$-immersion was unramified. We now show the same is true for $T$-immersions.  

\begin{proposition}\label{Prop: Immersions are unramified} If $f:X \to Y$ is a $T$-immersion in a tangent category then $f$ is $T$-unramified.
\end{proposition}
\begin{proof}
Because we have the section/retraction pair
\[
\begin{tikzcd}
X \ar[r]{}{0_{X/Y}} \ar[dr, equals] & T_{X/Y} \ar[d]{}{p_{X/Y}} \\
 & X
\end{tikzcd}
\]
in order to prove that $T_{X/Y} \cong X$ it suffices to prove instead that $p_{X/Y} \circ 0_{X/Y} = \id_{T_{X/Y}}$. Additionally, recall by Theorem \ref{Thm: ZeroCarrable makes relative bundle a kernel in DBun}the diagram
\[
\begin{tikzcd}
T_{X/Y} \ar[r]{}{\iota_f} & TX \ar[rr, shift left = 1]{}{\theta_f} \ar[rr, shift right = 1, swap]{}{f^{\ast}(0_Y) \circ p_X} & & f^{\ast}(TY)
\end{tikzcd}
\]
is an equalizer in $\DBun(X)$. In particular, this implies that $\iota_f$ is monic and also that the diagram
\[
\begin{tikzcd}
T_{X/Y} \ar[r]{}{\iota_f} \ar[d, swap]{}{p_{X/Y}} & TX \ar[d]{}{\theta_f} \\
X \ar[r, swap]{}{f^{\ast}(0_Y)} & f^{\ast}(TY)
\end{tikzcd}
\]
commutes in $\DBun(X)$. In particular, this shows that $\theta_f \circ \pi_0$ is monic in $\DBun(X)$. Since $\pi_0$ and $\theta_f$ are linear differential bundle morphisms the diagrams
\[
\begin{tikzcd}
T_{X/Y} \ar[r]{}{\pi_0} & TX \\
X \ar[u]{}{0_{X/Y}} \ar[ur, swap]{}{0_X}
\end{tikzcd}\quad
\begin{tikzcd}
TX \ar[r]{}{\theta_f} & f^{\ast}(TY) \\
X \ar[u]{}{0_X} \ar[ur, swap]{}{f^{\ast}(0_Y)}
\end{tikzcd}
\]
commute in $\DBun(X)$. This allows us to compute
\begin{align*}
    \theta_f \circ \iota_f \circ \id_{T_{X/Y}} &= \theta_f \circ \iota_f = f^{\ast}(0_Y) \circ p_{X/Y} = \theta_f \circ 0_X \circ p_{X/Y} = \theta_f \circ \iota_f \circ 0_{X/Y} \circ p_{X/Y},
\end{align*}
which, because $\theta_f \circ \iota_f$ is monic in $\DBun(X)$, allows us to further deduce that $\id_{T_{X/Y}} = 0_{X/Y} \circ p_{X/Y}$. Thus we get that the diagrams
\[
\begin{tikzcd}
X \ar[dr, equals] \ar[r]{}{0_{X/Y}} & T_{X/Y} \ar[d]{}{p_{X/Y}} \\
 & X
\end{tikzcd}\quad
\begin{tikzcd}
T_{X/Y} \ar[r]{}{p_{X/Y}} \ar[dr, equals] & X \ar[d]{}{0_X} \\
 & T_{X/Y}
\end{tikzcd}
\]
both commute in $\DBun(X)$ and hence gives the isomorphism $X \cong T_{X/Y}$ in $\DBun(X)$. Finally, passing through the forgetful functor $\DBun(X) \to \Cscr$  gives that $X \cong T_{X/Y}$ in $\Cscr$ and hence that $f$ is unramified, i.e., that the diagram
\[
\begin{tikzcd}
X \ar[r]{}{f} \ar[d, swap]{}{0_X} & Y \ar[d]{}{0_Y} \\
TX \ar[r, swap]{}{Tf} & TY
\end{tikzcd}
\]
is a pullback.
\end{proof}

We now show that being a $T$-immersion has an analogous property to being a prepullback, save that we must restrict our attention to the linear bundle morphisms.

\begin{proposition}\label{Prop: Immersion has analogous prepullvack property}
If $f:X \to Y$ is a $T$-immersion then for any differential bundle $q:E \to X$ and linear differential bundle morphism $\binom{\varphi}{f}:q \to p_Y$, there is at most one linear differential bundle map from $q$ to $p_X$ rendering the diagram
\[
\begin{tikzcd}
q \ar[rr]{}{\binom{\varphi}{f}} \ar[dr, dashed]{}{\exists\leq 1} & & p_Y \\
 & p_X \ar[ur, swap]{}{\binom{Tf}{f}}
\end{tikzcd}
\]
commutative in $\DBun(\Cscr)$.
\end{proposition}
\begin{proof}
Assume that we have linear morphisms $\rho, \psi \in \DBun(X)(E,TX)$ which renders the diagram
\[
\begin{tikzcd}
E \ar[drr, bend left = 20]{}{\varphi} \ar[ddr, swap, bend right = 20]{}{q} \ar[dr, shift left = 1]{}{\psi} \ar[dr, swap, shift right = 1]{}{\rho} \\
 & TX \ar[r]{}{Tf} \ar[d, swap]{}{p_X} & TY \ar[d]{}{p_Y} \\
 & X \ar[r, swap]{}{f} & Y
\end{tikzcd}
\]
commutative. Then, by the universal property of the pullback we find that there are maps $\tilde{\psi}, \tilde{\varphi}:E \to f^{\ast}(TY)$ which render both diagrams
\[
\begin{tikzcd}
E \ar[drr, bend left = 20]{}{\tilde{\rho}} \ar[ddr, swap, bend right = 20]{}{q} \ar[dr, shift left = 1]{}{\psi} \ar[dr, swap, shift right = 1]{}{\rho} \ar[drrr, shift left = 1, bend left = 20]{}{\varphi} \\
 & TX \ar[r]{}{\theta_f} \ar[d, swap]{}{p_X} & f^{\ast}TY \ar[d]{}{\pi_0} \ar[r]{}{\pi_1} & TY \ar[d]{}{p_Y} \\
 & X \ar[r, equals] & X \ar[r, swap]{}{f} & Y
\end{tikzcd}\quad
\begin{tikzcd}
E \ar[drr, bend left = 20]{}{\tilde{\psi}} \ar[ddr, swap, bend right = 20]{}{q} \ar[dr, shift left = 1]{}{\psi} \ar[dr, swap, shift right = 1]{}{\rho} \ar[drrr, shift left = 1, bend left = 20]{}{\varphi} \\
 & TX \ar[r]{}{\theta_f} \ar[d, swap]{}{p_X} & f^{\ast}TY \ar[d]{}{\pi_0} \ar[r]{}{\pi_1} & TY \ar[d]{}{p_Y} \\
 & X \ar[r, equals] & X \ar[r, swap]{}{f} & Y
\end{tikzcd}
\]
commute in $\DBun(\Cscr)$. Observe also that as the equations
\[
\pi_0 \circ \tilde{\rho} = q = \pi_0 \circ \tilde{\psi},\qquad \pi_1 \circ \tilde{\rho} = \varphi = \pi_1 \circ \tilde{\psi}
\]
both hold, we get that $\tilde{\rho} = \tilde{\psi}$. Additionally, a routine computation allows us to verify that both $\tilde{\rho}$ and $\tilde{\psi}$ are linear morphisms of differential bundles over $X$. This allows us to use the fact that $\theta_f$ is monic in $\DBun(X)$ in order to deduce that $\rho = \psi$ and hence that there is at most one morphism rendering
\[
\begin{tikzcd}
E \ar[dr, dashed]{}{\exists\,\leq 1} \ar[ddr, swap, bend right = 20]{}{q} \ar[drr, bend left = 20]{}{\varphi} \\
 & TX \ar[r]{}{Tf} \ar[d, swap]{}{p_X} & TY \ar[d]{}{p_Y} \\
 & X \ar[r, swap]{}{f} & Y
\end{tikzcd}
\]
commutative in $\DBun(\Cscr)$.
\end{proof}

Now since strong $T$-immersions are $T$-immersions, Example \ref{Example: CDC where T unram is not strong immersion} gives an a tangent category where not every $T$-immersion is $T$-uramified. The issue lies with the lack of \textit{negatives}. So we now conclude this section by proving a very useful and somewhat surprising result: in Rosick{\'y} tangent categories (tangent categories with negatives), all flavours of $T$-immersion coincide with $T$-unramified morphisms. In particular this implies that every $T$-immersion is also a strong $T$-immersion. This is surprising because the formal structure of immersions, at all levels, seems to come just shy of being able to show that the kernel $T_{X/Y}$ vanishing is equivalent to $\theta_f$ being monic. However, when one has negatives, being monic is equivalent to having trivial kernel and so we can reduce $\theta_f$ being a linear $T$-monomorphism to the naturality square for the transformation $0$ at $f$ being a $T$-pullback. That said, when one does \emph{not} have negatives, 

\begin{theorem}\label{Thm: If C Rosicky then immersion iff unrmaified}
If $\Cscr$ is a Rosick{\'y} tangent category then a $p$-carrable morphism $f:X \to Y$ is a $T$-immersion if and only if $f$ is $T$-unramified. In particular, in a Roscik{\'y} tangent category a $p$-carrable morphism $f$ is a $T$-immersion if and only if it is a strong $T$-immersion.
\end{theorem}
\begin{proof}
Recall that $f$ is a $T$-immersion precisely when $\theta_f$ is a linear $T$-monomorphism in $\DBun(X)$. But as $\Cscr$ is Rosick{\'y} and we by convention only study differential bundles with negatives over Rosick{\'y} tangent categories, $\DBun(X)$ is not merely $\mathbf{CMon}$-enriched but also $\mathbf{Ab}$-enriched. As each category $\DBun(T^mX)$ has $T^mX$ as a zero object, applying Lemma \ref{Lemma: The homological algebra characcterization of trivial kernels in additive categories} to each monic $T^m\theta_f$ implies that asking if there is an isomorphism
\[
T^m(T_{X/Y}) \overset{?}{\cong} T^mX
\]
in $\DBun(T^mX)$ is equivalent to asking if $T^m\theta_f$ is monic in $\DBun(T^mX)$ by virtue of the relative cotangent sequence
\[
\begin{tikzcd}
T^mX \ar[r] & T^m(T_{X/Y}) \ar[r]{}{T^m\iota_f} & T^{m+1}X \ar[r]{}{T^m\theta_f} & T^m(f^{\ast}TY)
\end{tikzcd}
\]
exhibiting $T^m(T_{X/Y})$ as a kernel of $T^m\theta_f$. Conseuqently, $f$ is $T$-unramified if and only if $f$ is a $T$-immersion. For the final claim of the theorem, we note that by Theorem \ref{Thm: Rosckiy Timmersion iff Tunramified}, $f$ is a $T$-immersion if and only if $f$ is a strong $T$-immersion. 
\end{proof}

\section{Tangent Submersions}\label{Section: TSubmersions}

Submersions are some of the most important morphisms in differential geometry because they are the morphisms $f:X \to Y$ of smooth manifolds which meet every other morphism $g:Z \to Y$ transversely (cf.~ \cite{KMSDiffGeo}). They appear also in studying equivariant differential geometry (cf.~ \cite[Section 5]{PronkVooys}) and provide a class of morphisms $\Dcal$ in $\SMan$ for which not only do pullback functors $f^{\ast}:\SMan_{/Y} \to \SMan_{/X}$ given by categorical pullback
\[
\begin{tikzcd}
f^{\ast}(M) \ar[r]{}{\pi_1} \ar[d, swap]{}{\pi_0} & M \ar[d]{}{g} \\
X \ar[r,swap]{}{f} & Y
\end{tikzcd}
\]
exist for any submersion $f:X \to Y$ by \cite{KMSDiffGeo}, but these functors also have the property that they induce natural isomorphisms
\[
\alpha:T_{(-)/Y} \circ f^{\ast} \xRightarrow{\cong} f^{\ast} \circ T_{(-)/X}.
\]
Submersions also enjoy the property that, when given a submersion $f:X \to Y$ and a pullback
\[
\begin{tikzcd}
X \times_Y M \ar[r]{}{\pi_1} \ar[d, swap]{}{\pi_0} & M \ar[d]{}{g} \\
X \ar[r, swap]{}{f} & Y
\end{tikzcd}
\]
this pullback is actually a $T$-pullback \emph{and} $Tf$ is also a submersion. Additionally, there is a Grothendieck topology $J$ on $\SMan$ whose covering morphisms are surjective submersions which is important in various sheaf-theoretic approaches to studying $\SMan$. Unfortunately, however, generalizing submersions to tangent categories is not at all straightforward because of various technical issues which arise due to the level of generality at which tangent categories exist.

Let us make this more explicit. In \cite[Subsection 1.6]{AintablianBlohmann} the authors point out arguably the four most important properties that submersions $f:X \to Y$ in $\SMan$ enjoy:
\begin{enumerate}
    \item If $x \in X$ then there exists an open $V \subseteq Y$ with $f(x) \in V$ and with the property that there is a function $s:V \to X$ for which $f \circ s =\id_{V}$. That is, every point in the image of $f$ has the property that there is a local section of $f$ defined around $f(x)$. That is, for any $x \in X$ there is an open submanifold inclusion $j:V \to Y$ with $f(x) \in V$ and a morphism $s:V \to X$ for which
    \[
    \begin{tikzcd}
    V \ar[dr]{}{j} \ar[d, swap]{}{s} \\
    X \ar[r, swap]{}{f} & Y
    \end{tikzcd}
    \]
    commutes in $\SMan$.
    \item If $g:M \to Y$ is a morphism then the pullback $X \times_Y M$ exists in $\SMan$.
    \item Every natural morphism $T(X \times_Y M) \to TX \times_{TY} TM$ is an isomorphism.
    \item The tangent $Tf:TX \to TY$ is a submersion.
\end{enumerate}
These properties, unfortunately, need not all hold in arbitrary tangent categories. Work has been done historically in terms of what are called $T$-display maps (cf.~ \cite{GeoffRobinBundle}, \cite{BenVectorBundles}, \cite{JonathanThesis}, and \cite{GeoffMarcelloTSubmersionPaper}) as generalizations of submersions\footnote{It is worth observing that the original ways of framing what it means for a morphism to have transverse meetings (cf.~ \cite{JonathanThesis}, \cite{BenVectorBundles}) are encoded and captured in what we call \emph{split $T$-submersions} in this paper. These are studied in detail in Section \ref{Section: Split Submersions}.}. The idea here is that $T$-display morphism in a tangent category $\Cscr$ is \emph{exactly} a morphism $f:X \to Y$ which satisfies Statements $(2) - (4)$ above (suitably refined by saying, of course, that $f:X \to Y$ admits all $T$-pullbacks and that $T^mf$ also preserves and admits all $T$-pullbacks). However, approaching submersions in this way need not capture all possible generalizations and ways of extending submersions. For instance, when one is given nothing more than a $T$-display morphism $f$ in the abstract, it is not at all clear that the local surjectivity of $Tf$ need hold. That is, it is not at all clear that when $f$ is both $p$-and-$0$-carrable, the diagram of differential bundles
\[
\begin{tikzcd}
 X \ar[r] & T_{X/Y} \ar[r]{}{\iota_f} & TX \ar[r]{}{\theta_f} & f^{\ast}(TY) \ar[r] & X
\end{tikzcd}
\]
has the property that $\theta_f$ is a cokernel of $\iota_f$.

In this paper, we take the perspective\footnote{Inspired by coversations with Robin Cockett, Geoff Cruttwell, Richard Garner, and Marcello Lanfranchi.} that it is the \emph{locally surjective} differential that fundamentally defines what it means to be a submersion and that the properties of being $T$-display are something quite special to the category $\SMan$  --- in particular, being $T$-display is a \emph{property} of being a submersion in $\SMan$ and not the \emph{definition} of being a submersion. The idea we take behind what submersions should be at the full level of tangent categories is that because submersions $f$ in $\SMan$ are maps for which \emph{definitionally first and foremost} $T_xf$ has the property that it is a surjective linear map for all $x \in X.$ We will see below that taking this perspective requires us to both make some sacrifices on the consequences of being a submersion (cf.~ Propositions \ref{Prop: Section Sub: When submersions compose} and \ref{Prop: Section Sub: When submersions pull back} give conditions when submersions in tangent categories --- in the sense of Definition \ref{Defn: TSubmersion} --- are stable under composition and pullback), but that some properties (such as submersions being stable under application of the functors $T^m$) still remain true; cf.~ Proposition \ref{Prop: Section Sub: Submersions are Tstable}. Additionally, in order to focus on and really express the \emph{linearity} of submersions we will need to require that submersions are $p$-and-$0$-carrable.

\subsection{Definitions and Examples of Tangent Submersions}\label{Subsection:  Tsubmersion Defns, and Examples}

As we have seen prior, we are taking the perspective that it is the local surjective linearity of submersions which is their defining feature. Because in $\SMan$ this is equivalent to asking that
\[
\theta_f:TX \to f^{\ast}(TY)
\]
is a regular epimorphism in $\DBun(X)$, we need to be able to have access to the horizontal descent to express this. However, we do not \emph{just} want $\theta_f$ to be a regular epimorphism. We also want to know that it is precisely the coequalizer of the zero map and the kernel of $\theta_f$, i.e.~ and in light of Theorem \ref{Thm: ZeroCarrable makes relative bundle a kernel in DBun}, that the diagram
\[
\begin{tikzcd}
T_{X/Y} \ar[r]{}{\iota_f} \ar[d, swap]{}{p_{X/Y}} & TX \ar[d]{}{\theta_f} \\
X \ar[r, swap]{}{f^{\ast}0_Y} & f^{\ast}TY
\end{tikzcd}
\]
is a pushout in $\DBun(X)$. Finally, as $\DBun(X)$ need not carry a tangent structure which coincides with the tangent structures on $\DBun(\Cscr)$ and $\Cscr$, we also need to encode tangential-stability of the pushouts above by asking that said pushout be a linear $T$-pushout. This leads us to the definition of $T$-submersions in tangent categories.

\begin{definition}\label{Defn: TSubmersion}
Let $\Cscr$ be a tangent category. Then a $p$-carrable and $0$-carrable morphism $f:X \to Y$ is a \emph{$T$-submersion} if the horizontal descent $\theta_f$ makes the diagram
\[
\begin{tikzcd}
    T_{X/Y} \ar[r, shift left = 1ex]{}{\iota_f} \ar[r, swap, shift right = 1ex]{}{0_X \circ p_{X/Y}} & TX \ar[r]{}{\theta_f} & f^{\ast}(TY)
\end{tikzcd}
\]
a linear $T$-coequalizer in $\DBun(X)$. That is, $\theta_f$ is a regular $T$-epimorphism in $\DBun(X)$.
\end{definition}

It is simultaneously instructive and useful to reframe $T$-submersions in terms of the  cotangent sequence. Recall that for all $0$-carrable and $p$-carrable morphisms $f:X \to Y$, the relative cotangent sequence of $f$ is the diagram
\[
\begin{tikzcd}
X \ar[r] & T_{X/Y} \ar[r]{}{\iota_f} & TX \ar[r]{}{\theta_f} & f^{\ast}(TY)
\end{tikzcd}
\]
in $\DBun(X)$. Furthermore, this diagram is an exact sequence in $\DBun(X)$ in the sense that $\iota_f = \operatorname{Ker}(\theta_f)$ in $\DBun(X)$. When we ask for $\theta_f$ to be the linear $T$-coequalizer of $\iota_f$ and the zero morphism $0_X \circ p_{X/Y}$ in $\DBun(X)$, we then get that $\theta_f = \operatorname{Coker}(\iota_f)$ and hence that the relative cotangent sequence extends into an exact sequence
\[
\begin{tikzcd}
X \ar[r] & T_{X/Y} \ar[r]{}{\iota_f} & TX \ar[r]{}{\theta_f} & f^{\ast}(TY) \ar[r] & X
\end{tikzcd}
\]
in $\DBun(X)$. While we record below that to ask for this sequence to be a $T$-exact sequence is equivalent to asking for $f$ to be a submersion, it is worth noting that \emph{defining} $T$-submersions in terms of the relative cotangent sequence is a perspective whose importance was first recognized by G.~ Cruttwell and M.~ Lanfranchi, which they use to study and examine connections in tangent categories \cite{GeoffMarcelloTSubmersionPaper}.

\begin{proposition}\label{Prop: Section TSub: Submersion iff Rel Cotan is T Exact Sequence}
Let $f:X \to Y$ be a $p$-carrable and $0$-carrable morphism in a tangent category $\Cscr.$ Then $f$ is a $T$-submersion if and only if for all $m \in \N$ the diagram
\[
\begin{tikzcd}
T^mX \ar[r] & T^m(T_{X/Y}) \ar[r]{}{T^m\iota_f} & T^{m+1}(X) \ar[r]{}{T^m(\theta_f)} & T^m(f^{\ast}(TY)) \ar[r] & T^mX
\end{tikzcd}
\]
is an exact sequence in $\DBun(T^mX)$.
\end{proposition}
\begin{proof}
$\implies:$ If $f:X \to Y$ is a $T$-submersion, then by definition $\theta_f$ is a linear $T$-coequalizer of $\theta_f$ and $0_X \circ p_{X/Y}$ in $\DBun(X)$. In particular, for any $m \in \N$ the functor $T_{\ast}^{m}:\DBun(X) \to \DBun(T^mX)$ renders $T^m(\theta_f)$ as the coequalizer of $T^m(\theta_f)$ and $T^m(0_X \circ p_{X/Y})$. Thus, as $T^m\iota_f$ is always the kernel of $T^m\theta_f$ in $\DBun(T^mX)$, the diagram
\[
\begin{tikzcd}
T^mX \ar[r] & T^m(T_{X/Y}) \ar[r]{}{T^m\iota_f} & T^{m+1}(X) \ar[r]{}{T^m(\theta_f)} & T^m(f^{\ast}(TY)) \ar[r] & T^mX
\end{tikzcd}
\]
is an exact sequence in $\DBun(T^mX)$.

$\impliedby:$ If the diagram
\[
\begin{tikzcd}
T^mX \ar[r] & T^m(T_{X/Y}) \ar[r]{}{T^m\iota_f} & T^{m+1}(X) \ar[r]{}{T^m(\theta_f)} & T^m(f^{\ast}(TY)) \ar[r] & T^mX
\end{tikzcd}
\]
is an exact sequence in $\DBun(T^mX)$ for all $m \in \N$, then we have by construction that 
\[
\operatorname{coker}(T^m\iota_f) = \theta_f = \operatorname{coeq}\left(T^m\iota_f,0_{T^mX} \circ p_{T^mX/T^mY}\right) = \operatorname{coeq}\left(T^m\iota_f,T^m(0_X \circ p_{X/Y})\right).
\]
Thus we have that $\theta_f$ is a regular $T$-epimorphism \emph{and} that $T_{\ast}^m$ carries $\theta_f$ to a regular $T$-epimorphism in $\DBun(T^mX)$ for all $m \in \N$. In particular, it follows by definition that $f$ is a $T$-submersion.
\end{proof}

\begin{example}\label{Example: TSubmersions in SMan}
In $\SMan$, a morphism $f:X \to Y$ is a $T$-submersion if and only if it is a submersion in the usual sense. Recall that in the cotangent exact sequence of $f$,
\[
\begin{tikzcd}
X \ar[r] & T_{X/Y} \ar[r]{}{\iota_f} & TX \ar[r]{}{\theta_f} & f^{\ast}(TY) \ar[r] & X
\end{tikzcd}
\]
we have that $\iota_f$ is the inclusion of the sub-bundle
\[
T_{X/Y} = \left\lbrace \left(x,\overrightarrow{v}\right) \in TX \; : \; D[f](x)\overrightarrow{v}= \overrightarrow{0}\right\rbrace
\]
and that by Example \ref{Example: What  is thetaf in SMan} we have $\theta_f$ defined as the map
\[
\theta_f:TX \to X \times_Y TY, \quad \left(x,\overrightarrow{v}\right) \mapsto \left(x, D[f](x)\overrightarrow{v}\right).
\]
From this we see that $\theta_f$ is a cokernel of the inclusion $\iota_f$ if and only if for every point $x \in X$, $D[f](x)$ is a surjective morphism of vector spaces.
\end{example}
\begin{example}\label{Example: TSubmersions in CAlg}
Let $R$ be a commutative rig and let $f:A \to B$ be a morphism of commutative $R$-algebras. Then $f$ is a $T$-submersion if and only if $f$ is surjective. To see this, recall that the relative cotangent sequence for $f$ is the diagram of $A$-algebras
\[
\begin{tikzcd}
A \ar[r] & A \ltimes \operatorname{Ker}(f) \ar[r] & A[\epsilon] \ar[r]{}{\id \ltimes f} & A \ltimes B \ar[r] & A
\end{tikzcd}
\]
where $\operatorname{Ker}(f) = \lbrace a \in A \; | \; f(a) = 0 \rbrace$ and where $(\id \ltimes f)(a+x\epsilon) = (a,f(x)).$. Now, as $\DBun(A) \simeq \Mod{A}$ via the equivalence $M \mapsto A \ltimes M$ for $A$-modules $M$, we see that the diagram
\[
\begin{tikzcd}
A \ltimes \operatorname{Ker}(f) \ar[r] \ar[d, swap] & A[\epsilon] \ar[d]{}{\id \ltimes f} \\
A \ar[r] & A \ltimes B
\end{tikzcd}
\]
is a pushout in $\DBun(A)$ if and only if the diagram
\[
\begin{tikzcd}
\operatorname{Ker}(f) \ar[r] \ar[d] & A \ar[d]{}{f} \\
0 \ar[r] & B
\end{tikzcd}
\]
is a pushout in $\Mod{A}$. But this holds if and only if $f$ is a regular epimorphism in $\Mod{A}$ and hence, as $\Mod{A}$ is the category of models of the Lawvere theory of $A$-modules in $\Set$ and since $\Set$ is a topos, this occurs if and only if $f$ is a surjection of $A$-modules.
\end{example}
\begin{example}\label{Example: TSubmersions in CAlgOp}
Let $R$ be a commutative rig. Then for any map $f:A \to B$ of commutative $R$-algebras, $f$ is a $T$-submersion in $\CAlg{R}^{\op}$ if and only if the map $\delta$ in the module relative cotangent sequence
\[
\begin{tikzcd}
\Kah{A}{R} \otimes_A B \ar[r]{}{\delta} & \Kah{B}{R} \ar[r] & \Kah{B}{A} \ar[r] & 0
\end{tikzcd}
\]
given by 
\[
\delta(\mathrm{d}a \otimes b) \mapsto b\mathrm{d}\big(f(a)\big)
\]
has the property that $\delta$ is injective in $\Mod{B}$. This follows from the equivalence of categories $\DBun(B) \simeq \Mod{B}^{\op}$ in $\CAlg{R}^{\op}$, the fact that regular monics in $\Mod{B}$ are precisely injective morphisms of modules, and the fact that when taking the opposite of a category regular monics get sent to regular epimorphisms.
\end{example}
\begin{example}\label{Example: TSubmersions in Sch}
Let $S$ be a scheme and let $f:X \to Y$ be a morphism of $S$-schemes. Then $f$ is a $T$-submersion if and only if the relative cotangent sequence of quasi-coherent sheaves
\[
\begin{tikzcd}
    f^{\ast}\Kah{Y}{S} \ar[r]& \Kah{X}{S} \ar[r] & \Kah{X}{Y} \ar[r] & 0
\end{tikzcd}
\]
is an exact sequence in $\QCoh(X)$. As before, this follows from the opposite equivalence of categories $\QCoh(X)^{\op} \simeq \DBun(X)$.
\end{example}

\begin{example} For an arbitrary CDC, to the best of our knowledge, not much more can be deduced apart from saying that $f$ is a $T$-submersion if and only if $\langle \pi_0,D[f]\rangle$ is a regular $T$-epimorphism in the category of linear diff bundles over $X$. 
\end{example}

We close this subsection by providing a counter-example to a sort of desideratum for $T$-submersions which one would be tempted to derive (and in fact was claimed by a draft of this paper at one point in time). When given a $T$-submersion $f:X \to Y$, it is tempting to desire or ask that in the diagram
\[
\begin{tikzcd}
X \ar[r] & T_{X/Y} \ar[r]{}{\iota_f} & TX \ar[r]{}{\theta_f} & f^{\ast}TY
\end{tikzcd}
\]
one has that $\theta_f$ is the coequalizer of $0_X \circ p_{X/Y}$ and $\iota_f$ if and only if $\theta_f$ is an epimorphism. This would be in line with the fact that this holds in each of the cases $\SMan$ and $\Sch_{/S}$ for any scheme $S$, as in those cases one has that to determine $T$-submersions one need only look at either the Abelian categories of quasi-coherent sheavesx $\QCoh(X)$ for schemes $X$ or the additive categories $\DBun(M)$ for smooth manifolds $M$. In either case, one can use techniques from categories which at least stalkwise look like regular categories, i.e., in each case one can test if the map
\[
\theta_f:TX \to f^{\ast}TY
\]
is a regular epimorphism by looking at the corresponding maps of stalks
\[
\theta_{f,x}:T_xX \to (f^{\ast}TY)_x
\]
and determining \emph{there} if $\theta_f$ is regular epic for each point $x \in X$. This means in particular, that the categories $\DBun(X)$ have certain degrees of regularity with which one can work, and this greatly aids one in working with regular epimorphisms. In particular, since in the $\SMan$ case the stalkwise categories look like vector spaces and in the $\Sch_{/S}$ case the stalkwise case the stalkwise categories look like modules, one can actually say that $\theta_f$ is a $T$-submersion if and only if it is merely epic in the first place. 

However, when one loses access to either negatives and regularity, one cannot make such a claim. As such we present here an example of a tangent category $\Cscr$ together with a morphism $f$ for which $\theta_f$ is epic but $f$ is not a $T$-submersion.

\begin{example}
Recall that $T$-coequalizers are necessarily regular $T$-epimorphisms. As such, it suffices to provide an example of a morpshim $f:X \to Y$ in a tangent category $\Cscr$ for which $\theta_f$ is epic but not regular epic. To this end, let $\Cscr = \mathbf{CMon}$ (equipped with the biproduct tangent structure) and let $f:\N \to \Z$ be the standard inclusion. Then $f$ is an epimorphism of commutative monoids which is not regular epic. As $\theta_f:\N \oplus \N \to \N \oplus \Z$ is the map $\id_{\N} \oplus f$ and the biproduct of epimorphisms remains epimorphic, the map $\id \oplus f:\N \oplus \N \to \N \oplus \Z$ is an epimorphism in $\DBun(\N)$. We now argue that $\theta_f$ cannot be a regular epimorphism. Now for every commutative monoid $X$ we have $\DBun(X) \simeq  \mathbf{CMon}$ with the equivalence $ \mathbf{CMon} \to \DBun(X)$ given by $E \mapsto X \oplus E$ on objects and $\varphi \mapsto \id_X \oplus \varphi$. Now because this implies $\mathbf{CMon} \simeq \DBun(\N)$, we see that $\theta_f = \id_{\N} \oplus f$ is a regular epimorphism in $\DBun(\N)$ if and only if $f$ is a regular epimorphism in $\mathbf{CMon}$. Because $f$ is not regular, so too is $\theta_f$ not regular.
\end{example}

\subsection{Composition and Pullback Stability of Tangent Submersions}\label{Subsection: Tsubmersion Properties and Counterexamples}

While it is true in each of the categories $\SMan, \Sch_{/S}, \CAlg{R}, \CAlg{R}^{\op}$ that $T$-submersions are stable under composition and pullback, this seems in practice to be something remarkable that we could not prove at the level of arbitrary tangent categories.  In this short subsection we will give conditions on when $T$-submersions compose and are stable under pullback, and also provide some indications as to why the composition-and-pullback stability of $T$-submersions is a special property which need not always hold.

We start by examining the question of when $T$-submersions are stable under $T$-pullback. Begin by assuming that we have a $T$-pullback
\[
\begin{tikzcd}
X \times_Z Y \ar[r]{}{\pi_1} \ar[d, swap]{}{\pi_0} & Y \ar[d]{}{g} \\
X \ar[r, swap]{}{f} & Z
\end{tikzcd}
\]
for which both $f$ and $\pi_1$ are $p$-carrable and $0$-carrable. Then by Proposition \ref{Prop: How thetaf pulls back} we have an isomorphism
\[
\rho:\pi_1^{\ast}(TY) \xrightarrow{\cong} f^{\ast}(TZ) \times_{TZ} TY
\]
which makes the diagram
\[
\begin{tikzcd}
T(X \times_Y Z) \ar[rr]{}{\cong} \ar[d, swap]{}{\theta_{\pi_1}} & & TX \times_{TZ} TY \ar[d]{}{\theta_f \times \id_{TZ}}\\
\pi_1^{\ast}(TY) \ar[rr, swap]{}{\rho} & & f^{\ast}(TZ) \times_{TZ} TY
\end{tikzcd}
\]
commute. Additionally, because $f$ and $\pi_1$ are both $p$-and-$0$-carrable, by Proposition \ref{Prop: Zero carrable nec and suf} we know that there is an isomorphism
\[
T_{(X \times_Z Y)/Y} \cong T_{X/Z}\times_{TZ} TY
\]
and this isomorphism makes
\[
\begin{tikzcd}
T_{(X \times_Y Z)/Y} \ar[rr]{}{\cong} \ar[d, swap]{}{\iota_{\pi_1}} & & T_{X/Z} \times_{TZ} TY \ar[d]{}{\iota_f \times \id_{TZ}} \\
T(X \times_Z Y) \ar[rr, swap]{}{\cong} & & TX \times_{TZ} TY
\end{tikzcd}
\]
commute in $\Cscr$. Because answering if the relative cotangent sequence can be done on the isomorphism classes of $\iota_{\pi_1}:T_{(X \times_Z Y)/Y} \to T(X \times_Z Y)$ and $\theta_{\pi_1}:T(X \times_Z Y) \to \pi_1^{\ast}(TY)$, to show $\pi_1$ is a $T$-submersion it suffices to prove that the diagram
\[
\begin{tikzcd}
T_{X/Z} \times_Z Y \ar[rrrr]{}{\langle \iota_f \circ \pi_0, 0_Y \circ \pi_1 \rangle} \ar[d, swap]{}{p_{X/Z} \times \id_Y} & & & & TX \times_{TZ} TY \ar[d]{}{\theta_f \times \id_{TY}} \\
X \times_Z Y \ar[rrrr, swap]{}{\langle \zeta_{f^{\ast}(TZ)} \circ \pi_0, 0_Y\circ \pi_1 \rangle} & & & & f^{\ast}(TZ) \times_{TZ} TY
\end{tikzcd}
\]
is a linear $T$-pushout in $\DBun(X \times_Z Y)$. Because of this and Proposition \ref{Prop: Section TSub: Submersion iff Rel Cotan is T Exact Sequence}, we find that $\pi_1$ is a $T$-submersion if and only if the relative cotangent sequence
\[
\begin{tikzcd}
X \times_Z Y \ar[r] & T_{X/Z} \times_{TZ} TY \ar[rr]{}{\iota_f \times \id} & & TX \times_{TZ} TY \ar[rr]{}{\theta_f \times \id} & & f^{\ast}(TZ) \times_{TZ} TY\ar[r] & X\times_Z Y
\end{tikzcd}
\]
is $T$-linear exact in $\DBun(X \times_Y Z)$. However, as each map in this sequence is a categorical pullback of either $\iota_f$ or $\theta_f$ against the map $Tg:TY \to TZ$ and since
\[
\begin{tikzcd}
T_{X/Z} \ar[r]{}{\iota_f} \ar[d, swap]{}{p_{X/Z}} & TX \ar[d]{}{\theta_f} \\
X \ar[r] & f^{\ast}TZ
\end{tikzcd}
\]
is a linear $T$-pullback, the same is true of
\[
\begin{tikzcd}
T_{X/Z} \times_{TZ} TY \ar[rr]{}{\iota_f \times \id} \ar[d] & & TX \times_{TZ} TY \ar[d]{}{\theta_f \times \id} \\
X \times_Y Z \ar[rr] & & f^{\ast}TZ \times_{TZ} TY
\end{tikzcd}
\]
As such, to show that $\pi_1$ is a $T$-submersion, it is necessary and sufficient to know that
\[
\begin{tikzcd}
T_{X/Z} \times_{TZ} TY \ar[rr]{}{\iota_f \times \id} \ar[d] & & TX \times_{TZ} TY \ar[d]{}{\theta_f \times \id} \\
X \times_Y Z \ar[rr] & & f^{\ast}TZ \times_{TZ} TY
\end{tikzcd}
\]
is a linear $T$-pushout. In particular, we see that if the functor $(-) \times \id_{TY}$ is finitely cocontinuous and if $f$ is a submersion, the so too is $\pi_1$. 
\begin{proposition}\label{Prop: Section Sub: When submersions compose}
If $f:X \to Y$ is a $T$-submersion and there is a $T$-pullback
\[
\begin{tikzcd}
X \times_Z Y \ar[r]{}{\pi_1} \ar[d, swap]{}{\pi_0} & Y \ar[d]{}{g} \\
X \ar[r, swap]{}{f} & Z
\end{tikzcd}
\]
with $\pi_1$ both $p$-and-$0$-carrable, then $\pi_1$ is a $T$-submersion if and only if the pullback 
\[
\theta_f \times \id_{TY}:TX \times_{TZ} TY \to f^{\ast}(TZ) \times_{TZ} TY
\]
is the cokernel of $\iota_f \times \id_{TY}$ in $\DBun(X \times_Y Z)$. In particular, if the functor $(-) \times_{TZ} TY$ is finitely cocontinuous then $\pi_1$ is a $T$-submersion
\end{proposition}
\begin{proof}
The discussion prior to the statement of the proposition shows that the diagram
\[
\begin{tikzcd}
T_{X/Z} \times_{TZ} TY \ar[rr]{}{\iota_f \times \id} \ar[d] & & TX \times_{TZ} TY \ar[d]{}{\theta_f \times \id} \\
X \times_Y Z \ar[rr] & & f^{\ast}TZ \times_{TZ} TY
\end{tikzcd}
\]
is a linear $T$-pushout if and only if the map $\theta_f \times \id_{TY}$ is the cokernel of $\iota_f \times \id_{TY}$. The final statement of the proposition simply gives a condition in which
\[
\begin{tikzcd}
T_{X/Z} \ar[r]{}{\iota_f} \ar[d, swap]{}{p_{X/Z}} & TX \ar[d]{}{\theta_f} \\
X \ar[r] & f^{\ast}TZ
\end{tikzcd}
\]
is a linear $T$-pushout and when the functor $(-) \times_{TY} TZ$ preserves this linear $T$-pushout.
\end{proof}

To better understand the issues which arise regarding the composition of $T$-submersions, let $f:X \to Y$ and $g:Y \to Z$ be $T$-submersions and assume that $g \circ f$ is both $p$-carrable and $0$-carrable. By Lemma \ref{Lemma: Thetaf and composition}, have a natural isomorphism $\alpha:f^{\ast}(g^{\ast}TZ) \to (g \circ f)^{\ast}TZ$ of differential bundles over $X$ which renders
\[
\begin{tikzcd}
TX \ar[rr]{}{\theta_{g \circ f}} \ar[d, swap]{}{\theta_f} & & (g \circ f)^{\ast}TZ \\
f^{\ast}(TY) \ar[rr, swap]{}{f^{\ast}\theta_g} & & f^{\ast}(g^{\ast}TY) \ar[u, swap]{}{\alpha} \ar[u]{}{\cong}
\end{tikzcd}
\]
commutative. These two observations allow us to reduce the question of whether or not the cotangent sequence is exact to asking if the particular sequence
\[
\begin{tikzcd}
    X \ar[r] & T_{X/Z} \ar[rr]{}{\pi_0} & & TX \ar[rr]{}{f^{\ast}(\theta_g)\circ \theta_f} & & f^{\ast}(g^{\ast}TZ)
\end{tikzcd}
\]
is exact in $\DBun(X)$, i.e., if $f^{\ast}(\theta_g) \circ \theta_f = \operatorname{coker}(\iota_{g \circ f})$. On one hand, if we know that the pullback $f^{\ast}\theta_g$ remains a regular epimorphism in the sense that the sequence
\[
\begin{tikzcd}
X \ar[r] & f^{\ast}T_{Y/Z} \ar[r]{}{f^{\ast}\iota_g} & f^{\ast}TY \ar[r]{}{f^{\ast}\theta_g} & f^{\ast}(g^{\ast}TZ) \ar[r] & X
\end{tikzcd}
\]
is exact in $\DBun(X)$, then we simply need to know that the kernels of both morphisms $\theta_f$ and $f^{\ast}\theta_g$ interact appropriately. That is, we first need to know that given the composite morphism
\[
\theta_f \circ \iota_{g \circ f}:T_{X/Z} \to f^{\ast}TY
\]
in $\DBun(X)$, the unique factorization
\[
\begin{tikzcd}
T_{X/Z} \ar[rr]{}{\theta_f \circ \iota_{g \circ f}} \ar[dr, swap, dashed]{}{\exists!\kappa} & & f^{\ast}(TY) \\
 & f^{\ast}T_{Y/Z} \ar[ur, swap]{}{f^{\ast}\iota_g}
\end{tikzcd}
\]
has the property that $\kappa$ is a $T$-epimorphism. We will show below that knowing this is sufficient for us to be able to deduce that the composition $g \circ f$ of $T$-submersions $f$ and $g$ is again a $T$-submersion.

\begin{proposition}\label{Prop: Section Sub: When submersions pull back}
Let $\Cscr$ be a tangent category with $0$-and-$p$-carrable morphisms $f:X \to Y$, $g:Y \to Z$, and $g \circ f:X \to Z$. Then if $f$ and $g$ are $T$-submersions and if:
\begin{itemize}
    \item The relative cotangent exact sequence
    \[
    \begin{tikzcd}
        Y \ar[r] & T_{Y/Z} \ar[r] & TY \ar[r] & g^{\ast}TZ \ar[r] & Y
    \end{tikzcd}
    \]
   in $\DBun(Y)$ pulls back via $f^{\ast}(-)$ to a $T$-linear exact sequence
   \[
   \begin{tikzcd}
       X \ar[r] & f^{\ast}(T_{Y/Z}) \ar[rr]{}{f^{\ast}\iota_g} & & f^{\ast}TY \ar[rr]{}{f^{\ast}\theta_g} & & f^{\ast}(g^{\ast}TZ) \ar[r] & X
   \end{tikzcd}
   \]
   in $\DBun(X)$;
   \item The unique morphism $\kappa$ factoring
   \[
   \begin{tikzcd}
T_{X/Z} \ar[rr]{}{\theta_f \circ \iota_{g \circ f}} \ar[dr, swap, dashed]{}{\exists!\kappa} & & f^{\ast}(TY) \\
 & f^{\ast}T_{Y/Z} \ar[ur, swap]{}{f^{\ast}\iota_g}
\end{tikzcd}
   \]
   is a $T$-linear epimorphism in $\DBun(X)$.
\end{itemize}
Then $g \circ f$ is a submersion.
\end{proposition}
\begin{proof}
We begin by assuming that we have a commuting diagram of the form
\[
\begin{tikzcd}
    T_{X/Z} \ar[r]{}{\iota_{g \circ f}} \ar[d, swap]{}{p_{X/Z}} & TX \ar[d]{}{\varphi} \\
    X \ar[r, swap]{}{\zeta} & E
\end{tikzcd}
\]
in $\DBun(X)$. Now observe that since the diagram
\[
\begin{tikzcd}
T_{X/Y} \ar[rr]{}{g \circ \iota^{f}} \ar[d, swap]{}{\iota_f} & & Z \ar[d]{}{0_Z} \\
TX \ar[rr, swap]{}{T(g \circ f)} & & TZ
\end{tikzcd}
\]
commutes because $0_Z \circ g \circ \iota^f = Tg \circ 0_Y \circ \iota^f = Tg \circ Tf \circ \iota_f = T(g \circ f) \circ \iota_f$, there is a unique morphism $\gamma_{f,g}:T_{X/Y} \to T_{X/Z}$ rendering the diagram
\[
\begin{tikzcd}
T_{X/Y} \ar[dr, dashed]{}{\exists!\gamma_{f,g}} \ar[drr, bend left = 20]{}{g \circ \iota^f} \ar[ddr, swap, bend right = 20]{}{\iota_f} \\
 & T_{X/Z} \ar[r]{}{\iota^{g \circ f}} \ar[d, swap]{}{\iota_{g \circ f}} & Z \ar[d]{}{0_Z} \\
& TX \ar[r, swap]{}{T(g \circ f)} & TZ
\end{tikzcd}
\]
commutative in $\Cscr$. But then a routine check shows that $\gamma_{f,g}$ is both monic and a linear morphism $\gamma_{f,g}:T_{X/Y} \to T_{X/Z}$ in $\DBun(X)$. This implies that the diagram
\[
\begin{tikzcd}
T_{X/Y} \ar[r]{}{\iota_f} \ar[d, swap]{}{p_X} & TX \ar[d]{}{\varphi} \\
X \ar[r, swap]{}{\zeta} & E
\end{tikzcd}
\]
commutes in $\DBun(X)$, as the equation
\[
\varphi \circ \iota_f = \varphi \circ \iota_{g \circ f} \circ \gamma_{f,g} = \zeta \circ p_{X/Z} \circ \gamma_{f,g} = \zeta \circ p_{X/Y}
\]
holds. But now, as $f$ is a $T$-submersion and hence $\theta_f$ is a linear $T$-pushout, we see that there is a unique morphism $\psi:f^{\ast}(TY) \to E$ in $\DBun(X)$ which renders
\[
\begin{tikzcd}
T_{X/Y} \ar[r]{}{\iota_f} \ar[d, swap]{}{p_{X/Y}} & TX \ar[ddr, bend left = 20]{}{\varphi} \ar[d]{}{\theta_f} \\
X \ar[r, swap]{}{f^{\ast}0_Y} \ar[drr, swap, bend right =20]{}{\zeta} & f^{\ast}(TY) \ar[dr, dashed]{}{\exists!\psi} \\
 & & E
\end{tikzcd}
\]
commutative. Furthermore, a routine computation gives the commutativity of the diagram
\[
\begin{tikzcd}
T_{X/Z} \ar[r]{}{\iota_{g \circ f}} \ar[d, swap]{}{p_{X/Z}} & TX \ar[d]{}{\theta_f} \ar[ddr, bend left = 20]{}{\psi} \\
X \ar[r, swap]{}{f^{\ast}0_Y} \ar[drr, swap, bend right = 20]{}{\zeta} & f^{\ast}(TY) \ar[dr]{}{\psi} \\
 & & E
\end{tikzcd}
\]
in $\DBun(X)$.

In order to proceed, rearrange the diagram above to the diagram:
\[
\begin{tikzcd}
T_{X/Z} \ar[rr]{}{\theta_f \circ \iota_{g \circ f}} \ar[d, swap]{}{p_{X/Z}} & & f^{\ast}(TY) \ar[d]{}{\psi} \\
X \ar[rr, swap]{}{\zeta} & & E
\end{tikzcd}
\]
Our goal now is to use the fact that the sequence
\[
\begin{tikzcd}
X \ar[r] & f^{\ast}T_{Y/Z} \ar[r]{}{f^{\ast}\iota_g} & f^{\ast}TY \ar[r]{}{f^{\ast}\theta_g} & f^{\ast}(g^{\ast}TZ) \ar[r] & X
\end{tikzcd}
\]
is exact in $\DBun(X)$ (and hence that $f^{\ast}\iota_g$ is the kernel of $f^{\ast}\theta_g$) in order to show that $\theta_f \circ \iota_{g \circ f}$ factors through $f^{\ast}\iota_g$. To this end, recall from Lemma \ref{Lemma: Thetaf and composition} that since there is a linear isomorphism $\alpha:f^{\ast}(g^{\ast}TZ) \xrightarrow{\cong} (g \circ f)^{\ast}TZ$ which makes the equation
\[
\theta_{g \circ f} = \alpha \circ f^{\ast}\theta_g \circ \theta_f
\]
hold. This allows us to compute that
\[
f^{\ast}\theta_g \circ \theta_f \circ \iota_{g \circ f} = \alpha^{-1} \circ \theta_{g \circ f} \circ \iota_{g \circ f} = \alpha^{-1} \circ (g \circ f)^{\ast}0_{Z} \circ p_{X/Z} = f^{\ast}(g^{\ast}0_Z) \circ p_{X/Z}.
\]
and so, by the universal property carried by $f^{\ast}T_{Y/Z}$, we see that there is a unique morphism $\kappa:T_{X/Z} \to f^{\ast}T_{Y/Z}$ in $\DBun(X)$ which renders the diagram
\[
\begin{tikzcd}
T_{X/Z} \ar[rr]{}{\theta_f \circ \iota_{g \circ f}} \ar[dr, dashed, swap]{}{\exists! \kappa} & & f^{\ast}TY \\
 & f^{\ast}T_{Y/Z} \ar[ur, swap]{}{f^{\ast}\iota_g}
\end{tikzcd}
\]
Now, by assumption the morphism $\kappa$ is an epimorphism. This allows us to compute that
\[
\psi \circ f^{\ast}\iota_g \circ \kappa = \psi \circ \theta_f \circ \iota_{g \circ f} =  \zeta \circ p_{X/Z} = \zeta \circ \pi_0 \circ \kappa
\]
and hence deduce the commutativity of the diagram
\[
\begin{tikzcd}
f^{\ast}T_{Y/Z} \ar[r]{}{f^{\ast}\iota_g} \ar[d, swap]{}{\pi_0} & f^{\ast}TY \ar[d]{}{\psi} \\
X \ar[r, swap]{}{\zeta} & E
\end{tikzcd}
\]
in $\DBun(X)$. We now use that as $f^{\ast}\theta_g$ is the $T$-linear cokernel of $f^{\ast}\iota_g$, there is a unique morphism $\rho:f^{\ast}(g^{\ast}TZ) \to E$ in $\DBun(X)$ which renders the diagram
\[
\begin{tikzcd}
f^{\ast}T_{Y/Z} \ar[rr]{}{f^{\ast}\iota_g} \ar[d, swap]{}{\pi_0} & & f^{\ast}TY \ar[ddr, bend left = 20]{}{\psi} \ar[d]{}{f^{\ast}\theta_g} \\
X \ar[rr, swap]{}{f^{\ast}(g^{\ast}0_Z)} \ar[drrr, swap, bend right = 20]{}{\zeta} & & f^{\ast}(g^{\ast}TZ) \ar[dr, dashed]{}{\exists!\rho} \\
 & & & E
\end{tikzcd}
\]
in $\DBun(X)$. Post-composing both $f^{\ast}\iota_g$ and $\pi_0$ by $\kappa$ then allows us to deduce that the diagram
\[
\begin{tikzcd}
T_{X/Z} \ar[rr]{}{\theta_f \circ \iota_{g \circ f}} \ar[d, swap]{}{p_{X/Z}} & & f^{\ast}TY \ar[d]{}{f^{\ast}\theta_g} \ar[ddr, bend left = 20]{}{\psi} \\
X \ar[rr, swap]{}{f^{\ast}(g^{\ast}0_Z)} \ar[drrr, bend right = 20, swap]{}{\zeta} & & f^{\ast}(g^{\ast}TZ) \ar[dr]{}{\rho} \\
 & & & E
\end{tikzcd}
\]
and hence its rearranged version
\[
\begin{tikzcd}
T_{X/Z} \ar[rr]{}{\iota_{g \circ f}} \ar[d, swap]{}{p_{X/Z}} & & TX \ar[d]{}{f^{\ast}\theta_g \circ \theta_f} \ar[ddr, bend left = 20]{}{\varphi} \\
X \ar[rr, swap]{}{f^{\ast}(g^{\ast}0_Z)} \ar[drrr, swap, bend right = 20]{}{\zeta} & & f^{\ast}(g^{\ast}TZ) \ar[dr, dashed]{}{\exists!\rho} \\
 & & & E
\end{tikzcd}
\]
commute in $\DBun(X)$; note that the uniqueness of $\rho$ follows from the fact that is an epimorphism $f^{\ast}(\theta_g) \circ \theta_f$ because $\theta_f$ and $g^{\ast}\theta_f$ are both epimorphisms.

We have just proved that the diagram
\[
\begin{tikzcd}
T_{X/Z} \ar[rr]{}{\iota_{g \circ f}} \ar[d, swap]{}{p_{X/Z}} & & TX \ar[d]{}{f^{\ast}\theta_g \circ \theta_f} \\
X \ar[rr, swap]{}{f^{\ast}(g^{\ast}0_Z)} & & f^{\ast}(g^{\ast}TZ)
\end{tikzcd}
\]
is a pushout in $\DBun(X)$. To see that it is a $T$-linear pushout, we simply note that both diagrams
\[
\begin{tikzcd}
T_{X/Y} \ar[r]{}{\iota_f} \ar[d, swap]{}{p_{X/Y}} & TX \ar[d]{}{\theta_f} \\
X \ar[r] & f^{\ast}TY
\end{tikzcd}\quad
\begin{tikzcd}
f^{\ast}T_{Y/Z} \ar[r]{}{f^{\ast}\iota_g} \ar[d, swap]{}{\pi_0} & f^{\ast}TY \ar[d]{}{f^{\ast}\theta_g} \\
X \ar[r] & f^{\ast}(g^{\ast}TZ)
\end{tikzcd}
\]
are $T$-linear pushouts by assumption while the factorizations $\kappa$ is a $T$-linear epimorphism by assumption. Because in addition the diagram
\[
\begin{tikzcd}
f^{\ast}T_{Y/Z} \ar[r]{}{f^{\ast}\iota_g} \ar[d, swap]{}{\pi_0} & f^{\ast}TY \ar[d]{}{f^{\ast}\theta_g} \\
X \ar[r] & f^{\ast}(g^{\ast}TZ)
\end{tikzcd}
\]
is a $T$-linear pullback, the maps $T^m\kappa$ are the desired factorizations of $T^m\theta_f \circ T^m\iota_{g \circ f}$ through $T^m(f^{\ast}\iota_g)$. Thus each formal tool used in the factorization of
\[
\begin{tikzcd}
T_{X/Z} \ar[r]{}{\iota_{g \circ f}} \ar[d, swap]{}{p_{X/Z}} & TX \ar[d]{}{\varphi} \\
X \ar[r, swap]{}{\zeta} & E
\end{tikzcd}
\]
exists in $\DBun(T^mX)$ as well and may be used after applying the functor $T_{\ast}^m:\DBun(X) \to \DBun(T^mX)$. This allows us to deduce that each diagram
\[
\begin{tikzcd}
T^m(T_{X/Z}) \ar[rr]{}{T^m\iota_{g \circ f}} \ar[d, swap]{}{T^m(p_{X/Z})} & & T^{m+1}X \ar[d]{}{T^m(f^{\ast}\theta_g) \circ T^m\theta_f} \\
T^mX \ar[rr, swap]{}{T^m(f^{\ast}(g^{\ast}0_Z))} & & T^m(f^{\ast}(g^{\ast}TZ)
\end{tikzcd}
\]
is a pushout in $\DBun(T^mX)$. Finally post-composing again by $\alpha:f^{\ast}(g^{\ast}TZ) \to (g \circ f)^{\ast}TZ$ in all relevant locations allows us to deduce that
\[
\begin{tikzcd}
T_{X/Z} \ar[rr]{}{\iota_{g \circ f}} \ar[d, swap]{}{p_{X/Z}} & & TX \ar[d]{}{\theta_{g \circ f}} \\
X \ar[rr, swap]{}{(g \circ f)^{\ast}0_Z} & & (g \circ f)^{\ast}TZ
\end{tikzcd}
\]
is a $T$-linear pushout in $\DBun(X)$ and hence that $f$ is a $T$-submersion.
\end{proof}

It may be tempting to think at this point that $T$-submersions are a remarkably poorly-behaved class of morphism in a tangent category (at full generality, at least). However, we close this section by showing that not \emph{all} pleasing formal properties of submersions as they are known in $\SMan$ are lost upon generalizing to $T$-submersions in tangent categories. $T$-submersions are still stable under application of all powers of the tangent bundle $T^m$. Any further development and study of $T$-submersions at this level of generality is, however, left as future work.
\begin{proposition}\label{Prop: Section Sub: Submersions are Tstable}
If $f:X \to Y$ is a $T$-submersion then $T^mf$ is a $T$-submersion for all $m \in \N$.
\end{proposition}
\begin{proof}
This follows from the fact that if the diagram
\[
\begin{tikzcd}
T_{X/Y} \ar[r]{}{\iota_f} \ar[d, swap]{}{p_{X/Y}} & TX \ar[d]{}{\theta_f} \\
X \ar[r, swap] & f^{\ast}TY
\end{tikzcd}
\]
is a $T$-linear pushout, then for all $m \in \N$ the diagram
\[
\begin{tikzcd}
T^m(T_{X/Y}) \ar[r]{}{T^m\iota_f} \ar[d, swap]{}{T^mp_{X/Y}} & T^{m+1}X \ar[d]{}{\theta_f} \\
T^mX \ar[r, swap]{}{T^m(f^{\ast}0_Y)} & T^m(f^{\ast}TY)
\end{tikzcd}
\]
is a $T$-linear pushout in $\DBun(T^mX)$. Furthermore, a straightforward but tedious diagram chase shows that these squares are naturally isomorphic to the diagrams
\[
\begin{tikzcd}
T^m(T_{X/Y}) \ar[d, swap]{}{p_{T^mX/T^mY}} \ar[r]{}{\iota_{T^mf}} & T^{m+1}X \ar[d]{}{\theta_{T^mf}} \\
T^mX \ar[r] & (T^mf)^{\ast}(T^{m+1}Y)
\end{tikzcd}
\]
for all $m \in \N$. Thus $T^mf$ is a $T$-submersion.
\end{proof}

\section{Split Tangent Submersions}\label{Section: Split Submersions}
In this section we study an alternative definition of submersion in tangent categories which is a kind of categorical dual to strong $T$-immersions (cf. Definition \ref{Defn: Strong Timmersion}). This kind of submersion, which we call a split $T$-submersion below, gives a categorical condition which is on one hand more strict than asking $T$-submersion above (in the sense that when $f$ is $p$-carrable and $0$-carrable, strong $T$-submersions ask for the relative cotangent sequence of $f$ to admit a right splitting and not merely be exact). However, on the other hand and perhaps surprisingly, we will see that split $T$-submersions are possible to cast in \emph{more} tangent categories than $T$-submersions because they need not require $p$-carrablitity nor $0$-carrability to exist. These are the morphisms $f:X \to Y$ which play a dual role to that of strong $T$-immersions: they are precisely the maps whose bundle projection naturality square are \emph{weak} pullbacks (and so have the existence but not necessarily uniqueness of lifts/factorizations).

What we call split $T$-submersions have been studied in tangent category theory already: a notable and important example justifying and placing this level of generality appeared in \cite{BenVectorBundles} when MacAdam used weak pullbacks to study submersions of smooth manifolds and prove, using the language of retractive display systems, that vector bundles in $\SMan$ correspond to differential bundles. However, as we indicated earlier, these maps require quite a strong technical condition to exist: they need the map $f:X \to Y$ to have the horizontal descent $\theta_f$ have a \emph{global} section andnot merely a local section. As such, it is generally only possible in $\SMan$ to have split $T$-submersions correspond to maps $f:X \to Y$ between paracompact smooth manifolds \emph{or} maps $f$ which admit a connection\footnote{We will prove below that to give a split $T$-submersion in $\SMan$ is equivalent to giving a submersion $f:X \to Y$ which admits a smooth partition of unity.}.

Despite these warnings above, in alternative contexts to differential geometry, we will see below in algebraic settings that meeting arbitrary maps in maximally transverse ways with global sections is much more subtle than one may initially expect. In $\CAlg{R}$, for a commutative rig $R$, we will find that $f:A \to B$ is a submersion precisely when it is surjective with a linear section. Additionally, when given a commutative rig $R$ and a map $f:A \to B$ in $\CAlg{R}$, in $\CAlg{R}^{\op}$ we will see that $f$ is  a submersion precisely when $\Kah{B}{R}$ is a split extension of $\Kah{B}{A}$ by $\Kah{A}{R} \otimes_A B$ in $\Mod{B}$. When $R$ is a commutative ring, this is related to studying when the map $f$ is formally smooth relative to $R$ \cite[D{\'e}finition 19.9.1]{EGA04}\footnote{We will go through this further later, but a warning to our readers: being formally smooth relative to $R$ is a significantly weaker property than a morphism being formally smooth in the slice category over $R$.}. What is common to all perspectives, however, is that submersions arise from weak pullbacks in each respective tangent category.

\subsection{Weak Pullbacks and Submersions}

In the general case we study submersions following the paths laid down by \cite{BenVectorBundles}, \cite{JonathanThesis}, and \cite{GeoffMarcelloTSubmersionPaper}: submersions are maps for which the projection naturality square is a weak pullback which is preserved by all powers of the tangent bundle functor. To this end we will proceed by first recalling what it means to be a weak pullback. In a category $\Cscr$, a square 
\[
\begin{tikzcd}
X \ar[r]{}{f} \ar[d, swap]{}{g} & Y \ar[d]{}{h} \\
Z \ar[r, swap]{}{k} & W
\end{tikzcd}
\]
is a \emph{weak pullback in $\Cscr$} if for any diagram 
\[
\begin{tikzcd}
V \ar[r]{}{s} \ar[d, swap]{}{t} & Y \ar[d]{}{h} \\
Z \ar[r, swap]{}{k} & W
\end{tikzcd}
\]
there is at least one map such that the following diagram commutes: 
\[
\begin{tikzcd}
V \ar[drr, bend left = 20]{}{s} \ar[ddr, swap, bend right = 20]{}{t} \ar[dr, dashed]{}{\exists\geq 1} \\
& X \ar[r]{}{f} \ar[d, swap]{}{g} & Y \ar[d]{}{h} \\
& Z \ar[r, swap]{}{k} & W
\end{tikzcd}
\]
Additionally, we say that a weak pullback is a \emph{$T$-weak pullback} if for all $m \in \N$, after applying $T^m$ to the diagram, the new diagram remains a weak pullback.
\begin{definition}[{\cite[Definition 9]{BenVectorBundles}}]\label{Defn: TSubmersions}
In a tangent category $\Cscr$, a map $f:X \to Y$ is a \emph{split $T$-submersion} if the naturality square
\[
\begin{tikzcd}
TX \ar[r]{}{Tf} \ar[d, swap]{}{p_X} & TY \ar[d]{}{p_Y} \\
X \ar[r, swap]{}{f} & Y
\end{tikzcd}
\]
is a $T$-weak pullback.
\end{definition}
\begin{remark}
In full generality, it is possible to define split $T$-submersions in more general tangent categories than $T$-submersions. This is because we do not need a morphism $f$ to be $p$-carrable in order for $f$ to be a split $T$-submersion.
\end{remark}

In order to prove some key facts about $T$-submersions, we will need the following folklorish result about weak pullbacks, which we prove for completeness sake. 

\begin{lemma}[Folklore]\label{Lemma: Folklore for weak pullbacks}
Consider commutative diagrams with squares numbered as below:
\[
\begin{tikzcd}
X \ar[r, ""{name = UL}]{}{k} \ar[d, swap]{}{f} & Y \ar[d]{}{g} \ar[r, ""{name = UR}]{}{\ell} & Z \ar[d]{}{h} \\
W \ar[r, swap, ""{name = DL}]{}{k^{\prime}} & V \ar[r, swap, ""{name = DR}]{}{\ell^{\prime}} & U
\ar[from = UL, to = DL, symbol = {\rotatebox[origin=c]{90}{$(1)$}}]
\ar[from = UR, to = DR, symbol = {\rotatebox[origin=c]{90}{$(2)$}}]
\end{tikzcd}
\qquad
\begin{tikzcd}
X \ar[r, ""{name = U}]{}{\ell \circ k} \ar[d, swap]{}{f} & Z \ar[d]{}{h} \\
W \ar[r, swap, ""{name = D}]{}{\ell^{\prime} \circ k^{\prime}} & U
\ar[from = U, to = D, symbol = {\rotatebox[origin=c]{90}{$(3)$}}]
\end{tikzcd}
\]
Then:
\begin{enumerate}[{\em (i)}] 
    \item If Squares $(1)$ and $(2)$ are weak pullbacks then so too is Square $(3)$.
    \item If Square $(3)$ is a weak pullback and if Square $(2)$ is a prepullback then Square $(1)$ is a weak pullback.
\end{enumerate}
In particular, if Square $(2)$ is a pullback then Square $(1)$ is a weak pullback if and only if Square $(3)$ is a weak pullback.
\end{lemma}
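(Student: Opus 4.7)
The plan is to execute both claims by straightforward diagram chase, dualizing the structure of the proof of Lemma~\ref{Lemma: The prepullback stacking lemma}. Since a pullback is precisely a weak pullback that is also a prepullback, statements~$(i)$ and~$(ii)$ together recover the existence half of the usual pullback pasting lemma, and the ``in particular'' clause then follows immediately by observing that any pullback is both a weak pullback and a prepullback.

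For $(i)$, I will begin with an arbitrary cone $(s:P \to Z, t:P \to W)$ over Square~$(3)$ and climb it back through the two given weak pullbacks in sequence. First, the pair $(s, k^\prime \circ t)$ is a cone over Square~$(2)$ by the outer commutativity hypothesis, so the weak pullback property of Square~$(2)$ produces some $\beta:P \to Y$ satisfying $\ell \circ \beta = s$ and $g \circ \beta = k^\prime \circ t$. The latter equation means $(\beta, t)$ is a cone over Square~$(1)$, and applying the weak pullback property of Square~$(1)$ yields some $\alpha:P \to X$ with $k \circ \alpha = \beta$ and $f \circ \alpha = t$. This $\alpha$ immediately satisfies $\ell \circ k \circ \alpha = \ell \circ \beta = s$ and $f \circ \alpha = t$, which is the required factorization through Square~$(3)$.

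For $(ii)$, I will start with a cone $(s:P \to Y, t:P \to W)$ over Square~$(1)$, pass to the cone $(\ell \circ s, t)$ over Square~$(3)$ (whose cone condition is immediate from Square~$(2)$ commuting and the cone condition on the original data), and use the weak pullback property of Square~$(3)$ to produce $\alpha:P \to X$ with $\ell \circ k \circ \alpha = \ell \circ s$ and $f \circ \alpha = t$. The main subtlety I anticipate is that this only yields $\ell \circ k \circ \alpha = \ell \circ s$ rather than the needed $k \circ \alpha = s$, and this is exactly where the prepullback hypothesis on Square~$(2)$ is essential: both $k \circ \alpha$ and $s$ can be seen to induce the \emph{same} cone $(\ell \circ s, k^\prime \circ t)$ over Square~$(2)$, using the identity $g \circ k \circ \alpha = k^\prime \circ f \circ \alpha = k^\prime \circ t = g \circ s$. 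The prepullback property of Square~$(2)$ then forces $k \circ \alpha = s$, completing the factorization. Beyond this single point, no further obstacles are anticipated; everything else is routine pasting.
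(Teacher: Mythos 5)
Your proof is correct and follows essentially the same route as the paper's: for (i) you lift a cone over Square (3) first through the weak pullback (2) and then through the weak pullback (1); for (ii) you push a cone over Square (1) out to Square (3), obtain a lift there, and then use the prepullback property of Square (2) to force $k \circ \alpha = s$, exactly as the paper does. No gaps.
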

\begin{proof}
Note that the final statement of the lemma follows by combining both $(i)$ and $(ii)$ together with the fact that pullbacks are both weak pullbacks and prepullbacks. As such, it suffices to prove $(i)$ and $(ii)$ to prove the lemma. We first prove $(i)$. Assume that we have a commuting diagram
\[
\begin{tikzcd}
P \ar[r]{}{s} \ar[d, swap]{}{t} & Z \ar[d]{}{h} \\
W \ar[r, swap]{}{\ell^{\prime} \circ k^{\prime}} & U
\end{tikzcd}
\]
and note that this implies that in particular we obtain a commuting diagram with a lift $\lambda$
\[
\begin{tikzcd}
V \ar[drr, bend left = 20]{}{s} \ar[ddr, swap, bend right = 20]{}{k^{\prime} \circ t} \ar[dr]{}{\lambda} \\
& Y \ar[r]{}{\ell} \ar[d, swap]{}{g} & Z \ar[d]{}{h} \\
& V \ar[r, swap]{}{\ell^{\prime}} & U
\end{tikzcd}
\]
by virtue of Square $(2)$ being a weak pullback. But then since $g \circ \lambda = k^{\prime} \circ t$, the fact that Square $(1)$ is a weak pullback allows us to produce a map $\rho:P \to X$ through which $\lambda$ and $t$ are factored by $k$ and $f$, respectively. A straightforward check shows that $\rho$ renders the entire diagram
\[
\begin{tikzcd}
P \ar[d, swap]{}{\rho} \ar[dd, bend right = 40, swap]{}{t} \ar[dr]{}{\lambda} \ar[drr, bend left = 20]{}{s} \\
X \ar[r]{}{k} \ar[d, swap]{}{f} & Y \ar[d]{}{g} \ar[r]{}{\ell} & Z \ar[d]{}{h} \\
W \ar[r, swap]{}{k^{\prime}} & V \ar[r, swap]{}{\ell^{\prime}} & U
\end{tikzcd}
\]
commutative. Because the outer edges exactly compose to the initial cone above Square $(3)$, this shows that Squre $(3)$ is a weak pullback.

Now for $(ii)$, assume that we have a commuting diagram
\[
\begin{tikzcd}
P \ar[r]{}{s} \ar[d, swap]{}{t} & Y \ar[d]{}{g} \\
W \ar[r, swap]{}{k^{\prime}} & V
\end{tikzcd}
\]
and note that by post-composing this with Square $(2)$ we get a commuting diagram
\[
\begin{tikzcd}
P \ar[drr, bend left = 20]{}{\ell \circ s} \ar[ddr, swap, bend right = 20]{}{t} \ar[dr, dashed]{}{\exists\,\lambda} & & \\
 & X \ar[r]{}{\ell \circ k} \ar[d, swap]{}{f} & Z \ar[d]{}{h} \\
 & W \ar[r, swap]{}{\ell^{\prime} \circ k^{\prime}} & U
\end{tikzcd}
\]
where $\lambda$ exists precisely because Square $(3)$ is a weak pullback. Since $f \circ \lambda = t$, we also have that $g \circ k \circ \lambda = k^{\prime} \circ f \circ \lambda = k^{\prime} \circ t$. Similarly, by the assumption $g \circ s = k^{\prime} \circ t$, we see that the diagram
\[
\begin{tikzcd}
P \ar[drr, bend left = 20]{}{\ell \circ s} \ar[ddr, swap, bend right = 20]{}{t} \ar[dr, shift left = 1]{}{k \circ \lambda} \ar[dr, swap, shift right = 1]{}{s}& & \\
 & Y \ar[r]{}{\ell} \ar[d, swap]{}{g} & Z \ar[d]{}{h} \\
 & v \ar[r, swap]{}{\ell^{\prime}} & U
\end{tikzcd}
\]
commutes with both presented lifts $s$ and $k \circ \lambda$. However, because Square $(2)$ is a  prepullback, we must have that $k \circ \lambda = s$ and so that the diagram
\[
\begin{tikzcd}
P \ar[drr, bend left = 20]{}{s} \ar[ddr, swap, bend right = 20]{}{t} \ar[dr]{}{\lambda} & & \\
 & X \ar[r]{}{k} \ar[d, swap]{}{f} & Y \ar[d]{}{g} \\
 & W \ar[r, swap]{}{k^{\prime}} & V
\end{tikzcd}
\]
commutes. Thus Square $(1)$ is a weak pullback.
\end{proof}

\begin{proposition}\label{Prop: Tsubmersions and stability properties}
In a tangent category $\Cscr$, let $f:X \to Y$ be a split $T$-submersion. Then:
\begin{enumerate}[{\em (i)}] 
    \item If $g:Y \to Z$ is a split $T$-sumbersion then $g \circ f:X \to Z$ is a split $T$-submersion.
    \item If $m \in \N$ then $T^mf$ is a split $T$-submersion.
    \item Assume $g:Z \to Y$ is an arbitrary map for which the $T$-pullback
    \[
    \begin{tikzcd}
    W \ar[r]{}{\pi_1} \ar[d, swap]{}{\pi_0} & Z \ar[d]{}{g} \\
    X \ar[r, swap]{}{f} & Y
    \end{tikzcd}
    \]
    exists. Then $\pi_1$ is a split $T$-submersion.
\end{enumerate}
\end{proposition}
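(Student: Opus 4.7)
The plan is to handle each of the three claims separately, with parts (i) and (ii) following quickly from pasting arguments and canonical flip coherences, and part (iii) requiring the main diagram chase.

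For part (i), the $p$-naturality square at $g \circ f$ factors horizontally as the pasting of the $p$-naturality squares at $f$ and at $g$:
\[
\begin{tikzcd}
TX \ar[r]{}{Tf} \ar[d, swap]{}{p_X} & TY \ar[r]{}{Tg} \ar[d]{}{p_Y} & TZ \ar[d]{}{p_Z} \\
X \ar[r, swap]{}{f} & Y \ar[r, swap]{}{g} & Z
\end{tikzcd}
\]
Each of the two inner squares is a weak pullback by hypothesis, so Lemma \ref{Lemma: Folklore for weak pullbacks}.(i) gives that the outer rectangle is a weak pullback as well. Stability under all $T^m$ follows by applying $T^m$ to this pasting and invoking the same lemma, using that the tangent submersion hypothesis on $f$ and $g$ gives preservation at each individual square.

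For part (ii), the $p$-naturality square at $T^m f$ differs from $T^m$ applied to the $p$-naturality at $f$ only in that its vertical legs are $(p \ast T^m)$ rather than $(T^m \ast p)$. Iterated whiskerings of the canonical flip $c$ assemble (exactly as in the proof of Corollary \ref{Cor: pcarrable is power of Tm stable}) into an isomorphism of cospans relating the two, so the two squares are isomorphic as commutative squares. Since $T^m$ applied to the $p$-naturality at $f$ is a weak pullback by hypothesis, the $p$-naturality at $T^m f$ is also a weak pullback; applying any further $T^k$ and repeating this flip-based identification proves the preservation requirement.

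Part (iii) carries the main content. Suppose we have a cone
\[
\begin{tikzcd}
V \ar[r]{}{s} \ar[d, swap]{}{t} & TZ \ar[d]{}{p_Z} \\
W \ar[r, swap]{}{\pi_1} & Z
\end{tikzcd}
\]
Combining the equations $\pi_1 \circ t = p_Z \circ s$ and $f \circ \pi_0 = g \circ \pi_1$ shows that $(V, \pi_0 \circ t, Tg \circ s)$ is a cone over $X \xrightarrow{f} Y \xleftarrow{p_Y} TY$. Since $f$ is a $T$-submersion, there exists a lift $\ell: V \to TX$ satisfying $p_X \circ \ell = \pi_0 \circ t$ and $Tf \circ \ell = Tg \circ s$. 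The pair $(\ell, s)$ then forms a cone over $TX \xrightarrow{Tf} TY \xleftarrow{Tg} TZ$, and since $T$ preserves the pullback square witnessing $W$, this cone factors through a unique $\tau: V \to TW$ with $T\pi_0 \circ \tau = \ell$ and $T\pi_1 \circ \tau = s$. A short diagram chase using the pullback property of $W$ with respect to $\pi_0$ and $\pi_1$ then gives $\pi_0 \circ p_W \circ \tau = \pi_0 \circ t$ and $\pi_1 \circ p_W \circ \tau = \pi_1 \circ t$, so $p_W \circ \tau = t$, producing the required lift. Stability under $T^m$ follows by running the same construction after applying $T^m$ throughout, using both that $T^m$ preserves the pullback defining $W$ and that $T^m$ of the $p$-naturality square at $f$ is a weak pullback. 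The main obstacle is simply the bookkeeping of the pullback-of-a-pullback argument; no genuinely new ingredient is needed for the $T^m$ case beyond the $m = 0$ case.
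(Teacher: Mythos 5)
Your proof is correct and follows essentially the same route as the paper: (i) via the pasting lemma for weak pullbacks, (ii) via canonical-flip isomorphisms between $(p \ast T^m)$ and $(T^m \ast p)$, and (iii) via the two-step lift — first through $TX$ using the submersion property of $f$, then through $TW$ using that $T$ preserves the defining pullback. The only cosmetic difference is in (iii), where the paper first reduces to an outer rectangle via the final statement of Lemma \ref{Lemma: Folklore for weak pullbacks}, whereas you construct the lift for the square at $\pi_1$ directly and verify $p_W \circ \tau = t$ componentwise using that $W$ is a genuine pullback; both verifications are sound.
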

\begin{proof} Starting with $(i)$, note that for all $m \in \N$ in the diagram
\[
\begin{tikzcd}
T^{m+1}X \ar[rr]{}{T^{m+1}f} \ar[d, swap]{}{(T^m \ast p)_X} & & T^{m+1}Y \ar[rr]{}{T^{m+1}g} \ar[d]{}{(T^m \ast p)_Y} & & T^{m+1}Z \ar[d]{}{(T^m \ast p)_Z} \\
T^mX \ar[rr, swap]{}{T^mf} & & T^mY \ar[rr,  swap]{}{T^mg} & & T^mZ
\end{tikzcd}
\]
appealing to Lemma \ref{Lemma: Folklore for weak pullbacks}.(i) gives the result because both the left-and-right-handed squares are weak pullbacks by assumption. For $(ii)$, observe that this is immediate (but tedious to verify) from the fact that each functor $T^m$ preserves the weak pullbacks defining what it means to be a $T$-submersion, using that we get various natural isomorphisms $T^{m+k} \ast p \cong T^{k} \ast \left(T^m \ast p\right) \cong T^k \ast \left(p \ast T^m\right)$ for all $k \in \N$ by appropriately whiskering the powers of the tangent bundle functors by various (whiskered) canonical flips, and using that being a weak pullback is stable under isomorphism. 

For $(iii)$, consider that by the final statement of Lemma \ref{Lemma: Folklore for weak pullbacks}, in the diagram
\[
\begin{tikzcd}
TW \ar[r, ""{name = UL}]{}{p_W} \ar[d, swap]{}{T\pi_1} & W \ar[d]{}{\pi_1} \ar[r, ""{name = UR}]{}{\pi_0} & X \ar[d]{}{f} \\
TZ \ar[r, swap, ""{name = DL}]{}{p_Z} & Z \ar[r, swap, ""{name = DR}]{}{g} & Y
\ar[from = UL, to = DL, symbol = {\rotatebox[origin=c]{90}{$(1)$}}]
\end{tikzcd}
\]
Square $(1)$ is a weak pullback if and only if the total square is a weak pullback. However, as the total square is equal to the commuting diagram
\[
\begin{tikzcd}
TW \ar[r]{}{T\pi_0} \ar[d, swap]{}{T\pi_1} & TX \ar[r]{}{p_X} \ar[d]{}{Tf} & X \ar[d]{}{f} \\
TZ \ar[r, swap]{}{Tg} & TY \ar[r, swap]{}{p_Y} & Y
\end{tikzcd}
\]
it suffices to prove that this rectangle is a weak pullback. To this end assume we have a commuting square
\[
\begin{tikzcd}
U \ar[r]{}{s} \ar[d, swap]{}{t} & TZ \ar[d]{}{p_Y \circ Tg} \\
X \ar[r, swap]{}{f} & Y
\end{tikzcd}
\]
and note that by reorganizing our composition we get a commuting diagram
\[
\begin{tikzcd}
U \ar[drr, bend left = 20]{}{Tg \circ s} \ar[ddr, swap, bend right = 20]{}{t} \ar[dr, dashed]{}{\exists\,\gamma} \\
 & TX \ar[r]{}{Tf} \ar[d, swap]{}{p_X} & TY \ar[d]{}{p_Y} \\
 & X \ar[r, swap]{}{f} & Y
\end{tikzcd}
\]
with the existence of $\gamma$ justified by the fact that the naturality square of $p$ at $f$ is a weak pullback. But then because $TW$ is a pullback of $Tf$ against $Tg$, we see that there is a unique morphism $\theta:U \to TW$ rendering the diagram
\[
\begin{tikzcd}
U \ar[drr, bend left = 20]{}{s} \ar[ddr, swap, bend right = 20]{}{\gamma} \ar[dr, dashed]{}{\exists!\theta} \\
 & TW \ar[r]{}{T\pi_1} \ar[d, swap]{}{T\pi_0} & TZ \ar[d]{}{Tg} \\
 & TX \ar[r, swap]{}{Tf} & TY
\end{tikzcd}
\]
commutative. This then implies that the diagram
\[
\begin{tikzcd}
U \ar[drr, bend left = 20]{}{s} \ar[ddr, swap, bend right = 20]{}{t} \ar[dr, dashed]{}{\exists\,\theta} \\
 & TW \ar[r]{}{T\pi_1} \ar[d]{}{p_X \circ T\pi_0} & TZ \ar[d]{}{p_Y \circ Tg} \\
 & X \ar[r, swap]{}{f} & Y
\end{tikzcd}
\]
commutes and so that the whole diagram is a weak pullback. Appealing again to the final statement of Lemma \ref{Lemma: Folklore for weak pullbacks} we get that the square
\[
\begin{tikzcd}
TW \ar[r]{}{T\pi_1} \ar[d, swap]{}{p_W} & TZ \ar[d]{}{p_Z} \\
W \ar[r, swap]{}{\pi_1} & Z
\end{tikzcd}
\]
is a weak pullback. Running this same argument for all $m \in \N$ and using that the powers of the tangent bundle functor $T^m$ preserve every structure weak pullback and pullback in sight allows us to conclude that $\pi_1$ is a $T$-submersion.
\end{proof}

As we can see in Definition \ref{Defn: TSubmersions} above, split $T$-submersions are defined in more general situations than when the horizontal descent $\theta_f$ is necessarily defined. This is not an issue in practice because conventional practice favours working with display $T$-submersions; cf.\! \cite[Proposition 2]{BenVectorBundles} and \cite[Proposition 2.28]{GeoffMarcelloTSubmersionPaper}, for instance. In order to relate split $T$-submersions to the horizontal descent $\theta_f$, we need to assume that the map $f:X \to Y$ is $p$-carrable. In this situation we find that a map is a split $T$-submersion if and only if its horizontal descent admits a section. 

\begin{proposition}\label{Prop: The lifting property characterization of TSmooth}
In a tangent category $\Cscr$, if $f: X \to Y$ is $p$-carrable morphism, then $f$ is a split $T$-submersion if and only if $\theta_f$ admits a section.
\end{proposition}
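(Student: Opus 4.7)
The plan is to exploit the defining universal property of the horizontal descent $\theta_f = \langle p_X, Tf\rangle$ with respect to the pullback $f^\ast(TY)$. For the forward direction, I will take the $m=0$ instance of the weak pullback and apply it to the canonical cone constituted by the projections of $f^\ast(TY)$ itself; for the reverse direction, I will use a section of $\theta_f$ to manufacture lifts out of arbitrary cones, and then leverage Corollary \ref{Cor: The other higher powers of T to thetaf in one line} to handle the $T^m$-preservation.

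First I would do $(\Rightarrow)$: assume $f$ is a $T$-submersion. The pullback projections $\pr_0: f^\ast(TY) \to X$ and $\pr_1: f^\ast(TY) \to TY$ satisfy $f \circ \pr_0 = p_Y \circ \pr_1$ by construction, so they form a cone over the naturality square for $p$ at $f$. Taking $m = 0$ in the weak pullback hypothesis produces a lift $s: f^\ast(TY) \to TX$ with $p_X \circ s = \pr_0$ and $Tf \circ s = \pr_1$. Then $\theta_f \circ s$ is a self-map of $f^\ast(TY)$ that agrees with the identity on both $\pr_0$ and $\pr_1$, so by the uniqueness in the pullback property, $\theta_f \circ s = \id_{f^\ast(TY)}$.

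For $(\Leftarrow)$, I would start by assuming $\theta_f$ admits a section $s$ and verifying the weak pullback property at $m = 0$. Given any commuting square with sides $a: V \to X$ and $b: V \to TY$ satisfying $f \circ a = p_Y \circ b$, the universal property of $f^\ast(TY)$ yields the cone map $\langle a, b\rangle: V \to f^\ast(TY)$, and I would set $\lambda := s \circ \langle a, b\rangle$. The verification that $p_X \circ \lambda = a$ and $Tf \circ \lambda = b$ is immediate from $\pr_0 \circ \theta_f = p_X$, $\pr_1 \circ \theta_f = Tf$, and $\theta_f \circ s = \id$. To upgrade this to $T^m$-preservation, note that $T^m f$ is $p$-carrable by Corollary \ref{Cor: pcarrable is power of Tm stable}, and Corollary \ref{Cor: The other higher powers of T to thetaf in one line} gives an identity $\theta_{T^m f} = \tilde{C}_m \circ T^m\theta_f \circ C_m$ with $C_m, \tilde{C}_m$ isomorphisms. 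Since functors preserve sections, $T^m s$ is a section of $T^m \theta_f$, and conjugating by $C_m^{\pm 1}$ and $\tilde{C}_m^{\pm 1}$ yields a section of $\theta_{T^m f}$. Applying the $m = 0$ argument with $T^m f$ in place of $f$ then shows the naturality square for $T^m f$ is a weak pullback, which up to the canonical-flip isomorphisms coincides with the $T^m$-image of the naturality square for $f$.

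The main obstacle I expect is the bookkeeping around the isomorphism $\theta_{T^m f} \cong T^m \theta_f$ in Corollary \ref{Cor: The other higher powers of T to thetaf in one line} and the mismatch between the $(T^m \ast p)$ and $(p \ast T^m)$ structures on $T^{m+1}$; this is what forces us to mediate via canonical flips rather than claim equality. Once that identification is in hand, though, every weak-pullback verification reduces cleanly to the same elementary cone-lifting argument used at $m = 0$, so the induction step is essentially formal.
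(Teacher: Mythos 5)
Your proposal is correct and follows essentially the same route as the paper's proof: the forward direction extracts the section from the weak-pullback lift of the canonical cone of projections and uses the pullback's uniqueness to see $\theta_f \circ s = \id$, while the reverse direction builds lifts by composing $s$ with the induced cone map and handles the higher powers via the conjugation identity $\theta_{T^m f} = \tilde{C}_m \circ T^m\theta_f \circ C_m$ from Corollary \ref{Cor: The other higher powers of T to thetaf in one line}. If anything, your explicit remark that the $m=0$ argument is then re-run for $T^m f$ in place of $f$ makes the final step slightly more transparent than the paper's phrasing.
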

\begin{proof}
$\implies$: Assume that $f$ is a split $T$-submersion. Because the pullback projections from $f^{\ast}(TY)$ form a cone over the cospan $X \xrightarrow{f} Y \xleftarrow{p_Y} TY$, there is a map $s$ making the diagram
\[
\begin{tikzcd}
f^{\ast}(TY) \ar[dr, dashed]{}{\exists\,s} \ar[drr, bend left = 20]{}{\pr_1} \ar[ddr, swap, bend right = 20]{}{\pr_0} & & \\
 & TX \ar[r]{}{Tf} \ar[d, swap]{}{p_X} & TY \ar[d]{}{p_Y} \\
 & X \ar[r, swap]{}{f} & Y
\end{tikzcd}
\]
commute. Because $s$ is a cone morphism and $f^{\ast}(TY)$ is the terminal object in the category of cones, $\theta_f \circ s = \id$. So $\theta_f$ admits a section as desired. 

$\impliedby:$ Assume that $s:f^{\ast}(TY) \to TX$ is a section of $\theta_f$, i.e., $s$ is a map making
\[
\begin{tikzcd}
f^{\ast}(TY) \ar[dr, equals] \ar[r]{}{s} & TX \ar[d]{}{\theta_f} \\
 & f^{\ast}(TY)
\end{tikzcd}
\]
commute. Then to see that the naturality square for $p$ is a weak pullback define $t:Z \to TX$ by the formula $t := s \circ \gamma$. To see this defines a morphism of cones, consider that since we have the commuting diagrams
\[
\begin{tikzcd}
Z \ar[dr]{}{\gamma} \ar[drr, bend left = 20]{}{g} \ar[ddr, bend right = 20, swap]{}{h} \\
 & f^{\ast}(TY) \ar[r]{}{\pr_1} \ar[d, swap]{}{\pr_0} & TY \ar[d]{}{p_Y} \\
 & X \ar[r, swap]{}{f} & Y
\end{tikzcd}\quad
\begin{tikzcd}
TX \ar[dr]{}{\theta_f} \ar[drr, bend left = 20]{}{Tf} \ar[ddr, bend right = 20, swap]{}{p_X} \\
 & f^{\ast}(TY) \ar[r]{}{\pr_1} \ar[d, swap]{}{\pr_0} & TY \ar[d]{}{p_Y} \\
 & X \ar[r, swap]{}{f} & Y
\end{tikzcd}
\]
then
\[
p_X \circ t = p_X \circ s \circ \gamma = p_X \circ s \circ \gamma = \pr_0 \circ \gamma = h
\]
and
\[
Tf \circ t = Tf \circ s \circ \gamma = \pr_1 \circ \theta_f \circ s \circ \gamma = \pr_1 \circ \gamma = g,
\]
so $t$ is indeed a cone morphism. Finally we compute that
\[
\theta_f \circ t = \theta_f \circ s \circ \gamma = \gamma,
\]
so there is indeed a factorization of $\gamma$ through $TX$. In particular, this shows that the naturality square for $p$ at $f$ is a weak pullback. We now verify that for arbitrary $m \in \N$, the naturality square
\[
\begin{tikzcd}
    T^{m+1}X \ar[r]{}{T^{m+1}f} \ar[d, swap]{}{(T^{m} \ast p)_X} & T^{m+1}Y \ar[d]{}{(T^m \ast p)_Y} \\
    T^mX \ar[r, swap]{}{T^mf} & T^mY
\end{tikzcd}
\]
is a weak pullback. However, because this is given by applying $T^m$ to the naturality square for $f$ at $p$ and because $T^m$ preserves the pullback $f^{\ast}(TY)$, $T^m(s)$ is a section for $T^m(\theta_f)$ because split idempotents are absolute  coequalizers. Appealing to Corollary \ref{Cor: The other higher powers of T to thetaf in one line} and writing $\theta_{T^mf} = \tilde{C}_m \circ T^m\theta_f \circ C_m$, defining $s_{T^ms} := C_m^{-1} \circ T^ms \circ \tilde{C}_m^{-1}$ gives that
\[
\theta_{T^mf} \circ s_{T^mf} = \tilde{C}_m \circ T^m\theta_f \circ C_m \circ C_m^{-1} \circ T^ms \circ \tilde{C}_m^{-1} = \tilde{C}_m \circ T^m(\theta_f \circ s) \circ \tilde{C}_m^{-1} = \tilde{C}_m \circ \tilde{C}_m^{-1} = \id.
\]
Thus each such square is a $T$-weak pullback.
\end{proof}
We now give a sanity check which shows that split $T$-submersions are, in fact, $T$-submersions whenever their underlying map $f$ is $p$-carrable.
\begin{proposition}\label{Prop: Split Tsubmersion}
Let $f:X \to Y$ be a $p$-carrable morphism in a tangent category $\Cscr$. Then $f$ is a split $T$-submersion if and only if $\theta_f$ is a split $T$-epimorphism.
\end{proposition}
\begin{proof}
$\implies:$ Applying Proposition \ref{Prop: The lifting property characterization of TSmooth} shows that $f$ is a split $T$-submersion if and only if $\theta_f$ admits a section $s_f$. However, as retracts are absolute coequalizers, this implies that $\theta_f$ is a split $T$-epimorphism and hence a $T$-submersion. 

$\impliedby$: In the other direction, $\theta_f$ being a split $T$-epimorphism means that $T^m\theta_f$ is a split epimorphism for all $m \in \N$. However, this means that $\theta_{T^mf}$ is a split epimorphism for all $m \in \N$ by Corollary \ref{Cor: The other higher powers of T to thetaf in one line} and so that $\theta_{T^mf}$ is a retract and hence that the corresponding square
\[
\begin{tikzcd}
T^{m+1}X \ar[r]{}{T^{m+1}f} \ar[d, swap]{}{(p \ast T^m)_X} & T^{m+1}Y \ar[d]{}{(p \ast T^m)_Y} \\
T^mX \ar[r, swap]{}{T^mf} & T^mY
\end{tikzcd}
\]
is a $T$-weak pullback again by Proposition \ref{Prop: The lifting property characterization of TSmooth}.
\end{proof}

We now present the calculations of split $T$-submersions for our main examples. Note that because classifying split $T$-submersions in affine schemes is quite involved and subtle, it will involve a nontrivial digression from the abstract development of split $T$-submersions. As such, we have separated this into its own subsection below.

\begin{example}\label{Example: TSubmersion in SMan}
In $\SMan$, the split $T$-submersions are precisely the submersions whose horizontal descent admits a global section. To see this, note that as the horizontal descent $\theta_f$ exists in $\SMan$, by Proposition \ref{Prop: Split Tsubmersion} in order for a map $f:X \to Y$ be a split $T$-submersion, $f$ must be in particular a $T$-submersion for which $\theta_f$ is a retract. But this means that there is a morphism $s_f:X \times_Y TY \to TX$ for which
\[
\begin{tikzcd}
X \times_Y TY \ar[r]{}{s_f} \ar[dr, equals] & TX \ar[d]{}{\theta_f} \\
 & X \times_Y TY
\end{tikzcd}
\]
commutes. As submersions are precisely the maps for which $\theta_f$ admits local sections around every point $(x,\overrightarrow{v})$ in $X \times_Y TY$, the existence of $s_f$ means that upon making suitable compatibility choices, these local sections may be stictched together to form a global section of $\theta_f$. In particular, this states that $f$ is a split $T$-submersion if and only if $\theta_f$ has the property that there exists a family of local sections to $\theta_f$ for which the collection of the $s_f$ admit a smooth partition of unity subordinate to the induced open cover of $f^{\ast}TY$.
\end{example}
\begin{remark}
In light of Corollary \ref{Cor: Section Submersion: thetaf section iff linear section for adjoints} below, another way to compute the split $T$-submersions in $\SMan$ is to first remark that since $\SMan$ is a Rosick{\'y} tangent category, the map $\theta_f$ admits a section if and only if it admits a linear section. With this observation and Proposition \ref{Prop: Split Tsubmersion}, we see that $f:X \to Y$ is a split $T$-submersion if and only if the cotangent sequence
\[
\begin{tikzcd}
X \ar[r] & T_{X/Y} \ar[r] & TX \ar[r]{}{\theta_f} & f^{\ast}(TY) \ar[r] & X
\end{tikzcd}
\]
is right split in $\DBun(X) \simeq \mathbf{Vec}(X)$. Consequently, the split $T$-submersions are precisely those submersions whose horizontal descent admits a linear global section.
\end{remark}

\begin{example}\label{Example: REgular Epi submersion SMan Paracompact}
In the category $\SManPC$ of paracompact Hausdorff smooth manifolds, every $T$-submersion is a split $T$-submersion. This is because if $f:X \to Y$ is a morphism of paracompact smooth manifolds, then the paracompactness of both $X$ and $Y$ force $TX, TY$, and $f^{\ast}(TY)$ to be paracompact as well. As $TX$ and $f^{\ast}(TY)$ are paracompact, this means that any open cover of either space admits a locally finite smooth partition of unity subordinate to each cover and so allows the stitching of suitably compatible local sections of $\theta_f$ into global sections.
\end{example}

\begin{example}\label{Example: Tsubmersion in CAlgR}
Let $R$ be a commutative rig. In $\CAlg{R}$ the split $T$-submersions are precisely the surjections $f:A \to B$ with a linear section. To see this recall that given the naturality square
\[
\begin{tikzcd}
A[\epsilon] \ar[d, swap]{}{p_A} \ar[r]{}{f[\epsilon]} & B [\epsilon] \ar[d]{}{p_B} \\
A \ar[r, swap]{}{f} & B
\end{tikzcd}
\]
the pullback $A \times_B B[\epsilon] \cong A \ltimes B$ with the map $\theta_f:A[\epsilon] \to A \ltimes B$ given by $\theta_f(a+x\epsilon) = (a,f(x))$. Now observe that any map $\sigma:A \ltimes B \to A[\epsilon]$ takes the form $\sigma(r,s) = \sigma_0(r,s) + \sigma_1(r,s)\epsilon$ for some maps $\sigma_0, \sigma_1:A \ltimes B \to A$. Thus if there is a $\sigma$ rendering the diagram
\[
\begin{tikzcd}
A \ltimes B \ar[r]{}{\sigma} \ar[dr, equals] & A[\epsilon] \ar[d]{}{\theta_f} \\
 & A \ltimes B
\end{tikzcd}
\]
commutative then we have that $\id_{A \ltimes B} = \theta_f \circ \sigma$. This allows us to deduce:
\begin{prooftree}
    \AxiomC{$\id_{A \ltimes B} = \theta_f \circ \sigma$}
    \UnaryInfC{$\forall\,(a,b) \in A \ltimes B.\,(a,b) = \theta_f(\sigma_0(a,b)+\sigma_1(a,b)\epsilon) = (\sigma_0(a,b),f(\sigma_1(a,b)))$}
    \UnaryInfC{$a = \sigma_0(a,b)$ and $b = f(\sigma_1(a,b))$.}
\end{prooftree}
Thus it must be the case that for every $b$ there is an $x \in A$ (namely $x = \sigma_1(a,b)$) for which $f(x) = b$. This implies in particular that $f$ is surjective. We now show that $\sigma_1$ must be a derivation. Note that since $\sigma$ is a rig morphism, we deduce:
\begin{prooftree}
 \AxiomC{$\forall\,(a,b), (x,y) \in A \ltimes B.\, \sigma(a,b) + \sigma(x,y)  = \sigma((a,b) + (x,y)) = \sigma(a+x, b+y)$}
 \UnaryInfC{$\sigma_0(a+x, b+y) = \sigma_0(a,b) + \sigma_0(x,y)$ and $\sigma_1(a,b) + \sigma_1(x,y) = \sigma_1(a+x, b+y)$}
\end{prooftree}
Thus $\sigma_1((a,b)+(x,y)) = \sigma_1(a,b) + \sigma_1(x,y)$ and in particular $\sigma_1(0,0) = 0$, showing that is additive. Additionally because $\sigma$ is a rig morphism we deduce:
\begin{prooftree}
 \AxiomC{$\forall\,(a,b), (x,y) \in A \ltimes B.\, \sigma(a,b)\sigma(x,y)  = \sigma((a,b)(x,y))$}
 \UnaryInfC{$(\sigma_0(a,b)+\sigma_1(a,b)\epsilon)(\sigma_0(x,y)+\sigma_1(x,y)\epsilon)) = \sigma(ax,f(a)y + bf(x))$}
 \UnaryInfC{$\sigma_0(a,b)\sigma_1(x,y) +\sigma_1(a,b)\sigma_0(x,y) = \sigma_1(ax,f(a)y+bf(x))$}
\end{prooftree}
Finally, because rig morphisms preserve multiplicative units, we also deduce:
\begin{prooftree}
    \AxiomC{$\sigma(1,0) = 1_{A[\epsilon]}$}
    \UnaryInfC{$1 + 0 \epsilon = \sigma(1,0) = \sigma_0(1,0) + \sigma_1(1,0)\epsilon$}
    \UnaryInfC{$\sigma_1(1,0) = 0$}
\end{prooftree}
This in particular shows that $\sigma_1:A \ltimes B \to A$ is a derivation. Furthermore, consider the map $s:B \to A$ defined via the composition
\[
\begin{tikzcd}
B \ar[rr]{}{b \mapsto (0,b)} \ar[rr, swap]{}{\operatorname{incl}} \ar[drr, swap]{}{s} & & A \ltimes B \ar[d]{}{\sigma_1} \\
 & & A
\end{tikzcd}
\]
of $A$-modules; note that the map $b \mapsto (0,b)$ is a map of $A$-modules because for all $a \in A$ and for all $b \in B$
\[
a \bullet (0,b) = (a,0)(0,b) = (a\cdot 0, f(a)b + 0^2) = (0,a\bullet b).
\]
We compute that for all $b \in B$ on one hand
\begin{align*}
(\theta_f \circ s)(b) = \big(\theta_f \circ \sigma\big)(\operatorname{incl}(b)) = \operatorname{incl}(b) = (0,b)
\end{align*}
while on the other hand
\begin{align*}
(\theta_f \circ s)(b) = \theta_f\big(\sigma(\operatorname{incl}(b))\big) = \theta_f\big(\sigma_1(0,b)\epsilon\big) = \theta_f\big(0+\sigma_1(0,b)\epsilon\big) =  \bigg(0,f\big(\sigma_1(0,b)\big)\bigg) = \bigg(0,f\big(s(b)\big)\bigg).
\end{align*}
Thus we conclude, by comparing $B$-coordinates, that $f(s(b)) = b$ and so that $s:B \to A$ is a linear section of $A$.
\end{example}

\begin{example}\label{Example: TSubmersion CDC}
In a CDC, for a map $f: X \to Y$, because $\theta_f = \langle \pi_0, D[f]\rangle$, we get that any section $s:f^{\ast}(TY) \to TX$ has the form:
\[
\begin{tikzcd}
X \times Y \ar[drr, equals] \ar[rr]{}{\langle s_0, s_1 \rangle} & & X \times X \ar[d]{}{\langle \pi_0, D[f]\rangle} \\
& & X \times Y
\end{tikzcd}
\]
This tells us immediately that $s_0 = \id_X$. It also tells us that $D[f] \circ s_1 = \id_Y$ and so shows that it is necessary and sufficient for $D[f]$ to be a retract. In particular, if $f$ is linear and hence $\theta_f = \id_X \times f$, then $f$ is a split $T$-submersion if and only if $f$ is a retract.
\end{example}

\subsection{Split Submersions in Algebraic Geometry}\label{Subsection: TSubmersions in Algebraic Geometry}

Because knowing which maps of schemes $f:X \to Y$ make the cotangent sequence
\[
\begin{tikzcd}
X \ar[r] & T_{X/Y} \ar[r] & TX \ar[r] & f^{\ast}TY \ar[r] & X
\end{tikzcd}
\]
a \emph{split} exact sequence is of a high level of interest in algebrac geometry (it is necessary when checking formal smoothness, for instance, by the Jacobian Criterion; cf., for instance, \cite[Th{\'e}or{\`e}me 22.6.1]{EGA04}) for an affine version of the criterion) it is of interest to classify the split $T$-submersions in the categories of $S$-schemes and affine schemes $\CAlg{R}^{\op}$. This is of additional interest because the definition of formal smoothness of morphisms of schemes given in \cite[D{\'e}finition 17.1.1]{EGA44} suggests that formally smooth morphisms of schemes are, in effect, the submersions of schemes (or at least that is an often stated metaphor). However, we will see below that while formally smooth morphisms of schemes \emph{are} split $T$-submersions, it need not be the case that all split $T$-submersions of schemes are formally smooth (or even formally {\'e}tale).

In order to classify the split $T$-submersions in the categories $\CAlg{R}^{\op}$ (and more generally in $\Sch_{/S}$ for a base scheme $S$) we need to recall what it means for a morphism $f:A \to B$ of commutative rings to be formally smooth. We also need to recall a weaker and less-well-known notion of formal smoothness which commutative rings can satisfy developed in \cite[Section 19.9]{EGA04}. This is a notion which defines when a commutative algebra morphism $g:A \to B$ is formally smooth relative to a base ring $R$; it is a relatively weak definition and, despite the way the term ``relative to $(-)$'' is used in all of EGA and SGA, \emph{not} the same thing as $g:A \to B$ being formally smooth in $\CAlg{R}$.

\begin{definition}[{\cite[D{\'e}finition 19.3.1]{EGA04}}]\label{Defn: Formally Smooth}
Lef $f:A \to B$ be a morphism of commutative rings. We say that $f$ is \emph{formally smooth} if $f$ satisfies the weak left lifting property against nilpotent ideal quotient maps. That is, given any commutative square
\[
\begin{tikzcd}
A \ar[r]{}{\alpha} \ar[d, swap]{}{f} & C \ar[d]{}{\pi_{I}} \\
B \ar[r, swap]{}{\beta} & \frac{C}{I}
\end{tikzcd}
\]
with $I \trianglelefteq C$ a nilpotent ideal, there exists a ring map $\ell:B \to C$ making the diagram
\[
\begin{tikzcd}
A \ar[r]{}{\alpha} \ar[d, swap]{}{f} & C \ar[d]{}{\pi_{I}} \\
B \ar[ur, dashed]{}{\ell} \ar[r, swap]{}{\beta} & \frac{C}{I}
\end{tikzcd}
\]
commute. If instead the diagrams and lifts above occur in the category $\CAlg{R}$ for a commutative ring $R$, then $f$ is \emph{a formally smooth map of $R$-algebras}.
\end{definition}
An important way to view formally smooth maps is in terms of their characterization by K{\"a}hler differentials via the Jacobian Criterion for Formal Smoothness. This allows one to use the splitting of the relative conormal sequence to detect formal smoothness  --- it says that a map is formally smooth if and only if the map is formally nonsingular in the sense that locally/for a choice of local coordinates the Jacobian determinants of the transformation is invertible.

\begin{theorem}[Jacobian Criterion for Formal Smoothness; {\cite[Th{\'e}or{\`e}me 22.6.1]{EGA04}, \cite[Proposition 16.12]{Eisenbud}, \cite[Proposition 10.138.8]{stacks-project}}]\label{Thm: }
Let $f:A \to B$ be a morphism of commutative rings and let $h:A \to P$ be a formally smooth $A$-algebra with a surjection $p:P \to B$ making the diagram
\[
\begin{tikzcd}
 & P \ar[dr]{}{p} \\
A \ar[rr, swap]{}{f} \ar[ur]{}{h} & & B
\end{tikzcd}
\]
commute. Then $f:A \to B$ is formally smooth (in $\CAlg{R})$ if and only if the exact sequence
\[
\begin{tikzcd}
\frac{\operatorname{Ker}(p)}{\operatorname{Ker}(p)^2} \ar[r] & \Kah{P}{A} \otimes_{P} B \ar[r] & \Kah{B}{A} \ar[r] & 0
\end{tikzcd}
\]
is split exact in $\Mod{B}$.
\end{theorem}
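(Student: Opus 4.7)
The plan is to prove both directions of the criterion by explicitly manipulating liftings against square-zero ideals, using the formal smoothness of $P$ and the universal property of K\"ahler differentials (namely that $\Kah{P}{A}$ co-represents $A$-derivations out of $P$) to move between infinitesimal deformations of algebra maps and module-theoretic data. Throughout, one should work with the conormal exact sequence
\[
\frac{\operatorname{Ker}(p)}{\operatorname{Ker}(p)^2} \xrightarrow{d} \Kah{P}{A} \otimes_P B \xrightarrow{v} \Kah{B}{A} \to 0,
\]
whose exactness (independent of any smoothness hypothesis) is standard; it is this sequence whose splitting we need to control. Write $I := \operatorname{Ker}(p)$ for brevity.

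First I would handle the $(\Rightarrow)$ direction. The idea is to test formal smoothness of $f$ against the canonical square-zero thickening $P/I^2 \twoheadrightarrow B$, whose kernel $I/I^2$ visibly satisfies $(I/I^2)^2 = 0$. Lifting $\id_B$ along this surjection against the composite $A \xrightarrow{h} P \twoheadrightarrow P/I^2$ produces an $A$-algebra section $s\colon B \to P/I^2$. Given such a section, standard manipulation exhibits a $B$-linear retraction of $d\colon I/I^2 \to \Kah{P}{A} \otimes_P B$: concretely, the difference $\pi - s \circ \bar p\colon P/I^2 \to P/I^2$ (where $\pi$ is the quotient and $\bar p$ is induced by $p$) lands in $I/I^2$ and is an $A$-derivation, hence factors through $\Kah{P/I^2}{A} \cong \Kah{P}{A} \otimes_P (P/I^2)$, and the induced $B$-linear map $\Kah{P}{A} \otimes_P B \to I/I^2$ is a retraction of $d$. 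This retraction splits the conormal sequence.

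For the $(\Leftarrow)$ direction, fix a splitting $r\colon \Kah{P}{A} \otimes_P B \to I/I^2$ of $d$ and a test diagram of $A$-algebras
\[
\begin{tikzcd}
A \ar[r, "\alpha"] \ar[d, "f"'] & C \ar[d, "\pi_J"] \\
B \ar[r, "\beta"'] & C/J
\end{tikzcd}
\]
with $J \trianglelefteq C$ nilpotent (one reduces immediately to $J^2 = 0$ by induction on the nilpotency index). Using formal smoothness of $P$ over $A$, lift $\beta \circ p$ along $\pi_J$ to obtain an $A$-algebra map $\tilde{g}\colon P \to C$ with $\pi_J \circ \tilde{g} = \beta \circ p$. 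The restriction $\tilde{g}|_I\colon I \to J$ is $A$-linear, kills $I^2$ (since $J^2=0$), and hence descends to a $B$-linear map $\varphi\colon I/I^2 \to J$. The obstruction to $\tilde g$ descending to $B$ is precisely $\varphi$, and the cure is to modify $\tilde{g}$ by an $A$-derivation $\delta\colon P \to J$ whose associated map $\Kah{P}{A} \otimes_P B \to J$ equals $\varphi \circ r$; the splitting $r$ is exactly what allows us to produce such a $\delta$. Replacing $\tilde g$ by $\tilde g - \delta$ yields a map that vanishes on $I$, hence factors through $p$ to give the required lift $\ell\colon B \to C$.

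The main obstacle, and the step that requires the most care, is the modification argument in the second direction: one must check that the torsor of lifts of $P$-algebra maps into $C$ is a torsor under $\operatorname{Hom}_P(\Kah{P}{A}, J) = \operatorname{Der}_A(P, J)$, that this torsor action descends compatibly to $B$-linear maps $\Kah{P}{A} \otimes_P B \to J$, and that $\varphi \circ r$ lies in the image of this action. All of this is ultimately a diagram chase once one records the right universal properties, but it is where the hypothesis on $r$ is genuinely used and where one should be careful with the $A$- vs.\ $B$-module structures.
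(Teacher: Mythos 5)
The paper does not prove this statement at all: it is recalled as a classical theorem with citations to EGA~IV (Th\'eor\`eme 22.6.1), Eisenbud (Proposition 16.12), and the Stacks Project (Tag 10.138.8), so there is no in-paper argument to compare against. Your proposal is a correct and essentially complete rendition of the standard proof given in those references. The forward direction correctly tests formal smoothness against the square-zero thickening $P/I^2 \twoheadrightarrow B$ and converts the resulting $A$-algebra section $s$ into the $A$-derivation $\id - s\circ\bar p$ valued in $I/I^2$, whose corresponding $B$-linear map $r\colon \Kah{P}{A}\otimes_P B \to I/I^2$ satisfies $r\circ d = \id$ because elements of $I$ are fixed by $\id - s\circ\bar p$ modulo $I^2$; this is exactly the splitting. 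The converse direction is also the standard obstruction-theoretic argument: reduce to $J^2=0$, lift through the formally smooth $P$, observe that the failure of the lift to kill $I$ is a $B$-linear map $\varphi\colon I/I^2 \to J$, and correct by the derivation corresponding to $\varphi\circ r$, using that lifts form a torsor under $\operatorname{Der}_A(P,J)\cong\operatorname{Hom}_B(\Kah{P}{A}\otimes_P B, J)$ since $J$ is a $B$-module. Two small imprecisions worth tidying: the isomorphism $\Kah{P/I^2}{A}\cong \Kah{P}{A}\otimes_P(P/I^2)$ you invoke is only valid after further base change to $B$ (the conormal term $d(I^2)$ dies only modulo $I$), though this does not affect the construction of $r$ since a derivation $P\to I/I^2$ into a $B$-module already corepresents as a map out of $\Kah{P}{A}\otimes_P B$; and the map you call $\pi - s\circ\bar p$ should be read as $\id_{P/I^2} - s\circ\bar p$ (or as a map out of $P$). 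Neither issue is a gap.
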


As we will see below, formally smooth maps relative to a base $R$ are distinct in that they are characterized not by the splitting of the conormal sequence but instead by the splitting of the cotangent sequence.

\begin{definition}[{\cite[D{\'e}finition 19.9.1]{EGA04}}]\label{Defn: Formally Smooth relative to R}
Let $R \xrightarrow{f} A \xrightarrow{g} B$ be maps of commutative rings. We say that $g:A \to B$ is a \emph{formally smooth $A$-algebra relative to $R$} if and only if for any commutative diagram
\[
\begin{tikzcd}
A \ar[r]{}{\alpha} \ar[d, swap]{}{f} & C \ar[d]{}{\pi_{I}} \\
B \ar[r, swap]{}{\beta} & \frac{C}{I}
\end{tikzcd}
\]
in $\CAlg{A}$, if there is a lift $\ell_R$ making the diagram
\[
\begin{tikzcd}
R \ar[r]{}{\alpha \circ f} \ar[d, swap]{}{g \circ f} & C \ar[d]{}{\pi_{I}} \\
B \ar[r, swap]{}{\beta} \ar[ur, dashed]{}{\ell_R} & \frac{C}{I}
\end{tikzcd}
\]
commute in $\CAlg{R}$ then there is a lift $\ell_A$ making the diagram
\[
\begin{tikzcd}
A \ar[r]{}{\alpha} \ar[d, swap]{}{f} & C \ar[d]{}{\pi_{I}} \\
B \ar[r, swap]{}{\beta} \ar[ur, dashed]{}{\ell_A} & \frac{C}{I}
\end{tikzcd}
\]
commute.
\end{definition}
The way to view formally smooth maps relative to a base is in terms of their characterization by way of the cotangent sequence. However, we will use this below to give a cautionary example showing that formally smooth morphisms relative to a base $R$ need \emph{not} be formally smooth morphisms (even in $\CAlg{R}$). It is true, however, that formally smooth maps are formally smooth relative to $R$: this follows from a straightforward check directly from definitions by reusing any given lift $\ell$ provided by the formally smooth map.
\begin{theorem}[{\cite[Th{\'e}or{\`e}me 20.5.7.ii]{EGA04}}]\label{Thm: EGA 0 4 20 5 7}
Let $R \xrightarrow{f} A \xrightarrow{g} B$ be maps of commutative rings. The map $g:A \to B$ is formally smooth relative to $R$ if and only if the relative cotangent sequence
\[
\begin{tikzcd}
\Kah{A}{R} \otimes_A B \ar[r] & \Kah{B}{R} \ar[r] & \Kah{B}{A} \ar[r] & 0
\end{tikzcd}
\]
is split exact.
\end{theorem}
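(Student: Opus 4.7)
The plan is to translate both sides through the universal property $\operatorname{Der}_R(-,I) \cong \operatorname{Hom}_B(\Kah{-}{R},I)$ and reduce the lifting problem of Definition \ref{Defn: Formally Smooth relative to R} to the question of whether $B$-linear maps out of $\Kah{A}{R}\otimes_A B$ extend along the first map $u: \Kah{A}{R}\otimes_A B \to \Kah{B}{R}$ of the relative cotangent sequence.

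First I would reduce to the case where $I\subseteq C$ is square-zero using the standard filtration $I \supseteq I^2 \supseteq \cdots \supseteq I^n = 0$: a lift along $C \twoheadrightarrow C/I$ is constructed one step at a time along the square-zero quotients $C/I^k \twoheadrightarrow C/I^{k-1}$, and each such step is itself a formal-smoothness-relative-to-$R$ lifting problem. In the square-zero case, composing $C \twoheadrightarrow C/I$ with $\beta$ gives $I$ a well-defined $B$-module structure, and the difference of two $R$-algebra lifts of $\beta$ becomes a genuine element of $\operatorname{Der}_R(B, I)$.

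Next, fix a lifting problem as in Definition \ref{Defn: Formally Smooth relative to R} with an $R$-algebra lift $\ell_R: B \to C$ of $\beta$ already in hand. Then $\delta := \ell_R\circ g - \alpha : A \to I$ is an $R$-derivation (being the difference of two $R$-algebra maps agreeing modulo a square-zero ideal) and corresponds to a $B$-linear map $\widetilde{\delta}:\Kah{A}{R}\otimes_A B \to I$. An $A$-algebra lift $\ell_A$ extending both $\alpha$ and $\beta$ exists if and only if there is $D\in\operatorname{Der}_R(B, I)$ with $D\circ g=\delta$, i.e., if and only if $\widetilde{\delta}$ lies in the image of $u^{\ast} : \operatorname{Hom}_B(\Kah{B}{R},I)\to\operatorname{Hom}_B(\Kah{A}{R}\otimes_A B, I)$. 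A direct verification then shows that every pair $(I, \widetilde{\delta})$ is realized by some lifting problem (take $C = B\oplus I$ with $I$ square-zero, $\ell_R$ the canonical inclusion of $B$, and $\alpha = \ell_R\circ g - \widetilde{\delta}$), so $g$ is formally smooth relative to $R$ if and only if $u^{\ast}$ is surjective for every $B$-module $I$.

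Finally, surjectivity of $u^{\ast}$ for every $I$ is equivalent (by taking $I = \Kah{A}{R}\otimes_A B$ and lifting the identity) to $u$ admitting a left inverse, i.e., being a split monomorphism. This in turn is equivalent to split exactness of the cotangent sequence: a retraction $r$ of $u$ provides a decomposition $\Kah{B}{R} \cong u(\Kah{A}{R}\otimes_A B) \oplus \ker(r)$, and exactness in the middle identifies $\ker(r)$ with $\Kah{B}{A}$ via the surjection, producing a section of the quotient map; conversely, given such a splitting (interpreted as an isomorphism of the sequence with a split short exact sequence), projection onto the first summand supplies a retraction of $u$. The main obstacle I anticipate is being careful about what ``split exact'' means for a three-term sequence whose leftmost map is not \emph{a priori} injective, and checking that the equivalence with $u$ being a split monomorphism forces (and does not merely accompany) that injectivity.
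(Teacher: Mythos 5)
First, a point of comparison: the paper offers no proof of this statement at all --- it is imported verbatim from EGA $0_{\mathrm{IV}}$, Th\'eor\`eme 20.5.7.ii --- so there is no in-paper argument to measure you against. Your treatment of the square-zero case is the standard argument and is essentially EGA's own: the dictionary between lifting problems and the extension problem for derivations, the realization of every pair $(I,\widetilde{\delta})$ by the trivial square-zero extension $B\oplus I$, the passage from ``$u^{\ast}$ surjective for all $I$'' to ``$u$ split mono'' by testing against $I=\Kah{A}{R}\otimes_A B$, and your closing remark that split-monomorphy of $u$ is exactly what supplies the injectivity needed to call the three-term sequence ``split exact'' are all correct.

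The genuine problem is your opening reduction from nilpotent to square-zero ideals: in the \emph{relative} setting this d\'evissage is not merely under-justified, it is false. At the $k$-th stage you must lift the already-corrected map $m^{(k-1)}:B\to C/I^{k-1}$ along $C/I^{k}\to C/I^{k-1}$, and to invoke relative formal smoothness there you need an $R$-algebra lift of $m^{(k-1)}$ itself; but $\ell_R$ only lifts its own reduction $\ell_R\bmod I^{k-1}$, which differs from $m^{(k-1)}$ by something that is not a derivation, and $m^{(k-1)}$ may admit no $R$-algebra lift whatsoever. Concretely: take $R=\mathbb{F}_2$, $A=\mathbb{F}_2[x]$, $B=\mathbb{F}_2[x]/(x^2)$, so that $u$ is the identity on $B\,\mathrm{d}x$ and the cotangent sequence is split exact; take $C=\mathbb{F}_2[s,t]/(s^2,t^3)$ with $I=(t)$ (so $I^3=0$ but $I^2\neq 0$), $\beta(\bar{x})=\bar{s}$, $\ell_R(\bar{x})=s$, and $\alpha(x)=s+t$. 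The square commutes and $\ell_R$ is an $R$-algebra lift of $\beta$, but any $A$-algebra lift must send $\bar{x}$ to $s+t$, which is impossible since $(s+t)^2=t^2\neq 0$ in $C$. Hence with nilpotent ideals in Definition \ref{Defn: Formally Smooth relative to R} the direction ``split exact $\Rightarrow$ relatively formally smooth'' fails outright. The theorem is correct precisely for EGA's actual definition ($0_{\mathrm{IV}}$, 19.9.1), where the ideal is required to satisfy $I^2=0$: you should delete the filtration step and work with square-zero ideals throughout. (This also exposes a real ambiguity in the paper, whose Definition \ref{Defn: Formally Smooth} demands only nilpotence and whose Definition \ref{Defn: Formally Smooth relative to R} leaves the condition on $I$ implicit.)
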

\begin{example}\label{Example: Classification of TSubmersions in CAlgRop}
Let $R$ be a commutative ring. By virtue of the equivalence $\DBun(\Spec R) \simeq \Mod{R}^{\op}$ asserted by \cite{GeoffJSDiffBunComAlg}, we deduce the following chains of equivalences:
\begin{prooftree}
    \AxiomC{$\theta_f:T_{B/R} \to f^{\ast}(T_{A/R})$ has a section in $\DBun(\Spec B)$}
    \UnaryInfC{$\theta_f:\Spec(\Sym_{B}(\Kah{B}{R})) \to \Spec(\Sym_A(\Kah{A}{R})) \times_{A} \Spec B$ has a section in $\DBun(\Spec B)$}
    \UnaryInfC{$\theta_f:\Spec(\Sym_{B}(\Kah{B}{R})) \to \Spec(\Sym_B(\Kah{A}{R} \otimes_{A} B))$ has a section in $\DBun(\Spec B)$}
    \UnaryInfC{$\Kah{A}{R} \otimes_A B \to \Kah{B}{R}$ admits a retract in $\Mod{B}$}
    \UnaryInfC{$0 \to \Kah{A}{R} \otimes_A B \to \Kah{B}{R} \to \Kah{A}{B} \to 0$ is split exact in $\Mod{B}$}
\end{prooftree}
As such, invoking \cite[Th{\'e}or{\`e}me 20.5.7.ii]{EGA04} gives that $g:A \to B$ is formally smooth relative to $R$ if and only if $\Spec g:\Spec B \to \Spec A$ is a split $T$-submersion in $\mathbf{AffSch}_{/R} \simeq \CAlg{R}^{\op}$.
\end{example}

\begin{example}\label{Example: Nonexample of formally smooth}
It is unfortunately not the case that split $T$-submersions in $\mathbf{CAlg}_{R}^{\op}$ coincide with formally smooth morphisms of rings. To see an explicit example of this, note that for any commutative ring $R$, $\Kah{R}{R} \cong 0$ so $\Sym_{R}(\Kah{R}{R}) \cong R$ and $T_{R/R} \cong R$. In particular, for any commutative $R$-algebra $A$, the diagram
\[
\begin{tikzcd}
\Spec A \ar[r]{}{!_A} \ar[d, equals] & T_{R/R} \ar[d] \\
\Spec A \ar[r, swap]{}{!_A} & \Spec R
\end{tikzcd}
\]
is a pullback diagram. But then $!_A$ is always a split $T$-submersion because the natural map $\theta_{!_A} = p_A:T_{A/R} \to \Spec A$ is a retract with linear section $0_A:\Spec A \to T_{A/R}$. As such, in particular, any non-smooth $R$-algebra $A$ is a split $T$-submersion over $R$. For a very explicit example, set $R =  \Fbb_2$ and $A = \Fbb_2[x]/(x^2)$ with the evident algebra map. Then because a morphism of rings is smooth if and only if it is formally smooth and finitely presented, it suffices to check that $\Fbb_2[x]/(x^2)$ is not a formally smooth $\Fbb_2$-algebra. However, as
\[
\operatorname{Jac}\left(\frac{\Fbb_2[x]}{(x^2)}\right) = \det\begin{pmatrix}
\frac{\mathrm{d}(x^2)}{\mathrm{d} x}
\end{pmatrix} = \det(0) = 0
\]
the algebra $\Fbb_2[x]/(x^2)$ is not smooth because it violates the Jacobian Criterion.
\end{example}

Due to some technical difficulties which may be avoided by Proposition \ref{Prop: Linear sections from nonlinear sections with adjoints and units}, we defer computing the split $T$-submersions in $\Sch_{/S}$ until Example \ref{Example: Submersions in Schemes} below. 

\begin{example}
If $R$ is a commutative rig instead of a commutative ring, it is still true from \cite{GeoffJSDiffBunComAlg} and Corollary \ref{Cor: Schemes and affine rig schemes are linearly submersive} below\footnote{The referenced corollary here allows us to show that in $\CAlg{R}^{\op}$, $\theta_f:TX \to f^{\ast}(TY)$ admits a section in $\Cscr_{/X}$ if and only if it admits a section in $\DBun(X)$. Note that it does not say that every section is linear, but instead that every section has a linear companion.} and the equivalence of categories $\DBun(A)^{\op} \simeq \Mod{A}$ (for all commutative $R$-algebras $A$) that for a map $f:B \to A$ in $\CAlg{R}^{\op}$, $\theta_f:T_{B/R} \to f^{\ast}(T_{A/R})$ admits a section if and only if the map of $B$-modules $\Kah{A}{R} \otimes_A B \to \Kah{B}{R}$ admits a retract $r:\Kah{B}{R} \to \Kah{A}{R} \otimes_A B$ in the category $\Mod{B}$.
\end{example}

\subsection{Linear Sections to Split Submersions}
Part of the reason that we have spent a great deal of time defining and studying the structure of $p$-carrable morphisms in this paper is that the horizontal descent $\theta_f:TX \to f^{\ast}(TY)$ allows us to use differntial bundles as a geometric tool with which to study how close or how far $f$ is from being a categorical local diffeomorphism. We have seen already that when studying $p$-carrable morphisms which are also split $T$-submersions, we have access to a section/retraction pair
\[
\begin{tikzcd}
f^{\ast}(TY) \ar[r]{}{s} \ar[dr, equals] & TX \ar[d]{}{\theta_f} \\
 & f^{\ast}(TY)
\end{tikzcd}
\]
However, there is one major structural question which underlies $p$-carrable split $T$-submersions at every corner: how linear is the section $s:f^{\ast}(TY) \to TX$? Certainly $s$ is a bundle map, as since
\[
\begin{tikzcd}
f^{\ast}(TY) \ar[r]{}{s} \ar[dr, equals] & TX \ar[d]{}{\theta_f} \\
 & f^{\ast}(TY)
\end{tikzcd}
\]
the diagram
\[
\begin{tikzcd}
 & TX \ar[dd, near start]{}{p_X} \ar[dr]{}{\theta_f} \\
f^{\ast}(TY) \ar[dr, swap]{}{\pi_0} \ar[ur]{}{s} \ar[rr, equals, crossing over] & & f^{\ast}(TY) \ar[dl]{}{\pi_0} \\
 & X
\end{tikzcd}
\]
also commutes. However, it need not be the case that $s$ itself is directly linear. What we instead ask is the following general question: ``given a $p$-carrable split $T$-submersion $f:X \to Y$, if we have corresponding section $s$ to $\theta_f$, can we build a \emph{linear} section of $\theta_f$ out of $s$?'' We will present some partial answers to this question below and indicate that, for the case of Rosick{\'y} tangent categories and for the opposite categories of commutative algebras $\CAlg{R}^{\op}$ over a commutative rig $R$, it is the case that we can find a ``linearization'' or ``linear component'' of $s$, of sorts.

The first cases we consider for building linear versions of given sections are those which mirror the local algebraic structure that the symmetric algebra $\Sym_{X}$ and underlying module $\operatorname{Und}_{X}$ induce, via the opposite equivalences $\DBun(\Spec A)^{\op} \simeq \Mod{A}$ of \cite{GeoffJSDiffBunComAlg}, on the corresponding differential bundle categories $\Sym_{X}^{\op}:\DBun(X)^{\op} \to \CAlg{X}^{\op}$ and $\operatorname{Und}_{X}^{\op}:\CAlg{X}^{\op} \to \DBun(X)^{\op}$. The point here is that there are three key ingredients to this situation:
\begin{enumerate}[{\em (i)}] 
    \item In $\CAlg{R}^{\op}$, every map is $p$-carrable.
    \item In $\CAlg{R}^{\op}$, $\Sym_{A}$ (the functor whose opposite forgets the differential bundle over $A$ structure) is left adjoint to the underlying module functor $\operatorname{Und}_A$ (the functor whose opposite takes a sliced object and promotes it to a differential bundle).
    \item The unit of adjunction $M \to \operatorname{Und}_A(\Sym_A(M))$ is a pointwise split monic in $\Mod{A}$, i.e., the left adjoint $\Sym_A$ is faithful.
\end{enumerate}
Abstracting these appropriately and taking opposites where indicated allows us to produce the following situation in which we know we can produce linear sections out of general sections.
\begin{proposition}\label{Prop: Linear sections from nonlinear sections with adjoints and units}
In a tangent category $\Cscr$, let $f: X \to Y$ be a map such that: 
\begin{enumerate}[{\em (i)}] 
    \item The forgetful functor $\operatorname{Forget}_X:\DBun(X) \to \Cscr_{/X}$ has a left adjoint $F_X:\Cscr_{/X} \to \DBun(X)$.
    \item The counit of adjunction $\epsilon$ for $F_X \dashv \operatorname{Forget}_X$ is a pointwise split epimorphism.
    \item The map $f:X \to Y$ is $p$-carrable.
\end{enumerate}
Then for any section of $\theta_f$, $s: f^{\ast}(TY) \to TX$, there admits a linear section $s_f$ of $\theta_f$ (so a section in $\DBun(X)$). In particular, if $f$ is a split $T$-submersion then there exists a linear section of $\theta_f$ witnessing that $f$ is a split $T$-submersion.
\end{proposition}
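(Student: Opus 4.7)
The plan is to apply the left adjoint $F_X$ to the given section $s$ in order to move it into $\DBun(X)$, and then use hypothesis~$(ii)$ to convert the result into an honest linear section of $\theta_f$. A preliminary observation I would need is that the horizontal descent $\theta_f$ is itself a linear morphism in $\DBun(X)$; this can be verified directly from the explicit description of the pulled-back differential bundle structure on $f^{\ast}(TY)$ (whose lift is $\id_X$ paired with $\ell_Y$) together with the defining equation $\theta_f = \langle p_X, Tf\rangle$ and naturality of $\ell$. This is consistent with how $\theta_f$ is already being used in, e.g., Theorem~\ref{Thm: ZeroCarrable makes relative bundle a kernel in DBun}.

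With $\theta_f$ regarded as a map in $\DBun(X)$, the hypothesis that $s$ is a section of $\theta_f$ in $\Cscr_{/X}$ reads $\operatorname{Forget}_X(\theta_f) \circ s = \id$. Applying the functor $F_X$ gives $F_X(\operatorname{Forget}_X(\theta_f)) \circ F_X(s) = \id$ in $\DBun(X)$, and naturality of the counit $\epsilon$ of $F_X \dashv \operatorname{Forget}_X$ at $\theta_f$ yields the identity $\theta_f \circ \epsilon_{TX} = \epsilon_{f^{\ast}(TY)} \circ F_X(\operatorname{Forget}_X(\theta_f))$. Composing on the right with $F_X(s)$ collapses these two facts to
\[
\theta_f \circ \bigl(\epsilon_{TX} \circ F_X(s)\bigr) \;=\; \epsilon_{f^{\ast}(TY)}
\]
as morphisms in $\DBun(X)$.

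To finish, I would invoke hypothesis~$(ii)$: pick a linear splitting $r \colon f^{\ast}(TY) \to F_X(f^{\ast}(TY))$ of the counit $\epsilon_{f^{\ast}(TY)}$ and define $s_f := \epsilon_{TX} \circ F_X(s) \circ r$. This is a composite of morphisms in $\DBun(X)$, hence linear, and the displayed identity immediately gives $\theta_f \circ s_f = \epsilon_{f^{\ast}(TY)} \circ r = \id$, so $s_f$ is the desired linear section. The ``in particular'' assertion is then instant from Proposition~\ref{Prop: The lifting property characterization of TSmooth}, which produces an initial (possibly non-linear) section $s$ whenever $f$ is a $T$-submersion. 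The only real subtlety in the whole argument is the preliminary linearity check on $\theta_f$; once that is settled, the rest is a short exercise in adjunction calculus.
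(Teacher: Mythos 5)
Your proposal is correct and follows essentially the same route as the paper's proof: establish that $\theta_f$ is linear, apply $F_X$ to the section equation, and splice in a splitting of the counit to define $s_f = \epsilon_{TX} \circ F_X(s) \circ r$, with the verification $\theta_f \circ s_f = \id$ being exactly the paper's computation (the paper folds the counit naturality square into the chain of equalities rather than stating it separately). Your explicit flagging of the preliminary linearity check on $\theta_f$ is a reasonable touch, since the paper simply asserts it.
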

\begin{proof}
Begin by observing that since $\theta_f$ is a linear morphism of differential bundles over $X$, we can rewrite the diagram expressing that $s$ is a section $s$ as:
\[
\begin{tikzcd}
\operatorname{Forget}_X(f^{\ast}(TY)) \ar[r]{}{s} \ar[dr, equals] & \operatorname{Forget}_X(TX) \ar[d]{}{\operatorname{Forget}_X(\theta_f)} \\
 & \operatorname{Forget}_X(f^{\ast}(TY))
\end{tikzcd}
\]
Now apply the left adjoint $F_X$ to the diagram above to produce the commuting diagram:
\[
\begin{tikzcd}
F_X\left(\operatorname{Forget}_X(f^{\ast}(TY))\right) \ar[drr, equals] \ar[rr]{}{F_X(s)} & & F_X\left(\operatorname{Forget}_X(TX)\right) \ar[d]{}{F_X(\operatorname{Forget}_X(\theta_f)} \\
 & & F_X\left(\operatorname{Forget}_X(f^{\ast}(TY)\right)
\end{tikzcd}
\]
Let $\sigma_{f^{\ast}(TY)}:f^{\ast}(TY) \to F_X(\operatorname{Forget}_X(f^{\ast}(TY)))$ denote a given splitting of the counit
\[
\begin{tikzcd}
f^{\ast}(TY) \ar[rr]{}{\sigma_{f^{\ast}(TY)}} \ar[drr, equals] & & F_X\left(\operatorname{Forget}_X\big(f^{\ast}(TY)\big)\right) \ar[d]{}{\epsilon_{f^{\ast}(TY)}} \\
 & & f^{\ast}(TY)
\end{tikzcd}
\]
and define the linear map of differential bundles $s_f$ via the composition:
\[
\begin{tikzcd}
f^{\ast}(TY) \ar[d, swap]{}{s_f} \ar[rr]{}{\sigma_{f^{\ast}(TY)}} & & F_X(\operatorname{Forget}_X(f^{\ast}(TY))) \ar[d]{}{F_X(s)} \\
TX & & F_X(\operatorname{Forget}_X(TX)) \ar[ll]{}{\epsilon_{TX}}
\end{tikzcd}
\]
We then compute that
\begin{align*}
\theta_f \circ s_f &= \theta_f \circ \epsilon_{TX} \circ F_X(s) \circ \sigma_{f^{\ast}(TY)} = \epsilon_{f^{\ast}(TY)} \circ F_X\left(\operatorname{Forget}_X(\theta_f)\right) \circ F_X(s) \circ \sigma_{f^{\ast}(TY)} = \epsilon_{f^{\ast}(TY)} \circ \sigma_{f^{\ast}(TY)} \\
&= \id_{f^{\ast}(TY)} 
\end{align*}
so $s_f$ is indeed a section to $\theta_f$. Moreover, $s_f$ is linear exactly because it is defined as a composite of maps in $\DBun(X)$. The final claim of the proposition is immediate once we know that all powers $T^m$ preserve the weak pullback which makes $f$ a split $T$-submersion.
\end{proof}
\begin{corollary}\label{Cor: Section Submersion: thetaf section iff linear section for adjoints}
In a tangent category $\Cscr$, let $f: X \to Y$ be a $p$-carrable map such that: 
\begin{enumerate}[{\em (i)}] 
    \item The forgetful functor $\operatorname{Forget}_X:\DBun(X) \to \Cscr_{/X}$ has a left adjoint $F_X:\Cscr_{/X} \to \DBun(X)$.
    \item The counit of adjunction $\epsilon$ for $F_X \dashv \operatorname{Forget}_X$ is a pointwise split epimorphism.
\end{enumerate}
Then the horizontal descent map $\theta_f:TX \to f^{\ast}(TY)$ admits a section if and only if it admits a \emph{linear} section.
\end{corollary}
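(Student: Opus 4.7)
The plan is to observe that this corollary is essentially a direct packaging of Proposition \ref{Prop: Linear sections from nonlinear sections with adjoints and units} together with a trivial converse, so the proof reduces to verifying both implications against the tools already assembled.

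For the $\impliedby$ direction, I would simply note that a linear section of $\theta_f$ is by definition a morphism in $\DBun(X)$, and applying the forgetful functor $\operatorname{Forget}_X:\DBun(X) \to \Cscr_{/X}$ gives a section in $\Cscr_{/X}$ (which amounts to an ordinary section in $\Cscr$ after postcomposing with the forgetful functor $\Cscr_{/X} \to \Cscr$). No further work is needed here, as $\theta_f$ is already a linear morphism of differential bundles by construction and hypothesis $(iii)$ of the ambient setup.

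For the $\implies$ direction, suppose that $s:f^{\ast}(TY) \to TX$ is any section of $\theta_f$ in $\Cscr_{/X}$. Since $f$ is $p$-carrable, the hypotheses of Proposition \ref{Prop: Linear sections from nonlinear sections with adjoints and units} are satisfied verbatim, so invoking that proposition directly produces a linear section $s_f:f^{\ast}(TY) \to TX$ of $\theta_f$ in $\DBun(X)$. I would explicitly recall that $s_f$ is built by transporting $s$ across the adjunction $F_X \dashv \operatorname{Forget}_X$: one applies $F_X$ to $s$ and then conjugates by a chosen splitting $\sigma_{f^{\ast}(TY)}$ of the counit component $\epsilon_{f^{\ast}(TY)}$, together with the counit component $\epsilon_{TX}$, yielding $s_f = \epsilon_{TX} \circ F_X(s) \circ \sigma_{f^{\ast}(TY)}$. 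The verification that $\theta_f \circ s_f = \id_{f^{\ast}(TY)}$ proceeds by naturality of $\epsilon$ against the linear map $\theta_f$, precisely as carried out in the proof of the proposition.

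The only minor subtlety worth remarking on is that a ``section of $\theta_f$'' in the statement of the corollary is taken to be a section in $\Cscr$ (as in Definition \ref{Defn: TSubmersions} and Proposition \ref{Prop: The lifting property characterization of TSmooth}), while Proposition \ref{Prop: Linear sections from nonlinear sections with adjoints and units} is phrased so as to accept any such section and apply the left adjoint $F_X$ to it viewed as a morphism in $\Cscr_{/X}$. This is harmless because any section of $\theta_f$ in $\Cscr$ is automatically a section of $\theta_f$ in $\Cscr_{/X}$: the section equation $\theta_f \circ s = \id$ forces $p_X \circ s = \pi_0$, so $s$ is a morphism over $X$. With this observation in place, the forward direction follows immediately from the proposition and there is no further obstacle to overcome.
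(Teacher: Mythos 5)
Your proof is correct and matches the paper's treatment: the corollary is stated there as an immediate consequence of Proposition \ref{Prop: Linear sections from nonlinear sections with adjoints and units}, with the forward direction given by that proposition and the converse being the trivial observation that a linear section is in particular a section. Your added remark that $\theta_f \circ s = \id$ forces $p_X \circ s = \pi_0$, so that any section in $\Cscr$ is automatically a morphism in $\Cscr_{/X}$, is a correct and worthwhile clarification of a point the paper leaves implicit.
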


We now show that both our tangent categories of affine schemes and schemes satisfy the conditions of the corollary. 

\begin{corollary}\label{Cor: Schemes and affine rig schemes are linearly submersive}
The categories $\CAlg{R}^{\op}$ (for a commutative rig $R$) and $\Sch_{/S}$ (for a base scheme $S$) have the property that for any map $f:X \to Y$, the horizontal descent $\theta_f:TX \to f^{\ast}(TY)$ admits a section if and only if it admits a linear section.
\end{corollary}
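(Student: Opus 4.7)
The plan is to deduce both statements from Corollary \ref{Cor: Section Submersion: thetaf section iff linear section for adjoints} by verifying its hypotheses in each tangent category. Hypothesis (iii) is immediate from Example \ref{Example: DB carrable}: both $\CAlg{R}^{\op}$ and $\Sch_{/S}$ are $DB$-carrable, so every morphism is in particular $p$-carrable. The remaining work is to exhibit, for each object $X$, a left adjoint $F_X$ to the forgetful $\operatorname{Forget}_X: \DBun(X) \to \Cscr_{/X}$ whose counit is pointwise split epic.

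For $\CAlg{R}^{\op}$ with $X = \Spec A$, the equivalences $\DBun(\Spec A)^{\op} \simeq \Mod{A}$ of \cite[Remark 4.20]{GeoffJSDiffBunComAlg} and $\CAlg{R}^{\op}/\Spec A \simeq \CAlg{A}^{\op}$ identify a differential bundle, classified by a module $M$, with the algebra $\Sym_A(M)$; the forgetful $\operatorname{Forget}_X$ thereby corresponds to $\Sym_A^{\op}: \Mod{A}^{\op} \to \CAlg{A}^{\op}$. Dualising the well-known adjunction $\Sym_A \dashv \operatorname{Und}_A$ in $\mathbf{Ring}$-land produces $\operatorname{Und}_A^{\op} \dashv \Sym_A^{\op}$, and we take $F_X := \operatorname{Und}_A^{\op}$. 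The counit of $F_X \dashv \operatorname{Forget}_X$ is the opposite of the unit $\eta_M: M \to \operatorname{Und}_A\Sym_A(M) = \bigoplus_{n \geq 0} \Sym_A^n(M)$. Because $M$ is a direct summand (the degree-$1$ piece) of $\Sym_A(M)$, the projection onto that summand gives a pointwise splitting of $\eta$; its opposite is the required pointwise split epimorphism, and Corollary \ref{Cor: Section Submersion: thetaf section iff linear section for adjoints} applies.

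For $\Sch_{/S}$ the same strategy works, but with a subtlety: the slice $\Sch_{/S}/X$ is strictly larger than $\mathbf{AffSch}_{/X}$, so $\operatorname{Forget}_X: \DBun(X) \to \Sch_{/S}/X$ does not admit a left adjoint on the nose. The observation that unlocks the argument is that both endpoints of $\theta_f$, namely $T_{X/S}$ and $f^{\ast}(T_{Y/S})$, are affine over $X$, so any section $s$ of $\theta_f$ in $\Sch_{/S}/X$ is automatically a morphism in $\mathbf{AffSch}_{/X}$. Under the equivalences $\DBun(X)^{\op} \simeq \QCoh(X)$ of \cite[Theorem 4.28]{GeoffJSDiffBunComAlg} and $\mathbf{AffSch}_{/X}^{\op} \simeq \QCoh(X, \CAlg{\Ocal_X})$ of \cite{EGA2}, the restricted forgetful $\DBun(X) \to \mathbf{AffSch}_{/X}$ becomes $\underline{\Sym}_{\Ocal_X}^{\op}$ with left adjoint $\operatorname{Und}_{\Ocal_X}^{\op}$, and (by localising to the affine case above) the unit of $\underline{\Sym}_{\Ocal_X} \dashv \operatorname{Und}_{\Ocal_X}$ is pointwise split monic. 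Rerunning the construction in the proof of Proposition \ref{Prop: Linear sections from nonlinear sections with adjoints and units} verbatim, but with $\mathbf{AffSch}_{/X}$ replacing $\Cscr_{/X}$, produces a linear section of $\theta_f$ from any given section.

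The main technical point is precisely this restriction step for schemes: Corollary \ref{Cor: Section Submersion: thetaf section iff linear section for adjoints} is phrased in terms of $\Cscr_{/X}$, but $\Sch_{/S}/X$ is too large to admit the required adjoint. The resolution is to notice that the linearisation construction in Proposition \ref{Prop: Linear sections from nonlinear sections with adjoints and units} only ever applies $F_X$ to morphisms whose source and target lie in $\DBun(X)$, so it factors through the affine subcategory where the adjoint does exist. Once this is granted, both cases reduce cleanly to the pointwise splitness of the unit $\Fscr \hookrightarrow \underline{\Sym}_{\Ocal_X}(\Fscr)$, which is a completely formal consequence of the grading on symmetric algebras.
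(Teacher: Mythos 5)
Your proposal is correct and follows essentially the same route as the paper: both verify the hypotheses of Corollary \ref{Cor: Section Submersion: thetaf section iff linear section for adjoints} by identifying $\operatorname{Forget}_X$ with (the opposite of) a symmetric-algebra functor, dualising $\Sym \dashv \operatorname{Und}$, and using that the unit $M \to \bigoplus_{n}\Sym^n(M)$ is split by projection onto the degree-one piece. The one place you genuinely improve on the paper is the scheme case: the paper silently treats $\operatorname{Forget}_X$ as a right adjoint on all of $\Sch_{/X}$, whereas the relative-spectrum equivalence only supplies a left adjoint on the full subcategory $\mathbf{AffSch}_{/X}$. Your observation that $T_{X/S}$ and $f^{\ast}(T_{Y/S})$ are both affine over $X$, so that any section of $\theta_f$ already lives in $\mathbf{AffSch}_{/X}$ and the linearisation of Proposition \ref{Prop: Linear sections from nonlinear sections with adjoints and units} runs verbatim there, is exactly the right patch and makes the argument airtight.
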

\begin{proof}
In both cases the functor $\operatorname{Forget}_X:\DBun(X) \to \Cscr_{/X}$ arises either as 
\[
\operatorname{Forget}_X \cong \underline{\Spec}_X(\underline{\Sym}_{\Ocal_X}(-)):\QCoh(X)^{\op} \to \Sch_{/X}
\]
in the case of an $S$-scheme $X$ or as
\[
\operatorname{Forget}_X \cong \Sym_{X}^{\op}(-):\Mod{X}^{\op} \to \CAlg{X}^{\op} = (\CAlg{R}^{\op})_{/X}.
\]
when $X$ is an object of $\CAlg{R}^{\op}$. In both cases the functors
\[
\Sym_X:\Mod{X} \to \CAlg{X}, \qquad \underline{\Sym}_{\Ocal_X}:\QCoh(X) \to \QCoh(X,\CAlg{\Ocal_X})
\]
are left adjoints whose unit $\eta$ is pointwise split. Taking the opposite of the rig-theoretic case or the relative spectrum of the scheme-theoretic case implies that $\operatorname{Forget}_X$ is a right adjoint with a pointwise split counit in all cases.
\end{proof}
\begin{example}\label{Example: Submersions in Schemes}
Let $S$ be a base scheme. Then in $\Sch_{/S}$ a map $f:X \to Y$ is a split $T$-submersion if and only if the relative cotangent sequence
\[
\begin{tikzcd}
f^{\ast}\Kah{Y}{S} \ar[r]{}{v_f} & \Kah{X}{S} \ar[r] & \Kah{X}{Y} \ar[r] & 0
\end{tikzcd}
\]
is split exact in $\QCoh(X)$. This follows from the fact that $(\underline{\Spec}_X \circ \underline{\Sym}_{\Ocal_X})(v_f) = \theta_f$ and by Corollary \ref{Cor: Schemes and affine rig schemes are linearly submersive}, $\theta_f$ admits a section if and only if $v_f$ admits a section in $\QCoh(X)^{\op}$.
\end{example}

We now show that in a Rosick{\'y} tangent category, we can build linear sections from any section of the horizontal descent of a $p$-carrable map. We will need to do this in two steps: first we will show that in a Rosick{\'y} tangent category if $s:f^{\ast}(TY) \to TX$ is a section of the horizontal descent $\theta_f$, the map $s - s \circ f^{\ast}(0_Y) \circ \pi_0:f^{\ast}(TY) \to TX$ is a section  of $\theta_f$ which preserves the zero sections of the bundle map. Second we will have to show that if we have a zero-preserving section $s:f^{\ast}(TY) \to TX$ of the horizontal descent $\theta_f$ then there is a corresponding linear section $s_f$ of $\theta_f$.

\begin{lemma}\label{Lemma: From section to zero pres section}
Let $f:X \to Y$ be a $p$-carrable map in a Rosick{\'y} tangent category $\Cscr$. Then if $s:f^{\ast}(TY) \to TX$ is a section of the horizontal descent $\theta_f$, the map $s - s \circ f^{\ast}(0_Y) \circ \pi_0$ is a section of $\theta_f$ which preserves the zero section of $f^{\ast}(TY)$ and $TX$.
\end{lemma}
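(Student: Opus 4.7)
The plan is to carry out the argument inside the abelian group $\mathrm{Bun}_{/X}(f^{\ast}(TY), TX)$ of bundle maps over $X$, whose abelian group structure is computed fibrewise from $\operatorname{add}_X$ and $\operatorname{neg}_X$; this is precisely where the Rosick{\'y} hypothesis is used. First I would observe that $s$ is automatically a bundle map over $X$: since $\theta_f = \langle p_X, Tf\rangle$, we get $p_X \circ s = \pi_0 \circ \theta_f \circ s = \pi_0$, and an identical check using $\pi_0 \circ f^{\ast}(0_Y) = \id_X$ shows that $s \circ f^{\ast}(0_Y) \circ \pi_0$ is also a bundle map over $X$. Hence their difference $s - s \circ f^{\ast}(0_Y) \circ \pi_0$ is a well-defined morphism in $\Cscr$ lying over $\id_X$.

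Next I would verify the section property. Because $\theta_f$ is a linear morphism of differential bundles, it is in particular an additive bundle morphism, so post-composition by $\theta_f$ is an abelian group homomorphism on the relevant bundle hom-sets. Combining this observation with $\theta_f \circ s = \id_{f^{\ast}(TY)}$ gives
\[
\theta_f \circ \left(s - s \circ f^{\ast}(0_Y) \circ \pi_0\right) = \id_{f^{\ast}(TY)} - f^{\ast}(0_Y) \circ \pi_0.
\]
The term $f^{\ast}(0_Y) \circ \pi_0$ is exactly the zero element of the abelian group $\mathrm{End}_{/X}(f^{\ast}(TY))$, since $\pi_0$ is the bundle projection and $f^{\ast}(0_Y)$ the zero section of the pulled-back differential bundle on $f^{\ast}(TY)$. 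Thus the right-hand side collapses to $\id_{f^{\ast}(TY)}$.

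For the zero-preservation claim I would use that pre-composition with $f^{\ast}(0_Y)$ is additive in the same sense, together with $\pi_0 \circ f^{\ast}(0_Y) = \id_X$, to compute
\[
\left(s - s \circ f^{\ast}(0_Y) \circ \pi_0\right) \circ f^{\ast}(0_Y) = s \circ f^{\ast}(0_Y) - s \circ f^{\ast}(0_Y),
\]
which is the zero of $\mathrm{Bun}_{/X}(X, TX)$, namely $0_X$. The main obstacle is purely notational: one must keep careful track of which ``zero'' is meant in each hom-set, since in $\mathrm{End}_{/X}(f^{\ast}(TY))$ the zero is the composite $f^{\ast}(0_Y) \circ \pi_0$ rather than a literal zero morphism, and one must invoke the linearity (not merely bundle-map property) of $\theta_f$ to move the Rosick{\'y} subtraction past it. Once this enrichment bookkeeping is in place, the proof reduces to the two displayed calculations.
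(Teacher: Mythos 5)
Your proposal is correct and follows essentially the same route as the paper's proof: the same three fibrewise-group computations (the difference lies over $\pi_0$, post-composing with the additive map $\theta_f$ reduces the correction term to the zero element $f^{\ast}(0_Y)\circ\pi_0$ of the endomorphism group, and pre-composing with $f^{\ast}(0_Y)$ yields $0_X$). Your explicit framing in terms of the abelian-group structure on bundle hom-sets over $X$ merely makes visible the bookkeeping the paper leaves implicit.
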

\begin{proof}
Begin by observing that by assumption the diagram
\[
\begin{tikzcd}
 & TX \ar[dd, near start]{}{p_X} \ar[dr]{}{\theta_f} \\
f^{\ast}(TY) \ar[dr, swap]{}{\pi_0} \ar[rr, equals] \ar[ur]{}{s} & & f^{\ast}(TY) \ar[dl]{}{\pi_0} \\
 & X
\end{tikzcd}
\]
commutes and so $s$ is a map in the slice category $\Cscr_{/X}$. We also compute that in $\Cscr_{/X}$
\[
p_X \circ (s - s \circ f^{\ast}(0_Y) \circ \pi_0) = p_X \circ s - p_X \circ s \circ f^{\ast}(0_Y) \circ \pi_0 = \pi_0 - \pi_0 \circ f^{\ast} \circ \pi_0 = \pi_0 - \pi_0 = \pi_0
\]
where the last equality holds because $\pi_0:f^{\ast}(TY) \to X$ is the map from $f^{\ast}(TY)$ to the terminal object in $\Cscr_{/X}$. We also compute that
\[
(s - s \circ f^{\ast}(0_Y) \circ \pi_0) \circ f^{\ast}(0_Y) = s\circ f^{\ast}(0_Y) - s \circ f^{\ast}(0_Y) \circ \pi_0 \circ f^{\ast}(0_Y) = s \circ f^{\ast}(0_Y) - s \circ f^{\ast}(0_Y) = 0_X
\]
which shows that the diagram
\[
\begin{tikzcd}
f^{\ast}(TY) \ar[rr]{}{s - s \circ f^{\ast}(0_Y) \circ \pi_0} & & TX \\
 & X \ar[ur, swap]{}{0_X} \ar[ul]{}{f^{\ast}(0_Y)}
\end{tikzcd}
\]
commutes. Note that the last equality uses the fact that $0_X$ is the unit of the bundle structure on $TX$ and the fact that the difference of the maps $\alpha - \alpha$ of morphisms $\alpha:X \to TX$ has the property that
\[
\begin{tikzcd}
X \ar[r, swap, shift right = 1]{}{\alpha - \alpha} \ar[r, shift left = 1]{}{0_X} & TX
\end{tikzcd}
\]
commutes. Finally we also compute that
\[
\theta_f \circ (s - s \circ f^{\ast}(0_Y) \circ \pi_0) = \theta_f \circ s - \theta_f \circ s \circ f^{\ast}(0_Y) \circ \pi_0 = \id - \theta_f \circ 0_X \circ \pi_0 = \id - f^{\ast}(0_Y) \circ \pi_0 = \id
\]
because $f^{\ast}(0_Y) \circ \pi_0:f^{\ast}(TY) \to f^{\ast}(TY)$ is the zero map on $f^{\ast}(TY)$. Thus $s - s \circ f^{\ast}(0_Y) \circ \pi_0$ is a section to the horizontal descent $\theta_f$ which preserves the zero section $f^{\ast}(0_Y)$ of $f^{\ast}(TY)$.
\end{proof}

\begin{proposition}\label{Prop: Rosicky tan cats all admit linear sections once the section preserves the zero}
Let $f:X \to Y$ be a $p$-carrable map in a Rosick{\'y} tangent category $\Cscr$. Suppose that the horizontal descent $\theta_f:TX \to f^{\ast}(TY)$ admits a section $s:f^{\ast}(TY) \to TX$ which preserves the zero section, i.e., $s \circ f^{\ast}(0_Y) = 0_X$. Then there exists a linear section $s_f:f^{\ast}(TY) \to TX$ of $\theta_f$.
\end{proposition}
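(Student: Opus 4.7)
The plan is to define $s_f$ as the ``linear part of $s$ along the zero section'' by factoring the composite $Ts \circ \lambda_{f^{\ast}(TY)}: f^{\ast}(TY) \to T^2X$ uniquely through $\ell_X: TX \to T^2X$, using the universality of the vertical lift. The zero-preservation hypothesis $s \circ f^{\ast}(0_Y) = 0_X$ will be exactly what makes this factorization possible, and I will then verify that the resulting $s_f$ is a linear section.

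For the construction, I first check that $Ts \circ \lambda_{f^{\ast}(TY)}$ satisfies the equalizer condition, namely $(T \ast p)_X \circ Ts \circ \lambda_{f^{\ast}(TY)} = 0_X \circ p_X \circ (p \ast T)_X \circ Ts \circ \lambda_{f^{\ast}(TY)}$. Both sides reduce to $0_X \circ \pi_0$: the left via $T\pi_0 \circ \lambda_{f^{\ast}(TY)} = 0_X \circ \pi_0$ (using $p_X \circ s = \pi_0$ and the differential bundle axiom $Tq \circ \lambda = 0_X \circ q$), and the right via $s \circ p_{f^{\ast}(TY)} \circ \lambda_{f^{\ast}(TY)} = s \circ f^{\ast}(0_Y) \circ \pi_0 = 0_X \circ \pi_0$ (using naturality of $p$, the axiom $p_E \circ \lambda = \zeta \circ q$, and zero-preservation of $s$). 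The universal property of $\mu_X: T_2X \hookrightarrow T^2X$ then produces a unique $\langle s_f, h\rangle$ with $\mu_X \circ \langle s_f, h\rangle = Ts \circ \lambda_{f^{\ast}(TY)}$. The identity $p_{TX} \circ \mu_X = \pi_1$ (a consequence of the bundle axioms) forces $h = 0_X \circ \pi_0$, and the coherence $\mu_X \circ \langle \id, 0_X p_X\rangle = \ell_X$ collapses the factorization to the clean defining equation $\ell_X \circ s_f = Ts \circ \lambda_{f^{\ast}(TY)}$.

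To verify $s_f$ is a section, I apply $T$ to $\theta_f \circ s = \id$ and precompose with $\lambda_{f^{\ast}(TY)}$, substitute using the defining equation, and invoke the linearity of $\theta_f$ (i.e., $T\theta_f \circ \ell_X = \lambda_{f^{\ast}(TY)} \circ \theta_f$) to reduce to $\lambda_{f^{\ast}(TY)} \circ \theta_f \circ s_f = \lambda_{f^{\ast}(TY)}$; since $\lambda_{f^{\ast}(TY)}$ is monic (it factors as $E \xrightarrow{\langle \id, \zeta q\rangle} E_2 \hookrightarrow TE$ through a split mono and the universal lift pullback), this yields $\theta_f \circ s_f = \id$. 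The hardest step is linearity, $Ts_f \circ \lambda_{f^{\ast}(TY)} = \ell_X \circ s_f$. The plan there is to apply $T$ to the defining equation, precompose with $\lambda_{f^{\ast}(TY)}$, and then successively apply: the differential bundle axiom $T\lambda \circ \lambda = \ell_E \circ \lambda$, naturality of $\ell$, the defining equation again, and the tangent-structure coherence $(T \ast \ell) \circ \ell = (\ell \ast T) \circ \ell$ to derive $T\ell_X \circ Ts_f \circ \lambda_{f^{\ast}(TY)} = T\ell_X \circ \ell_X \circ s_f$. The main technical obstacle is showing $T\ell_X$ is monic; this follows because $T$ preserves the pullback defining the universal vertical lift of the differential bundle $p_X: TX \to X$ (so $T\mu_X$ is a pullback of the monic $T0_X$, which is monic as a section of $T p_X$), together with the factorization $T\ell_X = T\mu_X \circ T\iota_0$ where $\iota_0 := \langle \id, 0_X p_X\rangle$ is a split mono. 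Cancelling then yields the linearity identity. I note that the Rosick{\'y} hypothesis is not invoked within this argument itself; it is used only in the preceding Lemma to reduce arbitrary sections to the zero-preserving case treated here.
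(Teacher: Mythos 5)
Your proposal is correct and follows essentially the same route as the paper's proof: you construct $s_f$ by factoring $Ts \circ \lambda_{f^{\ast}(TY)}$ through the universality of the vertical lift (using $s \circ f^{\ast}(0_Y) = 0_X$ to verify the equalizer condition), and you establish linearity by the same cancellation of the monic $T\ell_X$ after applying the bundle axiom $T\lambda \circ \lambda = \ell_E \circ \lambda$, naturality of $\ell$, and the coherence $(\ell \ast T) \circ \ell = (T \ast \ell) \circ \ell$. The only cosmetic differences are that you verify $\theta_f \circ s_f = \id$ in one step via the linearity of $\theta_f$ and the monicity of $\lambda_{f^{\ast}(TY)}$ (the paper instead checks the two pullback components separately, cancelling $\ell_Y$), and you re-derive the $T$-monicity of $\ell_X$ rather than citing it; your closing remark that the Rosick{\'y} hypothesis is only needed for the reduction to zero-preserving sections also matches the paper's structure.
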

\begin{proof}
We construct the section $s_f$ as follows. Recall that from the universal property of the vertical lift, we get that
\[
\begin{tikzcd}
TX \ar[r]{}{\ell_X} & T^2X \ar[rrr, bend right = 30, swap]{}{(T \ast p)_X} \ar[rrr,bend left = 30]{}{(p \ast T)_X} \ar[r]{}{(p \ast T)_X} & TX \ar[r]{}{p_X} & X \ar[r]{}{0_X} & TX
\end{tikzcd}
\]
is an equalizer diagram. We will use this universal property to show that we can induced a unique section $s_f$ from $f^{\ast}(TY) \to TX$ which equalizes the three listed maps. To this end we must first prove that
\[
\begin{tikzcd}
f^{\ast}(TY) \ar[rr]{}{Ts \circ \lambda} & & T^2X \ar[rrr, bend right = 30, swap]{}{(T \ast p)_X} \ar[rrr,bend left = 30]{}{(p \ast T)_X} \ar[r]{}{(p \ast T)_X} & TX \ar[r]{}{p_X} & X \ar[r]{}{0_X} & TX
\end{tikzcd}
\]
commutes where $\lambda = f^{\ast}(\ell_Y)$ is the lift of the differential bundle structure on $f^{\ast}(TY)$ over $X$. We first compute that by virtue of $\lambda = f^{\ast}(\ell_Y)$,
\[
(T \ast p)_X \circ Ts \circ \lambda = T(p_X \circ s) \circ \lambda = T\pi_0 \circ \lambda = 0_X \circ \pi_0
\]
and
\[
0_X \circ p_X \circ (T \ast p)_X \circ Ts \circ \lambda = 0_X \circ p_X \circ 0_X \circ \pi_0 = (0 \ast T)_X \circ \pi_0
\]
Finally, because $s \circ f^{\ast}(0_Y) = 0_X$, we also get that
\[
(p \ast T)_X \circ Ts \circ \lambda = s \circ p_X \circ \lambda = s \circ f^{\ast}(0_Y) \circ \pi_0 = 0_X \circ \pi_0.
\]
Applying the universal property of the equalizer  gives rise to a unique map $s_f$ rendering the diagram
\[
\begin{tikzcd}
TX \ar[r]{}{\ell_X} & T^2X \ar[rrr, bend right = 30, swap]{}{(T \ast p)_X} \ar[rrr,bend left = 30]{}{(p \ast T)_X} \ar[r]{}{(p \ast T)_X} & TX \ar[r]{}{p_X} & X \ar[r]{}{0_X} & TX \\
f^{\ast}(TY) \ar[u, dashed]{}{\exists! s_f} \ar[r, swap]{}{\lambda} & T(f^{\ast}(TY)) \ar[u, swap]{}{Ts}
\end{tikzcd}
\]
We will prove that $s_f$ is the section we desire.

Let us first prove that $s_f$ is a linear bundle morphism. First by construction we have the following commuting diagram:
\[
\begin{tikzcd}
f^{\ast}(TY) \ar[d, swap]{}{s_f} \ar[r]{}{\lambda} & T(f^{\ast}(TY)) \ar[d]{}{Ts} \ar[r]{}{p_{f^{\ast}(TY)}} & f^{\ast}(TY) \ar[d]{}{s} \\
TX \ar[r, swap]{}{\ell_X} & T^2X \ar[r, swap]{}{(p \ast T)_X} & TX
\end{tikzcd}
\]
We additionally have a commuting diagram
\[
\begin{tikzcd}
X \ar[d, equals] \ar[dr]{}{0_X} \ar[r]{}{f^{\ast}(0_Y)} & f^{\ast}(TY) \ar[d]{}{s} \\
X & TX \ar[l]{}{p_X}
\end{tikzcd}
\]
because $s \circ f^{\ast}(0_Y) = 0_X$. We also compute on one hand that $p_X = p_X \circ (p \ast T)_X \circ \ell_X$ and that $f^{\ast}(0_Y) \circ \pi_0 = p_{f^{\ast}(TY)} \circ \lambda$. Putting these together gives
\begin{align*}
p_X \circ s_f &= p_X \circ (p \ast T)_X \circ \ell_X \circ s_f = p_X \circ (p \ast T)_X \circ Ts \circ \lambda = p_X \circ s \circ p_{f^{\ast}(TY)} \circ \lambda \\
&= p_X \circ s \circ f^{\ast}(0_Y) \circ \pi_0 = p_X \circ 0_X \circ \pi_0 = \pi_0.
\end{align*}
Thus the diagram
\[
\begin{tikzcd}
f^{\ast}(TY) \ar[rr]{}{s_f} \ar[dr, swap]{}{\pi_0} & & TX \ar[dl]{}{p_X} \\
 & X
\end{tikzcd}
\]
commutes and so $s_f$ is a bundle map. To see that it is a linear bundle map we note that $\ell_X$ is $T$-monic by Proposition \ref{Prop: Vertical Lift is TMonic} and so, in particular, $T\ell_X = (T \ast \ell)_X$ is monic. We use this to deduce that
\begin{align*}
T\ell_X \circ Ts_f \circ \lambda &= T^2s \circ T\lambda \circ \lambda = T^2s \circ \ell_{f^{\ast}(TY)} \circ \lambda = (\ell \ast T)_X \circ Ts \circ \lambda = (\ell \ast T)_X \circ \ell_X \circ s_f \\
&= T\ell_X \circ \ell_X \circ s_f.
\end{align*}
Using that $T\ell_X$ is monic gives that $\ell_X \circ s_f = Ts_f \circ \lambda$ and hence that $s_f$ is a linear bundle map.

We now prove that $s_f$ is a section of $\theta_f$. To see this note that since $f^{\ast}(TY)$ is a pullback, it suffices to prove that $\pi_0 \circ \theta_f \circ s_f = \pi_0$ and that $\pi_1 \circ \theta_f \circ s_f = \pi_1$. For the first equation we derive that
\[
\pi_0 \circ \theta_f \circ s_f = p_X \circ s_f = \pi_0
\]
because $s_f$ is a linear bundle map. For the second note that $\pi_1 \circ \theta_f \circ s_f = Tf \circ s_f$ and that $\ell_Y \circ Tf \circ s_f = s_f \circ \ell_X \circ T^2f$. Since $\lambda:f^{\ast}(TY) \to T(f^{\ast}(TY))$ has the form $\lambda = \langle 0_X \circ \pi_0, \ell_Y \circ \pi\rangle$ by \cite[Bottom of Page 12]{GeoffRobinBundle}, the diagram
\[
\begin{tikzcd}
f^{\ast}(TY) \ar[r]{}{\pi_1} \ar[d, swap]{}{\lambda} & TY \ar[d]{}{\ell_Y} \\
T(f^{\ast}(TY)) \ar[r, swap]{}{T\pi_1} & T^2Y
\end{tikzcd}
\]
commutes. This allows us to deduce that on one hand $\ell_Y \circ \pi_1 = T\pi_1 \circ \lambda$ while on the other hand, since $Tf \circ s = \pi_1$ by virtue of $s$ being a section to $\theta_f$,
\begin{align*}
\ell_Y \circ \pi_1 \circ \theta_f \circ s_f &= \ell_Y \circ Tf \circ s_f = T^2f \circ \ell_X \circ s_f = T^2f \circ Ts \circ \lambda = T(Tf \circ s) \circ \lambda = T\pi_1 \circ \lambda.
\end{align*}
Thus $\ell_Y \circ \pi_1 \circ \theta_f \circ s_f = T\pi_1 \circ \lambda = \ell_1 \circ \pi_1$ and so, once again by $\ell_Y$ being $T$-monic by Proposition \ref{Prop: Vertical Lift is TMonic}, we have that $\pi_1 \circ \theta_f \circ s_f = \pi_1$. This in turn allows us to deduce that $\theta_f \circ s_f = \id_{f^{\ast}TY}$ and so proves that $s_f$ is a linear section of $\theta_f$.
\end{proof}

Putting these results together we can deduce the following: in a Rosick{\'y} tangent category every $p$-carrable split $T$-submersion admits a \emph{linear} section of the horizontal descent, not just a nonlinear section.

\begin{corollary}\label{Cor: For a Rosicky tan cat T submersion on p carrable if and only if linear section to descent}
Let $\Cscr$ be a Rosick{\'y} tangent category and let $f:X \to Y$ be a $p$-carrable morphism. Then the horizontal descent $\theta_f:TX \to f^{\ast}(TY)$ admits a section in $\Cscr_{/X}$ if and only if it admits a section in $\DBun(X)$. In particular, $f$ is a split $T$-submersion if and only if $\theta_f$ is a retract in $\DBun(X)$.
\end{corollary}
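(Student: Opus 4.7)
The plan is to prove the corollary by chaining together Lemma \ref{Lemma: From section to zero pres section} and Proposition \ref{Prop: Rosicky tan cats all admit linear sections once the section preserves the zero}, and then invoking Proposition \ref{Prop: The lifting property characterization of TSmooth} for the final claim about $T$-submersions. One direction of the main equivalence is essentially free: any section of $\theta_f$ in $\DBun(X)$ is in particular a morphism in $\Cscr_{/X}$ (since linear differential bundle morphisms over $X$ are in particular bundle maps over $X$, and these are morphisms in $\Cscr_{/X}$), so if $\theta_f$ admits a section in $\DBun(X)$ then it admits one in $\Cscr_{/X}$.

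The substantive content lies in the converse direction. First I would take an arbitrary section $s : f^{\ast}(TY) \to TX$ of $\theta_f$ in $\Cscr_{/X}$. Applying Lemma \ref{Lemma: From section to zero pres section}, which crucially uses the existence of negatives on the tangent bundle in a Rosick{\'y} tangent category, I produce the zero-preserving section $s^{\prime} := s - s \circ f^{\ast}(0_Y) \circ \pi_0$. This new section still splits $\theta_f$ and now satisfies $s^{\prime} \circ f^{\ast}(0_Y) = 0_X$. I would then feed $s^{\prime}$ into Proposition \ref{Prop: Rosicky tan cats all admit linear sections once the section preserves the zero}, which outputs a linear section $s_f$ of $\theta_f$, that is, a section living in $\DBun(X)$. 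This completes the equivalence.

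For the final statement, I would note that Proposition \ref{Prop: The lifting property characterization of TSmooth} tells us that a $p$-carrable morphism $f : X \to Y$ is a $T$-submersion precisely when $\theta_f$ admits a section in $\Cscr_{/X}$ (the proof of that proposition only uses the existence of a section in $\Cscr$, but combined with the bundle-map observation this is equivalent to a section in $\Cscr_{/X}$). Combining this with the first half of the corollary gives immediately that $f$ is a $T$-submersion if and only if $\theta_f$ is a retract in $\DBun(X)$.

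I do not anticipate any serious obstacles, since the hard technical work has already been carried out in the preceding lemma and proposition. The only small care required is to check that all constructions take place in $\Cscr_{/X}$ (and eventually in $\DBun(X)$) rather than merely in $\Cscr$; but this is precisely what Lemma \ref{Lemma: From section to zero pres section} and Proposition \ref{Prop: Rosicky tan cats all admit linear sections once the section preserves the zero} are engineered to guarantee, and the horizontal descent $\theta_f$ is itself a linear morphism of differential bundles, which is what lets one transport sections between the two settings cleanly.
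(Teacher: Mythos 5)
Your proposal is correct and follows essentially the same route as the paper's own proof: reduce an arbitrary section in $\Cscr_{/X}$ to a zero-preserving one via Lemma \ref{Lemma: From section to zero pres section}, linearize it via Proposition \ref{Prop: Rosicky tan cats all admit linear sections once the section preserves the zero}, note the converse is trivial by forgetting linearity, and deduce the $T$-submersion claim from Proposition \ref{Prop: The lifting property characterization of TSmooth}. No gaps.
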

\begin{proof}
For the $\implies$ direction, assume that $\theta_f$ admits a section $s:f^{\ast}(TY) \to TX$ in $\Cscr_{/X}$. Then applying Lemma \ref{Lemma: From section to zero pres section} shows that there is a section $\hat{s}:f^{\ast}(TY) \to TX$ in $\Cscr_{/X}$ which preserves the zero section $f^{\ast}(0_Y)$ in the sense that $\hat{s} \circ f^{\ast}(0_Y) = 0_X$. Finally applying Proposition \ref{Prop: Rosicky tan cats all admit linear sections once the section preserves the zero} gives that there is a linear section $s_f$ of $\theta_f$, i.e., a section $s_f$ to $\theta_f$ in $\DBun(X)$. For the $\impliedby$ direction, this is immediate because linear sections are, in particular, sections upon forgetting from $\DBun(X)$ to $\Cscr_{/X}$. The final claim of the corollary follows from an immediate application of Proposition \ref{Prop: The lifting property characterization of TSmooth}. 
\end{proof}

\begin{corollary}\label{Cor: SMan has linear sections}
Each of the tangent categories $\SMan, \CAlg{R}$, $\CAlg{R}^{\op}$, and $\Sch_{/S}$ for $R$ a commutative ring have the property that $f:X \to Y$ is a split $T$-submersion if and only if $\theta_f$ admits a linear section.
\end{corollary}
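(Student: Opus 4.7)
The plan is to observe that this corollary follows essentially by bookkeeping from results already established, in particular from Corollary \ref{Cor: For a Rosicky tan cat T submersion on p carrable if and only if linear section to descent}. The strategy is to verify for each of the four listed tangent categories that (a) it is a Rosick{\'y} tangent category and (b) every morphism in it is $p$-carrable; once these two facts are in hand, the cited corollary immediately yields the desired equivalence between being a $T$-submersion and the horizontal descent admitting a linear section.

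First I would dispatch the algebraic examples. For $\CAlg{R}$ with $R$ a commutative ring, the Rosick{\'y} structure is recorded in Example \ref{Example: The tangent category of commutative algebras}, and since $\CAlg{R}$ is complete with a limit-preserving tangent bundle functor $A \mapsto A[\epsilon]$, every morphism is automatically $p$-carrable. The opposite category $\CAlg{R}^{\op}$ is Rosick{\'y} by Example \ref{Example: The tangent category of affine R schemes} and is $DB$-carrable (hence in particular $p$-carrable for every morphism) by Example \ref{Example: DB carrable}. Similarly, $\Sch_{/S}$ is Rosick{\'y} by Example \ref{Example: The tangent category of S schemes} and is $DB$-carrable by Example \ref{Example: DB carrable}.

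The mildly delicate case is $\SMan$, which is not finitely complete but is $DB$-carrable by Example \ref{Example: DB carrable not fincomp}; in particular the pullback of $p_Y:TY \to Y$ along any smooth map $f:X \to Y$ exists and is preserved by all powers of $T$, so every smooth map is $p$-carrable. Since $\SMan$ is Rosick{\'y} (Example \ref{Example: The tangent category of smooth manifolds}), the hypotheses of Corollary \ref{Cor: For a Rosicky tan cat T submersion on p carrable if and only if linear section to descent} are again satisfied.

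With these verifications complete, the corollary is immediate: in each case, for any map $f:X \to Y$, $f$ is a $T$-submersion if and only if $\theta_f$ admits a section in $\Cscr_{/X}$ (by Proposition \ref{Prop: The lifting property characterization of TSmooth}), if and only if $\theta_f$ admits a section in $\DBun(X)$ (by Corollary \ref{Cor: For a Rosicky tan cat T submersion on p carrable if and only if linear section to descent}). There is no genuine obstacle here; the only subtlety worth flagging is the $\SMan$ case, where one must recall that $DB$-carrability was established via the fact that bundle projections are submersions and therefore meet arbitrary smooth maps transversally, even in the absence of general pullbacks. I would also note that for $\CAlg{R}^{\op}$ and $\Sch_{/S}$ one could alternatively cite Corollary \ref{Cor: Schemes and affine rig schemes are linearly submersive} directly (which does not require the Rosick{\'y} hypothesis), but invoking the Rosick{\'y} corollary gives a uniform treatment of all four examples at once.
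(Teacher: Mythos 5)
Your proposal is correct and takes essentially the same route as the paper, which simply observes that all four tangent categories are Rosick\'y and invokes Corollary \ref{Cor: For a Rosicky tan cat T submersion on p carrable if and only if linear section to descent}. You are in fact slightly more careful than the paper's one-line proof, since you explicitly verify the $p$-carrability hypothesis in each case (including the non-finitely-complete $\SMan$), which the paper leaves implicit.
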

\begin{proof}
Each of the above tangent categories are Rosick{\'y}, so simply apply Corollary \ref{Cor: For a Rosicky tan cat T submersion on p carrable if and only if linear section to descent} in each case.
\end{proof}


\section{Tangent {\'E}tale Morphisms}

We now discuss the final topic of the paper: that of $T$-{\'e}tale morphisms. These are the morphisms in a tangent category which play the role of local diffeomorphisms in differential geometry\footnote{It should be noted, however, that it is not the case that such $T$-{\'e}tale morphisms coincide with all notions of local homeomorphism one may want to consider in all tangent categories, but instead something different. We show below, for instance, that there are monic $T$-{\'e}tale maps in $\CAlg{R}^{\op}$ which can be non-open closed immersions and hence complementary to being a ``local homeomorphism'' of schemes.} In fact, in \cite[Theorem 3.45]{GeoffMarcelloTSubmersionPaper} the authors show that the monic $T$-{\'e}tale maps which are also display morphisms form a class of maps which are analogous to open embeddings of smooth submanifolds in differential geometry\footnote{We will see below, however, that this analogy fails at the level of schemes in the sense that some monic $T$-{\'e}tale morphisms fail to be open immersions of schemes (and in fact can even be \emph{closed} immersions of schemes).}. Because of the importance of open immersions and also because {\'e}tale morphisms give the local flavour of a category of geometric objects, we are thus obliged to study $T$-{\'e}tale morphisms in the same systematic way as we studied $T$-immersions, $T$-submersions, split $T$-submersions, and $T$-unramified morphisms.

\subsection{{\'E}tale Moprphisms and Examples}
Our focus in this subsection is to get to know $T$-{\'e}tale maps based on their structure, how they incarnate in our four main families of examples, and what permanence/stability properties they have. We are in particular interested in how they relate to, and may be used with, $T$-immersions, $T$-submersions, and split $T$-submersions. We expect that future applications of these ideas will require an intimate knowledge of what being {\'e}tale in this tangent-categorical context buys.

\begin{definition}[{\cite[Definition 35]{GeoffJSReverse}, \cite[Definition 2.27]{GeoffMarcelloTSubmersionPaper}}]\label{Defn: TEtale in a tangent category}
In a tangent category $\Cscr$, a map $f:X \to Y$ is \emph{$T$-{\'e}tale} when the naturality square for $p$ at $f$
\[
\begin{tikzcd}
TX \ar[r]{}{Tf} \ar[d, swap]{}{p_X} & TY \ar[d]{}{p_Y} \\
X \ar[r, swap]{}{f} & Y
\end{tikzcd}
\]
is a $T$-pullback.
\end{definition}

The following proposition is immediate from definition. They allow us to characterize $T$-{\'e}tale maps as maps which are both $T$-immersions and split $T$-submersions in any tangent category and, when a map is $p$-carrable, characterize being $T$-{\'e}tale in terms of the horizontal descent being an isomorphism.

\begin{proposition}\label{Prop: Classify of Tetales}
In a tangent category $\Cscr$:
\begin{enumerate}[{\em (i)}] 
    \item A map $f$ is $T$-{\'e}tale if and only if $f$ is both a strong $T$-immersion and a split $T$-submersion;
    \item A $p$-carrable map $f$ is $T$-{\'e}tale if and only if $\theta_f$ is an isomorphism. 
    \item Any $T$-{\'e}tale morphism is $p$-carrable with $f^{\ast}(TY) \cong TX$. 
    \item If $f$ is a $p$-carrable and $0$-carrable map then $f$ is $T$-{\'e}tale if and only if $f$ is a $T$-submersion and either a $T$-immersion or strong a $T$-immersion.
\end{enumerate}
\end{proposition}
\begin{proof}
For Statement $(i)$, this is immediate from the fact that pullbacks are pre-pullbacks which are weak pullbacks and vice-versa. For Statement $(ii)$ this immediate by uniqueness of pullbacks up to isomorphism. Statement $(iii)$ follows immediately from the fact that if
\[
\begin{tikzcd}
TX \ar[r]{}{Tf} \ar[d, swap]{}{p_X} & TY \ar[d]{}{p_Y} \\
X \ar[r, swap]{}{f} & Y
\end{tikzcd}
\]
is a pullback, $f^{\ast}(TY) \cong TX$ by definition. 

For Statement $(iv)$ we observe the following. First, note that because $f$ is a $T$-submersion, the morphism
\[
\theta_f:TX \to f^{\ast}(TY)
\]
is a regular epimorphism in $\DBun(X)$. Additionally, for any flavour of $T$-immersion, it follows that $\theta_f$ is monic in $\DBun(X)$. Finally, 
\end{proof}

Below we record the fact that at least for $p$-carrable morphisms $f$, asking for the naturality square of $p$ at $f$ to be a $T$-pullback is unnecessary. In such a case we need only ask for the naturality square of $p$ at $f$ to be a pullback in order to derive that $f$ is $T$-{\'e}tale.
\begin{proposition}
If $f:X \to Y$ is $T$-{\'e}tale and $p$-carrable, then
\[
\begin{tikzcd}
TX\ar[r]{}{Tf} \ar[d, swap]{}{p_X} & TY \ar[d]{}{p_Y} \\
X\ar[r, swap]{}{f} & Y
\end{tikzcd}
\]
is a $T$-pullback.
\end{proposition}
\begin{proof}
Because $\theta_f$ is an isomorphism in this case, applying $T^m$ gives the commuting diagram
\[
\begin{tikzcd}
T^{m+1}X \ar[dr]{}{T^m\theta_f} \ar[drr, bend left = 20]{}{T^{m+1}f} \ar[ddr, swap, bend right = 20]{}{(T^m \ast p)_X} \\
 & T^m(f^{\ast}TY) \ar[r]{}{T^m\pi_1} \ar[d, swap]{}{T^m\pi_0} & T^{m+1}Y \ar[d]{}{(T^m \ast p)_Y} \\
 & T^mX \ar[r, swap]{}{T^mf} & T^mY
\end{tikzcd}
\]
with $T^m\theta_f$ an isomorphism for all $m \in \N$. However, as there is a natural isomorphism $T^m(f^{\ast}TY) \cong T^m(f)^{\ast}(T^{m+1}Y)$, the result follows.
\end{proof}

We now present various examples of the $T$-{\'e}tale maps in our main examples. 

\begin{example}\label{Example: Section Etale: TEtale in SMan}
In $\SMan$, the $T$-{\'e}tale maps are precisely the local diffeomorphisms. This is because a map $f:X \to Y$ is a local diffeomorphism is exactly when $f$ is a $T$-immersion and a $T$-submersion, and hence $T$-{\'e}tale if and only if it is a local diffeomorphism. This gives a clean structural proof of the fact that in $\SMan$ local diffeomorphisms are exactly the smooth morphisms whose tangent bundle projection naturality square is a pullback.
\end{example}

\begin{example}\label{Example: Section Etale: TEtale in CAlg}
Let $R$ be a commutative rig. Then in $\CAlg{R}$, the $T$-{\'e}tale maps are precisely the isomorphisms of $R$-algebras. This is seen as follows: for any morphism $f:A \to B$ of commutative $R$-algebras, recall from Example \ref{Example: Relative tangent bundle in CAlg} that the horizontal descent $\theta_f:A[\epsilon] \to A \ltimes B$ is given by $a + x\epsilon \mapsto (a,f(x))$. Now $\theta_f$ is an isomorphism if and only if $f$ is an isomorphism in the first place.
\end{example}

\begin{example}\label{Example: Section Etale: TEtale in CAlgop}
Let $R$ be a commutative rig. In $\CAlg{R}^{\op}$, a map $f^{\op}:B \to A$ (so a map $f: A \to B$ in $\CAlg{R}$) is $T$-{\'e}tale if and only if the map $\Kah{A}{R} \otimes_A B \to \Kah{B}{R}$ is an isomorphism of $B$-modules (which follows from Examples \ref{Example: Timmersions in CAlgop} and \ref{Example: Classification of TSubmersions in CAlgRop}). This follows from the equivalence of categories $\DBun(A) \simeq \Mod{A}^{\op}$ of \cite{GeoffJSDiffBunComAlg} together with  the isomorphism of rigs $\Sym_{B}(\Kah{A}{R} \otimes_A B) \cong \Sym_A(\Kah{A}{R}) \otimes_A B$. Note that in particular, when $R$ is a commutative ring this says that the $T$-{\'e}tale maps in $\mathbf{AffSch}_{/R}$ are exactly the formally unramified maps of affine schemes which are also formally smooth relative to $R$. In particular, every formally unramified map $!_{A}: \Spec A \to \Spec R$ over the terminal object is $T$-{\'e}tale in $\mathbf{AffSch}_{/R}$ by Example \ref{Example: Nonexample of formally smooth}. Note that it certainly need not be the case that $\Spec A$ is formally smooth over $\Spec R$.
\end{example}

In the case of algebraic geometry, there is an unfortunate overlap in terminology between formally {\'e}tale maps of affine schemes and $T$-{\'e}tale maps of affine schemes. In particular, \emph{these are not the same concept --- being formally {\'e}tale is stronger.} We now show that it is the case that formally {\'e}tale maps are $T$-{\'e}tale but also provide an example of a morphism in $\mathbf{CAlg}_{R}^{\op}$ which is $T$-{\'e}tale but \emph{not} formally {\'e}tale.
\begin{example}\label{Example: Tetale in Schemes}
Let $S$ be a base scheme. In $\Sch_{/S}$, a map $f:X \to Y$ is $T$-{\'e}tale if and only if in the cotangent exact sequence
\[
\begin{tikzcd}
f^{\ast}(\Kah{Y}{S}) \ar[r]{}{u} & \Kah{X}{S} \ar[r] & \Kah{X}{Y} \ar[r] & 0
\end{tikzcd}
\]
the map $u$ is an isomorphism. Now recall that a map $f:X \to Y$ of $S$-schemes is formally {\'e}tale if and only if it is formally smooth and formally unramified. By \cite[Proposition 17.2.3.ii]{EGA44}, since $f$ is formally smooth the cotangent sequence is a short exact sequence in $\QCoh(X)$
\[
\begin{tikzcd}
0 \ar[r] & f^{\ast}(\Kah{Y}{S}) \ar[r] & \Kah{X}{S} \ar[r] & \Kah{X}{Y} \ar[r] & 0
\end{tikzcd}
\]
which is locally split. Furthermore, since $f$ is formally unramified, $\Kah{X}{Y} \cong 0$ and so the cotangent sequence is isomorphic to the exact sequence
\[
\begin{tikzcd}
0 \ar[r] & f^{\ast}(\Kah{Y}{S}) \ar[r] & \Kah{X}{S} \ar[r] & 0 \ar[r] & 0
\end{tikzcd}
\]
which shows that $f^{\ast}(\Kah{Y}{S}) \cong \Kah{X}{S}$. Since $f:X \to Y$ is  $T$-{\'e}tale if and only if $f^{\ast}(\Kah{Y}{S}) \cong \Kah{X}{S}$, it follows that formally {\'e}tale maps are $T$-{\'e}tale.
\end{example}

\begin{example}\label{Example: Tetale in Scheme not formal etale} Here is now an example of $T$-{\'e}tale morphism of schemes which is not formally {\'e}tale. Let $A = R = \mathbb{C}\llbracket t \rrbracket$ and let $B = \mathbb{C}\llbracket t \rrbracket/(t^2)$ with $f:A \to R$ the identity morphism and $g:A \to B$ the quotient map. Then $g$ is not formally {\'e}tale (it is not formally smooth) but $\Kah{A}{R} \otimes_A B \cong 0 \cong \Kah{B}{R}$. Thus $g:A \to B$ is $T$-{\'e}tale but not formally {\'e}tale.
\end{example}
\begin{remark}
Example \ref{Example: Tetale in Scheme not formal etale} above shows that the closed immersion $\Spec \mathbb{C}\llbracket t \rrbracket/(t^2) \to \Spec \mathbb{C}\llbracket t \rrbracket$ is a monic $T$-{\'e}tale morphism in $\Sch_{/\mathbb{C}\llbracket t \rrbracket}$ which is as far away from an open immersion as possible; the only open subset of $\Spec \mathbb{C}\llbracket t\rrbracket$ which contains the image of $\Spec \mathbb{C}\llbracket t\rrbracket/(t^2)$ is the entire scheme $\Spec \mathbb{C}\llbracket t\rrbracket$. Furthermore, this is even a display monic $T$-{\'e}tale morphism, as each category $\Sch_{/S}$ is finitely complete and each tangent functor is a right adjoint.
\end{remark}

\begin{example}\label{EXample: Section Etale: TEtale in CDC}
In a CDC, a map $f:X \to Y$ is $T$-{\'e}tale if and only if $D[f]$ is an isomorphism. This is because by Example \ref{Example: Immersions in CDC} $f$ is a $T$-immersion if and only if $D[f]$ is monic while by Example \ref{Example: TSubmersion CDC} $f$ is a $T$-submersion if and only if $D[f]$ is a retract.
\end{example}

\subsection{The Structure of {\'E}tale Morphisms}
We conclude this paper with the general theory of $T$-{\'e}tale morphisms in a tangent category. Our next task is to prove some stability and permanence properties of $T$-{\'e}tale maps and some of their variations. Of particular interest is Proposition \ref{Prop: Etale reflects immersive/submersive stuff postcompositionally} below, as it shows that when we know a map $g:Y \to Z$ is $T$-{\'e}tale then the any composite $g \circ f$ is $T$-{\'e}tale (respectively a split $T$-submersion or $T$-immersion) if and only if the same is true for $f$. After establishing this, we will focus on monic $T$-{\'e}tale morphisms before concluding with a study of when tangent bundle functors reflect {\'e}tale morphisms.

\begin{proposition}\label{Prop: Tetale stable under composition and base change}
In a tangent category $\Cscr$, let $f:X \to Y$ and $g:Y \to Z$ be $T$-{\'e}tale morphisms and assume that we have a $T$-pullback square:
\[
\begin{tikzcd}
X \times_Y W \ar[r]{}{\pr_0} \ar[d, swap]{}{\pr_1} & X \ar[d]{}{f} \\
W \ar[r, swap]{}{h} & Y
\end{tikzcd}
\]
Then:
\begin{enumerate}[{\em (i)}] 
    \item The composite $g \circ f$ is $T$-{\'e}tale.
    \item The morphism $\pr_1$ is $T$-{\'e}tale.
\end{enumerate}
\end{proposition}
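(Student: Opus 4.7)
The plan is to derive both statements from Proposition~\ref{Prop: Classify of Tetales}.(i), which identifies the $T$-{\'e}tale morphisms with those that are simultaneously $T$-immersions and $T$-submersions, combined with the stability results already established for each class separately. For (i), since $f$ and $g$ are $T$-{\'e}tale they are each $T$-immersions and $T$-submersions; Proposition~\ref{Prop: Immersions comp and Tm stable}.(i) and Proposition~\ref{Prop: Tsubmersions and stability properties}.(i) then give that $g \circ f$ is simultaneously a $T$-immersion and a $T$-submersion, so by Proposition~\ref{Prop: Classify of Tetales}.(i) again it is $T$-{\'e}tale. For (ii), the same logic applied via Proposition~\ref{Prop: Immersions comp and Tm stable}.(iii) and Proposition~\ref{Prop: Tsubmersions and stability properties}.(iii) promotes $\pr_1$ to both a $T$-immersion and a $T$-submersion, hence to a $T$-{\'e}tale map.

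Alternatively, one can argue directly by the Pullback Lemma. For (i), stacking the $p$-naturality squares at $f$ and at $g$ --- each a pullback preserved by all $T^m$ --- yields the $p$-naturality square at $g \circ f$ as a pullback preserved by all $T^m$. For (ii), consider the rectangle
\[
\begin{tikzcd}
T(X \times_Y W) \ar[r]{}{p_{X \times_Y W}} \ar[d, swap]{}{T\pr_1} & X \times_Y W \ar[r]{}{\pr_0} \ar[d]{}{\pr_1} & X \ar[d]{}{f} \\
TW \ar[r, swap]{}{p_W} & W \ar[r, swap]{}{h} & Y
\end{tikzcd}
\]
whose right square is the given pullback. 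Up to naturality of $p$, the full rectangle equals the composite of the pullback $T(X \times_Y W) \cong TX \times_{TY} TW$ (which exists because $T$ preserves $X \times_Y W$) with the $p$-naturality square at $f$ (a pullback because $f$ is $T$-{\'e}tale), and is therefore itself a pullback. Because the right square is a pullback, the Pullback Lemma forces the left square --- namely the $p$-naturality square at $\pr_1$ --- to be a pullback as well; iterating with $T^m$ in place of $T$ finishes the argument.

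The only real bookkeeping concern is that Proposition~\ref{Prop: Immersions comp and Tm stable}.(iii) as formally stated requires both morphisms of its cospan to be $T$-immersions, whereas for our (ii) we only have that $f$ is; however, inspection of its proof confirms that only $f$ being a $T$-immersion is ever used, so the citation is safe, and in any event the direct Pullback Lemma argument above provides a self-contained fallback. No genuine obstacle is anticipated.
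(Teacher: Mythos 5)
Your proof is correct, but it is organized differently from the paper's. The paper disposes of (i) with exactly your fallback argument --- ``the Pullback Lemma in action,'' stacking the two $p$-naturality pullback squares --- and for (ii) it simply cites an external result (\cite[Lemma 36]{GeoffJSReverse}) rather than giving an argument. Your primary route, factoring ``$T$-\'etale'' through Proposition \ref{Prop: Classify of Tetales}.(i) as ``$T$-immersion $+$ $T$-submersion'' and then invoking the separate stability results, is a genuinely different decomposition: it buys you a proof that is uniform across (i) and (ii) and that reuses work already done, at the cost of leaning on the observation (which you correctly make and verify) that Proposition \ref{Prop: Immersions comp and Tm stable}.(iii) only ever uses that $f$ is a $T$-immersion, not both legs of the cospan. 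Your direct Pullback Lemma argument for (ii) --- rewriting the composite rectangle via naturality of $p$ as $[T$ applied to the given square$]$ followed by the $p$-naturality square at $f$, then cancelling the right-hand pullback --- is a correct self-contained reconstruction of what the paper outsources, and is arguably the more valuable contribution here.

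One caveat worth flagging, though it is an imprecision inherited from the statement rather than a defect of your argument: the proposition as written only assumes the given square is preserved by $T$, whereas both of your routes (the citations to the $(iii)$-clauses of the stability propositions, and the ``iterate with $T^m$'' step of the direct argument) require it to be preserved by $T^m$ for all $m \in \N$, since the $m$-th instance of the Pullback Lemma needs $T^m$ of the given square to be a pullback. There is no obvious bootstrap from preservation by $T$ alone to preservation by all powers. Under the evidently intended hypothesis --- preservation by all powers of $T$, as in Definition \ref{Defn: DBun pullbackable} and in every other pullback-stability statement in the paper --- your proof goes through without issue.
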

\begin{proof}
For $(i)$ this is the Pullback Lemma in action. For $(ii)$ we observe that this is exactly \cite[Lemma 36]{GeoffJSReverse}.
\end{proof}

\begin{proposition}\label{Prop: Etale reflects immersive/submersive stuff postcompositionally}
In a tangent category $\Cscr$, let $f:X \to Y$ and $g:Y \to Z$ be morphisms in a tangent category for which $g$ is $T$-{\'e}tale. Then:
\begin{enumerate}[{\em (i)}] 
    \item $f$ is a strong $T$-immersion if and only if $g \circ f$ is a strong $T$-immersion.
    \item $f$ is a split $T$-submersion if and only if $g \circ f$ is a split $T$-submersion.
    \item $f$ is $T$-{\'e}tale if and only if $g \circ f$ is $T$-{\'e}tale.
\end{enumerate}
\end{proposition}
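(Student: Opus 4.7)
The plan is to pass through the horizontally composed naturality square and invoke the stacking lemmas for prepullbacks and weak pullbacks. Concretely, for any $m \in \N$ consider the diagram
\[
\begin{tikzcd}
T^{m+1}X \ar[r]{}{T^{m+1}f} \ar[d, swap]{}{(T^m \ast p)_X} & T^{m+1}Y \ar[d]{}{(T^m \ast p)_Y} \ar[r]{}{T^{m+1}g} & T^{m+1}Z \ar[d]{}{(T^m \ast p)_Z} \\
T^mX \ar[r, swap]{}{T^mf} & T^mY \ar[r, swap]{}{T^mg} & T^mZ
\end{tikzcd}
\]
in which the total rectangle is the naturality square $(T^m \ast p)$ at $T^m(g \circ f)$. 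Since $g$ is $T$-\'etale, the rightmost square is a pullback (hence both a prepullback and a weak pullback) for every $m \in \N$.

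For statement $(i)$, I would apply Lemma~\ref{Lemma: The prepullback stacking lemma}: since the right-hand square is a pullback, the leftmost square is a prepullback if and only if the total rectangle is a prepullback. Running this simultaneously for every $m \in \N$ shows that the naturality square of $f$ is a prepullback preserved by all $T^m$ if and only if the same is true for $g \circ f$, giving $(i)$. For statement $(ii)$, the same reasoning using Lemma~\ref{Lemma: Folklore for weak pullbacks} (again exploiting that the right-hand square is a pullback and hence both a prepullback and weak pullback) yields that the leftmost square is a weak pullback preserved by all $T^m$ if and only if the total rectangle is. Statement $(iii)$ then follows by combining $(i)$ and $(ii)$ via Proposition~\ref{Prop: Classify of Tetales}.(i), or alternatively by observing directly that the Pullback Lemma asserts that the leftmost square is a pullback if and only if the total rectangle is, applied uniformly in $m$.

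I do not anticipate any serious obstacle here: the statements $(i)$--$(iii)$ are formal consequences of the respective stacking results, and the uniformity in $m$ is automatic because $T$-\'etaleness of $g$ is preserved by every $T^m$ by definition. The only mild subtlety is remembering to apply the lemmas separately at each $m$ rather than only at $m=0$, so as to secure the full preservation clause required by the definitions of $T$-immersion, $T$-submersion, and $T$-\'etale.
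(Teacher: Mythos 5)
Your proposal is correct and follows essentially the same route as the paper: the same three-column diagram of stacked naturality squares, with the right-hand square a pullback by $T$-\'etaleness of $g$, and then Lemma~\ref{Lemma: The prepullback stacking lemma} for $(i)$, Lemma~\ref{Lemma: Folklore for weak pullbacks} for $(ii)$, and the Pullback Lemma for $(iii)$, applied at each $m \in \N$. The only cosmetic difference is your optional alternative for $(iii)$ via Proposition~\ref{Prop: Classify of Tetales}.(i), which is equally valid.
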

\begin{proof}
Fix an $m \in \N$ and consider the diagram:
\[
\begin{tikzcd}
T^{m+1}X \ar[d, swap]{}{(p \ast T^m)_X} \ar[r]{}{T^{m+1}f} & T^{m+1}Y \ar[d]{}{(p \ast T^m)_Y} \ar[r]{}{T^{m+1}g} & T^{m+1}Z \ar[d]{}{(p \ast T^m)_Z} \\
T^mX \ar[r, swap]{}{T^mf} & T^mY \ar[r, swap]{}{T^mg} & TY
\end{tikzcd}
\]
Note that the right-handed square in the rectangle is a pullback by virtue of $g$ being $T$-{\'e}tale. We must show:
\begin{enumerate}
    \item for $(i)$ that the left-handed square is a prepullback if and only if the whole rectangle is;
    \item for $(ii)$ that the left-handed square is a weak pullback if and only if the whole rectangle is;
    \item for $(iii)$ that the left-handed square is a pullback if and only if the whole rectangle is.
\end{enumerate}  
However, in all cases we can reduce proving each item to applying the functor $T^m$ to the diagram and then: for $(i)$ appealing to Lemma \ref{Lemma: The prepullback stacking lemma} for each $m \in \N$; for $(ii)$ by appealing to Lemma \ref{Lemma: Folklore for weak pullbacks} for each $m \in \N$; and for $(iii)$, this is simply the Pullback Lemma applied at each $m \in \N$. 
\end{proof}

As before, if we know also that all maps in sight are $p$-carrable, then we can rephrase the $T$-{\'e}tale permanence properties in terms in order to reflect $T$-submersions.
\begin{proposition}
Let $\Cscr$ be a tangent category and let $f:X \to Y, g:Y \to Z$ be morphisms for which $g \circ f, g,$ and $f$ are all $p$-carrable and $0$-carrable. Assume also that $g$ is $T$-{\'e}tale. Then $f$ is a $T$-submersion if and only if $g \circ f$ is a $T$-submersion.
\end{proposition}
\begin{proof}
Begin by observing that asking for $g \circ f$ to be a $T$-submersion means that we require $\theta_{g \circ f}$ to be a $T$-coequalizer. However, as
\[
\theta_{g \circ f} = \gamma \circ (\id_X \times \theta_g) \circ \theta_f
\]
by Lemma \ref{Lemma: Thetaf and composition}, and both $\gamma$ and $(\id_X \times \theta_g)$ are isomorphisms ($\theta_g$ by virtue of $g$ being $T$-{\'e}tale and  $\gamma$ by Lemma \ref{Lemma: Thetaf and composition}), we see that $\theta_{g \circ f}$ is a $T$-coequalizer if and only if $\theta_f$ is.
\end{proof}

We now give a short classification of the monic $T$-{\'e}tale maps. In particular, we show that monic $T$-{\'e}tale maps are exactly the $T$-monic split $T$-submersions in any tangent category.

\begin{proposition}\label{Prop: Classify of monic Tetales}
In a tangent category $\Cscr$, a map $f: X \to Y$ is monic $T$-{\'e}tale if and only if $f$ is both $T$-monic and a split $T$-submersion.
\end{proposition}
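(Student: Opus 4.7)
The plan is to prove both directions by exploiting the uniform structure of the $p$-naturality squares and the elementary observation that a commuting square is a prepullback whenever either of its parallel edges is monic, so that a weak pullback with a monic side is automatically a pullback.

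For the forward direction, I will show that a monic $T$-{\'e}tale map is a $T$-submersion and is $T$-monic. The first is immediate since every pullback is a weak pullback, and all powers of $T$ preserve the naturality square by hypothesis, so $f$ is a $T$-submersion. For $T$-monicity, I will induct on $m$. The base case $T^0 f = f$ is monic by assumption. For the inductive step, apply $T^m$ to the naturality square for $p$ at $f$; since $f$ is $T$-{\'e}tale, this yields a pullback square with $T^m f$ along the bottom and $T^{m+1} f$ along the top. As $T^m f$ is monic by the inductive hypothesis and monics are stable under pullback, $T^{m+1} f$ is monic.

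For the reverse direction, I need to upgrade the weak pullback supplied by the $T$-submersion hypothesis to an actual pullback. Since $f$ is $T$-monic, both parallel arrows $T^{m+1} f$ and $T^m f$ of each naturality square are monic. A standard observation is that a commuting square
\[
\begin{tikzcd}
A \ar[r]{}{a} \ar[d, swap]{}{b} & B \ar[d]{}{c} \\
C \ar[r, swap]{}{d} & D
\end{tikzcd}
\]
is a prepullback whenever $a$ (or $b$) is monic: two factorizations $\alpha,\beta\colon Z \to A$ of a cone automatically satisfy $a\alpha = a\beta$, so $\alpha = \beta$. I will invoke this with $a = T^{m+1} f$ to deduce that every $p$-naturality square at $T^m f$ is a prepullback. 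Together with being a weak pullback (from the $T$-submersion hypothesis), this forces each square to be a genuine pullback. Since $T$-monicity and $T$-submersivity are both preserved by powers of $T$, the pullback is preserved by $T^k$ for all $k \in \N$, so $f$ is $T$-{\'e}tale; monicity of $f$ itself is the $m=0$ case of $T$-monicity.

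No step here is a serious obstacle: the content rests on the observation above about prepullbacks, together with the already proven fact that $T$-{\'e}tale and $T$-submersion are stable under applying powers of $T$. The only thing to verify carefully is that in the backward direction the pullback squares assembled at each $m$ remain preserved by all $T^k$, but this follows directly because $T$-monicity implies $T^{m+k} f$ is monic for all $k$ (so prepullback persists) and the $T$-submersion hypothesis already guarantees that weak pullback persists under all $T^k$.
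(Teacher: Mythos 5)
Your proof is correct and follows essentially the same route as the paper's: the forward direction deduces $T$-monicity by induction from the fact that $T^{m+1}f$ arises as a pullback of $T^{m}f$, and the backward direction upgrades each weak pullback to a pullback by observing that monicity of $T^{m+1}f$ makes the square a prepullback. No gaps.
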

\begin{proof}
$\implies:$ If $f$ is a monic $T$-{\'e}tale morphism then for all $m \in \N$ the diagram
\[
\begin{tikzcd}
T^{m+1}X \ar[r]{}{T^{m+1}f} \ar[d, swap]{}{(T^m \ast p)_X} & T^{m+1}Y \ar[d]{}{(T^m \ast p)_Y} \\
T^mX \ar[r, swap]{}{T^mf} & T^mY
\end{tikzcd}
\]
is a pullback. Since $T$-pullbacks are in particular $T$-weak pullbacks, $f$ is a split $T$-submersion. Finally, a routine induction on $m$ gives that $T^mf$ is monic for all $m \in \N$ (the trick is that $T^0f = f$ is monic by assumption and for each $k \in \N$, $T^{k+1}f$ is the pullback of $T^kf$ against $(T^k \ast p)_Y$). Thus $f$ is also $T$-monic.

$\impliedby:$ If $f:X \to Y$ is $T$-monic and a split $T$-submersion then for all $m \in \N$ the diagram
\[
\begin{tikzcd}
T^{m+1}X \ar[r]{}{T^{m+1}f} \ar[d, swap]{}{(T^m \ast p)_X} & T^{m+1}Y \ar[d]{}{(T^m \ast p)_Y} \\
T^mX \ar[r, swap]{}{T^mf} & T^mY
\end{tikzcd}
\]
is a weak pullback with each map $T^kf$ monic for all $k \in \N$. But then since $T^{m+1}f$ is monic, any diagram of the form
\[
\begin{tikzcd}
Z \ar[drr, bend left = 20]{}{h} \ar[dr, shift left = 1]{}{\psi} \ar[dr, shift right = 1, swap]{}{\varphi} \ar[ddr, swap, bend right = 40]{}{h} \\
 & T^{m+1}X \ar[r]{}{T^{m+1}f} \ar[d, swap]{}{(T^m \ast p)_X} & T^{m+1}Y \ar[d]{}{(T^m \ast p)_Y} \\
 & T^mX \ar[r, swap]{}{T^mf} & T^mY
\end{tikzcd}
\]
has $\varphi = \psi$ by virtue of $T^{m+1}f$ monic. Thus
\[
\begin{tikzcd}
T^{m+1}X \ar[r]{}{T^{m+1}f} \ar[d, swap]{}{(T^m \ast p)_X} & T^{m+1}Y \ar[d]{}{(T^m \ast p)_Y} \\
T^mX \ar[r, swap]{}{T^mf} & T^mY
\end{tikzcd}
\]
is a weak pullback which is a prepullback and hence a pullback. Applying $T^m$ to the naturality diagram at $p$ thus is a pullback for all $m \in \N$. But arguing this for $m = 0$ gives that the naturality square for $p$ at $f$ is a $T$-pullback and hence that $f$ is a monic $T$-{\'e}tale morphism.
\end{proof}

\begin{corollary}\label{Cor: Monic Tetale stable under comp and base change}
In a tangent category $\Cscr$, let $f:X \to Y$ and $g:Y \to Z$ be monic $T$-{\'e}tale morphisms and assume that we have a pullback square
\[
\begin{tikzcd}
X \times_Y W \ar[r]{}{\pr_0} \ar[d, swap]{}{\pr_1} & X \ar[d]{}{f} \\
W \ar[r, swap]{}{h} & Y
\end{tikzcd}
\]
which is preserved by the tangent bundle functor $T$. Then:
\begin{enumerate}[{\em (i)}] 
    \item The composite $g \circ f$ is monic $T$-{\'e}tale.
    \item The morphism $\pr_1$ is monic $T$-{\'e}tale.
\end{enumerate}
\end{corollary}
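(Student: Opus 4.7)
The plan is to reduce the corollary immediately to the previously established stability results by invoking the characterization of monic $T$-{\'e}tale morphisms in Proposition \ref{Prop: Classify of monic Tetales}, which identifies them with $T$-monic $T$-submersions. Once we have this equivalence, both claims become a combination of the composition/pullback stability results for $T$-monics (Proposition \ref{Prop: T monics compose and are stable under T preserved pullbacks}) and for $T$-submersions (Proposition \ref{Prop: Tsubmersions and stability properties}).

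More concretely, for part (i), I would argue as follows. Since $f$ and $g$ are monic $T$-{\'e}tale, Proposition \ref{Prop: Classify of monic Tetales} tells us that each is both $T$-monic and a $T$-submersion. Proposition \ref{Prop: T monics compose and are stable under T preserved pullbacks}.(i) then gives that $g \circ f$ is $T$-monic, while Proposition \ref{Prop: Tsubmersions and stability properties}.(i) gives that $g \circ f$ is a $T$-submersion. Applying Proposition \ref{Prop: Classify of monic Tetales} once more in the reverse direction shows that $g \circ f$ is monic $T$-{\'e}tale.

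For part (ii), the argument is analogous: since $f$ is $T$-monic and a $T$-submersion, Proposition \ref{Prop: T monics compose and are stable under T preserved pullbacks}.(iii) (applied to the given pullback square which is, by hypothesis, preserved by $T^m$ for all $m \in \N$) gives that $\pr_1$ is $T$-monic, while Proposition \ref{Prop: Tsubmersions and stability properties}.(iii) gives that $\pr_1$ is a $T$-submersion. Another application of Proposition \ref{Prop: Classify of monic Tetales} yields that $\pr_1$ is monic $T$-{\'e}tale.

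There is essentially no obstacle here since all the heavy lifting has been done by the prior stability propositions and the classification. The only subtlety worth double-checking is the hypothesis that the pullback square is preserved by $T^m$ for all $m \in \N$ (not just by $T$), which is required by both Proposition \ref{Prop: T monics compose and are stable under T preserved pullbacks}.(iii) and Proposition \ref{Prop: Tsubmersions and stability properties}.(iii). Reading the hypothesis of the corollary carefully, it only asks for preservation by $T$, so a minor remark may be needed — but since being monic $T$-{\'e}tale for $f$ already implies $T^m$-preservation of the associated pullbacks, and pullbacks compose with $T^m$-preserved pullbacks to give $T^m$-preserved pullbacks, one can bootstrap this upward to preservation by all $T^m$ without additional work.
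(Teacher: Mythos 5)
Your proof is correct and essentially identical to the paper's: the paper likewise reduces the corollary, via Proposition \ref{Prop: Classify of monic Tetales}, to the composition and base-change stability results for $T$-monics (Proposition \ref{Prop: T monics compose and are stable under T preserved pullbacks}) and $T$-submersions (Proposition \ref{Prop: Tsubmersions and stability properties}). Your closing remark about the hypothesis asserting preservation only by $T$ rather than by all $T^m$ flags a genuine (if minor) imprecision that the corollary's statement shares with Proposition \ref{Prop: Tetale stable under composition and base change}; the intended reading is preservation by all powers of $T$, and the paper simply does not address the point, so your sketched bootstrap (which as stated is not obviously non-circular) is not needed to match the paper's argument.
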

\begin{proof}
Because monic $T$-{\'e}tale maps are exactly the $T$-monic split $T$-submersion maps by Proposition \ref{Prop: Classify of monic Tetales}, the result follows from the corresponding base change and composition stability results for $T$-monic and $T$-{\'e}tale morphisms.
\end{proof}

\begin{corollary}\label{Cor: T monic etale for p carrable}
In a tangent category $\Cscr$, if a map $f:X \to Y$ is $p$-carrable then $f$ is monic $T$-{\'e}tale if and only if $f$ is $T$-monic and a $T$-submersion. In particular, $f$ is a monic $T$-immersion and a $T$-submersion.
\end{corollary}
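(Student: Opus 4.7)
The plan is to reduce this corollary to a short chain of already-established equivalences, with essentially no new work required beyond bookkeeping. The three results we need are all in the preceding sections of the paper, and together they carve up the statement ``monic $T$-{\'e}tale'' into exactly the three conditions listed.

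First, I would invoke Proposition \ref{Prop: Classify of monic Tetales}, which says that (in any tangent category, with no $p$-carrability hypothesis) a morphism $f:X \to Y$ is monic $T$-{\'e}tale if and only if $f$ is $T$-monic and $f$ is a $T$-submersion. This already splits the claim into two independent characterizations to be massaged into the desired form via the horizontal descent $\theta_f$. Since $f$ is $p$-carrable by hypothesis, $\theta_f$ is actually defined and both of these conditions can be rephrased using it.

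Next I would apply Proposition \ref{Cor: f monic and thetaf monic iff Tmonic}: for a $p$-carrable morphism, $f$ is $T$-monic if and only if $f$ is monic and $\theta_f$ is $T$-monic. This handles the first of the two conditions from Proposition \ref{Prop: Classify of monic Tetales} and accounts for the ``$f$ is monic'' and ``$\theta_f$ is $T$-monic'' parts of the claim.

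Finally, I would apply Proposition \ref{Prop: The lifting property characterization of TSmooth}: for a $p$-carrable morphism, $f$ is a $T$-submersion if and only if $\theta_f$ admits a section, i.e., $\theta_f$ is a retract. Splicing these three equivalences together gives exactly: $f$ monic $T$-{\'e}tale $\iff$ ($f$ $T$-monic and $f$ $T$-submersion) $\iff$ ($f$ monic, $\theta_f$ $T$-monic, and $\theta_f$ a retract). There is no genuine obstacle here; the entire content lies in the earlier characterization theorems, and the corollary is really just a packaging statement that collects the $p$-carrable case into a single clean criterion in terms of $\theta_f$.
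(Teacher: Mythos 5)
Your proof is correct and follows exactly the paper's own route: combine Proposition \ref{Prop: Classify of monic Tetales} with Proposition \ref{Cor: f monic and thetaf monic iff Tmonic} and Proposition \ref{Prop: The lifting property characterization of TSmooth}. Nothing is missing.
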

\begin{proof}
As $f$ is $T$-monic means that $T^mf$ and $\theta_{T^mf}$ are all monic for all $m \in \N$, and as $f$ being a $T$-submersion means that $\theta_{T^mf}$ is a $T$-cokernel and hence a regular $T$-epimorphism for all $m \in \N$, the result follows by virtue of isomorphisms being exactly the monic regular epimorphisms.
\end{proof}

Our final task is to prove a proposition which has as a consequence that when all maps in a tangent category are $p$-carrable, then all powers of the tangent bundle functor preserve and reflect $T$-{\'e}tale maps. 


\begin{proposition}\label{Prop: tangent bundle functors preserve Tetales}
Let $\Cscr$ be a tangent category such that all morphisms are $p$-carrable. Then for all $m \in \N$, $T^m$ preserves and reflects $T$-{\'e}tale morphisms, that is, $T^{m}f$ is $T$-{\'e}tale if and only if $f$ is $T$-{\'e}tale.
\end{proposition}
\begin{proof} So fix an $m \in \N$ and let $f$ be an arbitrary {\'e}tale map. Applying Corollary \ref{Cor: The other higher powers of T to thetaf in one line} gives rise to isomorphisms $C$ and $\tilde{C}$ for which the identity $\theta_{T^{m}f} = \tilde{C} \circ T^{m}\theta_{f} \circ C$ holds. Because $\tilde{C}$ and $C$ are isomorphisms, $T^{m}\theta_f$ is an isomorphism if and only if $\theta_{T^{m}f}$ is an isomorphism. For the $\implies$ direction: if $T^{m}\theta_f$ is an isomorphism, then applying Corollary \ref{Cor: tangent bundle functors are isomorphism reflecting} gives that $\theta_f$ is an isomorphism as well. For the $\impliedby$ direction: if $\theta_f$ is an isomorphism then so too is $T^{m}(\theta_f)$ by virtue of $T^{m}$ being a functor. But then $\theta_{T^{m}(f)}$ is an isomorphism as well and so $T^{m}f$ is $T$-{\'e}tale.
\end{proof}

\bibliographystyle{amsalpha}      
\bibliography{BundlesAndStuffBib}

\end{document}